\numberwithin{equation}{section}
\newtheorem{lemma}{Lemma}[section]
\newtheorem{corollary}[lemma]{Corollary}
\newtheorem{proposition}[lemma]{Proposition}
\newtheorem{theorem}[lemma]{Theorem}
\theoremstyle{definition}
\newtheorem{definition}[lemma]{Definition}
\newtheorem{remark}[lemma]{Remark}
\newlist{thm_enum}{enumerate}{1}
\setlist[thm_enum]{label=\normalfont(\alph*)}
\newlist{def_enum}{enumerate}{1}
\setlist[def_enum]{label=\normalfont(\roman*)}
\newlist{equiv_enum}{enumerate}{1}
\setlist[equiv_enum]{label=\normalfont(\roman*)}
\newcommand{\IZ}{\mathbb{Z}}
\newcommand{\IN}{\mathbb{N}}
\newcommand{\IR}{\mathbb{R}}
\newcommand{\IC}{\mathbb{C}}
\newcommand{\IP}{\mathbb{P}}
\newcommand{\abs}[1]{\left\lvert#1\right\rvert}
\newcommand{\normalabs}[1]{\lvert#1\rvert}
\newcommand{\norm}[1]{\left\lVert#1\right\rVert}
\newcommand{\normalnorm}[1]{\lVert#1\rVert}
\newcommand{\biggnorm}[1]{\biggl\lVert#1\biggr\rVert}
\newcommand{\R}[2][\empty]{
	\ifthenelse{\equal{#1}{\empty}}
		{\mathcal{R}\left\{#2\right\}}
		{\mathcal{R}_{#1}\left\{#2\right\}}
}
\newcommand{\ip}[2]{\langle #1, #2 \rangle}
\newcommand{\ud}{\, \mathrm{d}}
\newcommand{\LeftEqNo}{\let\veqno\@@leqno}
\renewcommand{\d}{\mathop{}\!d}
\renewcommand{\epsilon}{\varepsilon}
\renewcommand{\phi}{\varphi}
\renewcommand{\vec}[1]{\bm{#1}}
\DeclareMathOperator{\sgn}{sgn}
\DeclareMathOperator{\Id}{Id}
\DeclareMathOperator{\supp}{supp}
\DeclareMathOperator*{\esssup}{ess\,sup}
\DeclareMathOperator{\ran}{ran}
\DeclareMathAlphabet{\mathpzc}{OT1}{pzc}{m}{it}
\begin{document}

\title{Weighted Estimates for Operator-Valued Fourier Multipliers}

\begin{abstract}
	We establish Littlewood--Paley decompositions for Muckenhoupt weights in the setting of UMD spaces. As a consequence we obtain two-weight variants of the Mikhlin multiplier theorem for operator-valued multipliers. We also show two-weight estimates for multipliers satisfying Hörmander type conditions.
\end{abstract}

\author{Stephan Fackler}
\address{Institute of Applied Analysis, Ulm University, Helmholtzstr.\ 18, 89069 Ulm, Germany}
\email{stephan.fackler@alumni.uni-ulm.de}

\author{Tuomas P.\ Hytönen}
\address{Department of Mathematics and Statistics, P.O.B.\ 68 (Pietari Kalmin katu 5), FI-00014 University of Helsinki, Finland}
\email{tuomas.hytonen@helsinki.fi}

\author{Nick Lindemulder}
\address{Delft Institute of Applied Mathematics, Delft University of Technology, P.O. Box 5031, 2600 GA Delft, The Netherlands}
\email{N.Lindemulder@tudelft.nl}

\thanks{The first author was supported by the DFG grant AR 134/4-1 ``Regularität evolutionärer Probleme mittels Harmonischer Analyse und Operatortheorie''. Parts of the work were done during a research stay and a visit of the first author at the University of Helsinki and the Delft University of Technology. He thanks both institutes for their hospitality.
The third author is supported by the VIDI subsidy 639.032.427 of the Netherlands Organisation for Scientific Research (NWO). Parts of the work were done during a visit of the third author to the University of Helsinki. He thanks the institute for its hospitality.}
\keywords{Fourier multiplier, Fourier type, H\"ormander condition, Littlewood--Paley decomposition, Mikhlin condition, Muckenhoupt weights, operator-valued symbol, two-weight estimates, vector-valued, UMD}
\subjclass[2010]{Primary 42B15; Secondary 42B25, 42B20.}

\maketitle

\section{Introduction}
	
	The classical multiplier theorems by Marcinkiewicz, Hörmander and Mikhlin have been extended in at least two different fundamental directions. The first one replaces the Lebesgue measure by a class of weights on $\mathbb{R}^n$. For $p \in (1, \infty)$ we say that a locally integrable function $\omega\colon \IR^n \to \IR_{\ge 0}$ belongs to the \emph{Muckenhoupt class} $\mathcal{A}_p(\IR^n; \mathcal{C}_n)$ if
	\begin{equation*}
		[\omega]_{\mathcal{A}_p(\IR^n;\mathcal{C}_n)} \coloneqq \sup_{A \in \mathcal{C}_n} \left( \frac{1}{\abs{A}} \int_{A} \omega(x) \d x \right) \left( \frac{1}{\abs{A}} \int_{A} \omega(x)^{-\frac{1}{p-1}} \d x \right)^{p-1} < \infty.
	\end{equation*}
	Here $\mathcal{C}_n$ is either the collection $\mathcal{Q}_n$ or $\mathcal{R}_n$ of all cubes or rectangles in $\IR^n$ of positive and finite measure whose sides are parallel to the coordinate axes. Clearly, in the one dimensional case one has $\mathcal{A}_p(\IR; \mathcal{Q}_1) = \mathcal{A}_p(\IR; \mathcal{R}_1)$, whereas in the higher dimensional case the strict inclusion $\mathcal{A}_p(\IR^n; \mathcal{R}_n) \subsetneq \mathcal{A}_p(\IR^n; \mathcal{Q}_n)$ holds. The class $\mathcal{A}_p(\IR^n; \mathcal{Q}_n)$ goes back to~\cite{Muck72}, where it was used to give a real variable characterization of the weights $\omega$ for which the Hardy--Littlewood maximal operator remains bounded on the weighted space $L^p_{\omega}(\IR^n)$. Afterwards the classical Fourier multiplier theorems were extended by Kurtz~\cite{Kur80} and Kurtz/Wheeden~\cite{KurWhe79} to weights in $\mathcal{A}_p(\IR^n;\mathcal{R}_n)$. The crucial tool in the proofs is some variant of the Littlewood--Paley decomposition. Recently, sharp weighted estimates for the Littlewood-Paley square function and Marcinkiewicz multipliers were considered in \cite{Ler18} in the one-dimensional case.
Another recent work on weighted estimates for Fourier multipliers is \cite{Amenta&Lorist&Veraar2017_Fourier}, where various extensions of the Coifman–-Rubio de Francia–-
Semmes multiplier theorem to operator-valued multipliers on Banach function
spaces were obtained.

	Motivated from applications in non-linear partial differential equations, there has been much interest in Fourier multipliers on vector-valued $L^p$-spaces. It is understood that a reasonable Fourier multiplier theory is only possible on a certain class of complex Banach spaces, the class of UMD spaces. For such spaces a vector-valued analogue of the Littlewood--Paley decomposition was obtained by Bourgain~\cite{Bou86}. Based on this decomposition variants of the Mikhlin multiplier theorem for scalar multipliers were obtained by Zimmermann in~\cite{Zim89}.
	
	The case of operator-valued multipliers involves a concept called $\mathcal{R}$-boundedness (see Section~\ref{sec:r-boundedness} for details) and a corresponding Mikhlin multiplier theorem has been established in~\cite{Wei} in the one and in~\cite{StrWei07} in the higher dimensional case. Our goal is to unify these generalizations and to show multiplier theorems for operator-valued multipliers in the $\mathcal{A}_p$-setting. Here the crucial step is to generalize Bourgain's Littlewood--Paley decomposition to $\mathcal{A}_p$-weights.
	
	Further, we go beyond the one-weight setting and generalize the Mikhlin multiplier theorem to the two-weight setting, i.e.\ we consider the boundedness of multipliers between $L^p_{\sigma}(\IR^n;X)$ and $L^p_{\omega}(\IR^n;Y)$. Here one replaces the $\mathcal{A}_p$-condition with its two weight analogue
	\begin{equation*}
		[\omega,\sigma]_{\mathcal{A}_p(\IR^n; \mathcal{C}_n)} \coloneqq \sup_{A \in \mathcal{C}_n} \left( \frac{1}{\abs{A}} \int_A \omega(x) \d x \right) \left( \frac{1}{\abs{A}} \int_A \sigma(x)^{-\frac{1}{p-1}} \d x \right)^{p-1}.
	\end{equation*}
	Our two-weight multiplier results seem to be new even in the case of scalar-valued multipliers.

By its very own nature, the approach based on Littlewood--Paley theory only yields results for $\mathcal{A}_p$-weights with respect to rectangles. However, we also give multiplier results for weights with respect to cubes only. In contrast to the previous results the made assumptions are of Hörmander instead of Mikhlin type. In this way we obtain weighted multiplier results which generalize~\cite{KurWhe79} in three directions: first we deal with operator-valued multipliers, secondly we work in a two-weight setting and thirdly we obtain estimates with explicit dependencies on the weight characteristics. As an application we use our established multiplier theorems to recover directly some extrapolation results for maximal $L^p$-regularity of evolution equations.

In order to give an impression of the paper, let us now state three Fourier multiplier results we are able to prove.
For simplicity we restrict ourselves here to the one-weight setting. Besides this restriction, the three results below are special cases or simplified versions of more general and/or technical results from the main part of the paper.
	
The first result follows the Littlewood--Paley approach (Section~\ref{sec:LP}; Section~\ref{sec:LP1d} in the one-dimensional case) and is therefore restricted to the setting of rectangular $\mathcal{A}_p$-weights.
\begin{theorem}\label{thm:mikhlin_weighted;intro}
		Let $X$ be a UMD space, $p \in (1,\infty)$, and $\omega \in \mathcal{A}_p(\IR^n; \mathcal{R}_n)$.
Let $m \in L^{\infty}(\IR^{n};\mathcal{B}(X))$ be such that $\partial^{\alpha}m$ is continuous on $\IR^{n}_{*}=[\IR \setminus\{0\}]^{n}$ for each $|\alpha|_{\infty} \leq 1$.
Then
    			\begin{equation*}
					\norm{T_m}_{\mathcal{B}(L^{p}_{\omega}(\IR^{n};X))} \lesssim_{X,p,n,\omega} \sup_{|\alpha|_{\infty} \le 1} \mathcal{R}\left\{\abs{\xi}^{\abs{\alpha}} \partial^{\alpha} m(\xi): \xi \in \IR^{n}_{*}\right\}.
    			\end{equation*}
\end{theorem}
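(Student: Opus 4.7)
The plan is to follow the Bourgain--Zimmermann scheme of combining a Littlewood--Paley decomposition with the notion of $\mathcal{R}$-boundedness, but now in the weighted setting. The decisive step is to generalize Bourgain's vector-valued Littlewood--Paley theorem from Lebesgue measure to all $\omega \in \mathcal{A}_p(\IR^n;\mathcal{R}_n)$. Since the smoothness hypothesis involves mixed derivatives $\partial^{\alpha}$ with $|\alpha|_{\infty}\le 1$, the natural frequency atoms are dyadic rectangles, and this matches the rectangular Muckenhoupt class in the theorem.

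First I would establish the one-dimensional weighted analogue. For $X$ UMD, $p \in (1,\infty)$ and $\omega \in \mathcal{A}_p(\IR)$, let $\Delta_j$ be the frequency projection onto $\pm[2^{j},2^{j+1})$. Writing each $\Delta_j$ as a bounded linear combination of modulated Hilbert transforms and invoking the boundedness of the vector-valued Hilbert transform on $L^{p}_{\omega}(\IR;X)$ (weighted UMD), a randomization argument in the spirit of Bourgain gives the two-sided estimate
\begin{equation*}
\norm{f}_{L^{p}_{\omega}(\IR;X)} \sim \bignorm{(\Delta_{j}f)_{j \in \IZ}}_{L^{p}_{\omega}(\IR;\Rad(X))}.
\end{equation*}
The passage to $\IR^{n}$ with $\omega \in \mathcal{A}_{p}(\IR^{n};\mathcal{R}_{n})$ then proceeds by iteration: rectangular $\mathcal{A}_{p}$-weights are characterized by being $1$D $\mathcal{A}_{p}$ in each variable with uniform constant (Kurtz/Rubio de Francia), so applying the one-dimensional result coordinatewise to the factorization $L^{p}_{\omega}(\IR^{n};X) = L^{p}_{x_{1}}\bigl(L^{p}_{x_{2},\ldots,x_{n}}(X)\bigr)$ and tensorising the Rademacher systems yields the product decomposition indexed by dyadic rectangles $R = I_{j_{1}} \times \cdots \times I_{j_{n}}$.

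With this decomposition in hand, the multiplier assertion reduces to a comparatively routine transfer. Fix a smooth partition of unity $(\psi_{R})_{R}$ adapted to a mild dilate of the dyadic rectangles and decompose $T_{m}f = \sum_{R} T_{m\psi_{R}} \widetilde{\Delta}_{R} f$ with a slightly enlarged projection $\widetilde{\Delta}_{R}$. Sandwiching with the weighted Littlewood--Paley equivalence and applying the Kahane contraction principle, it suffices to show that $\{T_{m\psi_{R}}\}_{R}$ is $\mathcal{R}$-bounded on $L^{p}_{\omega}(\IR^{n};X)$ with constant controlled by the right-hand side of the theorem. Expanding each $m\psi_{R}$ in a Fourier series on a dilate of $R$, the coefficients are, up to harmless geometric factors, averages of $|\xi|^{|\alpha|}\partial^{\alpha}m(\xi)$ over $R$ with $|\alpha|_{\infty}\le 1$, and the hypothesis is exactly that the collection of such values is $\mathcal{R}$-bounded; integration by parts gives the rapid decay in the Fourier index needed to sum the series.

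The main obstacle is the weighted Bourgain-type Littlewood--Paley theorem itself. Even in the scalar one-dimensional case the argument must be executed with care: the weighted Hilbert transform lacks the convenient kernel decompositions available in the unweighted theory, and the UMD randomization must be carried through in a way that tracks the constant $[\omega]_{\mathcal{A}_{p}}$. The higher-dimensional iteration additionally requires a Fubini-type step that is well-behaved only for rectangular $\mathcal{A}_{p}$-weights, which explains why the theorem is stated for $\mathcal{A}_{p}(\IR^{n};\mathcal{R}_{n})$ rather than for $\mathcal{A}_{p}(\IR^{n};\mathcal{Q}_{n})$.
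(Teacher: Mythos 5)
Your high-level scheme (weighted Littlewood--Paley plus $\mathcal{R}$-boundedness transfer) is the same as the paper's, and the one-dimensional building block is essentially correct. But there is a genuine gap in your passage to $\IR^n$: when you ``tensorise the Rademacher systems'' to obtain the product decomposition indexed over the full family of dyadic rectangles $R=I_{j_1}\times\cdots\times I_{j_n}$ with a doubly-indexed $\mathcal{R}$-estimate, you are implicitly invoking Pisier's property $(\alpha)$. In its absence the product pre-decomposition need \emph{not} be unconditional on $L^{p}_{\omega}(\IR^{n};X)$; this is not a technicality but a real obstruction (the paper cites Lancien's counterexamples to make exactly this point). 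Theorem~\ref{thm:mikhlin_weighted;intro} is stated for arbitrary UMD $X$, without property $(\alpha)$, so your decomposition step fails as written. The paper avoids this by applying its abstract blocking result (Theorem~\ref{thm:unconditional_blockings_product_decompositions}, following Witvliet) to the commuting coordinate-wise decompositions of Lemma~\ref{lem:Littlewood-Paley_coordinate}, producing a \emph{one-parameter} unconditional decomposition $(\Delta[E])_{E\in\mathcal{E}_n}$ built from suitable ``dyadic corridors''; the full product decomposition $\mathcal{I}_n$ is only used in the case where $X$ and $Y$ in addition have property $(\alpha)$ (Theorem~\ref{thm:littlewood-paley}.\ref{item:thm:littlewood-paley;product}). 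Your sandwiching step would need to be adapted to this blocked family, and at that point the bookkeeping of the frequency supports of $m\psi_R$ and $\widetilde{\Delta}_R$ no longer matches a single index family.

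As a lesser remark, the transfer step you propose (expanding $m\psi_R$ in a Fourier series on a dilate of $R$, controlling the coefficients by $\mathcal{R}$-boundedness of $|\xi|^{|\alpha|}\partial^\alpha m$, and summing by decay) is a legitimate alternative to the paper's device of ``uniformly $\mathcal{R}$-bounded variation'' (Definition~\ref{def:uniform_R-bdd_var}), which instead represents $m\mathds{1}_{E}$ via the fundamental theorem of calculus as an absolutely convergent integral against the cut-offs $\Delta([\eta,\infty))$. Both realize the same idea of writing each localized multiplier as an integral or series of rank-one-type compositions with $\mathcal{R}$-bounded coefficients and uniformly $\mathcal{R}$-bounded spectral projections, but the paper's formulation is cleaner to combine with its abstract Theorem~\ref{thm:generic_fm_thm}. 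Still, fixing the decomposition step so that it does not require property $(\alpha)$ is the essential missing ingredient.
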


The above theorem is a special case of Theorem~\ref{thm:mikhlin_weighted}.\ref{item:thm:mikhlin_weighted;UMD}. Part \ref{item:thm:mikhlin_weighted;UMDalpha} of that theorem is concerned with the case that $X$ in addition satisfies Pisier's property $(\alpha)$ and provides $\mathcal{R}$-boundedness of a set of Fourier multipliers. We furthermore obtain a version for anisotropic symbols on mixed-norm spaces (see Theorem~\ref{thm:mikhlin_weighted_mixed-norm_anisotropic}),
extending \cite[Theorem~3.2]{Hyt07b} (see also~\cite[Section~7]{Hyt07c}) to the weighted setting.

As an application we use Theorem~\ref{thm:mikhlin_weighted;intro} to recover directly some extrapolation results for maximal $L^p$-regularity of evolution equations (Section~\ref{sec:application_max-Lp-reg}).

We also give multiplier results for $\mathcal{A}_{p}$-weights with respect to cubes (Section~\ref{sec:cubular}), where we pass from the multiplier perspective to the perspective of singular integral operators.

\begin{theorem}\label{thm:mikhlin_weighted_non-sharp_dep_weight;intro}
		Let $X$ be a UMD space, $p \in (1,\infty)$, and $\omega \in \mathcal{A}_p(\IR^n; \mathcal{Q}_n)$.
Let $m \in L^{\infty}(\IR^{n};\mathcal{B}(X))$ be such that $\partial^{\alpha}m$ is continuous on $\IR^{n} \setminus \{0\}$ for each $|\alpha|_{1} \leq n$.
Then
    			\begin{equation*}
					\norm{T_m}_{\mathcal{B}(L^{p}_{\omega}(\IR^{n};X))} \lesssim_{X,p,n,\omega} \sup_{|\alpha|_{1} \le n} \mathcal{R}\left\{\abs{\xi}^{\abs{\alpha}} \partial^{\alpha} m(\xi): \xi \in \IR^{n}\setminus\{0\}\right\}.
    			\end{equation*}
\end{theorem}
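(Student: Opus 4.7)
The plan is to follow a Calder\'on--Zygmund/singular-integral route: cast $T_m$ as an operator-valued singular integral operator whose convolution kernel $K$ satisfies an $\mathcal{R}$-bounded H\"ormander-type regularity condition, and then invoke the weighted operator-valued Calder\'on--Zygmund theory for $\mathcal{A}_p(\IR^n;\mathcal{Q}_n)$-weights. As a preliminary observation, the index set $\{\alpha : |\alpha|_\infty \leq 1\}$ is contained in $\{\alpha : |\alpha|_1 \leq n\}$ and $\IR^n_\ast \subset \IR^n\setminus\{0\}$, so the present hypothesis implies the one of Theorem~\ref{thm:mikhlin_weighted;intro} applied to the same $m$. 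In particular, the unweighted specialization of that theorem already delivers $L^p(\IR^n;X)$-boundedness of $T_m$, which will serve as the starting point of the Calder\'on--Zygmund extension.

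To obtain the kernel estimates, I would take a smooth dyadic Littlewood--Paley partition $1 = \sum_{j \in \IZ}\phi_j$ on $\IR^n\setminus\{0\}$ with $\supp \phi_j \subset \{2^{j-1} \leq |\xi| \leq 2^{j+1}\}$, set $m_j := \phi_j m$, and let $K_j := \mathcal{F}^{-1}m_j$. Exploiting the Fourier identity $x^\beta K_j(x) = (-i)^{|\beta|}\mathcal{F}^{-1}(\partial^\beta m_j)(x)$ together with Leibniz's rule, the annular support of $\phi_j$, and the hypothesized $\mathcal{R}$-boundedness of $\{|\xi|^{|\beta|}\partial^\beta m(\xi) : \xi \neq 0\}$ for all $|\beta|_1 \leq n$, I would derive $\mathcal{R}$-bounded pointwise decay and gradient estimates of the form
\begin{equation*}
\mathcal{R}\bigl\{ 2^{-jn}(1 + 2^j|x|)^n K_j(x) : j \in \IZ\bigr\} + \mathcal{R}\bigl\{ 2^{-j(n+1)}(1 + 2^j|x|)^n \nabla K_j(x) : j \in \IZ\bigr\} \lesssim M,
\end{equation*}
uniformly in $x \in \IR^n\setminus\{0\}$, where $M$ denotes the right-hand side of the asserted inequality. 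Summing these dyadic bounds would then upgrade to an $\mathcal{R}$-bounded H\"ormander condition of the standard form
\begin{equation*}
\sup_{y \neq 0} \int_{|x| > 2|y|} \|K(x-y) - K(x)\|_{\mathcal{B}(X)}\d x \lesssim M
\end{equation*}
for the full kernel $K = \sum_j K_j$, with the operator norm upgraded to an $\mathcal{R}$-bound over scales.

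With the unweighted $L^p$-boundedness in hand and an $\mathcal{R}$-bounded H\"ormander kernel condition established, invoking the operator-valued weighted Calder\'on--Zygmund theorem for $\mathcal{A}_p(\IR^n;\mathcal{Q}_n)$-weights (developed in Section~\ref{sec:cubular}) would complete the proof, with the implicit dependence on $[\omega]_{\mathcal{A}_p(\IR^n;\mathcal{Q}_n)}$ furnished by that theorem. The principal obstacle will be the kernel step: the integration-by-parts and dyadic summation have to be organized so that the $\mathcal{R}$-bounds on the symbol families are preserved at every stage and then transferred to genuinely $\mathcal{R}$-bounded kernel estimates across all scales, since replacing an $\mathcal{R}$-bound by a plain operator-norm bound at any point would lose exactly the randomized uniform control that the UMD-valued Calder\'on--Zygmund theorem demands.
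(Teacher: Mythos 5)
Your strategy — unweighted boundedness plus an operator-valued H\"ormander kernel estimate plus weighted extrapolation — is the right general shape, and your preliminary observation that the stated Mikhlin condition implies unweighted $L^q$-boundedness is fine. The problem is in the kernel step, and it is quantitative: with only $n$ derivatives ($|\alpha|_1 \leq n$), the dyadic pieces $K_j = \mathcal{F}^{-1}(\phi_j m)$ only satisfy decay of order exactly $n$, that is $\|K_j(x)\| \lesssim M\, 2^{jn}(1+2^j|x|)^{-n}$ and $\|\nabla K_j(x)\| \lesssim M\,2^{j(n+1)}(1+2^j|x|)^{-n}$, and these do not integrate. For instance, for $2^j|y| \gtrsim 1$ the natural estimate for $\int_{|x|>2|y|}\|K_j(x)\|\,dx$ reduces, after rescaling, to $\int_{|u|>2^j|y|}(1+|u|)^{-n}\,du$, which diverges; and the gradient route for $2^j|y| \lesssim 1$ runs into exactly the same divergent tail $\int(1+|u|)^{-n}\,du = \infty$. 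In short, the pure $L^\infty$/$\mathcal{R}$-bound method is the ``Fourier type $1$'' argument, for which one needs strictly more than $n$ derivatives; with just $n$ you cannot even conclude $K_j \in L^1$. This is why you cannot obtain a finite H\"ormander integral $\int_{|x|>2|y|}\|K(x-y)-K(x)\|\,dx$ this way.

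The paper avoids this by exploiting that every UMD space has non-trivial Fourier type $t > 1$. Lemma~\ref{lem:kernel_estimate} converts an $L^s$-averaged H\"ormander condition on $\partial^\alpha m$ (tested against vectors/functionals, not full operator norms) into a pointwise-in-$R$, $L^p$-averaged kernel estimate via vector-valued Hausdorff--Young, and the budget works because one only needs $l > n/t$ derivatives and decay order $d \in (n/t, n/t+1)$, both $\le n$ once $t > 1$. That decay order produces a convergent dyadic sum. Note also two further discrepancies in your last step: (i) the paper uses sparse domination (Theorem~\ref{thm:lerner}, Lemma~\ref{lem:domination}, Theorem~\ref{thm:sharp_estimate_sparse}), not the classical Calder\'on--Zygmund extrapolation Theorem~\ref{thm:extrapolation}, since the latter requires pointwise H\"older-type kernel estimates not available at smoothness $n$; and (ii) retaining $\mathcal{R}$-bounds at the kernel level is unnecessary — the weighted part is entirely a scalar sparse-operator estimate applied to $\|f\|$, so the $\mathcal{R}$-boundedness assumption on $\partial^\alpha m$ is used only to get the unweighted boundedness and the plain $L^\infty$/$L^s$ input for the kernel lemma. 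So the key missing idea in your proposal is the use of non-trivial Fourier type to improve the dyadic kernel decay from $L^\infty$ to $L^{t'}$-averaged, without which the scheme cannot close at only $n$ derivatives.
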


The above stated theorem actually is a consequence of Corollary~\ref{cor:estimates_mutliplier;GirWei03} and the fact that $\mathcal{A}_{p} = \bigcup_{q \in (1,p)}\mathcal{A}_{q}$ (see Remark~\ref{rmk:cor:estimates_mutliplier;GirWei03}).
The assumption on the weight $\omega$ in Corollary~\ref{cor:estimates_mutliplier;GirWei03} is $\omega \in \mathcal{A}_{p/r}$ or $\omega^{-\frac{1}{p-1}} \in \mathcal{A}_{p'/r}$ for suitable $r \in (1,p)$, with estimates explicitly depending on the weight characteristics.

Moreover, Corollary~\ref{cor:estimates_mutliplier;GirWei03} is in turn a consequence of our estimates for Fourier symbols satisfying conditions of H\"ormander type instead of stronger conditions of Mikhlin type.
These estimates take a geometric property of the Banach space $X$, namely Fourier type, into account, linking the required smoothness of the symbol $m$ to the geometry of $X$. Passing to the stronger H\"ormander conditions this gives a weighted extension of \cite[Corollary~4.4]{GirWei03} (see Corollary~\ref{cor:estimates_mutliplier;GirWei03}).

\section{Preliminaries}

\subsection{The Basic Setting}\label{sec:r-boundedness}	
	
	 We now give exact definitions and fix the setting. A general reference, in which more details on these topics can be found, is \cite{HNVW16}. In the following let $X,Y$ be Banach spaces which are always assumed to be complex. We denote by $\mathcal{S}(\IR^n;X)$ the space of all $X$-valued Schwartz functions. Further let $\mathcal{S}'(\IR^n;X)$ be the associated space of distributions, i.e.\ the space of all continuous linear mappings $\phi\colon \mathcal{S}(\IR^n) \to X$. %
	 For weights $\omega$ and $\sigma$, i.e.\ measurable functions $\IR^n \to [0,\infty]$ that take their values in $(0,\infty)$ almost everywhere, let
	 \begin{align*}
	 	L^p_{\omega}(\IR^n;X) \coloneqq \left\{ f \colon \IR^n \to X \text{ Bochner measurable}: \int_{\IR^n} \norm{f(x)}_X^p \omega(x) \d x < \infty \right\}.
	 \end{align*}
	 Here we identify functions that agree almost everywhere.

Let $\mathcal{C}_{n}$ be either the collection $\mathcal{Q}_{n}$ or $\mathcal{R}_{n}$ of all cubes or rectangles, respectively, in $\IR^{n}$ of positive and finite measure with sides parallel to the coordinate axex.
Let $M_{\mathcal{C}_{n}}$ denote the associated Hardy--Littlewood maximal function operator. For a weight $\omega$ on $\IR^{n}$ and a Borel set $A \subset \IR^{n}$, we write
\[
\omega(A) = \int_{A}\omega(x)\,dx \in [0,\infty].
\]
The $p$-dual weight of $\omega$ is the weight $\omega'_{p} := \omega^{-\frac{1}{p-1}}$, where $p \in (1,\infty)$.
We define the $\mathcal{A}_{p}$-characteristics
\[
[\omega,\sigma]_{\mathcal{A}_{p}(\IR^{n};\mathcal{C}_{n})} := \sup_{A \in \mathcal{C}_{n}}\frac{\omega(A)}{|A|}\left(\frac{\sigma'_{p}(A)}{|A|}\right)^{p-1}, \qquad p \in (1,\infty),
\]
\[
[\omega]_{\mathcal{A}_{p}(\IR^{n};\mathcal{C}_{n})} := [\omega,\omega]_{\mathcal{A}_{p}(\IR^{n};\mathcal{C}_{n})} = \sup_{A \in \mathcal{C}_{n}}\frac{\omega(A)}{|A|}\left(\frac{\sigma'_{p}(A)}{|A|}\right)^{p-1}, \qquad p \in (1,\infty),
\]
and
\[
[\omega]_{\mathcal{A}_{\infty}(\IR^{n};\mathcal{C}_{n})} := \sup_{A \in \mathcal{C}_{n}} \frac{1}{\omega(A)}\int_{A}M_{\mathcal{C}_{n}}(\omega \mathds{1}_{A})\,dx.
\]
For $p \in (1,\infty]$ the Muckenhoupt class $\mathcal{A}_{p}(\IR^{n};\mathcal{C}_{n})$ is defined as the set of all weights $\omega$ on $\IR^{n}$ with $[\omega]_{\mathcal{A}_{p}(\IR^{n};\mathcal{C}_{n})} < \infty$.
For $p \in (1,\infty)$ it holds that $\omega \in \mathcal{A}_{p}(\IR^{n};\mathcal{C}_{n})$ if and only if $\omega'_{p} \in \mathcal{A}_{p'}(\IR^{n};\mathcal{C}_{n})$, in which case $[\omega]_{\mathcal{A}_{p}(\IR^{n};\mathcal{C}_{n})} = [\omega'_{p}]_{\mathcal{A}_{p'}(\IR^{n};\mathcal{C}_{n})}^{p-1}$.
For $1 < p_{0} \leq p_{1} \leq \infty$ it holds that $\mathcal{A}_{p_{1}}(\IR^{n};\mathcal{C}_{n}) \subset \mathcal{A}_{p_{0}}(\IR^{n};\mathcal{C}_{n})$ with $1 \leq [\omega]_{\mathcal{A}_{p_{1}}(\IR^{n};\mathcal{C}_{n})} \leq [\omega]_{\mathcal{A}_{p_{0}}(\IR^{n};\mathcal{C}_{n})}$.

If $\omega$ is an $\mathcal{A}_p$-weight,
\[
\mathcal{S}(\IR^n;X) \stackrel{d}{\hookrightarrow} L^p_{\omega}(\IR^n;X) \hookrightarrow \mathcal{S}'(\IR^{n};X).
\]
One therefore may ask under which conditions on a function $m \in L^{\infty}(\IR^{n};\mathcal{B}(X,Y))$ the operator
		\begin{equation}\label{eq:t_m_basic_setting}
			T_m\colon \mathcal{S}(\IR^n;X) \ni f \mapsto \mathcal{F}^{-1}(\xi \mapsto m(\xi)(\mathcal{F}f)(\xi)) \in \mathcal{S}'(\IR^n;Y)
		\end{equation}
	induces a bounded operator $L^p_{\sigma}(\IR^n;X) \to L^p_{\omega}(\IR^n;Y)$. In this case we say that $m$ is a \emph{bounded multiplier}. We denote by $\mathcal{M}_p^n((X,\sigma) \to (Y,\omega))$ the space of all such bounded multipliers and write $\mathcal{M}_p^n(X,\omega)$ if both $X$ and $Y$ and $\sigma$ and $\omega$ agree. Its norm is given by the operator norm of the Fourier multiplier operator.

For a Borel measurable set $A \subset \IR^{n}$ we use the following special notation for the Fourier multiplier with as symbol the associated indicator function $\mathds{1}_{A}$: $\Delta(A) := T_{\mathbf{1}_{A}}$.

The pairing
\[
L^{p}_{\omega}(\IR^{n};X) \times L^{p'}_{\omega'_{p} }(\IR^{n};X^{*}) \longrightarrow \IC,\: \int_{\IR^{n}}(f,g) \mapsto \langle f , g \rangle \,d\lambda,
\]
is norming.
Under this pairing one has $[L^{p}_{\omega}(\IR^{n};X)]^{*} = L^{p'}_{\omega'_{p}}(\IR^{n};X^{*})$ when $X$ is e.g.\ reflexive.

If $\omega,\sigma \in \mathcal{A}_{p}$, then $\omega'_{p},\sigma'_{p} \in \mathcal{A}_{p'}$ and
\[
\mathcal{M}_p^n\left((X,\sigma) \to (Y,\omega)\right) \longrightarrow \mathcal{M}_{p'}^n\left((Y^{*},\sigma'_{p}) \to (X^{*},\omega'_{p} )\right),\: m \mapsto \widetilde{m}^{*},
\]
defines an isometric isomorphism, where $\widetilde{m}^{*}(\xi) = [m(-\xi)]^{*}$ and $T_{\widetilde{m}^{*}}$ is obtained from $T_{m^{*}}$ by restriction.

Reasonable multiplier theorems cannot be obtained on arbitrary Banach spaces as even the most basic multiplier, namely the vector-valued Hilbert transform
		\begin{equation}\label{eq:hilbert_transform}
			(Hf)(x) \coloneqq \lim_{\epsilon \downarrow 0} \int_{\abs{x-t} \ge \epsilon} \frac{f(t)}{x - t} \d t ,
		\end{equation}
	does not give rise to a bounded operator $L^2(\IR;X) \to L^2(\IR;X)$ for arbitrary Banach spaces $X$. This leads to the following definition.
	
	\begin{definition}\label{def:hilbert_transform}
		A Banach space $X$ is said to be of class \emph{HT} if the vector-valued Hilbert transform~\eqref{eq:hilbert_transform} initially defined on $\mathcal{S}(\IR;X)$ induces a bounded operator $L^p(\IR;X) \to L^p(\IR;X)$ for one or equivalently (by Hörmander's condition) all $p \in (1, \infty)$.
	\end{definition}

Recall that the Hilbert transform can be realized as the Fourier multiplier operator with symbol $\imath \sgn$.
A a consequence, $X$ is of class \emph{HT} if and only if $\Delta(\IR) \in \mathcal{B}(L^{p}(\IR;X))$ (i.e.\ $\mathds{1}_{\IR_{+}} \in \mathcal{M}^{1}_{p}(X)$) for some/all $p \in (1,\infty)$. 	

A deep result due to Burkholder and Bourgain (\cite[Theorem~5.1.1]{HNVW16}) says that a Banach space $X$ is of class HT if and only if $X$ is a \emph{UMD space}. UMD is a primarily probabilistic notion and stands for unconditionality of martingale differences (\cite[Definition~4.2.1]{HNVW16}).

	For example, all reflexive $L^p$-spaces are UMD spaces (\cite[Proposition~4.2.15]{HNVW16}). One can show that on UMD spaces the Mikhlin multiplier theorem holds for scalar-valued multipliers, see for example~\cite[Theorem~5.5.10]{HNVW16}. For operator-valued multipliers norm boundedness must be replaced by $\mathcal{R}$-boundedness.

A Rademacher sequence is a sequence of independent random variables $(\epsilon)_{k \in \IN}$ on some probability space $(\Omega,\mathcal{F},\IP)$ with $\IP(\epsilon_k = \pm 1) = \frac{1}{2}$ for all $k \in \IN$. In the following we fix a Rademacher sequence $(\epsilon)_{k \in \IN}$.
	
	\begin{definition}
		A subset $\mathcal{T} \subset \mathcal{B}(X,Y)$ is called \emph{$\mathcal{R}$-bounded} if there exists a constant $C \ge 0$ such that for all $n \in \IN$, $T_1, \ldots, T_n \in \mathcal{T}$, $x_1, \ldots, x_n \in X$ one has
			\[
				\biggnorm{\sum_{k=1}^n \epsilon_k T_k x_k}_{L^2(\Omega;Y)} \le C \biggnorm{\sum_{k=1}^n \epsilon_k x_k}_{L^2(\Omega;X)}.
			\]
		The smallest constant for which the above inequality holds is denoted by $\mathcal{R}(\mathcal{T})$.
	\end{definition}
	
	For the basic permanence properties of $\mathcal{R}$-boundedness under sums, compositions and unions, which will be used in the following, we refer to~\cite[Section~8.1]{HNVW17}. We are now ready to formulate the Mikhlin theorem for operator-valued Fourier multipliers in the unweighted case (\cite[Theorem~5.5.10]{HNVW16}).
	
	\begin{theorem}\label{thm:mikhlin}
		Let $X$ and $Y$ be UMD spaces and $m \in C^n(\IR^n \setminus \{0\}; \mathcal{B}(X,Y))$. Suppose that
			\[
				\sup_{\abs{\alpha}_{\infty} \le 1} \mathcal{R} \{\abs{\xi}^{\abs{\alpha}} \partial^{\alpha} m(\xi): \xi \in \IR^n \setminus \{ 0 \} \} < \infty.
			\]
		Then $m$ is a bounded Fourier multiplier, i.e.\ $m \in \mathcal{M}^n_p((X,\mathds{1}) \to (Y,\mathds{1}))$, for all $p \in (1, \infty)$. More precisely, there exists a constant $C > 0$ only depending on $n$, $p$, $X$ and $Y$ such that
			\[
				\norm{T_m} \le C \sup_{\abs{\alpha}_{\infty} \le 1} \mathcal{R} \{\abs{\xi}^{\abs{\alpha}} \partial^{\alpha} m(\xi): \xi \in \IR^n \setminus \{ 0 \} \}.
			\]
	\end{theorem}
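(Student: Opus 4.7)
The plan is to combine Bourgain's operator-valued Littlewood--Paley decomposition on UMD spaces with an $\mathcal{R}$-boundedness analysis of $m$ on dyadic rectangles. First I would partition $\IR^{n}\setminus\{0\}$ into the axis-parallel dyadic rectangles $R_{k}=\prod_{i=1}^{n}\eta_{i}[2^{k_{i}},2^{k_{i}+1})$ with $k\in\IZ^{n}$ and $\eta_{i}\in\{-1,+1\}$, and enumerate them by a countable index set. By iterating the one-dimensional Bourgain projection theorem along each coordinate---which is legitimate because $L^{p}$-spaces over UMD spaces are again UMD---the sharp dyadic projections $(\Delta(R_{k}))_{k}$ form an unconditional Schauder decomposition of $L^{p}(\IR^{n};Y)$ for every $p\in(1,\infty)$; equivalently, $\bignorm{\sum_{k}\epsilon_{k}\Delta(R_{k})f}_{L^{p}(\IR^{n};Y)}\lesssim_{p,n,Y}\norm{f}_{L^{p}(\IR^{n};Y)}$ uniformly in $(\epsilon_{k})\in\{\pm 1\}^{\IN}$.

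Next I would split $T_{m}=\sum_{k}T_{m\Ind_{R_{k}}}$ and treat each piece by expanding $m|_{R_{k}}$ in a Fourier series on a slightly enlarged reference cube after rescaling $R_{k}$ to a fixed cube. Under this rescaling, the hypothesis $M:=\sup_{\abs{\alpha}_{\infty}\le 1}\mathcal{R}\{\abs{\xi}^{\abs{\alpha}}\partial^{\alpha}m(\xi):\xi\in\IR^{n}\setminus\{0\}\}$ turns into a uniform-in-$k$ $\mathcal{R}$-bound for the rescaled mixed derivatives of $m|_{R_{k}}$ of order at most one in each coordinate; repeated integration by parts yields an expansion $m(\xi)\Ind_{R_{k}}(\xi)=\sum_{j\in\IZ^{n}}c^{(k)}_{j}\,\mathrm{e}^{2\pi\mathrm{i} j\cdot\xi/\ell(R_{k})}\Ind_{R_{k}}(\xi)$ with coefficients satisfying $\mathcal{R}\bigl\{\prod_{i=1}^{n}(1+\abs{j_{i}})\,c^{(k)}_{j}:k\bigr\}\lesssim_{n}M$ for each $j$. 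Each elementary factor $\mathrm{e}^{2\pi\mathrm{i} j\cdot\xi/\ell(R_{k})}\Ind_{R_{k}}$ acts on $L^{p}(\IR^{n};Y)$ as $\Delta(R_{k})$ composed with a physical-side translation, so by the unconditionality from the first step together with a polylogarithmic control $\lesssim \log(2+\abs{j})^{n}$ on the translation cost, the family $\bigl\{\sum_{k}T_{\mathrm{e}^{2\pi\mathrm{i} j\cdot\xi/\ell(R_{k})}\Ind_{R_{k}}}:j\in\IZ^{n}\bigr\}$ is controlled by this polylogarithm. Kahane's contraction principle then assembles the $\mathcal{R}$-bounded coefficients against the randomized Littlewood--Paley decomposition, and the resulting double sum over $k$ and $j$ converges in $\mathcal{B}(L^{p}(\IR^{n};X),L^{p}(\IR^{n};Y))$ to an operator of norm $\lesssim_{p,n,X,Y} M$.

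The main obstacle I expect is strengthening Bourgain's one-dimensional dyadic projection theorem to rectangles in $\IR^{n}$ in the randomized (unconditional) form needed to interact with the $\mathcal{R}$-boundedness hypothesis through Kahane's principle; the coordinate-wise iteration must be performed on spaces that remain UMD at every intermediate step, and one has to keep track of $\mathcal{R}$-bounds rather than mere operator norms. A secondary, largely bookkeeping difficulty is the summability of the $j$-series once the polylogarithmic cost of the modulations is absorbed, which is precisely why the $\abs{\alpha}_{\infty}\le 1$ mixed-smoothness Mikhlin condition---yielding the product decay $\prod_{i}(1+\abs{j_{i}})^{-1}$ of the Fourier coefficients---is used instead of an isotropic $\abs{\alpha}_{1}\le n$ assumption.
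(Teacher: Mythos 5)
The paper does not actually prove Theorem~\ref{thm:mikhlin}; it states it with a reference to \cite[Theorem~5.5.10]{HNVW16}, so I assess your sketch on its own terms. There is a genuine gap in the opening step. Iterating Bourgain's one-dimensional dyadic theorem coordinate-by-coordinate does show, for each $j\in\{1,\dots,n\}$, that the one-dimensional dyadic frequency projections acting in the $j$-th variable form an unconditional decomposition of $L^p(\IR^n;Y)$. However, the resulting family $\{\Delta(R_k)\}_{k}$ over all $n$-fold dyadic rectangles is only a \emph{product pre-decomposition} of $n$ commuting unconditional decompositions, and for a general UMD space $Y$ this product need \emph{not} be unconditional: that requires Pisier's property $(\alpha)$, and by \cite{Lancien1998} property $(\alpha)$ is in fact necessary here. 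Since there are UMD spaces without property $(\alpha)$ (e.g.\ Schatten classes $\mathcal{C}^p$ with $p\ne 2$), the Fourier-series argument you build on top of this claimed unconditionality does not apply to general $X,Y$ in the statement.

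This is precisely the dichotomy the paper must navigate in Section~\ref{sec:rect_Ap-weights}: with property $(\alpha)$ one may use the full rectangle family $\mathcal{I}_n$ via Theorem~\ref{thm:unconditional_product_decompositions}, but in the general UMD case one must pass to the blocked family $\mathcal{E}_n$ via Theorem~\ref{thm:unconditional_blockings_product_decompositions} (used in Theorem~\ref{thm:littlewood-paley}.\ref{item:thm:littlewood-paley;blocking}). To repair your argument you would need to carry out the rescaling and Fourier-series expansion of $m$ over the $\mathcal{E}_n$-blocks rather than over dyadic rectangles, so that the unconditional decomposition you actually possess is the one you invoke, or else bypass the Littlewood--Paley route and prove the theorem via a direct Calder\'on--Zygmund kernel estimate as done in \cite{HNVW16}. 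A secondary but less serious issue is that your appeal to a polylogarithmic translation cost is stated without an argument; the usual way this is handled goes through $\mathcal{R}$-boundedness of suitably overlapping frequency cutoffs, and one should verify that the $j$-sum still converges after that penalty is accounted for.
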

				
	The case of scalar-valued multipliers is contained in the above result. Indeed, by Kahane's contraction principle (\cite[Proposition~3.2.10]{HNVW17}) the set $\{ c \Id: \abs{c} \le 1 \}$ is $\mathcal{R}$-bounded in every Banach space.

\subsection{Extrapolation of Calderón--Zygmund operators}\label{sec:extrapolation_CZ}

	We will obtain a smooth variant of the Littlewood--Paley estimate as a consequence of extrapolation results for Calderón--Zygmund operators. In this section we present the necessary background in a smooth setting sufficient for our needs. For Banach spaces $X$ and $Y$, a Bochner measurable function $K : \IR^{n} \setminus \{0\} \to \mathcal{B}(X,Y)$ is called a \emph{Calderón--Zygmund kernel (of convolution type)} if, for some constant $C>0$,
\begin{enumerate}
\item it obeys the decay estimate
\begin{align*}
\norm{K(x)}_{\mathcal{B}(X,Y)} \le C \abs{x}^{-n}, \qquad x \neq 0,
\end{align*}
\item and it obeys the H\"older type estimate
\[
\norm{K(x-y)-K(x)}_{\mathcal{B}(X,Y)} \le C \abs{y}^{\alpha}\abs{x}^{-n-\alpha}, \qquad 0 < \abs{y} < \frac{1}{2}\abs{x-y}
\]
for some H\"older exponent $\alpha \in (0,1]$.
\end{enumerate}

		A bounded operator $T\colon L^p(\IR^n;X) \to L^p(\IR^n;Y)$ is called a \emph{Calderón--Zygmund operator} if there exists a Calderón--Zygmund kernel $K$ such that for all $f \in C^{\infty}_c(\IR^n;X)$ and almost all $x \not\in \supp f$ one has the representation
	\[
		(Tf)(x) = \int_{\IR^n} K(x - y) f(y) \d y.
	\]	
	We use the following extrapolation result for Calderón--Zygmund operators, which is a reformulation of \cite[Corollary~3.3]{HanHyt14b} (with $\sigma^{-\frac{1}{p-1}}$ playing the role of $\sigma$ there).

	\begin{theorem}\label{thm:extrapolation}
		Let $X$ be a Banach space, $T$ a Calderón--Zygmund operator on $L^p(\IR^n; X)$ and $\sigma$, $\omega$ such that $\omega, \sigma^{-\frac{1}{p-1}} \in \mathcal{A}_\infty(\IR^n;\mathcal{Q}_n)$ and $[\omega,\sigma]_{\mathcal{A}_p(\IR^n;\mathcal{Q}_n)} < \infty$. Then $T$ induces a bounded operator $L^p_{\sigma}(\IR^n;X) \to L^p_{\omega}(\IR^n;X)$ with
			\begin{equation*}
				\norm{T}_{L^p_{\sigma} \to L^p_{\omega}} \lesssim [\omega, \sigma]_{\mathcal{A}_p(\IR^n; \mathcal{Q}_n)}^{1/p} \left([\omega]_{\mathcal{A}_\infty(\IR^n;\mathcal{Q}_n)}^{1-\frac{1}{p}} + [\sigma^{-\frac{1}{p-1}}]_{\mathcal{A}_\infty(\IR^n;\mathcal{Q}_n)}^{\frac{1}{p}}\right).
			\end{equation*}
		The implicit constant only depends on $\norm{T}_{L^{p} \to L^{p}}$, $p$, the dimension $n$ and the constant $C$ in the definition of a Calderón--Zygmund kernel. In particular, if $\omega = \sigma$, then
			\begin{equation*}
				\norm{T}_{L^p_{\omega} \to L^p_{\omega}} \lesssim [\omega]_{\mathcal{A}_p(\IR^n;\mathcal{Q}_n)}^{\max \{ 1, \frac{1}{p-1} \}}.
			\end{equation*}
	\end{theorem}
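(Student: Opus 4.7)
My plan is to treat this theorem as a sparse-domination result. Since the statement is a reformulation of \cite[Corollary~3.3]{HanHyt14b} (with $\sigma^{-\frac{1}{p-1}}$ playing the role of their $\sigma$, as the paper notes), the first step is a bookkeeping one: verifying that $[\omega,\sigma]_{\mathcal{A}_{p}(\IR^n;\mathcal{Q}_n)}$ as defined here matches the Hänninen--Hytönen two-weight constant under that substitution, and that $[\sigma^{-\frac{1}{p-1}}]_{\mathcal{A}_{\infty}}$ arises as the $\mathcal{A}_\infty$-constant on their ``$\sigma$''-side.

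The analytic heart is a pointwise sparse domination of Calderón--Zygmund operators. Given $T$ as above, for every $f \in C^{\infty}_{c}(\IR^{n};X)$ one produces (working in some adjacent system of dyadic grids) a sparse collection $\mathcal{S}$ of cubes such that
\[
\norm{(Tf)(x)}_{Y} \lesssim \sum_{Q \in \mathcal{S}} \langle \norm{f}_{X}\rangle_{Q}\, \mathds{1}_{Q}(x) \qquad \text{a.e.},
\]
with implicit constant depending only on $n$, $\norm{T}_{L^{p}\to L^{p}}$, and the Calderón--Zygmund kernel constant $C$. The Hölder exponent $\alpha$ enters quantitatively only through this step, via the second kernel estimate. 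This reduces the vector-valued two-weight bound for $T$ to the scalar-valued two-weight bound for the sparse operator applied to $\norm{f}_{X}$, since $\norm{f}_{L^{p}_{\sigma}(\IR^{n};X)} = \norm{\,\norm{f}_{X}\,}_{L^{p}_{\sigma}}$.

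The two-weight sparse bound takes the form
\[
\bignorm{\sum_{Q \in \mathcal{S}}\langle g\rangle_{Q}\, \mathds{1}_{Q}}_{L^{p}_{\omega}} \lesssim [\omega,\sigma]_{\mathcal{A}_{p}}^{1/p}\bigl([\omega]_{\mathcal{A}_{\infty}}^{1-1/p} + [\sigma^{-\frac{1}{p-1}}]_{\mathcal{A}_{\infty}}^{1/p}\bigr)\,\norm{g}_{L^{p}_{\sigma}}.
\]
It is established by dualizing against $h \in L^{p'}_{\omega}$, writing $g = F\,\sigma^{-\frac{1}{p-1}}$ with $F \in L^{p}_{\sigma}$, passing to principal cubes, and invoking Carleson embedding theorems whose Carleson constants are controlled by $[\omega]_{\mathcal{A}_{\infty}}$ and $[\sigma^{-\frac{1}{p-1}}]_{\mathcal{A}_{\infty}}$, respectively. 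The two summands correspond to the two symmetric ways of distributing the Carleson argument between the two sides of the pairing. Inserting $g = \norm{f}_{X}$ yields the first displayed inequality. The one-weight corollary then follows from $[\omega]_{\mathcal{A}_{\infty}} \leq [\omega]_{\mathcal{A}_{p}}$ combined with the duality identity $[\omega^{-\frac{1}{p-1}}]_{\mathcal{A}_{p'}} = [\omega]_{\mathcal{A}_{p}}^{1/(p-1)}$ and the trivial bound $[\omega^{-\frac{1}{p-1}}]_{\mathcal{A}_{\infty}} \leq [\omega^{-\frac{1}{p-1}}]_{\mathcal{A}_{p'}}$.

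The main obstacle is producing the \emph{mixed} $\mathcal{A}_{p}$--$\mathcal{A}_{\infty}$ dependence rather than a cruder pure $\mathcal{A}_{p}$ bound. This rests on the Hytönen--Pérez sharp weighted weak-$(1,1)$ estimate for the dyadic maximal function, which replaces $[\omega]_{\mathcal{A}_{p}}$ by the strictly smaller $[\omega]_{\mathcal{A}_{\infty}}$ in the Carleson embeddings invoked above. Without this refinement one would only recover the classical one-weight constant $[\omega]_{\mathcal{A}_{p}}^{\max\{1,1/(p-1)\}}$, but none of the sharper $\mathcal{A}_{\infty}$ improvement that the theorem records.
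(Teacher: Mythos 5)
Your proposal is correct in its broad structure, and the final bookkeeping deduction of the one-weight bound from the mixed two-weight bound (using $[\omega]_{\mathcal{A}_\infty}\leq[\omega]_{\mathcal{A}_p}$ and $[\omega^{-1/(p-1)}]_{\mathcal{A}_\infty}\leq[\omega^{-1/(p-1)}]_{\mathcal{A}_{p'}}=[\omega]_{\mathcal{A}_p}^{1/(p-1)}$) is exactly right. Note, however, that in this paper Theorem~\ref{thm:extrapolation} is not proved at all: it is stated as a reformulation of \cite[Corollary~3.3]{HanHyt14b} and is simply quoted, so there is no ``paper's own proof'' for your argument to be compared against line-by-line. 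Compared to the cited reference, your route is genuinely different in one technical respect: H\"anninen--Hyt\"onen work with Lerner's \emph{local mean oscillation} (median) decomposition adapted to Bochner-space-valued functions, which bounds the $L^p_\omega$ norm of $Tf$ by the norm of a sparse sum of local oscillations; you instead invoke a \emph{pointwise} sparse domination $\norm{(Tf)(x)}\lesssim\sum_{Q\in\mathcal S}\langle\norm{f}\rangle_Q\mathds 1_Q(x)$, which for general Calder\'on--Zygmund operators is a slightly later refinement (Conde-Alonso--Rey, Lerner--Nazarov, Lacey) of the same circle of ideas. Both are perfectly valid and both reduce the problem to the same scalar two-weight sparse estimate with mixed $\mathcal A_p$--$\mathcal A_\infty$ constants, but if you want to reproduce the reference verbatim you should phrase the decomposition in terms of medians and local oscillations rather than pointwise averages. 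Two small slips worth fixing: the operator here maps $L^p(\IR^n;X)$ to itself (there is no second space $Y$ in the statement, so $\norm{(Tf)(x)}_Y$ should read $\norm{(Tf)(x)}_X$), and when you ``dualize against $h\in L^{p'}_\omega$'' you should specify the pairing (unweighted versus $\omega$-weighted), since this governs which weight appears in the dual Lebesgue space; the argument goes through either way, but the notation as written is ambiguous.
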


\subsection{Unconditional Decompositions}

In this subsection we recall some facts from the theory of unconditional (Schauder) decompositions, with references \cite{HNVW16}, \cite{HNVW17}, \cite{CPSW00} and \cite{Wit00}. We take the setting from \cite[Section~4.1.b]{HNVW16} on unconditional decompositions, which in the context of Littlewood--Paley decompositions provides a more natural framework than that of Schauder decompositions.

Given an index set $I$, we denote by $\{\epsilon_i\}_{i \in I}$ a family of independent identically distributed random variables on some probability space $(\Omega, \mathcal{F}, \IP)$ with $\IP(\epsilon_i = \pm 1) = \frac{1}{2}$. In case $I=\IN$ we get a Rademacher sequence.

A \emph{pre-decomposition} of a Banach space $X$ is a family of bounded linear projections $\Delta =(\Delta_{i})_{i \in I}$ in $X$ with the property that $\Delta_{i}\Delta_{j} = 0$ whenever $i \neq j$.
An \emph{unconditional Schauder decompostion} of $X$ is a pre-decomposition $\Delta =(\Delta_{i})_{i \in I}$ of $X$ with the property that $x=\sum_{i \in I}\Delta_{i}x$ in $X$ for all $x \in X$. A \emph{Schauder decomposition} of $X$ is a pre-decomposition $\Delta =(\Delta_{i})_{i \in \IN}$ of $X$ with the property that $x=\sum_{i=0}^{\infty}\Delta_{i}x$ in $X$ for all $x \in X$. Note that every unconditional decomposition $\Delta =(\Delta_{i})_{i \in I}$ of $X$ with $I$ countably infinite can be realized as Schauder decomposition by any enumeration of $I$.
A family $D=(D_{i})_{i \in I} \subset \mathcal{B}(X)$ is called \emph{U$^{+}$} if there exists a finite constant $C^{+}>0$ such that
\[
\biggnorm{\sum_{i \in F}\epsilon_{i}D_{i}x}_{L^{2}(\Omega;X)} \leq C^{+}\biggnorm{\sum_{i \in F}D_{i}x}_{X}
\]
for all finite subsets $F \subset I$ and $x \in X$, and \emph{U$^{-}$} if there exists a finite constant $C^{-}>0$ with
\[
\biggnorm{\sum_{i \in F}D_{i}x}_{X} \leq C^{-} \biggnorm{\sum_{i \in F}\epsilon_{i}D_{i}x}_{L^{2}(\Omega;X)}
\]
for all finite subsets $F \subset I$ and $x \in X$. We denote the smallest such constants $C^{+}>0$ and $C^{-}>0$ by $C^{+}_{D}>0$ and $C^{-}_{D}>0$, respectively.
Let $\Delta = (\Delta_{i})_{i \in I} \subset \mathcal{B}(X)$. For each finite subset $F \subset I$ we define $\Delta_{F}:= \sum_{i \in F}\Delta_{i}$. We futhermore define
\[
\mathrm{ran}(\Delta) := \cup\left\{ \Delta_{F}(X) : F \subset I \:\text{finite}\right\}.
\]

\begin{lemma}\label{lem:basic_char_uncond_Schaud_decomp}
For a pre-decomposition $\Delta =(\Delta_{i})_{i \in I}$ of a Banach space $X$ with $\mathrm{ran}(\Delta)$ dense in $X$ the following are equivalent:
\begin{equiv_enum}
	\item $\Delta$ is an unconditional decomposition.
	\item\label{item:lemma:undcond_char} There exists a finite constant $C>0$ such that for all $(\varepsilon_{i})_{i \in I} \in \{-1,1\}^{I}$, finite subsets $F \subset I$ and $x \in X$
	\[
	\biggnorm{\sum_{i \in F}\varepsilon_{i}\Delta_{i}x}_{X} \leq C \biggnorm{\sum_{i \in F} \Delta_{i}x}_{X}.
	\]

	\item $\Delta$ is \emph{U$^{+}$} and \emph{U$^{-}$}.
\end{equiv_enum}
The smallest admissible constant $C$ in~\ref{item:lemma:undcond_char} is called the \emph{unconditional constant} of $\Delta$ and is denoted by $C_{\Delta}$. Moreover, it holds that $C^{-}_{\Delta},C^{+}_{\Delta} \leq C_{\Delta} \leq C^{-}_{\Delta}C^{+}_{\Delta}$.
\end{lemma}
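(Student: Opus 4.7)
The plan is to prove (i)$\Leftrightarrow$(ii) and (ii)$\Leftrightarrow$(iii) separately, reading off the constant inequalities $C^{\pm}_{\Delta}\leq C_{\Delta}\leq C^{+}_{\Delta}C^{-}_{\Delta}$ from the algebraic/probabilistic arguments of the second equivalence. The first equivalence relies on the interplay between unconditional convergence and uniform boundedness of the partial-sum projections $\Delta_{F}=\sum_{i\in F}\Delta_{i}$, with the density of $\mathrm{ran}(\Delta)$ entering crucially in the direction (ii)$\Rightarrow$(i).

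For (i)$\Rightarrow$(ii) I would use that unconditional convergence of $\sum_{i\in I}\Delta_{i}x$ as a net over finite subsets of $I$ is equivalent to the Cauchy criterion: for every $\delta>0$ there is a finite $F_{0}$ such that $\|\sum_{i\in F}\Delta_{i}x\|<\delta$ for every finite $F$ disjoint from $F_{0}$. Splitting an arbitrary signed sum $\sum_{i\in F'}\eta_{i}\Delta_{i}x$ into its positive and negative parts yields the same estimate up to a factor of $2$ for any $\eta\in\{-1,1\}^{I}$ and finite $F'$ disjoint from $F_{0}$. Combined with the trivial bound on $F_{0}$, this gives $\sup_{F,\eta}\|M_{\eta}^{F}x\|_{X}<\infty$ for every $x$, where $M_{\eta}^{F}x:=\sum_{i\in F}\eta_{i}\Delta_{i}x$. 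Banach--Steinhaus then yields $C:=\sup_{F,\eta}\|M_{\eta}^{F}\|_{\mathcal{B}(X)}<\infty$; applying this to $y=\Delta_{F}x$, for which $\Delta_{i}y=\Delta_{i}x$ if $i\in F$ and $0$ otherwise, gives (ii) with the same constant $C$.

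For (ii)$\Leftrightarrow$(iii), squaring the inequality in (ii) and averaging over i.i.d.\ Rademacher signs immediately yields U$^{+}$ with $C^{+}_{\Delta}\leq C_{\Delta}$. For U$^{-}$, I would apply (ii) to $y=\sum_{i\in F}\varepsilon_{i}\Delta_{i}x$ with the sign sequence $\varepsilon$ itself; since $\Delta_{i}y=\varepsilon_{i}\Delta_{i}x$ for $i\in F$, the inequality collapses to $\|\sum_{i\in F}\Delta_{i}x\|\leq C\|\sum_{i\in F}\varepsilon_{i}\Delta_{i}x\|$, and averaging yields U$^{-}$ with $C^{-}_{\Delta}\leq C_{\Delta}$. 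Conversely, given $\varepsilon$ and $y=\sum_{i\in F}\varepsilon_{i}\Delta_{i}x$, applying U$^{-}$ to $y$ bounds $\|y\|$ by $C^{-}_{\Delta}\|\sum_{i}\epsilon_{i}\varepsilon_{i}\Delta_{i}x\|_{L^{2}(\Omega;X)}$; since $(\epsilon_{i}\varepsilon_{i})_{i\in F}$ is equidistributed with $(\epsilon_{i})_{i\in F}$, this equals $C^{-}_{\Delta}\|\sum_{i\in F}\epsilon_{i}\Delta_{i}x\|_{L^{2}(\Omega;X)}$, which U$^{+}$ bounds by $C^{-}_{\Delta}C^{+}_{\Delta}\|\sum_{i\in F}\Delta_{i}x\|$, giving $C_{\Delta}\leq C^{+}_{\Delta}C^{-}_{\Delta}$.

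The main obstacle is (ii)$\Rightarrow$(i), because the hypothesis only compares signed against unsigned partial sums and does not directly bound $\|\Delta_{F}\|_{\mathcal{B}(X)}$. My plan is to first establish the monotonicity $\|\Delta_{F}x\|\leq C\|\Delta_{G}x\|$ for $F\subset G$ finite, via the identity $\Delta_{F}=\tfrac{1}{2}(\Delta_{G}+(\Delta_{F}-\Delta_{G\setminus F}))$ and (ii). For $x=\Delta_{F_{x}}z_{x}\in\mathrm{ran}(\Delta)$ the orthogonality relations $\Delta_{i}\Delta_{j}=\delta_{ij}\Delta_{i}$ yield $\Delta_{F}x=\Delta_{F\cap F_{x}}z_{x}$, so monotonicity gives $\|\Delta_{F}x\|\leq C\|\Delta_{F_{x}}z_{x}\|=C\|x\|$. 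Since each $\Delta_{F}$ is bounded on $X$ and $\mathrm{ran}(\Delta)$ is dense, the bound $\|\Delta_{F}\|_{\mathcal{B}(X)}\leq C$ extends to all of $X$ by continuity. Finally, for $x\in X$ and $\delta>0$, I would choose $y\in\mathrm{ran}(\Delta)$ with $\|x-y\|<\delta/(C+1)$ and let $F_{y}$ be such that $\Delta_{F_{y}}y=y$; for every finite $F\supset F_{y}$ then $\Delta_{F}y=y$, hence $\|\Delta_{F}x-x\|\leq(C+1)\|x-y\|<\delta$, yielding the unconditional net convergence asserted in (i).
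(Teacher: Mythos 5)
The paper states this lemma without proof, treating it as standard (with pointers to \cite{HNVW16}, \cite{CPSW00}, \cite{Wit00}), so there is no in-paper argument to compare against. Your proof is correct and self-contained, and it follows the usual route.

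Briefly: the implication (i)$\Rightarrow$(ii) via the Cauchy criterion for unconditional sums and Banach--Steinhaus is the standard argument, and the reduction to $y=\Delta_{F}x$ using $\Delta_{i}\Delta_{F}=\Delta_{i}$ for $i\in F$ correctly extracts (ii) from the uniform bound on the $M_{\eta}^{F}$. The equivalences (ii)$\Leftrightarrow$(iii) via the sign-flip substitution $y=\sum_{i\in F}\varepsilon_{i}\Delta_{i}x$ and the distributional identity of $(\epsilon_{i}\varepsilon_{i})_{i}$ with $(\epsilon_{i})_{i}$ give the constant inequalities $C^{\pm}_{\Delta}\leq C_{\Delta}\leq C^{+}_{\Delta}C^{-}_{\Delta}$ exactly as stated. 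For (ii)$\Rightarrow$(i), the monotonicity bound $\|\Delta_{F}x\|\leq C\|\Delta_{G}x\|$ for $F\subset G$ via $\Delta_{F}=\tfrac12\bigl(\Delta_{G}+(\Delta_{F}-\Delta_{G\setminus F})\bigr)$ and (ii) implicitly uses $C\geq1$; this holds automatically because taking $\varepsilon\equiv1$ in (ii) forces $C\geq1$ unless all $\Delta_{i}$ vanish, which is excluded by density of $\mathrm{ran}(\Delta)$. The passage from the bound on $\mathrm{ran}(\Delta)$ to $\|\Delta_{F}\|_{\mathcal{B}(X)}\leq C$ by density, and then to net convergence of $(\Delta_{F}x)_{F}$, is carried out correctly. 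No gaps beyond that one-line remark.
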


Using the characterization of unconditional decompositions in terms of \emph{U$^{+}$} and \emph{U$^{-}$} one can establish the following abstract multiplier theorem~\cite[Theorem~3.4]{CPSW00}.

\begin{theorem}\label{thm:abstract_multiplier}
		Let $X$ and $Y$ be Banach spaces and $\Delta^{X} = (\Delta_i^X)_{i \in I}$, $\Delta^{Y} = (\Delta_i^Y)_{i \in I}$ unconditional decompositions of $X$ and $Y$, respectively. Further suppose that $(M_i)_{i \in I} \subset \mathcal{B}(X,Y)$ is $\mathcal{R}$-bounded with $\Delta_i^Y M_i = \Delta_i^Y M_i \Delta_i^X$ for all $i \in I$. Then
			\[
				Mx \coloneqq \sum_{i \in I} M_i \Delta_i x
			\]
		is summable for all $x \in X$ and defines a bounded linear operator $M\colon X \to Y$ with
			\begin{equation*}
				\norm{M} \leq C_X^{+} C_Y^{-} \mathcal{R}\{M_i: i \in I\}.
			\end{equation*}
	\end{theorem}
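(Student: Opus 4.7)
The plan is to obtain the bound first for finite partial sums and then pass to the limit. Fix a finite $F \subset I$ and $x \in X$, and set $S_F x := \sum_{i \in F} M_i \Delta_i^X x$. The goal is the uniform estimate
\[
\norm{S_F x}_Y \leq C_X^{+} C_Y^{-} \mathcal{R}\{M_i : i \in I\} \norm{x}_X
\]
with constant independent of $F$. Once this is in hand, applying the same estimate to tails $S_G x - S_F x = \sum_{i \in G \setminus F} M_i \Delta_i^X x$ for $F \subset G$ and using the unconditional convergence $\sum_{i \in I} \Delta_i^X x = x$ (which forces the resulting right-hand side to vanish as $F$ grows) yields both the summability of $\sum_{i \in I} M_i \Delta_i^X x$ and the claimed operator-norm bound on $M$.

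To establish the uniform bound I chain three ingredients. First, the factorization $\Delta_i^Y M_i = \Delta_i^Y M_i \Delta_i^X$ is the structural input that places $S_F x$ within the reach of the $U^{-}$ inequality for $\Delta^{Y}$, giving
\[
\norm{S_F x}_Y \leq C_Y^{-} \biggnorm{\sum_{i \in F} \epsilon_i M_i \Delta_i^X x}_{L^{2}(\Omega;Y)}.
\]
Second, conditioning on the Rademacher signs and invoking the definition of $\mathcal{R}$-boundedness for the family $(M_i)_{i \in F}$ applied to the vectors $(\Delta_i^X x)_{i \in F}$ yields
\[
\biggnorm{\sum_{i \in F} \epsilon_i M_i \Delta_i^X x}_{L^{2}(\Omega;Y)} \leq \mathcal{R}\{M_i : i \in I\} \biggnorm{\sum_{i \in F} \epsilon_i \Delta_i^X x}_{L^{2}(\Omega;X)}.
\]
Third, the $U^{+}$ property of $\Delta^{X}$ kills the Rademachers,
\[
\biggnorm{\sum_{i \in F} \epsilon_i \Delta_i^X x}_{L^{2}(\Omega;X)} \leq C_X^{+} \bignorm{\Delta_F^X x}_X,
\]
and uniform boundedness of the partial-sum projections of the unconditional decomposition $\Delta^{X}$ gives $\norm{\Delta_F^X x}_X \lesssim \norm{x}_X$. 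Composing the three steps delivers the required estimate.

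The main obstacle is the first inequality: $U^{-}$ is, a priori, a statement about sums of the form $\sum_{i} \Delta_i^{Y} z_i$, so one has to realize $S_F x$ in precisely that form. The hypothesis $\Delta_i^{Y} M_i = \Delta_i^{Y} M_i \Delta_i^{X}$ is designed to deliver this; if one prefers, one can first replace $M_i$ by $\widetilde{M}_i := \Delta_i^{Y} M_i$, which remains $\mathcal{R}$-bounded by the permanence of $\mathcal{R}$-bounds under composition with the uniformly bounded family $(\Delta_i^{Y})_{i \in I}$, and then run the argument with these intertwined operators. The remaining steps are direct applications of the assumed properties and the standard completion argument from finite partial sums to the full unconditionally convergent series.
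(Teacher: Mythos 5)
Your three-step chain — $U^-$ for $\Delta^Y$, $\mathcal{R}$-boundedness of $(M_i)$, $U^+$ for $\Delta^X$ — is indeed the skeleton of the standard argument from \cite{CPSW00}. But the very first link is not justified by the stated hypothesis, and the patch you offer does not repair it. Applying $U^-$ to $S_F x = \sum_{i\in F} M_i\Delta_i^X x$ requires each summand to lie in $\operatorname{ran}(\Delta_i^Y)$, i.e.\ $M_i\Delta_i^X = \Delta_i^Y M_i\Delta_i^X$. The hypothesis $\Delta_i^Y M_i = \Delta_i^Y M_i\Delta_i^X$ says the opposite thing (kernel containment, not range containment) and does \emph{not} imply it. A concrete model: on $\ell^2$ with the coordinate decomposition take $M_i x = x_i e_1$; then $\Delta_i^Y M_i = \Delta_i^Y M_i\Delta_i^X$ holds, $(M_i)$ is $\mathcal{R}$-bounded, yet $\sum_i M_i\Delta_i^X x = \bigl(\sum_i x_i\bigr)e_1$ diverges. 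Your suggested fix — working with $\widetilde M_i := \Delta_i^Y M_i$ — changes the operator: $\sum_i\widetilde M_i\Delta_i^X x = \sum_i\Delta_i^Y M_i\Delta_i^X x$ is a priori a different sum from $\sum_i M_i\Delta_i^X x$. Moreover, the claim that $\widetilde M_i$ ``remains $\mathcal{R}$-bounded by permanence of $\mathcal{R}$-bounds under composition with the uniformly bounded family $(\Delta_i^Y)$'' is false as stated: $\mathcal{R}$-boundedness is preserved under composition with $\mathcal{R}$-bounded families, not with merely uniformly bounded ones.

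What saves the day, and what makes the argument circulate in the literature and in this paper's applications, is the \emph{two-sided} intertwining $\Delta_i^Y M_i = M_i\Delta_i^X$; in the Fourier-multiplier instances here ($M_i = T_{m\mathds{1}_J}$, $\Delta_i = T_{\mathds{1}_J}$) one even has $\Delta_i^Y M_i = M_i = M_i\Delta_i^X$. Under that hypothesis each $M_i\Delta_i^X x$ does lie in $\operatorname{ran}(\Delta_i^Y)$, your three inequalities compose without needing any $\widetilde M_i$ detour, and the argument is clean. One further loose point: bounding $\norm{\Delta_F^X x}$ by ``$\lesssim\norm{x}$'' at the end introduces the unconditional constant of $\Delta^X$ and spoils the stated bound $C_X^+ C_Y^- \mathcal{R}\{M_i\}$. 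The correct move is to note that the finite-$F$ estimate reads $\norm{S_F x}\le C_Y^-\mathcal{R}\{M_i\}C_X^+\norm{\Delta_F^X x}$ and, applied to index differences, gives a Cauchy net so that the sum converges; then let $F\uparrow I$ and use $\Delta_F^X x\to x$ to pass to $\norm{x}$ on the right without any extra factor.
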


For Banach spaces $X$ and $Y$ that have \emph{Pisier's property $(\alpha)$} there is a useful $\mathcal{R}$-boundedness version of the above theorem. Before we state it, let us first recall \emph{Pisier's property $(\alpha)$}.
Let $(\epsilon_{i}')_{i \geq 1}$ and $(\epsilon_{j}'')_{j \geq 1}$ be independent Rademacher sequences on probability
spaces $(\Omega',\mathcal{F}',\IP')$ and $(\Omega'',\mathcal{F}'',\IP'')$, respectively.
A Banach space $X$ is said to have \emph{Pisier's property $(\alpha)$} (or \emph{Pisier's contraction property}) if there exists a finite constant $C > 0$ such that
		\begin{equation*}
		 \biggnorm{\sum_{i=1}^{M}\sum_{j=1}^{N} a_{i,j}\epsilon_i' \epsilon''_j x_{i,j}}_{L^{2}(\Omega'\times\Omega'';X)} \le C|a|_{\infty} \biggnorm{\sum_{i=1}^{M}\sum_{j=1}^{N} \epsilon'_{i}\epsilon''_{j} x_{i,j}}_{L^{2}(\Omega'\times\Omega'';X)}
		\end{equation*}
for all $M,N \in \IN$, $a=(a_{i,j})_{1 \le i \le M,1 \le j \le N} \subset \IC$ and $(x_{i,j})_{1 \le i \le M,1 \le j \le N} \subset X$.
The smallest such constant is denoted by $\alpha_{X}$.

\begin{theorem}\label{thm:abstract_multiplier;prop_alpha}\cite[Theorem~3.14]{CPSW00}
Let $X$ and $Y$ be Banach spaces with Pisier's property~$(\alpha)$ and $\Delta^{X} = (\Delta_i^X)_{i \in I}$, $\Delta^{Y} = (\Delta_i^Y)_{i \in I}$ unconditional decompositions of $X$ and $Y$, respectively.
Let $\mathcal{M} \subset \mathcal{B}(X,Y)$ be an $\mathcal{R}$-bounded collection of operators and
\[
\mathcal{T} \coloneqq \left\{ \sum_{i \in I}M_{i}\Delta_{i} : M_{i} \in \mathcal{M} \:\mbox{such that $\Delta_i^Y M_i = \Delta_i^Y M_i \Delta_i^X$ for all $i \in I$} \right\} \subset \mathcal{B}(X,Y).
\]
Then $\mathcal{T}$ is $\mathcal{R}$-bounded with
\begin{equation*}
\mathcal{R}(\mathcal{T}) \leq \alpha_{X}C_{X}^{+}\alpha_{Y}C_{Y}^{-}\mathcal{R}(\mathcal{M}).
\end{equation*}
	\end{theorem}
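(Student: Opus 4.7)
The plan is to follow the strategy of Theorem~\ref{thm:abstract_multiplier} while inserting one additional randomization and controlling the interaction of the two Rademacher families through Pisier's property~$(\alpha)$ on each side. Fix finite data: operators $T^{(k)} = \sum_{i}M_{i}^{(k)}\Delta_{i}^{X} \in \mathcal{T}$ for $k = 1,\ldots,N$ with $M_{i}^{(k)} \in \mathcal{M}$, and vectors $x^{(k)} \in X$; by density of $\mathrm{ran}(\Delta^{X})$ we may assume the inner sums effectively reduce to a finite index set $F \subset I$. Let $(\epsilon_{k})_{k}$, $(\eta_{i})_{i}$ and $(\tilde\epsilon_{k,i})_{(k,i)}$ be three independent Rademacher families on a product probability space. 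The goal is to control $\bignorm{\sum_{k}\epsilon_{k}T^{(k)}x^{(k)}}_{L^{2}(Y)}$ by $\alpha_{X}C_{X}^{+}\alpha_{Y}C_{Y}^{-}\mathcal{R}(\mathcal{M})$ times $\bignorm{\sum_{k}\epsilon_{k}x^{(k)}}_{L^{2}(X)}$.

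First I would apply $U^{-}$ for $\Delta^{Y}$ pointwise in $\epsilon$, using the compatibility hypothesis $\Delta_{i}^{Y}M_{i}^{(k)} = \Delta_{i}^{Y}M_{i}^{(k)}\Delta_{i}^{X}$ to identify the $\Delta_{i}^{Y}$-component of $T^{(k)}x^{(k)}$ with $\Delta_{i}^{Y}M_{i}^{(k)}\Delta_{i}^{X}x^{(k)}$. This picks up the constant $C_{Y}^{-}$ and replaces the norm by $\bignorm{\sum_{k,i}\epsilon_{k}\eta_{i}\Delta_{i}^{Y}M_{i}^{(k)}\Delta_{i}^{X}x^{(k)}}_{L^{2}(Y)}$. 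Property~$(\alpha)$ for $Y$ then converts the averaging over the products $(\epsilon_{k}\eta_{i})$ into averaging over the single doubly indexed family $(\tilde\epsilon_{k,i})$ at cost $\alpha_{Y}$, after which $\mathcal{R}$-boundedness of $\mathcal{M}$ applied to $\{M_{i}^{(k)}\}_{(k,i)}$ strips the operators at cost $\mathcal{R}(\mathcal{M})$, leaving the $X$-valued expression $\bignorm{\sum_{k,i}\tilde\epsilon_{k,i}\Delta_{i}^{X}x^{(k)}}_{L^{2}(X)}$.

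To close the loop I would use property~$(\alpha)$ for $X$ in the opposite direction to return to an average over $(\epsilon_{k}\eta_{i})$ at cost $\alpha_{X}$, and then integrate out $\eta$ via $U^{+}$ for $\Delta^{X}$ applied pointwise in $\epsilon$ to the vector $y := \sum_{k}\epsilon_{k}x^{(k)}$; this contributes the final factor $C_{X}^{+}$, with $\Delta_{F}^{X}$ acting as the identity on our finitely supported data so that no further constants accumulate. Composition of the five estimates yields the desired bound. The main obstacle I anticipate is the careful handling of the two invocations of property~$(\alpha)$: both directions of comparison between $(\epsilon_{k}\eta_{i})$ and $(\tilde\epsilon_{k,i})$ are needed, and each must be deduced from the one-sided definition by the now-standard trick of introducing the auxiliary family $(\tilde\epsilon_{k,i})$ as scalar coefficients, exploiting $\epsilon_{k}^{2} = \eta_{i}^{2} = 1$, and averaging. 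The remaining ingredients — Kahane's inequality to move between $L^{1}$ and $L^{2}$ randomized norms, and the finite-support density reduction for passing from $F$ to all of $I$ — are routine.
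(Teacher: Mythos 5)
Your scaffold is the right one and the constant bookkeeping is correct: $U^-$ for $\Delta^Y$ (cost $C_Y^-$), property $(\alpha)$ on the $Y$ side (cost $\alpha_Y$) to pass from the product family $(\epsilon_k\eta_i)$ to the doubly indexed family $(\tilde\epsilon_{k,i})$, $\mathcal{R}$-boundedness of $\mathcal{M}$ over the index set $\{(k,i)\}$ (cost $\mathcal{R}(\mathcal{M})$), property $(\alpha)$ on the $X$ side (cost $\alpha_X$) to return to $(\epsilon_k\eta_i)$, and $U^+$ for $\Delta^X$ (cost $C_X^+$). The two-sided comparison between $(\epsilon_k\eta_i)$-averages and $(\tilde\epsilon_{k,i})$-averages you invoke does indeed follow from the one-sided definition of property $(\alpha)$ by the standard absorption trick, and restricting to the finitely supported dense subspace $\mathrm{ran}(\Delta^X)$ correctly makes $\Delta_F^X$ act as the identity so that no extra factor appears in the final $U^+$ step. (Since $U^\pm$, property $(\alpha)$ and $\mathcal{R}$-boundedness are all formulated with $L^2$ randomized norms, Kahane's inequality is not actually needed.)

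There is, however, a genuine gap in your very first step. You claim the compatibility $\Delta_i^Y M_i^{(k)} = \Delta_i^Y M_i^{(k)}\Delta_i^X$ lets you identify $\Delta_i^Y\bigl(T^{(k)}x^{(k)}\bigr)$ with $\Delta_i^Y M_i^{(k)}\Delta_i^X x^{(k)}$. But
\begin{equation*}
\Delta_j^Y\bigl(T^{(k)}x^{(k)}\bigr) = \sum_{i}\Delta_j^Y M_i^{(k)}\Delta_i^X x^{(k)},
\end{equation*}
and the stated hypothesis does not make the off-diagonal terms $\Delta_j^Y M_i^{(k)}\Delta_i^X x^{(k)}$ ($j\ne i$) vanish. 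It only says that $\Delta_i^Y M_i^{(k)}$ kills $\ker\Delta_i^X$; it says nothing about where $M_i^{(k)}\Delta_i^X$ lands. A finite-dimensional illustration: take $X=Y=\IC^2$ with the coordinate projections $\Delta_1^X=\Delta_1^Y=P_1$, $\Delta_2^X=\Delta_2^Y=P_2$, and $M_1=e_2\otimes e_1^*$, $M_2=0$. Then $\Delta_1^Y M_1 = P_1M_1 = 0 = \Delta_1^Y M_1\Delta_1^X$, so the hypothesis holds, yet $\Delta_2^Y M_1\Delta_1^X = P_2M_1P_1 = M_1\ne 0$. The identification you actually need is $\Delta_j^Y M_i\Delta_i^X = 0$ for $j\ne i$, equivalently the two-sided condition $M_i\Delta_i^X=\Delta_i^Y M_i\Delta_i^X$ (i.e., $M_i$ maps $\Delta_i^X X$ into $\Delta_i^Y Y$). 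This is what holds in every application in the paper, where the $M_i$ are Fourier multipliers whose symbols are supported on the blocks $J_i$, so that $M_i\Delta[J_i]$ automatically has Fourier support in $J_i$. To repair the proof, either take the two-sided compatibility as the running hypothesis (which is almost certainly what is intended), or pass to adjoints: the adjoint operator $\sum_i(\Delta_i^X)^*M_i^*$ automatically has its $i$-th summand landing in $(\Delta_i^X)^*X^*$, so $U^-$ for $(\Delta^X)^*$ applies directly, and one runs your argument on the dual side (which also requires knowing that the dual families are unconditional decompositions of the duals, cf.\ Proposition~\ref{prop:uncond_Schaud_bbd_into_Rad}).
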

Note that $\mathcal{T}$ is well-defined by Theorem~\ref{thm:abstract_multiplier}. In the setting of Littlewood--Paley decompositions it is convenient to use duality in order to verify that a family of spectral projections forms an unconditional decomposition, the adjoint family being of the same form.
The following proposition provides the abstract basis for such a duality argument.

\begin{proposition}\label{prop:uncond_Schaud_bbd_into_Rad}
Let $\Delta =(\Delta_{i})_{i \in I}$ be a pre-decomposition of $X$ with adjoint family $\Delta^{*} =(\Delta_{i}^{*})_{i \in I}$. Then $\Delta$ is an unconditional decomposition if $\mathrm{ran}(\Delta)$ is dense in $X$ and both $\Delta$ and $\Delta^{*}$ are \emph{U$^{+}$}. Moreover, in this situation we have (in addition to Lemma~\ref{lem:basic_char_uncond_Schaud_decomp}) $C^{-}_{\Delta} \leq C^{+}_{\Delta^{*}}$.
\end{proposition}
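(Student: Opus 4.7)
The plan is to exploit Lemma~\ref{lem:basic_char_uncond_Schaud_decomp}: since $\Delta$ is already assumed to be U$^{+}$ and $\mathrm{ran}(\Delta)$ is dense, it suffices to establish that $\Delta$ is also U$^{-}$ with constant $C^{-}_{\Delta} \leq C^{+}_{\Delta^{*}}$. That will simultaneously give the unconditional decomposition property and the quantitative refinement added to the lemma.

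I would fix a finite $F \subset I$, $x \in X$, write $\Delta_F = \sum_{i \in F}\Delta_i$, and use Hahn--Banach to choose $y^{*} \in X^{*}$ with $\|y^{*}\| \leq 1$ and $\langle \Delta_F x, y^{*}\rangle = \|\Delta_F x\|$. The central manipulation rests on the pre-decomposition identities $\Delta_i^{2} = \Delta_i$ and $\Delta_j \Delta_i = 0$ for $i \neq j$, which yield, after passing to adjoints, $\langle \Delta_i x, y^{*}\rangle = \langle \Delta_i x, \Delta_i^{*} y^{*}\rangle$. Combined with the Rademacher orthogonality $\mathbb{E}[\varepsilon_i \varepsilon_j] = \delta_{ij}$, this rewrites
\[
\|\Delta_F x\| = \sum_{i \in F}\langle \Delta_i x, \Delta_i^{*} y^{*}\rangle = \mathbb{E}\biggl\langle \sum_{i \in F}\varepsilon_i \Delta_i x,\; \sum_{j \in F}\varepsilon_j \Delta_j^{*} y^{*}\biggr\rangle.
\]
Cauchy--Schwarz in $L^{2}(\Omega)$, together with the U$^{+}$ hypothesis for $\Delta^{*}$ applied to $y^{*}$, then produces
\[
\|\Delta_F x\| \leq \biggnorm{\sum_{i \in F}\varepsilon_i \Delta_i x}_{L^{2}(\Omega;X)} \cdot C^{+}_{\Delta^{*}}\,\|\Delta_F^{*} y^{*}\|_{X^{*}}.
\]

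The delicate point I expect to be the main obstacle is ensuring the factor $\|\Delta_F^{*} y^{*}\|_{X^{*}}$ can be bounded by $1$. The idea is that since $\Delta_F x = \Delta_F^{2} x$, the pairing $\langle \Delta_F x, y^{*}\rangle$ depends on $y^{*}$ only through $\Delta_F^{*} y^{*}$; one should therefore arrange that the Hahn--Banach norming functional is taken inside $\ran \Delta_F^{*}$, where $\Delta_F^{*} y^{*} = y^{*}$ and consequently $\|\Delta_F^{*} y^{*}\|_{X^{*}} = \|y^{*}\| \leq 1$. What needs careful verification is that $\ran \Delta_F^{*}$ is genuinely norming for $\Delta_F X$ — i.e., that the supremum of $\langle \Delta_F x, y^{*}\rangle$ over $y^{*} \in \ran \Delta_F^{*}$ with $\|y^{*}\|\leq 1$ still recovers $\|\Delta_F x\|$. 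This is where the pre-decomposition structure, and the interplay of the two U$^{+}$ hypotheses with the density of $\mathrm{ran}(\Delta)$, must combine to give the clean constant without slack. Once this is in place, the resulting estimate $\bignorm{\sum_{i\in F}\Delta_i x}_X \leq C^{+}_{\Delta^{*}}\bignorm{\sum_{i\in F}\varepsilon_i\Delta_i x}_{L^{2}(\Omega;X)}$ is exactly U$^{-}$ with $C^{-}_{\Delta} \leq C^{+}_{\Delta^{*}}$, and Lemma~\ref{lem:basic_char_uncond_Schaud_decomp} finishes the argument.
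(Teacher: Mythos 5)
Your overall strategy --- expressing $\langle\Delta_F x,y^*\rangle$ through Rademacher orthogonality as $\mathbb{E}\bigl\langle\sum_i\epsilon_i\Delta_i x,\sum_j\epsilon_j\Delta_j^*y^*\bigr\rangle$, then applying Cauchy--Schwarz and U$^+$ for $\Delta^*$ --- is the right starting point and does yield
\[
\normalnorm{\Delta_F x} \;\le\; C^{+}_{\Delta^{*}}\,\normalnorm{\Delta_F^* y^*}\,\biggnorm{\sum_{i\in F}\epsilon_i\Delta_i x}_{L^{2}(\Omega;X)}
\]
for any norming $y^*$. The problem is the step you yourself flag as delicate: the claim that one may choose the norming functional inside $\ran\Delta_F^*$ with $X^{*}$-norm at most one. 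That claim is false for a general projection, and nothing in your outline shows why the hypotheses of the proposition repair it.

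Concretely, for a bounded projection $P$ on $X$ and $z\in\ran P$, one always has $\normalnorm{z}=\sup\{\langle z,P^{*}y^{*}\rangle:\normalnorm{y^{*}}\le 1\}$, but the supremum of $\langle z,w^{*}\rangle$ over $w^{*}\in\ran P^{*}$ with $\normalnorm{w^{*}}_{X^{*}}\le 1$ can be strictly smaller than $\normalnorm{z}$. Indeed, for $\phi\in(\ran P)^{*}$ the unique representative in $\ran P^{*}$ is $\tilde\phi=\phi\circ P$, whose $X^{*}$-norm is only bounded by $\normalnorm{P}\,\normalnorm{\phi}$, so $\{w^{*}\in\ran P^{*}:\normalnorm{w^{*}}\le 1\}$ norms $\ran P$ only when $\normalnorm{P}\le 1$. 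A two-dimensional example makes this explicit: take $X=\mathbb{R}^{2}$ with the Euclidean norm, $\Delta_1 x=x_1 e_1$, $\Delta_2 x=x_2(e_1+\delta e_2)$ with $x=x_1 e_1+x_2(e_1+\delta e_2)$; then for $z=e_1\in\ran\Delta_1$ the only norming functional is $y^{*}=e_1$, and $\Delta_1^{*}y^{*}=(1,-1/\delta)$ has norm $\sqrt{1+1/\delta^{2}}\gg 1$, while the supremum of $\langle e_1,w^{*}\rangle$ over the unit ball of $\ran\Delta_1^{*}=\mathrm{span}\{(\delta,-1)\}$ is only $\delta/\sqrt{1+\delta^{2}}$. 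So the ``careful verification'' you defer is not a technicality to be checked but a false statement in general; what your argument as written actually proves is the weaker bound $C^{-}_{\Delta}\le C^{+}_{\Delta^{*}}\sup_{F}\normalnorm{\Delta_F}$, and there is no obvious way to remove the factor $\sup_{F}\normalnorm{\Delta_F}$ without already knowing $\Delta$ is an unconditional decomposition, which is what you are trying to prove. You would need a genuinely different mechanism --- not just a better choice of Hahn--Banach functional --- to obtain the clean constant $C^{+}_{\Delta^{*}}$ asserted in the proposition.
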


\subsection{A Generic Fourier Multiplier Theorem}

In this subsection we follow the approach presented in the survey article~\cite{Hyt07c}, which was concerned with the unweighted setting, to obtain Fourier multiplier theorems out of Littlewood--Paley decompositions. This is basically the usual approach but put in a nice abstract framework that cleans up the arguments.
As no proofs are given in~\cite{Hyt07c}, we have decided to include those here in quite some detail in order to make the paper more accessible.

For the rest of this section, let
$X$, $Y$, $E$ and $F$ be Banach spaces with
\[
\mathcal{S}(\IR^{n};X) \stackrel{d}{\hookrightarrow} E \stackrel{d}{\hookrightarrow} \mathcal{S}'(\IR^{n};X),\qquad F \subset \mathcal{S}'(\IR^{n};X), \qquad \mathcal{S}(\IR^{n};Y) \stackrel{d}{\hookrightarrow}  G \stackrel{d}{\hookrightarrow} \mathcal{S}'(\IR^{n};Y),
\]
$\mathcal{S}(\IR^{n};Y^{*}) \stackrel{d}{\subset} G^{*}$, $\mathcal{B}(X,Y) \hookrightarrow \mathcal{B}(F,G)$ contractively by pointwise multiplication and
\[
M:= \mathcal{R}\{ \Delta([\eta,\infty)) : \eta \in \IR^{n} \} < \infty \quad \text{in} \quad \mathcal{B}(E,F).
\]
Here
\[
\mathcal{S}(\IR^{n};Y^{*}) = [\mathcal{S}'(\IR^{n};Y)]' \hookrightarrow  G^{*} \hookrightarrow [\mathcal{S}(\IR^{n};Y)]' = \mathcal{S}'(\IR^{n};Y^{*})
\]
under the natural identifications; $\mathcal{S}(\IR^{n};Y^{*}) \stackrel{d}{\subset} G^{*}$ holds for instance when $G$ is reflexive.

We denote by $\mathcal{M}(E \to G)$ the space of all Fourier multiplier symbols with $T_{m} \in \mathcal{B}(E,G)$ equipped with the natural norm.

\begin{definition}\label{def:uniform_R-bdd_var}
We say that a set of functions $\mathscr{M} \subset L^{\infty}(\IR^{n};\mathcal{B}(X,Y))$ is of \emph{uniformly $\mathcal{R}$-bounded variation} if there exist a constant $C>0$, an $\mathcal{R}$-bounded set $\mathscr{T} \subset \mathcal{B}(X,Y)$, and for each $m \in \mathscr{M}$ a Borel measure $\mu_{m}$ on $\IR^{n}$ and a bounded WOT-measurable function $\tau_{m}:\IR^{n} \to \mathcal{B}(X,Y)$, with
$\norm{\mu_{m}} \leq C$ and $\tau_{m}(\IR^{n}) \subset \mathscr{T}$, such that
\begin{equation}\label{eq:def:uniform_R-bdd_var}
\ip{m(\xi)x}{y^{*}} = \int_{(-\infty,\xi]}\ip{\tau_m(\eta)x}{y^{*}} \ud \mu_m(\eta), \qquad \xi \in \IR^{n}, x \in X, y^{*} \in Y^{*}.
\end{equation}
We define
\[
\mathrm{var}_{\mathcal{R}}(\mathscr{M}) := \inf\{ \,C\mathcal{R}(\mathscr{T}) : C>0, \mathscr{T} \subset \mathcal{B}(X,Y)\:\:\text{as above}  \}.
\]
\end{definition}

\begin{lemma}\label{lemma:multiplier_rep_via_Borel_measure_rep}
Suppose that $m \in L^{\infty}(\IR^{n};\mathcal{B}(X,Y))$ can be represented as in \eqref{eq:def:uniform_R-bdd_var} for some complex Borel measure $\mu$ on $\IR^{n}$ and a bounded WOT-measurable function $\tau\colon \IR^{n} \to \mathcal{B}(X,Y)$.
Then we have $m \in \mathcal{M}(E \to G)$ with
\[
\norm{m}_{\mathcal{M}(E \to G)} \leq \sup\left\{ \norm{\Delta([\eta,\infty))}_{\mathcal{B}(E,F)} : \eta \in \IR^{n} \right\}\,\norm{\tau}_{\infty}\norm{\mu}.
\]
Moreover, for every $f \in E$ and $g \in G^{*}$,
$\IR^{n} \owns \eta \mapsto \ip{g}{\tau(\eta)\Delta([\eta,\infty))f}_{\ip{G}{G^{*}}} \in \IC$ is a bounded Borel measurable function from which the Fourier multiplier operator $T_{m} \in \mathcal{B}(E,G)$ can be obtained by
\begin{equation}\label{eq:lemma:multiplier_rep_via_Borel_measure_rep}
\ip{T_{m}f}{g}_{\ip{G}{G^{*}}} = \int_{\IR^{n}}\ip{\tau(\eta)\Delta([\eta,\infty))f}{g}_{\ip{G}{G^{*}}} \d\mu(\eta).
\end{equation}
\end{lemma}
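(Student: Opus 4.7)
My plan is to read the hypothesis \eqref{eq:def:uniform_R-bdd_var} as the pointwise identity
\[
m(\xi) = \int_{\IR^{n}}\mathds{1}_{[\eta,\infty)}(\xi)\,\tau(\eta)\ud\mu(\eta)
\]
(in the weak operator topology on $\mathcal{B}(X,Y)$) and to push it through the Fourier multiplier definition by a Fubini argument, using that each fixed operator $\tau(\eta)$ commutes with the scalar-valued Fourier multiplier $\Delta([\eta,\infty))$; this will formally produce the representation $T_{m} = \int \tau(\eta)\Delta([\eta,\infty))\ud\mu(\eta)$.

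First I would establish \eqref{eq:lemma:multiplier_rep_via_Borel_measure_rep} on a dense set. For $f \in \mathcal{S}(\IR^{n};X)$ and $g \in \mathcal{S}(\IR^{n};Y^{*})$, I would expand $\langle T_{m}f, g\rangle$ as a Fourier pairing $\int_{\IR^{n}}\langle m(\xi)\widehat{f}(\xi),(\mathcal{F}^{-1}g)(\xi)\rangle\ud\xi$, insert the hypothesis with $x=\widehat{f}(\xi)$ and $y^{*}=(\mathcal{F}^{-1}g)(\xi)$, and apply Fubini (justified by $\|\tau\|_{\infty}\|\mu\|<\infty$ together with Schwartz decay). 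The inner $\ud\xi$-integral then reassembles into $\langle \tau(\eta)\Delta([\eta,\infty))f, g\rangle_{G,G^{*}}$, using that $\tau(\eta)$ commutes with $\mathcal{F}$ and with multiplication by $\mathds{1}_{[\eta,\infty)}$.

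Next I would verify that $\eta \mapsto \langle \tau(\eta)\Delta([\eta,\infty))f, g\rangle$ is bounded Borel measurable for $f \in E$, $g \in G^{*}$. Boundedness is immediate from the contractive embedding $\mathcal{B}(X,Y) \hookrightarrow \mathcal{B}(F,G)$ together with uniform boundedness of $\{\Delta([\eta,\infty))\} \subset \mathcal{B}(E,F)$ (which is automatic from $\mathcal{R}$-boundedness), giving
\[
\bigl|\langle \tau(\eta)\Delta([\eta,\infty))f, g\rangle\bigr| \leq \|\tau\|_{\infty}\sup_{\zeta}\|\Delta([\zeta,\infty))\|_{\mathcal{B}(E,F)}\|f\|_{E}\|g\|_{G^{*}}.
\]
Borel measurability in $\eta$ reduces, via WOT-measurability of $\tau$, to measurability of $\eta \mapsto \Delta([\eta,\infty))f \in F$; for Schwartz $f$ this map is in fact continuous (by dominated convergence on $\mathcal{F}^{-1}(\mathds{1}_{[\eta,\infty)}\widehat f)$), and extends to arbitrary $f \in E$ by the uniform operator bound together with density $\mathcal{S}(\IR^{n};X) \stackrel{d}{\hookrightarrow} E$.

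Taking the supremum over $g \in \mathcal{S}(\IR^{n};Y^{*})$ with $\|g\|_{G^{*}} \leq 1$ (dense in the unit ball of $G^{*}$ by the standing assumption) in the Schwartz version of \eqref{eq:lemma:multiplier_rep_via_Borel_measure_rep} then yields the asserted norm bound, so $m \in \mathcal{M}(E \to G)$; a continuity/density argument extends both $T_{m}$ and the formula \eqref{eq:lemma:multiplier_rep_via_Borel_measure_rep} from $\mathcal{S}(\IR^{n};X)$ to all of $E$. The main obstacle I expect is the measurability/integrability bookkeeping around the merely WOT-measurable $\tau$: concretely, verifying that scalar Fubini applies to the integrand $\mathds{1}_{[\eta,\infty)}(\xi)\langle \tau(\eta)\widehat f(\xi),\mathcal{F}^{-1}g(\xi)\rangle$ and that the inner $\xi$-integral genuinely reproduces $\langle \tau(\eta)\Delta([\eta,\infty))f,g\rangle$ in the $G$--$G^{*}$ pairing. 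The algebraic identity and the norm estimate are then formal.
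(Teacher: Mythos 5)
Your argument is essentially the same route as the paper's: establish the identity~\eqref{eq:lemma:multiplier_rep_via_Borel_measure_rep} by expanding the Fourier pairing for Schwartz $f,g$ and applying Fubini, show that the right-hand side defines a bounded bilinear form on $E \times G^{*}$ of norm $\leq C\norm{\tau}_{\infty}\norm{\mu}$, and pass to all of $E$ by density. The main place you differ is exactly the step you flag as an obstacle: checking Borel measurability of $\eta \mapsto \ip{\tau(\eta)\Delta([\eta,\infty))f}{g}$. Your route goes through continuity of $\eta \mapsto \Delta([\eta,\infty))f$ into the abstract space $F$ for Schwartz $f$, then composes with the merely WOT-measurable $\tau$; but WOT-measurability of $\tau$ combined with strong measurability of an $F$-valued map does not obviously give measurability of the pairing, since WOT-measurability refers to pairings with fixed elements of $X$ and $Y^{*}$, not with $F$- and $G^{*}$-valued data. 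The paper sidesteps this by a cleaner observation: the bilinear form is continuous on $E\times G^{*}$ for each fixed $\eta$, so it suffices to check measurability for elementary tensors $f=\phi\otimes x$, $g=\psi\otimes y^{*}$, for which the pairing factors as $\ip{\Delta([\eta,\infty))\phi}{\psi}_{\ip{\mathcal{S}'}{\mathcal{S}}}\cdot\ip{\tau(\eta)x}{y^{*}}_{\ip{Y}{Y^{*}}}$, a product of two scalar measurable functions. If you patch your measurability step with this tensor reduction (which is compatible with everything else you wrote), the argument is complete and matches the paper's.
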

\begin{proof}
We put $C:=\sup\left\{ \norm{\Delta([\eta,\infty))}_{\mathcal{B}(E,F)} : \eta \in \IR^{n} \right\} \leq M < \infty$. Note that, as a consequence of the assumptions, $G$ can be described in terms of $G^{*}$ as follows:
\begin{equation}\label{eq:proof:lemma:multiplier_rep_via_Borel_measure_rep;G}
G = \left\{ u \in \mathcal{S}'(\IR^{n};Y) : [g \mapsto \ip{u}{g}_{\ip{\mathcal{S'}}{\mathcal{S}}}] \in (\mathcal{S}(\IR^{n};Y^{*}),\norm{\,\cdot\,}_{G^{*}})^{*} \right\} \quad \text{isometrically}.
\end{equation}

Let us first prove the measurabilty of $\IR^{n} \owns \eta \mapsto \ip{g}{\tau(\eta)\Delta([\eta, \infty))f}_{\ip{G}{G^{*}}} \in \IC$ for every $f \in E$ and $g \in G^{*}$.
For each fixed $\eta \in \IR^{n}$ it holds that
\[
E \times G^{*} \to \IC,\, (f,g) \mapsto \ip{\tau(\eta)\Delta([\eta,\bar{\infty}))f}{g}_{\ip{G}{G^{*}}}
\]
is a continuous bilinear map, satisfying the bound
\begin{equation}\label{eq:proof:lemma:multiplier_rep_via_Borel_measure_rep}
|\ip{\tau(\eta)\Delta([\eta,\infty))f}{g}_{\ip{G}{G^{*}}}| \leq C \,\norm{\tau}_{\infty}\norm{f}_{E}\norm{g}_{G^{*}}.
\end{equation}
Since
\[
\mathcal{S}(\IR^{n}) \otimes X \stackrel{d}{\subset} \mathcal{S}(\IR^{n};X) \stackrel{d}{\hookrightarrow} E,\qquad \mathcal{S}(\IR^{n}) \otimes Y^{*} \stackrel{d}{\subset} \mathcal{S}(\IR^{n};Y^{*}) \stackrel{d}{\hookrightarrow} G^{*},
\]
it thus is enough to consider $f = \phi \otimes x$ and
$g=\psi \otimes y^{*}$ with $\phi \in \mathcal{S}(\IR^{n})$, $\psi \in \mathcal{S}(\IR^{n})$, $x \in X$,
and $y^{*} \in Y^{*}$. Then $\eta \mapsto \ip{\tau(\eta)\Delta([\eta,\infty))f}{g}_{\ip{G}{G^{*}}} = \ip{\Delta([\eta,\infty))\phi}{\psi}_{\ip{\mathcal{S}'}{\mathcal{S}}}\ip{\tau(\eta)x}{y^{*}}_{\ip{Y}{Y^{*}}}$ is measurable, being the product of two measurable functions.

Since the measurable function $y \mapsto \ip{\tau(\eta)\Delta([\eta,\infty))f}{g}_{\ip{G}{G^{*}}}$ satisfies the bound \eqref{eq:proof:lemma:multiplier_rep_via_Borel_measure_rep},
it follows that the expression on the right hand-side of \eqref{eq:lemma:multiplier_rep_via_Borel_measure_rep} is well defined and, in fact, gives rise to a bounded bilinear form
\begin{equation*}\label{eq:lemma;multiplier_rep_via_Borel_measure_rep;bdd_bil_form}
B_{\tau,\mu}\colon E \times G^{*} \to \IC,\,
(f,g) \mapsto \int_{\IR^{n}}\ip{\tau(\eta)\Delta([\eta,\infty))f}{g}_{\ip{G}{G^{*}}} \d\mu(\eta)
\end{equation*}
of norm $\leq C \,\norm{\tau}_{\infty}\norm{\mu}$.
Since $\mathcal{S}(\IR^{n};X)$ and $\mathcal{S}(\IR^{n};Y^{*})$ are dense in
$E$ and $G^{*}$, respectively, in view of this bound for $B_{\tau,\mu}$ and \eqref{eq:proof:lemma:multiplier_rep_via_Borel_measure_rep;G},
it is thus enough to show $\ip{T_{m}f}{g}_{\ip{\mathcal{S}'}{\mathcal{S}}} = B_{\tau,\mu}(f,g)$
holds for all $f \in \mathcal{S}(\IR^{n};X)$ and $g \in \mathcal{S}(\IR^{n};Y^{*})$; here $T_{m}$ is at this moment of course still the operator~\eqref{eq:t_m_basic_setting}.
So let $f \in \mathcal{S}(\IR^{n};X)$ and $g \in \mathcal{S}(\IR^{n};Y^{*})$.
Then
\begin{align*}
\ip{T_{m}f}{g}_{\ip{\mathcal{S}'}{\mathcal{S}}}
&= \ip{m\hat{f}}{\check{g}}_{\ip{\mathcal{S}'}{\mathcal{S}}} = \ip{m\hat{f}}{\check{g}}_{\ip{L^{\infty}}{L^{1}}}
= \int_{\IR^{n}}\int_{(-\infty,\xi]}\ip{\tau(\eta)\hat{f}(\xi)}{\check{g}(\xi)} \d\mu(\eta) \d\xi \\
&= \int_{\IR^{n}}\int_{[\eta,\infty)}\ip{\tau(\eta)\hat{f}(\xi)}{\check{g}(\xi)}\ d\xi \d\mu(\eta) \\
&= \int_{\IR^{n}}\int_{\IR^{n}}\ip{\tau(\eta)\mathds{1}_{[\eta,\infty)}(\xi)\hat{f}(\xi)}{\check{g}(\xi)} \d\xi \d\mu(\eta) \\
&= \int_{\IR^{n}}\ip{\tau(\eta)\mathds{1}_{[\eta,\infty)}\hat{f}}{\check{g}}_{\ip{L^{\infty}}{L^{1}}} \d\mu(\eta) = \int_{\IR^{n}}\ip{\tau(\eta)\mathds{1}_{[\eta,\infty)}\hat{f}}{\check{g}}_{\ip{\mathcal{S}'}{\mathcal{S}}} \d\mu(\eta) \\
&= \int_{\IR^{n}}\ip{\tau(\eta)\Delta([\eta,\infty))f}{g}_{\ip{\mathcal{S}'}{\mathcal{S}}} \d\mu(\eta)
= \int_{\IR^{n}}\ip{\tau(\eta)\Delta([\eta,\infty))f}{g}_{\ip{G}{G^{*}}} \d\mu(\eta) \\
&= B_{\tau,\mu}(f,g),
\end{align*}
where we used Fubini's theorem in the fourth equality.
\end{proof}

\begin{proposition}\label{prop:R-bbd_fm_from_unif_R-bdd_var}
If $\mathscr{M} \subset L^{\infty}(\IR^{n};\mathcal{B}(X,Y))$ is of uniformly $\mathcal{R}$-bounded variation, then
\[
\mathcal{R}\{ T_{m} : m \in \mathscr{M} \} \leq M\,\mathrm{var}_{\mathcal{R}}(\mathscr{M}) \qquad \text{in}\:\: \mathcal{B}(E,G).
\]
\end{proposition}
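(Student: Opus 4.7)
The plan is to reduce the claim to an $\mathcal{R}$-boundedness assertion for a single composition family in $\mathcal{B}(E,G)$ and then to represent each $T_{m}$ as an ``average'' over that family by means of the integral formula in Lemma~\ref{lemma:multiplier_rep_via_Borel_measure_rep}. Fix a representation of $\mathscr{M}$ as in Definition~\ref{def:uniform_R-bdd_var} with $\mathcal{R}$-bounded $\mathscr{T} \subset \mathcal{B}(X,Y)$ and $C \geq \sup_{m}\|\mu_{m}\|$; the key auxiliary family I would use is
\[
\mathscr{U} := \{\tau\,\Delta([\eta,\infty)) : \tau \in \mathscr{T},\,\eta \in \IR^{n}\} \subset \mathcal{B}(E,G),
\]
where each $\tau \in \mathscr{T}$ is identified with its image in $\mathcal{B}(F,G)$ under the assumed contractive pointwise-multiplication embedding. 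The composition rule for $\mathcal{R}$-bounded families, together with the preservation of $\mathcal{R}$-bounds under contractive embeddings (\cite[Section~8.1]{HNVW17}), yields $\mathcal{R}(\mathscr{U}) \leq M\,\mathcal{R}(\mathscr{T})$.

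Next, I would fix $m_{1},\ldots,m_{k} \in \mathscr{M}$ with representations $(\mu_{m_{j}},\tau_{m_{j}})$ and $f_{1},\ldots,f_{k} \in E$, and rewrite the target norm by Rademacher duality as
\[
\biggnorm{\sum_{j=1}^{k}\epsilon_{j}T_{m_{j}}f_{j}}_{L^{2}(\Omega;G)} = \sup\,\sum_{j=1}^{k}\ip{T_{m_{j}}f_{j}}{g_{j}},
\]
the supremum ranging over $(g_{j}) \subset G^{*}$ with $\bignorm{\sum_{j}\epsilon_{j}g_{j}}_{L^{2}(\Omega;G^{*})} \leq 1$; this is the standard realization of a Rademacher-sum norm via the first-chaos projection of $L^{2}(\Omega;G^{*})$. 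By Lemma~\ref{lemma:multiplier_rep_via_Borel_measure_rep},
\[
\sum_{j}\ip{T_{m_{j}}f_{j}}{g_{j}} = \sum_{j}\int_{\IR^{n}}\ip{\tau_{m_{j}}(\eta)\Delta([\eta,\infty))f_{j}}{g_{j}}\,d\mu_{m_{j}}(\eta).
\]

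The crucial step is to convert this \emph{sum} of one-dimensional integrals into a \emph{single} integral against a product probability measure, so that the masses $\|\mu_{m_{j}}\|$ enter only through their supremum $C$ and not through their sum. Writing each $\mu_{m_{j}}$ in polar form $d\mu_{m_{j}} = \|\mu_{m_{j}}\|\,\phi_{j}\,dp_{j}$ with $p_{j}$ a probability measure and $|\phi_{j}| \equiv 1$, and inserting the trivial factors $\int dp_{i} = 1$ for $i \neq j$, the previous display becomes
\[
\int_{(\IR^{n})^{k}} \sum_{j}\|\mu_{m_{j}}\|\phi_{j}(\eta_{j})\ip{\tau_{m_{j}}(\eta_{j})\Delta([\eta_{j},\infty))f_{j}}{g_{j}}\,dP(\eta_{1},\ldots,\eta_{k})
\]
with $P = p_{1}\otimes\cdots\otimes p_{k}$ a probability measure on $(\IR^{n})^{k}$. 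Fibrewise in $(\eta_{1},\ldots,\eta_{k})$, the operators $\tau_{m_{j}}(\eta_{j})\Delta([\eta_{j},\infty))$ lie in $\mathscr{U}$ and the scalars $\|\mu_{m_{j}}\|\phi_{j}(\eta_{j})$ have modulus $\leq C$, so by $\mathcal{R}$-boundedness of $\mathscr{U}$, Kahane's contraction principle for complex scalars, and Cauchy--Schwarz in $L^{2}(\Omega)$,
\[
\biggl|\sum_{j}\|\mu_{m_{j}}\|\phi_{j}(\eta_{j})\ip{\tau_{m_{j}}(\eta_{j})\Delta([\eta_{j},\infty))f_{j}}{g_{j}}\biggr| \leq C\,M\,\mathcal{R}(\mathscr{T})\biggnorm{\sum_{j}\epsilon_{j}f_{j}}_{L^{2}(\Omega;E)}\biggnorm{\sum_{j}\epsilon_{j}g_{j}}_{L^{2}(\Omega;G^{*})}
\]
pointwise. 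Integrating against the probability measure $P$ preserves this bound; taking the supremum over the admissible $(g_{j})$ and finally the infimum over all representations as in Definition~\ref{def:uniform_R-bdd_var} yields $\mathcal{R}(\{T_{m} : m \in \mathscr{M}\}) \leq M\,\mathrm{var}_{\mathcal{R}}(\mathscr{M})$.

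The main obstacle I expect is precisely the summation/integration interchange: a naive pooling of $\sum_{j}\int\cdots\,d\mu_{m_{j}}$ as one integral over the disjoint union $\bigsqcup_{j}\IR^{n}$ would produce the factor $\sum_{j}\|\mu_{m_{j}}\|$, which grows with $k$ and destroys the desired bound. The polar decomposition combined with the product probability measure $P$ is the device that avoids this while still permitting a fibrewise application of the $\mathcal{R}$-bound of $\mathscr{U}$; apart from this, one only has to verify joint Borel measurability of the integrand in $(\eta_{1},\ldots,\eta_{k})$, which follows from the scalar measurability established in the proof of Lemma~\ref{lemma:multiplier_rep_via_Borel_measure_rep} by Fubini--Tonelli across coordinates.
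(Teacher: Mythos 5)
Your strategy is essentially an unfolding of what the paper does by citation: the paper also forms the product family $\mathscr{T}\mathscr{S}$ (your $\mathscr{U}$), observes $\mathcal{R}(\mathscr{T}\mathscr{S}) \le M\,\mathcal{R}(\mathscr{T})$, and then uses the integral representation of Lemma~\ref{lemma:multiplier_rep_via_Borel_measure_rep} to place $T_{m}$ inside $C\,\overline{\mathrm{abs\,conv}}(\mathscr{T}\mathscr{S})$, closure taken in the weak operator topology, after which stability of $\mathcal{R}$-bounds under absolutely convex hulls, products and WOT closure (appealing to \cite[Theorem~8.5.2 and Section~8.1.e]{HNVW17}) finishes the proof. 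Your polar decomposition $\ud\mu_{m_{j}} = \normalnorm{\mu_{m_{j}}}\phi_{j}\,\ud p_{j}$ with the product probability measure $P$ is exactly the averaging device lurking behind that abstract statement, and your fibrewise application of $\mathcal{R}(\mathscr{U})$ together with the complex contraction principle is sound.

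The gap is the Rademacher duality step. You write
\[
\biggnorm{\sum_{j}\epsilon_{j}T_{m_{j}}f_{j}}_{L^{2}(\Omega;G)} = \sup\Bigl\{\sum_{j}\ip{T_{m_{j}}f_{j}}{g_{j}} \ :\ \bignorm{\textstyle\sum_{j}\epsilon_{j}g_{j}}_{L^{2}(\Omega;G^{*})} \le 1\Bigr\},
\]
and justify it as ``the standard realization via the first-chaos projection of $L^{2}(\Omega;G^{*})$''. But boundedness of that projection is precisely $K$-convexity, which the abstract hypotheses of the proposition do not grant (the setting only assumes the dense embeddings of Schwartz spaces and $M < \infty$). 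Without $K$-convexity, the dual of $\mathrm{Rad}_{k}(G)$ is the quotient $L^{2}(\Omega;G^{*})/\mathrm{Rad}_{k}(G)^{\perp}$ and the quotient norm of a coefficient sequence $(g_{j})$ can be strictly smaller than $\normalnorm{\sum\epsilon_{j}g_{j}}_{L^{2}(\Omega;G^{*})}$, so restricting the supremum to Rademacher sums of norm $\le 1$ in $G^{*}$ only yields a \emph{lower} bound for the left-hand side, not an equality. Even when $G$ is $K$-convex the identity holds only up to the $K$-convexity constant, which would pollute the clean estimate $\mathcal{R}\{T_{m}\} \le M\,\mathrm{var}_{\mathcal{R}}(\mathscr{M})$ you are after. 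The paper sidesteps this entirely: once $T_{m} \in C\,\overline{\mathrm{abs\,conv}}^{\mathrm{WOT}}(\mathscr{T}\mathscr{S})$ is established, the preservation of $\mathcal{R}$-bounds under absolutely convex hulls and WOT limits is purely a consequence of weak lower semicontinuity of the norm in $L^{2}(\Omega;G)$ and the contraction principle, valid in every Banach space and with no loss of constants. To repair your argument you should either argue at the level of the $\mathcal{R}$-bound inequality directly (fix $x_{1},\dots,x_{k}$, approximate $T_{m_{j}}$ by finite absolutely convex combinations of elements of $\mathscr{U}$ in WOT simultaneously against the finitely many relevant pairings, and pass to the limit using weak lower semicontinuity), or invoke the WOT version of \cite[Theorem~8.5.2]{HNVW17} as the paper does.
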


\begin{proof}
Let $C>0$ and $\mathscr{T}$ be as in the definition of uniformly $\mathcal{R}$-bounded variation for $\mathscr{M}$, and define
$\mathscr{S} := \{ \Delta([\eta, \infty)) : \eta \in \IR^{n} \} \subset \mathcal{B}(E,F)$.
Each $m \in \mathscr{M}$ in particular satisfies the hypotheses of Lemma~\ref{lemma:multiplier_rep_via_Borel_measure_rep}, whence the associated Fourier multiplier operator $T_{m} \in \mathcal{B}(E,G)$ has the representation \eqref{eq:lemma:multiplier_rep_via_Borel_measure_rep}. Modifying the argument in \cite[Theorem~8.5.2]{HNVW17} from the strong operator topology to the weak operator topology, this representation yields
\[
T_{m} \in  C \,\overline{\mathrm{abs}\,\mathrm{conv}}(\mathscr{T}\mathscr{S}),\qquad m \in \mathscr{M},
\]
where the closure is taken in the weak operator topology on $\mathcal{B}(E,G)$.
Then, by the basic stability properties of $\mathcal{R}$-bounds (see \cite[Section~8.1.e]{HNVW17}),
\[
\mathcal{R}\{ T_{m} : m \in \mathscr{M} \} \leq C\mathcal{R}(\mathscr{T})\mathcal{R}(\mathscr{S}).
\]
Using the $\mathcal{R}$-boundedness assumption $M=\mathcal{R}(\mathscr{S})<\infty$ and taking the infimum over all admissible $C>0$ and $\mathscr{T}$ gives the desired result.
\end{proof}

Combining Proposition~\ref{prop:R-bbd_fm_from_unif_R-bdd_var} with Theorem~\ref{thm:abstract_multiplier}/\ref{thm:abstract_multiplier;prop_alpha} we arrive at the following generic Fourier multiplier theorem:

\begin{theorem}\label{thm:generic_fm_thm}
Let $\mathscr{J} \subset \mathcal{R}_{n}$ be a countable collection of rectangles for which $\Delta_{E} = \{ \Delta[R] : R \in \mathscr{J} \} \subset \mathcal{B}(E)$ and $\Delta_{F} = \{ \Delta[R] : R \in \mathscr{J} \} \subset \mathcal{B}(G)$ form unconditional decompositions of $E$ and $G$, respectively.
		\begin{thm_enum}%
			\item\label{item:thm:generic_fm_thm;general} If $m \in L^{\infty}(\IR^{d};\mathcal{B}(X,Y))$ is a symbol with the property that $\{ m\mathds{1}_{J}: J \in \mathscr{J} \}$ is of uniformly $\mathcal{R}$-bounded variation, then we have $m \in \mathcal{M}(E \to G)$ with
\begin{align*}
\norm{m}_{\mathcal{M}(E \to G)} \leq M\,C^{+}_{\Delta_{E}} C^{-}_{\Delta_{F}}\,\mathrm{var}_{\mathcal{R}}(\{ m1_{J}: J \in \mathscr{J} \}).
\end{align*}
			\item\label{item:thm:generic_fm_thm;alpha} Suppose additionally that $E$ and $G$ have Pisier's property $(\alpha)$. Suppose that $\mathscr{M} \subset L^{\infty}(\IR^{d};\mathcal{B}(X,Y))$ is a set of symbols such that $\{ m\mathds{1}_{J}: m \in \mathscr{M}, J \in \mathscr{J} \}$ is of uniformly $\mathcal{R}$-bounded variation. Then one has, in $\mathcal{B}(E,G)$,
\begin{align*}
\mathcal{R}\{T_{m} : m \in \mathscr{M} \} \leq M\,\alpha_{E}\alpha_{F}\,C^{+}_{\Delta_{E}} C^{-}_{\Delta_{F}}\,\mathrm{var}_{\mathcal{R}}(\{ m1_{J}: m \in \mathscr{M}, J \in \mathscr{J} \}).
\end{align*}
		\end{thm_enum}
\end{theorem}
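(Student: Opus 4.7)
The plan is to apply Proposition~\ref{prop:R-bbd_fm_from_unif_R-bdd_var} to produce an $\mathcal{R}$-bounded collection of ``building-block'' multipliers $M_J := T_{m \mathds{1}_J} \in \mathcal{B}(E, G)$, $J \in \mathscr{J}$, and then feed these into the abstract multiplier theorems (Theorem~\ref{thm:abstract_multiplier} for part~\ref{item:thm:generic_fm_thm;general}, Theorem~\ref{thm:abstract_multiplier;prop_alpha} for part~\ref{item:thm:generic_fm_thm;alpha}) along the unconditional decomposition $(\Delta[J])_{J \in \mathscr{J}}$.

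The key algebraic point to verify is the identity $T_m = \sum_{J \in \mathscr{J}} M_J \Delta[J]$ together with the diagonal relation $\Delta[J] M_J = \Delta[J] M_J \Delta[J]$ required by those theorems. Both follow from frequency-support considerations on Schwartz functions: since $m \mathds{1}_J$ vanishes outside $J$ one has $M_J = M_J \Delta[J]$, and since $M_J f = \mathcal{F}^{-1}(m \mathds{1}_J \hat f)$ is frequency-supported in $J$ one also has $\Delta[J] M_J = M_J$. The sum identity on Schwartz $f$ reduces to $m \hat f = \sum_J m \mathds{1}_J \hat f$, which holds because the unconditional decomposition hypothesis forces $\sum_J \mathds{1}_J = 1$ almost everywhere; by density the identification then extends to $E$.

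With these preparations, part~\ref{item:thm:generic_fm_thm;general} is immediate: Proposition~\ref{prop:R-bbd_fm_from_unif_R-bdd_var} applied to $\{m \mathds{1}_J : J \in \mathscr{J}\}$ gives $\mathcal{R}\{M_J\} \leq M \cdot \mathrm{var}_{\mathcal{R}}(\{m \mathds{1}_J\})$, and Theorem~\ref{thm:abstract_multiplier} then yields $\norm{T_m}_{E \to G} \leq C^+_{\Delta_E} C^-_{\Delta_F} \mathcal{R}\{M_J\}$. For part~\ref{item:thm:generic_fm_thm;alpha} the same reasoning applied to the enlarged family $\{m \mathds{1}_J : m \in \mathscr{M}, J \in \mathscr{J}\}$ shows that $\{T_{m \mathds{1}_J} : m \in \mathscr{M}, J \in \mathscr{J}\} \subset \mathcal{B}(E, G)$ is $\mathcal{R}$-bounded by $M \cdot \mathrm{var}_{\mathcal{R}}$; Theorem~\ref{thm:abstract_multiplier;prop_alpha} then realizes each $T_m$ ($m \in \mathscr{M}$) as an element of its set $\mathcal{T}$ and delivers the claimed $\mathcal{R}$-bound with the extra factor $\alpha_E \alpha_F$.

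No step presents a real obstacle: the abstract machinery built up in this subsection was set up precisely so that the theorem becomes a formal consequence of its two ingredients, the only care being the bookkeeping needed to verify the diagonal relations and the unconditional summation on the dense subspace of Schwartz functions, where all Fourier computations are classical.
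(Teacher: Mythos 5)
Your proposal is correct and follows essentially the same route as the paper: apply Proposition~\ref{prop:R-bbd_fm_from_unif_R-bdd_var} to get $\mathcal{R}$-boundedness of the block multipliers $T_{m\mathds{1}_J}$, verify the diagonal relation $\Delta[J]\,T_{m\mathds{1}_J} = \Delta[J]\,T_{m\mathds{1}_J}\,\Delta[J]$ via frequency-support considerations, feed this into Theorem~\ref{thm:abstract_multiplier} (resp.~\ref{thm:abstract_multiplier;prop_alpha}), and identify the resulting abstract multiplier $D_m$ with the Fourier multiplier $T_m$ by checking $T_m f = D_m f$ on Schwartz functions and extending by density. The paper's write-up is merely a bit more explicit about which topology the summation $\widehat{T_m f} = \sum_J m\mathds{1}_J \hat f$ converges in ($L^1$, then $\mathcal{S}'$), but the key steps coincide with yours.
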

\begin{proof}
We only need to show the compatibility between the Fourier multiplier operator $T_{m}$ and the operator, say $D_{m}$, obtained from the abstract multiplier result Theorem~\ref{thm:abstract_multiplier}/\ref{thm:abstract_multiplier;prop_alpha}.
So let $f \in \mathcal{S}(\IR^{n};X)$. It is enough to show that $T_{m}f=D_{m}f$.
Writing $m_{J}:=m\mathds{1}_{J}$, we have $D_{m} = \sum_{J \in \mathscr{J}}T_{m_{J}}\Delta_{J}$ with respect to the strong operator topology in $\mathcal{B}(E,G)$. Since $G \hookrightarrow \mathcal{S}'(\IR^{n};Y)$, it follows that $D_{m}f=\sum_{J \in \mathscr{J}}T_{m_{J}}\Delta_{J}f$ in $\mathcal{S}'(\IR^{n};Y)$.
On the other hand, $\widehat{T_{m}f} = m\hat{f} = \sum_{J \in \mathscr{J}}m_{J}\mathds{1}_{J}\hat{f}$ in $L^{1}(\IR^{n};Y)$, implying that $T_{m}f = \sum_{J \in \mathscr{J}}T_{m_{J}}\Delta_{J}f$ in $L^{\infty}(\IR^{n};Y) \hookrightarrow \mathcal{S}'(\IR^{n};Y)$. Therefore, $T_{m}f=D_{m}f$.
\end{proof}

\section{Littlewood-Paley Theory and Fourier Multipliers for $\mathcal{A}_{p}$-weights in One Dimension}\label{sec:LP1d}

In this section we extend Bourgain's Littlewood-Paley decomposition \cite{Bou86} to the weighted setting (see Theorem~\ref{thm:Littlewood-Paley_1d}),which we use to obtain a two-weight version of \cite[Theorem~3.4]{Wei} (see Theorem~\ref{thm:Mikhlin_1d}).

Although all results in this section are as special cases contained in Section~\ref{sec:rect_Ap-weights} on the higher-dimensional case, we have decided to treat the one-dimensional case separately. This has two reasons. Firstly, the one-dimensional case simplifies a lot and is already sufficient for the application to maximal $L^{p}$-regularity in Section~\ref{sec:application_max-Lp-reg}.
Secondly, this choice also improves the readability of Section~\ref{sec:rect_Ap-weights} at some points.

Throughout this section we will write $\mathcal{A}_{p}(\IR) := \mathcal{A}_{p}(\IR,\mathcal{R}_{1}) = \mathcal{A}_{p}(\IR,\mathcal{C}_{1})$.

\subsection{Littlewood-Paley Theory}\label{subsec:LP_1d}

\begin{lemma}\label{lem:cutoffs;1d}
		Let $X$ be a UMD space, $p \in (1, \infty)$ and $\omega, \sigma \in \mathcal{A}_p(\IR)$ with $[\omega, \sigma]_{\mathcal{A}_p(\IR)} < \infty$.
Then the family $\{ \Delta[I] : I \in \mathcal{R}_{1} \}$ lies in $\mathcal{B}(L^p_{\sigma}(\IR; X), L^p_{\omega}(\IR;X))$ with $\mathcal{R}$-bound
\[
\mathcal{R}\{\Delta_{j}[I]: I \in \mathcal{R}_{1} \}
					\lesssim_{X,p} [\omega, \sigma]_{\mathcal{A}_p(\IR)}^{1/p} ([\omega]_{\mathcal{A}_p(\IR)}^{1-\frac{1}{p}} + [\sigma]_{\mathcal{A}_p(\IR)}^{\frac{1}{p}}).
\]
	\end{lemma}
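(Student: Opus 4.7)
The plan is to express each $\Delta[(a,b)]$ as a difference of modulated Hilbert transforms, bound the Hilbert transform via Theorem~\ref{thm:extrapolation}, and then upgrade the operator-norm bound to an $\mathcal{R}$-bound using $\mathcal{R}$-boundedness of the modulation family. Let $M_t$ denote the modulation $(M_t f)(x) := e^{itx}f(x)$, which is an isometry on every $L^p_\tau(\IR;X)$. Since $\mathds{1}_{(0,\infty)}(\xi) = \tfrac{1}{2}(1+\sgn \xi)$ one has $\Delta[(0,\infty)] = \tfrac{1}{2}(I + iH)$, where $H$ is the vector-valued Hilbert transform, and a Fourier shift yields $\Delta[(t,\infty)] = M_t\Delta[(0,\infty)]M_{-t} = \tfrac{1}{2}(I + iM_t H M_{-t})$. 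Decomposing $\mathds{1}_{(a,b)} = \mathds{1}_{(a,\infty)} - \mathds{1}_{(b,\infty)}$ the identity contributions cancel, producing
\[
\Delta[(a,b)] = \frac{i}{2}\bigl(M_a H M_{-a} - M_b H M_{-b}\bigr).
\]
This is the crucial algebraic identity: the (generally unbounded in the two-weight setting) identity operator disappears, leaving only differences of conjugates of $H$ by modulations.

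For the two-weight bound on $H$, note that $X$ being UMD gives $H \in \mathcal{B}(L^p(\IR;X))$, and the convolution kernel $x\mapsto\tfrac{1}{\pi x}$ is a Calder\'on--Zygmund kernel with dimensionless constants. Since in one dimension $\mathcal{R}_1 = \mathcal{Q}_1$, Theorem~\ref{thm:extrapolation} yields
\[
\norm{H}_{\mathcal{B}(L^p_\sigma, L^p_\omega)} \lesssim_{X,p} [\omega,\sigma]_{\mathcal{A}_p(\IR)}^{1/p}\bigl([\omega]_{\mathcal{A}_\infty(\IR)}^{1-1/p} + [\sigma'_p]_{\mathcal{A}_\infty(\IR)}^{1/p}\bigr).
\]
Each $\mathcal{A}_\infty$-characteristic above is controlled by the corresponding $\mathcal{A}_p$-characteristic via $[\omega]_{\mathcal{A}_\infty} \leq [\omega]_{\mathcal{A}_p}$ and $[\sigma'_p]_{\mathcal{A}_\infty} \leq [\sigma'_p]_{\mathcal{A}_{p'}} = [\sigma]_{\mathcal{A}_p}^{1/(p-1)}$; for $p \geq 2$ this directly majorises the stated right-hand side, while for $p < 2$ one first invokes the duality isomorphism $\mathcal{M}_p^1((X,\sigma)\to(X,\omega)) \cong \mathcal{M}_{p'}^1((X^*,\omega'_p)\to(X^*,\sigma'_p))$ recorded in Subsection~\ref{sec:r-boundedness} to reduce to the dual exponent $p' > 2$, after which the same majorisation applies.

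Next, the family $\{M_t : t \in \IR\}$ is $\mathcal{R}$-bounded with constant at most $2$ on every $L^q_\tau(\IR;X)$. Indeed, the Kahane--Khintchine inequalities (\cite[Theorem~3.2.5]{HNVW17}) permit exchanging the $L^q_\tau$-norm with the $L^2(\Omega)$-norm, and for each fixed $x$ Kahane's contraction principle for unimodular complex scalars gives
\[
\biggnorm{\sum_{k}\epsilon_k e^{it_k x}f_k(x)}_{L^2(\Omega;X)} \leq 2\biggnorm{\sum_{k}\epsilon_k f_k(x)}_{L^2(\Omega;X)}.
\]
Integrating in $x$ yields the claim on $L^p_\sigma$ and on $L^p_\omega$ separately.

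To conclude, the ideal property of $\mathcal{R}$-bounds under compositions (\cite[Proposition~8.1.19]{HNVW17}) gives
\[
\mathcal{R}\bigl\{M_a H M_{-a} : a \in \IR\bigr\}_{\mathcal{B}(L^p_\sigma,L^p_\omega)} \leq \mathcal{R}\{M_a\}_{\mathcal{B}(L^p_\omega)} \cdot \norm{H}_{\mathcal{B}(L^p_\sigma,L^p_\omega)} \cdot \mathcal{R}\{M_{-a}\}_{\mathcal{B}(L^p_\sigma)} \lesssim \norm{H}_{\mathcal{B}(L^p_\sigma,L^p_\omega)},
\]
and closure of $\mathcal{R}$-bounds under finite linear combinations (also \cite[Section~8.1.e]{HNVW17}) transfers this to the family of differences $\{\Delta[(a,b)]\}$. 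The main technical point is therefore not the main argument (which is a pure \textit{transference + extrapolation} scheme), but the clean bookkeeping that brings the $\mathcal{A}_\infty$-characteristics emitted by Theorem~\ref{thm:extrapolation} into the symmetric $\mathcal{A}_p$-form stated in the lemma, aided by the duality reduction in the range $p<2$.
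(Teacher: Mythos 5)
Your core argument follows the paper's approach closely: extrapolate the Hilbert transform two-weight bound via Theorem~\ref{thm:extrapolation}, decompose $\Delta[I]$ into modulated half-line projections, supply $\mathcal{R}$-boundedness of the modulations via Kahane's contraction principle, and close via the composition/sum permanence of $\mathcal{R}$-bounds. This is precisely the route the paper abbreviates with ``finish as in \cite[Lemma~3.7~c)]{KunWei04}''. A detail worth noting in your favour: you explicitly record the cancellation of the identity contributions in $\mathds{1}_{(a,b)} = \mathds{1}_{(a,\infty)} - \mathds{1}_{(b,\infty)}$, so that only conjugated Hilbert transforms remain. This is cleaner than the paper's sentence ``$\mathds{1}_{\IR_{+}} \in \mathcal{M}^1_p((X,\sigma)\to(Y,\omega))$'', which, read literally as ``$H$ bounded implies $\Delta[(0,\infty)]$ bounded'', would require the identity to be bounded $L^p_\sigma\to L^p_\omega$; the paper's claim is in fact rescued only because $T_{\mathds{1}_{\IR_{+}}}$ is itself a Calder\'on--Zygmund operator in the sense of Section~\ref{sec:extrapolation_CZ} (bounded on $L^p$, same off-diagonal kernel up to a constant), so Theorem~\ref{thm:extrapolation} applies to it directly. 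Either fix is valid; yours is arguably more transparent for the lemma as stated, since $\mathcal{R}_1$ contains only \emph{finite} intervals.

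The one genuine gap is your handling of the weight exponents for $p<2$. The two-weight extrapolation estimate $B(p,\omega,\sigma) := [\omega,\sigma]_{\mathcal{A}_p}^{1/p}\bigl([\omega]_{\mathcal{A}_\infty}^{1-1/p}+[\sigma'_p]_{\mathcal{A}_\infty}^{1/p}\bigr)$ is \emph{invariant} under the passage $p\mapsto p'$, $(\omega,\sigma)\mapsto(\sigma'_p,\omega'_p)$: one checks directly that $[\sigma'_p,\omega'_p]_{\mathcal{A}_{p'}}^{1/p'}=[\omega,\sigma]_{\mathcal{A}_p}^{1/p}$, $(\omega'_p)'_{p'}=\omega$, and $1-1/p'=1/p$, so $B(p',\sigma'_p,\omega'_p)=B(p,\omega,\sigma)$. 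Consequently, dualising to $p'>2$ and running the ``$p'\ge2$'' majorisation there and translating back gives
\[
[\omega,\sigma]_{\mathcal{A}_p(\IR)}^{1/p}\Bigl([\sigma]_{\mathcal{A}_p(\IR)}^{\frac{1}{p(p-1)}}+[\omega]_{\mathcal{A}_p(\IR)}^{\frac{1}{p}}\Bigr),
\]
in which \emph{both} exponents exceed the targets $\tfrac{1}{p}$ and $1-\tfrac{1}{p}$ when $p<2$. So duality does not improve the power of $[\sigma]_{\mathcal{A}_p}$; the obstruction $[\sigma'_p]_{\mathcal{A}_\infty}^{1/p}\le[\sigma]_{\mathcal{A}_p}^{1/(p(p-1))}$, which only collapses to $[\sigma]_{\mathcal{A}_p}^{1/p}$ for $p\ge2$, remains. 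To be fair, the paper itself does not spell out this conversion, and the estimate that Theorem~\ref{thm:extrapolation} actually delivers (with $[\sigma'_p]_{\mathcal{A}_\infty}^{1/p}$ in place of $[\sigma]_{\mathcal{A}_p}^{1/p}$) suffices for everything downstream; but the specific duality reduction you invoke does not supply the exponent you claim, and should be dropped or replaced by simply quoting the bound as produced by Theorem~\ref{thm:extrapolation}.
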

	\begin{proof}
		Since $X$ is UMD, the Hilbert transform $H$ defines a bounded operator on $L^p(\IR; X)$. The Hilbert transform is a Calderón--Zygmund operator, so $H$
is bounded between $L^p_{\sigma}(\IR; X)$ and $L^q_{\omega}(\IR;X)$ with a norm estimate as in Theorem~\ref{thm:extrapolation}. As a Fourier multiplier operator, $H$ has symbol $\imath \sgn = \imath (2\mathds{1}_{\IR_{+}}-\mathds{1})$ . This implies that $\mathds{1}_{\IR_{+}} \in \mathcal{M}^1_p((X,\sigma) \to (Y,\omega))$ with
\[
\normalnorm{\mathds{1}_{\IR_{+}}}_{\mathcal{M}^1_p((X,\sigma) \to (Y,\omega))} \lesssim_{X,p}
[\omega, \sigma]_{\mathcal{A}_p(\IR)}^{1/p} ([\omega]_{\mathcal{A}_p(\IR)}^{1-\frac{1}{p}} + [\sigma]_{\mathcal{A}_p(\IR)}^{\frac{1}{p}}).
\]
The proof can now be finished as in \cite[Lemma~3.7~c)]{KunWei04}.
	\end{proof}

\begin{lemma}\label{lem:Mikhlin_jth-coord;1d}
Let $X$ and $Y$ be UMD Banach spaces, $p \in (1,\infty)$ and $\omega, \sigma \in \mathcal{A}_p(\IR)$ with $[\omega,\sigma]_{\mathcal{A}_p(\IR)} < \infty$.
Then $\mathcal{S}(\IR) \subset \mathcal{M}^{1}_p((X,\sigma) \to (Y,\omega))$ with
\[
\normalnorm{\phi}_{\mathcal{M}^{1}_p((X,\sigma) \to (Y,\omega))} \lesssim [\omega, \sigma]_{\mathcal{A}_p(\IR)}^{1/p} ([\omega]_{\mathcal{A}_p(\IR)}^{1-\frac{1}{p}} + [\sigma]_{\mathcal{A}_p(\IR)}^{\frac{1}{p}})\sup_{k=0,\ldots,3}\sup_{\xi \in \IR}|\xi^{k}\phi^{(k)}(\xi)|
\]
for every $\phi \in \mathcal{S}(\IR)$.
\end{lemma}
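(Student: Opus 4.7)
The plan is to combine an integral representation of $\phi$ with the uniform cutoff bound supplied by Lemma~\ref{lem:cutoffs;1d}. Since $\phi\in\mathcal{S}(\IR)$ in particular vanishes at $-\infty$, the fundamental theorem of calculus gives
\[
\phi(\xi) \;=\; \int_{-\infty}^{\xi}\phi'(\eta)\,d\eta \;=\; \int_{\IR}\mathds{1}_{[\eta,\infty)}(\xi)\,\phi'(\eta)\,d\eta.
\]
This places $\phi$ in the scope of Lemma~\ref{lemma:multiplier_rep_via_Borel_measure_rep}, taken with $\tau \equiv \Id_X$ and the signed Borel measure $\mu(d\eta) = \phi'(\eta)\,d\eta$; that lemma identifies the Fourier multiplier operator with the weak-operator integral $T_\phi = \int_\IR \phi'(\eta)\,\Delta([\eta,\infty))\,d\eta$ and yields the bound
\[
\|T_\phi\|_{\mathcal{B}(L^p_\sigma(\IR;X),L^p_\omega(\IR;X))} \;\leq\; \|\phi'\|_{L^1(\IR)}\,\sup_{\eta\in\IR}\|\Delta([\eta,\infty))\|_{\mathcal{B}(L^p_\sigma,L^p_\omega)}.
\]

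The supremum on the right is then controlled by Lemma~\ref{lem:cutoffs;1d}, whose proof via the Hilbert transform as a Calder\'on--Zygmund operator applies equally to the half-line cutoffs $\mathds{1}_{[\eta,\infty)}$ (these are sums/differences of $\mathds{1}_{\IR_+}$ and translates thereof), giving the factor
\[
[\omega,\sigma]_{\mathcal{A}_p(\IR)}^{1/p}\bigl([\omega]_{\mathcal{A}_p(\IR)}^{1-1/p}+[\sigma]_{\mathcal{A}_p(\IR)}^{1/p}\bigr)
\]
up to a constant depending only on $X$ and $p$.

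It remains to estimate $\|\phi'\|_{L^1(\IR)}$ by the seminorm $N_\phi := \sup_{0\leq k\leq 3}\sup_{\xi\in\IR}|\xi^k\phi^{(k)}(\xi)|$. I would split $\IR = [-1,1]\cup\{|\eta|>1\}$: on the compact middle piece, $|\phi'|$ is bounded pointwise from the $k=0,1$ terms combined with the elementary calculus bound $|\phi'(\eta)|\leq|\phi'(\pm 1)|+\int_\eta^{\pm 1}|\phi''|$; on the tails, one exploits the identity $\phi'(\eta)=-\int_\eta^{\infty}\phi''(s)\,ds$ (and analogously for $\eta<0$), iterated against $\phi'''$, together with the Mikhlin-type pointwise bounds $|\phi^{(k)}(s)|\leq N_\phi/|s|^k$ read off from the $k=2,3$ terms of $N_\phi$, in order to produce a pointwise bound on $|\phi'(\eta)|$ that is integrable at infinity. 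The principal obstacle is precisely this last step---extracting enough decay for $\phi'$ from only finitely many scale-invariant (Mikhlin-type) seminorms, which is exactly what motivates carrying four orders of derivative in the hypothesis.
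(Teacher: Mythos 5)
Your approach goes through an integral representation $\phi(\xi)=\int_{\IR}\mathds{1}_{[\eta,\infty)}(\xi)\,\phi'(\eta)\,d\eta$ and the cutoff bound of Lemma~\ref{lem:cutoffs;1d}, reducing the problem to controlling $\normalnorm{\phi'}_{L^1(\IR)}$ by the Mikhlin seminorm $N_\phi=\sup_{0\le k\le 3}\sup_\xi\normalabs{\xi^k\phi^{(k)}(\xi)}$. That last step is the gap, and it cannot be closed: the Mikhlin seminorm does \emph{not} control $\normalnorm{\phi'}_{L^1(\IR)}$. The pointwise information you can extract from $N_\phi$ is $\normalabs{\phi^{(k)}(\xi)}\le N_\phi\normalabs{\xi}^{-k}$, and iterating $\phi'(\eta)=-\int_\eta^\infty\phi''$ against the bound for $\phi''$ (or $\phi'''$) only reproduces the borderline decay $\normalabs{\phi'(\eta)}\lesssim N_\phi/\normalabs{\eta}$, which is not integrable at infinity. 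Concretely, take a smooth $1$-periodic $g$ with $\int_0^1\normalabs{g'}>0$ and set $\phi_N(\xi)=\chi_N(\xi)\,g(\log_2\xi)$ for a suitable cutoff $\chi_N$ supported on $[1,2^N]$ equal to $1$ on $[2,2^{N-1}]$ with $\sup_k\sup_\xi\normalabs{\xi^k\chi_N^{(k)}(\xi)}$ bounded uniformly in $N$. Then $N_{\phi_N}$ stays bounded, but the change of variables $t=\log_2\xi$ gives $\normalnorm{\phi_N'}_{L^1}\gtrsim N$. So your candidate bound for the variation measure blows up, and Lemma~\ref{lemma:multiplier_rep_via_Borel_measure_rep} with $\mu=\phi'\,d\lambda$ cannot deliver the stated estimate.

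The paper's proof avoids the $L^1$-bound entirely by passing to the \emph{kernel} side: normalizing $N_\phi\le 1$, it first gets the unweighted bound from Theorem~\ref{thm:mikhlin}, then invokes Stein's Proposition~VI.4.4.2(a) of~\cite{Ste93} to conclude that $K=\mathcal{F}^{-1}\phi$ is a Calder\'on--Zygmund kernel with uniform constants, and then applies the two-weight extrapolation Theorem~\ref{thm:extrapolation} for Calder\'on--Zygmund operators. This is precisely why four orders of derivatives (or rather, Mikhlin regularity beyond order one) are carried in the hypothesis: they are needed to derive the H\"older estimate $\normalnorm{K(x-y)-K(x)}\lesssim\normalabs{y}^\alpha\normalabs{x}^{-1-\alpha}$ on the kernel, not to control $\normalnorm{\phi'}_{L^1}$. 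If you want to rescue a representation-style argument, you would have to split off a fixed bounded elementary piece and express the remainder with a finite measure adapted to dyadic scales -- which is essentially the mechanism used later in the paper for the piecewise symbols $m\mathds{1}_I$ in Theorem~\ref{thm:Mikhlin_1d}, where the logarithmic measure on each dyadic block has \emph{bounded} mass since each block has bounded multiplicative length.
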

\begin{proof}
Let $\phi \in \mathcal{S}(\IR)$ with $\sup_{k=0,\ldots,3}\sup_{\xi \in \IR}|\xi^{k}\phi^{(k)}(\xi)| \leq 1$.
The Mikhlin multiplier theorem (Theorem~\ref{thm:mikhlin}) in one dimension gives $\phi \in \mathcal{M}^1_{p}((X,\mathds{1}) \to (Y,\mathds{1}))$ with the estimate $\norm{\phi}_{\mathcal{M}^1_{p}((X,\mathds{1}) \to (Y,\mathds{1}))} \lesssim 1$.
By \cite[Proposition~VI.4.4.2(a)]{Ste93}, $K:=\mathcal{F}^{-1}\phi$ satisfies the estimates in the definition of a Calderón--Zygmund kernel independently of $\phi$.
The desired result thus follows from Theorem~\ref{thm:extrapolation}.
\end{proof}

\begin{lemma}\label{lem:density_cpct_Fourier_supp_away;1d}
Let $X$ be a Banach space, $p \in (1,\infty)$ and $\omega \in \mathcal{A}_p(\IR)$.
Then $\mathcal{F}^{-1}C_c^{\infty}(\IR_{*})$ is dense in $L^{p}_{\omega}(\IR;X)$, where $\IR_{*} = \IR \setminus \{0\}$.
\end{lemma}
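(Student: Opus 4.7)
The plan is to reduce to the scalar-valued case and then invoke Hahn--Banach duality. First, the embedding $\mathcal{S}(\IR;X) \stackrel{d}{\hookrightarrow} L^p_\omega(\IR;X)$ noted in the preliminaries, combined with the density of the algebraic tensor product $\mathcal{S}(\IR) \otimes X$ in $\mathcal{S}(\IR;X)$ in the Schwartz topology (which is finer than the $L^p_\omega$-topology, since every $\mathcal{A}_p$-weight grows at most polynomially in the averaged sense $\omega(B(0,R)) \lesssim R^N$ arising from doubling, so Schwartz decay is more than enough to be $\omega$-integrable), shows that $\mathcal{S}(\IR) \otimes X$ is dense in $L^p_\omega(\IR;X)$. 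Approximating a finite sum $\sum_i \phi_i \otimes x_i$ reduces to approximating each scalar Schwartz factor $\phi_i$ in the scalar $L^p_\omega(\IR)$-norm by elements of $\mathcal{F}^{-1}C_c^\infty(\IR_*)$, so it suffices to prove density in the scalar case.

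For the scalar case, I would use the standard duality $(L^p_\omega(\IR))^* = L^{p'}_{\omega'_p}(\IR)$: by Hahn--Banach it suffices to show that every $g \in L^{p'}_{\omega'_p}(\IR)$ satisfying $\int g \cdot \mathcal{F}^{-1}\varphi \, \d x = 0$ for all $\varphi \in C_c^\infty(\IR_*)$ must vanish almost everywhere. Viewing $g$ as a tempered distribution via the embedding $L^{p'}_{\omega'_p}(\IR) \hookrightarrow \mathcal{S}'(\IR)$ (valid since $\omega'_p \in \mathcal{A}_{p'}$), the orthogonality rewrites as $\langle \hat g, \varphi\rangle = 0$ for every $\varphi \in C_c^\infty(\IR_*)$, so $\supp \hat g \subseteq \{0\}$. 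The classical structure theorem for distributions supported at a single point then forces $\hat g = \sum_{k=0}^N c_k \delta_0^{(k)}$, and inverting the Fourier transform shows that $g$ agrees almost everywhere with a polynomial.

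The remaining --- and main --- obstacle is to exclude nonzero polynomials from $L^{p'}_{\omega'_p}(\IR)$. Since $\omega \in \mathcal{A}_p(\IR)$ implies $\omega'_p \in \mathcal{A}_{p'}(\IR) \subseteq \mathcal{A}_\infty(\IR)$, the reverse-doubling property of $\mathcal{A}_\infty$-weights --- of the form $\omega'_p([-R,R]) \leq (1-\eta)\,\omega'_p([-2R,2R])$ for some $\eta > 0$, iterated on concentric intervals --- yields $\omega'_p(\{|x|>R\}) = \infty$ for every $R > 0$. For any nonzero polynomial $P$ one can pick $R_0$ large enough that $|P(x)| \geq c > 0$ on $\{|x|>R_0\}$, whence $\int |P|^{p'}\,\omega'_p \, \d x \geq c^{p'}\,\omega'_p(\{|x|>R_0\}) = \infty$. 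This forces $g = 0$ almost everywhere and completes the density argument; the rest is standard distributional reasoning combined with the scalar $L^p_\omega$-duality.
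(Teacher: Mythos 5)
Your proof is correct, but it takes a genuinely different route from the paper's. The paper proceeds constructively: after reducing to the scalar case (as you do), it approximates a given $f$ by $\mathcal F^{-1}C_c^\infty(\IR)$, splits the approximant via the frequency half-line projections $\Delta(\varepsilon[0,\infty))$ (bounded on $L^p_\omega(\IR)$ because $\omega\in\mathcal A_p$, via the weighted Hilbert transform), and then pushes each piece's frequency support strictly off the origin by modulating with $e_{\varepsilon/k}$ and multiplying by a one-sided-spectrum Schwartz modulator $\phi(\varepsilon\cdot/k)$, letting $k\to\infty$. Your argument instead goes by Hahn--Banach duality: an annihilator $g\in L^{p'}_{\omega'_p}(\IR)$ has $\check g$ supported in $\{0\}$, hence $g$ is a polynomial, and a nonzero polynomial cannot lie in $L^{p'}_{\omega'_p}$ since $\omega'_p\in\mathcal A_{p'}\subset\mathcal A_\infty$ has infinite total mass. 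Both proofs are sound; yours is shorter if one is willing to invoke the structure theorem for distributions supported at a point and the duality $[L^p_\omega]^*=L^{p'}_{\omega'_p}$, whereas the paper's is self-contained at the level of explicit approximants (which is helpful because a similar modulation trick is reused in the higher-dimensional analogue). One small remark on your reverse-doubling step: the inequality $\omega'_p([-R,R])\le(1-\eta)\,\omega'_p([-2R,2R])$ is cleanest from the two-sided comparability $(|E|/|Q|)^{p'}\le[\omega'_p]_{\mathcal A_{p'}}\,\omega'_p(E)/\omega'_p(Q)$ applied to $E=[-2R,2R]\setminus[-R,R]$, which is available since $\omega'_p$ is in fact $\mathcal A_{p'}$ and not merely $\mathcal A_\infty$; alternatively, iterating the $\mathcal A_\infty$-comparability over dyadically growing concentric intervals directly gives $\omega'_p(\IR)=\infty$ without needing the one-step reverse doubling.
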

\begin{proof}
In view of the density of $L^{p}_{\omega}(\IR) \otimes X$ in $L^{p}_{\omega}(\IR;X)$ we may without loss of generality assume that $X=\IC$. As $\mathcal{F}^{-1}C^{\infty}_{c}(\IR)$ is dense in $L^{p}_{\omega}(\IR)$, it suffices to show that $\mathcal{F}^{-1}C^{\infty}_{c}(\IR)$ is contained in the closure of $\mathcal{F}^{-1}C_c^{\infty}(\IR_{*})$ in $L^{p}_{\omega}(\IR)$.
So fix an $f \in \mathcal{F}^{-1}C^{\infty}_{c}(\IR)$.
For each $\varepsilon \in \{-1,1\}$ let $I_{\varepsilon} \coloneqq \varepsilon[0,\infty) \in \mathcal{R}_{1}$ and consider the associated frequency cut-off $\Delta(I_{\epsilon}) \in \mathcal{B}(L^{p}_{\omega}(\IR))$.
Then $f = \sum_{\varepsilon \in \{-1,1\}}\Delta(I_{\epsilon})f$ with $\supp \mathcal{F}[\Delta(I_{\epsilon})f] \subset I_{\varepsilon}$.
Furthermore, writing $e_{a}(x) = \exp(2\pi i a \cdot x)$, picking $\phi \in \mathcal{S}(\IR)$ with $\phi(0)=1$ and $\supp \mathcal{F}\phi \subset (0,\infty)$ and putting $\phi_{\varepsilon,k}:= \phi(\frac{\varepsilon}{k}\,\cdot\,)$, we have $f^{\varepsilon}_{k} := \phi_{\varepsilon,k}e_{\frac{1}{k}\varepsilon}\Delta(I_{\epsilon})f \in \mathcal{F}^{-1}C_c^{\infty}(\varepsilon(0,\infty))$ with $f^{\varepsilon}_{k} \to \Delta(I_{\epsilon})f$ in $L^{p}_{\omega}(\IR)$ as $k \to \infty$.
\end{proof}

For each $k \in \IZ$ and $\eta \in \{-1,1\}$ we consider the dyadic interval $I_{k,\eta} := \eta[2^{k},2^{k+1}]$.
Let $\mathcal{I}$ denote the collection of all these dyadic intervals: $\mathcal{I} \coloneqq \{ I_{k,\eta} : (k,\eta) \in \IZ \times \{-1,1\} \}$.

\begin{theorem}\label{thm:Littlewood-Paley_1d}
Let $X$ be a UMD Banach space, $p \in (1,\infty)$ and $\omega \in \mathcal{A}_p(\IR)$.
Then $\Delta \coloneqq (\Delta_{I})_{I \in \mathcal{I}}$ defines an unconditional decomposition of $L^{p}_{\omega}(\IR;X)$ with $C_{\Delta}^{\pm} \lesssim_{X,p} [\omega]_{\mathcal{A}_p(\IR)}^{2\max\{1,\frac{1}{p-1}\}}$.
\end{theorem}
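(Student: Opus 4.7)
The plan is to verify the hypotheses of Proposition~\ref{prop:uncond_Schaud_bbd_into_Rad}. Orthogonality of the indicators $\mathds{1}_I$ for $I \in \mathcal{I}$ gives the pre-decomposition property, and density of $\mathrm{ran}(\Delta)$ follows from Lemma~\ref{lem:density_cpct_Fourier_supp_away;1d}: any $g \in \mathcal{F}^{-1}C^\infty_c(\mathbb{R}_*)$ has $\hat g$ supported in a compact subset of $\mathbb{R}_*$, meeting only finitely many dyadic intervals, so $g = \Delta_F g$ for a finite $F \subset \mathcal{I}$. The main work is to quantify U$^+$ for $\Delta$ and for its adjoint family.

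For U$^+$ of $\Delta$ on $L^p_\omega(\mathbb{R};X)$, I fix a bump $\phi \in C^\infty_c(\mathbb{R})$ with $\phi \equiv 1$ on $[1,2]$ and $\mathrm{supp}\,\phi \subset (3/4, 9/4)$, and set $\phi_I(\xi) := \phi(\eta\xi/2^k)$ for $I = I_{k,\eta}$. Two properties are essential: the identity $\Delta_I \phi_I(D) = \Delta_I$, because $\mathds{1}_I \phi_I = \mathds{1}_I$, and the bounded-overlap property that at most two of the $\phi_I$ are simultaneously nonzero. The latter makes $\tilde{m}^F_\varepsilon := \sum_{I \in F}\varepsilon_I \phi_I$ a Schwartz function with $\sup_{j \leq 3}\sup_\xi |\xi^j (\tilde{m}^F_\varepsilon)^{(j)}(\xi)| \lesssim 1$ uniformly in finite $F \subset \mathcal{I}$ and signs $\varepsilon$, so Lemma~\ref{lem:Mikhlin_jth-coord;1d} (with $\sigma = \omega$) gives $\|T_{\tilde{m}^F_\varepsilon}\|_{\mathcal{B}(L^p_\omega(\mathbb{R};X))} \lesssim_{X,p} [\omega]^{\max\{1,2/p\}}_{\mathcal{A}_p(\mathbb{R})}$ uniformly. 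Chaining $\Delta_I = \Delta_I \phi_I(D)$ with the $\mathcal{R}$-boundedness of $\{\Delta[J] : J \in \mathcal{R}_1\}$ from Lemma~\ref{lem:cutoffs;1d} and this Mikhlin estimate, I bound $\biggnorm{\sum_{I \in F}\varepsilon_I \Delta_I f}_{L^2(\Omega;L^p_\omega)}$ by $[\omega]^{2\max\{1,2/p\}}_{\mathcal{A}_p(\mathbb{R})}\|f\|_{L^p_\omega}$; substituting $f \mapsto \Delta_F f$ converts the right-hand side into $\biggnorm{\sum_{I \in F}\Delta_I f}_{L^p_\omega}$, yielding $C^+_\Delta \lesssim [\omega]^{2\max\{1,2/p\}}_{\mathcal{A}_p(\mathbb{R})} \leq [\omega]^{2\max\{1,1/(p-1)\}}_{\mathcal{A}_p(\mathbb{R})}$.

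The adjoint family $\Delta^*$ on $L^{p'}_{\omega'_p}(\mathbb{R};X^*)$ satisfies $\Delta_I^* = \Delta_{-I}$ and is thus a dyadic decomposition of the same form, with weight $\omega'_p \in \mathcal{A}_{p'}(\mathbb{R})$. Repeating the previous argument in this dual setting, combined with $[\omega'_p]_{\mathcal{A}_{p'}(\mathbb{R})} = [\omega]_{\mathcal{A}_p(\mathbb{R})}^{1/(p-1)}$ and $1/(p'-1) = p-1$, yields $C^+_{\Delta^*} \lesssim [\omega]^{2\max\{1,1/(p-1)\}}_{\mathcal{A}_p(\mathbb{R})}$. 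Proposition~\ref{prop:uncond_Schaud_bbd_into_Rad} then gives that $\Delta$ is an unconditional decomposition with $C^-_\Delta \leq C^+_{\Delta^*}$, completing the estimate on $C^\pm_\Delta$. The main obstacle is designing the smooth bumps $\phi_I$ so that the identity $\Delta_I \phi_I(D) = \Delta_I$ couples the non-smooth projections to a Mikhlin-bounded symbol; once this bridge is in place, the $\mathcal{R}$-bound on $\{\Delta[J] : J \in \mathcal{R}_1\}$ reassembles the randomized sum with the correct quantitative dependence on $[\omega]_{\mathcal{A}_p(\mathbb{R})}$.
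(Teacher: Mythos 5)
Your proposal is correct and follows essentially the same route as the paper's proof: verify the hypotheses of Proposition~\ref{prop:uncond_Schaud_bbd_into_Rad} via Lemma~\ref{lem:density_cpct_Fourier_supp_away;1d} for density, use the duality $\Delta_I^* = \Delta_{-I}$ to reduce $C^-_\Delta$ to $C^+_{\Delta^*}$, and obtain U$^+$ by combining smooth bumps $\phi_I \equiv 1$ on $I$ with uniform Mikhlin-condition bounds (your $\phi_I$ play exactly the role of the paper's $\rho_I$), Lemma~\ref{lem:Mikhlin_jth-coord;1d}, the identity $\Delta_I T_{\phi_I} = \Delta_I$, and the $\mathcal{R}$-bound of Lemma~\ref{lem:cutoffs;1d}. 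The only cosmetic difference is that you track the slightly sharper intermediate exponent $\max\{1,2/p\}$ coming from Lemma~\ref{lem:Mikhlin_jth-coord;1d}~with~$\sigma=\omega$ before observing it is dominated by $\max\{1,\tfrac{1}{p-1}\}$, whereas the paper uses the cruder bound from the start; this does not change the asserted conclusion.
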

\begin{proof}
Let us check the conditions of Proposition~\ref{prop:uncond_Schaud_bbd_into_Rad}.
The density of $\ran(\Delta) \supset \mathcal{F}^{-1}C_c^{\infty}(\IR_{*};X)$ in $L^{p}_{\omega}(\IR;X)$ follows from Lemma~\ref{lem:density_cpct_Fourier_supp_away;1d}.
For the randomized estimates we only need to treat $\Delta$, $\Delta^{*}$ being of the same form.
Indeed, as $X$ is reflexive (being a UMD space), $\Delta_{I}^{*} = \Delta_{-I}$ on $[L^{p}_{\omega}(\IR;X)]^{*} = L^{p'}_{\sigma}(\IR;X^{*})$, where $\sigma= \omega^{-1/(p-1)}$.
Furthermore, $[\omega]_{\mathcal{A}_p(\IR)}^{2\max\{1,\frac{1}{p-1}\}} = [\sigma]_{\mathcal{A}_{p'}(\IR)}^{2\max\{1,\frac{1}{p'-1}\}}$.

It is standard (and in fact only involving a direct computation) to construct $(\rho_I)_{I \in \mathcal{I}} \subset C^{\infty}_{c}(\IR)$ with the properties that (i)
$\rho_{I} \equiv 1$ on $I$ for each $I \in \mathcal{I}$ and that (ii) the functions
\[
\rho_{\varepsilon,\mathcal{J}}  \coloneqq \sum_{I \in \mathcal{J}}\varepsilon_{I}\rho_{I},
 \qquad \varepsilon \in \{-1,1\}^{\mathcal{I}}, \mathcal{J} \subset \mathcal{I}\:\text{finite},
\]
uniformly satisfy the Mikhlin condition of order $3$, that is, there exists a finite constant $C>0$ such that
\[
\sup\{ |\xi^{l}\rho_{\varepsilon,\mathcal{J}}^{(l)}(\xi)|  : l =0,\ldots,3, \xi \neq 0 \} \leq C
\]
for all $\varepsilon \in \{-1,1\}^{\mathcal{I}}$ and $\mathcal{J} \subset \mathcal{I}$ finite.
Using Lemma~\ref{lem:Mikhlin_jth-coord;1d} we find that $(\rho_{I})_{I \in \mathcal{I}} \subset \mathcal{M}^{1}_p(X,\omega)$ with
\[
\biggnorm{\sum_{I \in \mathcal{J}}\varepsilon_{I}T_{\rho_{I}}}_{\mathcal{B}(L^{p}_{\omega}(\IR;X))} \lesssim_{p,X} [\omega]_{\mathcal{A}_p(\IR)}^{\max\{1,\frac{1}{p-1}\}}
\]
for all  $\varepsilon \in \{-1,1\}^{\mathcal{I}}$ and $\mathcal{J} \subset \mathcal{I}$ finite.
As $\Delta_{I}T_{\rho_{I}} = \Delta_{I}$, combining this estimate with Lemma~\ref{lem:cutoffs;1d} gives the desired estimate for $\Delta$ in Proposition~\ref{prop:uncond_Schaud_bbd_into_Rad}.
\end{proof}

\subsection{The Mikhlin Fourier Multiplier Theorem}

The following theorem, which extends~\cite[Theorem~3.4]{Wei} to the two-weighted setting, is consequence of the generic Theorem~\ref{thm:generic_fm_thm} and the Littlewood--Paley decompositions from
Theorem~\ref{thm:Littlewood-Paley_1d}.

\begin{theorem}\label{thm:Mikhlin_1d}
Let $X$ and $Y$ be UMD Banach spaces, $p \in (1,\infty)$ and $\omega,\sigma \in \mathcal{A}_p(\IR)$ with $[\omega,\sigma]_{\mathcal{A}_p(\IR)} < \infty$.
\begin{thm_enum}%
			\item\label{item:thm:mikhlin_weighted;UMD;1d}
Let $m \in L^{\infty}(\IR;\mathcal{B}(X,Y))$ be $C^{1}$ on $\IR \setminus \{0\}$.
If
\begin{equation*}
\norm{m}_{\mathcal{R}\mathfrak{M}} := \sup_{k=0,1} \mathcal{R}\{ \xi^{k}m^{(k)}(\xi): \xi \neq 0\} < \infty,
\end{equation*}
then $m \in \mathcal{M}^{1}_p((X,\sigma),(Y,\omega))$ with
\begin{equation*}
\norm{T_m}_{\mathcal{B}(L^{p}_{\sigma}(\IR;X),L^{p}_{\omega}(\IR;Y))} \lesssim_{X,Y,p,\sigma,\omega} \norm{m}_{\mathcal{R}\mathfrak{M}}.
\end{equation*}
\item\label{item:thm:mikhlin_weighted;UMDalpha;1d} Suppose further that $X$ and $Y$ have Pisier's property~$(\alpha)$. If $\mathscr{M} \subset L^{\infty}(\IR^{n};\mathcal{B}(X,Y))$ is such that $\partial^{\alpha}m$ is $C^{1}$ on $\IR \setminus \{0\}$ for each $m \in \mathscr{M}$ and
    			\begin{equation*}
    				\norm{\mathscr{M}}_{\mathcal{R}\mathfrak{M}} := \sup_{k=0,1} \mathcal{R}\{\xi^{k} m^{(k)}(\xi): m \in \mathscr{M}, \xi \neq 0\} < \infty,
                \end{equation*}
                	then $\mathscr{M} \subset \mathcal{M}^{1}_p((X,\sigma),(Y,\omega))$ and one has in $\mathcal{B}(L^{p}_{\sigma}(\IR;X),L^{p}_{\omega}(\IR;Y))$
    			\begin{equation*}
					\mathcal{R}\{ T_{m} : m \in \mathscr{M} \} \lesssim_{X,Y,p,\sigma,\omega} \norm{\mathscr{M}}_{\mathcal{R}\mathfrak{M}}.
    			\end{equation*}
		\end{thm_enum}
\end{theorem}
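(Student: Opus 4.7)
The plan is to invoke the generic Fourier multiplier result Theorem~\ref{thm:generic_fm_thm} with the choices $E = L^p_\sigma(\IR;X)$, $F = L^p_\omega(\IR;X)$, $G = L^p_\omega(\IR;Y)$ and $\mathscr{J} = \mathcal{I}$, the dyadic family from Subsection~\ref{subsec:LP_1d}. The required unconditional decompositions $\{\Delta[I] : I \in \mathcal{I}\}$ of $E$ and of $G$ will be supplied by Theorem~\ref{thm:Littlewood-Paley_1d} applied, respectively, to $(X,\sigma)$ and $(Y,\omega)$, with unconditional constants controlled by a power of $\max\{[\omega]_{\mathcal{A}_p(\IR)}, [\sigma]_{\mathcal{A}_p(\IR)}\}$. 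The quantity $M = \mathcal{R}\{\Delta([\eta,\infty)) : \eta \in \IR\}$ in $\mathcal{B}(E,F)$ will be finite by Lemma~\ref{lem:cutoffs;1d} applied to the two-weight pair $(\sigma,\omega)$. Thus everything reduces to verifying that the family $\{m\mathds{1}_I : I \in \mathcal{I}\}$ in part~\ref{item:thm:mikhlin_weighted;UMD;1d}, respectively $\{m\mathds{1}_I : m \in \mathscr{M},\, I \in \mathcal{I}\}$ in part~\ref{item:thm:mikhlin_weighted;UMDalpha;1d}, is of uniformly $\mathcal{R}$-bounded variation in the sense of Definition~\ref{def:uniform_R-bdd_var}, with $\mathrm{var}_{\mathcal{R}}$-quantity controlled by $\norm{m}_{\mathcal{R}\mathfrak{M}}$ (respectively $\norm{\mathscr{M}}_{\mathcal{R}\mathfrak{M}}$).

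For this main step I would fix a dyadic interval $I = \eta[2^k,2^{k+1}] \in \mathcal{I}$ with left endpoint $a$ and right endpoint $b$, interpreted as half-open so that the representation \eqref{eq:def:uniform_R-bdd_var} holds pointwise in $\xi$. For $\xi \in [a,b)$ the fundamental theorem of calculus gives
\[
m(\xi) = m(a) + \int_a^\xi m'(\zeta)\ud \zeta = m(a) + \int_a^\xi \zeta m'(\zeta)\,\frac{\ud \zeta}{\zeta},
\]
and inserting a compensating Dirac mass at $b$ to annihilate the contribution for $\xi \geq b$ produces the representation with
\[
\mu_I = \delta_a + \mathds{1}_{(a,b)}\frac{\ud \zeta}{\zeta} - \delta_b, \qquad \tau_I(a) = m(a),\quad \tau_I(\zeta) = \zeta m'(\zeta)\text{ on }(a,b),\quad \tau_I(b) = m(b).
\]
The crucial observation is that although $\abs{I} = 2^k$ grows with $k$, the multiplicatively invariant density $\ud \zeta/\zeta$ has total variation exactly $\log 2$ on every dyadic interval, so $\norm{\mu_I} \leq 2 + \log 2$ uniformly in $I$. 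Simultaneously the range of every $\tau_I$ lies in
\[
\mathscr{T} := \{m(\xi) : \xi \neq 0\} \cup \{\xi m'(\xi) : \xi \neq 0\},
\]
which is $\mathcal{R}$-bounded with $\mathcal{R}(\mathscr{T}) \leq 2\norm{m}_{\mathcal{R}\mathfrak{M}}$ by the basic permanence properties of $\mathcal{R}$-bounds. This gives $\mathrm{var}_{\mathcal{R}}(\{m\mathds{1}_I : I \in \mathcal{I}\}) \lesssim \norm{m}_{\mathcal{R}\mathfrak{M}}$; for part~\ref{item:thm:mikhlin_weighted;UMDalpha;1d} the identical construction with $\mathscr{T}$ enlarged across $m \in \mathscr{M}$ yields the analogous bound by $\norm{\mathscr{M}}_{\mathcal{R}\mathfrak{M}}$.

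With this in place, parts~\ref{item:thm:mikhlin_weighted;UMD;1d} and~\ref{item:thm:mikhlin_weighted;UMDalpha;1d} will follow at once from parts~\ref{item:thm:generic_fm_thm;general} and~\ref{item:thm:generic_fm_thm;alpha} of Theorem~\ref{thm:generic_fm_thm}, the latter also using that Pisier's property~$(\alpha)$ descends to weighted Bochner $L^p$-spaces. The hard part will be the uniform $\mathcal{R}$-bounded variation step in the previous paragraph: the dyadic intervals have length $2^k \to \infty$, so a naive choice such as Lebesgue measure would give $\norm{\mu_I} \sim 2^k$ and be useless. What rescues the argument is that the Mikhlin-type hypothesis controls $\xi m'(\xi)$ rather than bare $m'(\xi)$, which is precisely the derivative paired with the scale-invariant measure $\ud \zeta/\zeta$ whose total mass on every dyadic scale is exactly $\log 2$.
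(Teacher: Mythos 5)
Your proposal is correct and follows essentially the same route as the paper: reduce via the generic Theorem~\ref{thm:generic_fm_thm} (with $E=L^p_\sigma(\IR;X)$, $F=L^p_\omega(\IR;X)$, $G=L^p_\omega(\IR;Y)$, $\mathscr{J}=\mathcal{I}$) and the weighted Littlewood--Paley decomposition of Theorem~\ref{thm:Littlewood-Paley_1d}, then verify uniform $\mathcal{R}$-bounded variation of $\{m\mathds{1}_I : I\in\mathcal{I}\}$ by writing $m$ on each dyadic interval through the fundamental theorem of calculus with endpoint Dirac masses and the scale-invariant density $\mathrm{d}\zeta/\zeta$, exactly as in the paper (where $\tau_{m,I} = m\mathds{1}_{\{a_I,b_I\}} + \eta m'(\eta)\mathds{1}_{(a_I,b_I)}$ and $\mu_{m,I}$ has total variation $2+\log 2$). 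The only cosmetic differences are that you spell out the choice of $E,F,G$ explicitly and track the harmless factor $2$ in the $\mathcal{R}$-bound of $\mathscr{T}$.
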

\begin{proof}
By Theorems \ref{thm:generic_fm_thm} and \ref{thm:Littlewood-Paley_1d} we only need to check that $\{m\mathds{1}_{I} : I \in \mathcal{I}\}$ and $\{m\mathds{1}_{I} : m \in \mathscr{M}, I \in \mathcal{I}\}$ are of uniformly $\mathcal{R}$-bounded variation in \ref{item:thm:mikhlin_weighted;UMD;1d} and \ref{item:thm:mikhlin_weighted;UMDalpha;1d}, respectively.
The case \ref{item:thm:mikhlin_weighted;UMDalpha;1d} being exactly the same as \ref{item:thm:mikhlin_weighted;UMD;1d}, for simplicity of notation we only treat \ref{item:thm:mikhlin_weighted;UMD;1d}.

In connection with the representation \eqref{eq:def:uniform_R-bdd_var} in the definition of uniformly $\mathcal{R}$-bounded variation, let us note the following.
Let $-\infty < a < b < \infty$ and let $f:[a,b] \to Z$ be a $C^{1}$-function to some Banach space $Z$.
Then, extending $f$ by zero to $\IR$, the fundamental theorem of calculus gives
\begin{align*}
(f\mathds{1}_{\IR \setminus \{b\}})(\xi)
&= \int_{(-\infty,\xi]}f\mathds{1}_{\{a,b\}}\,d(\delta_{a}-\delta_{b}) + \int_{(-\infty,\xi]}f'\mathds{1}_{(a,b)}\,d\lambda.
\end{align*}
Denoting by $a_{I}$ and $b_{I}$ the left and right endpoint of $I \in \mathcal{I}$, respectively, this observation gives that, for a.e.\ $\xi \in \IR$,
\[
(m\mathds{1}_{I})(\xi) = \int_{(-\infty,\xi]}m\mathds{1}_{\{a_{I},b_{I}\}}\,d(\delta_{a_{I}}-\delta_{b_{I}}) + \int_{(-\infty,\xi]}\eta m'(\eta)\mathds{1}_{(a_{I},b_{I})}(\eta)\, \eta^{-1}d\lambda(\eta).
\]
So $m\mathds{1}_{I}$ satisfies \eqref{eq:def:uniform_R-bdd_var} a.e.\ with $\tau_{m,I}(\eta)=m\mathds{1}_{\{a_{I},b_{I}\}}+\eta m'(\eta)\mathds{1}_{(a_{I},b_{I})}(\eta)$ and $d\mu_{m,I}(\eta) = d(\delta_{a_{I}}-\delta_{b_{I}})+\mathds{1}_{(a_{I},b_{I})}(\eta)\eta^{-1}d\lambda(\eta)$.
Since
\[
\mathcal{R}(\{\tau_{m,I}(\eta):\eta \in \IR, I \in \mathcal{I}\}) \leq \norm{m}_{\mathcal{R}\mathfrak{M}}
\]
and
\begin{align*}
\normalnorm{\mu_{m,I}} = 2 + \int_{a_{I}}^{b_{I}}\frac{d\eta}{|\eta|} = 2 + \log(b_{I}/a_{I}) = 2 + \log(2), \qquad I = I_{k,\eta},
\end{align*}
it follows that
\[
\mathrm{var}_{\mathcal{R}}(\{ m\mathds{1}_{I}: I \in \mathcal{I} \}) \lesssim \norm{m}_{\mathcal{R}\mathfrak{M}}. \qedhere
\]
\end{proof}

\section{An application: maximal $L^{p}$-regularity}\label{sec:application_max-Lp-reg}

	We now give a short application of the obtained multiplier results in the context of maximal $L^p$-regularity. Let $-A$ be the generator of a bounded analytic $C_0$-semigroup on a Banach space $X$ (for an introduction see~\cite{Paz83} or~\cite{EngNag00}). Then $A$ is said to have \emph{maximal $L^p$-regularity} for $p \in (1, \infty)$ if for one or equivalently all $T \in (0, \infty)$ the following holds: for all $f \in L^p([0,T];X)$ the mild solution
	\begin{equation*}
		u(t) = \int_0^t e^{-(t-s)A} f(s) \d s
	\end{equation*}
	of the abstract Cauchy problem $\dot{u}(t) + Au(t) = f(t)$ with initial condition $u(0) = 0$ satisfies $u \in W^{1,p}([0,T];X) \cap L^p([0,T];D(A))$. By the closed graph theorem this is equivalent to the boundedness of the operator
	\begin{equation}
		\label{eq:mr_singular_operator}
		f \mapsto Au(\cdot) = \int_0^t Ae^{-(t-s)A} f(s) \d s = \int_{\IR} Ae^{-(t-s)A} \mathds{1}_{\IR_{\ge 0}}(t-s) f(s) \d s
	\end{equation}
	initially only defined for sufficiently regular functions, say $f \in C_c^{\infty}((0,T);X)$. Taking the Fourier transform, the boundedness of the singular integral at the right hand side is equivalent to the boundedness of the multiplier operator associated to $m(\xi) = i \xi (i\xi - A)^{-1}$. Since the $\mathcal{R}$-boundedness of $m$ is even a necessary condition for the boundedness of operator-valued multipliers and due to the easy structure of the resolvent, we even obtain -- partially as a consequence of the operator-valued Mikhlin multiplier theorem -- the following equivalence on UMD spaces:
	\begin{equation*}
		A \text{ has maximal } L^p\text{-regularity} \quad \Leftrightarrow \quad \R{i\xi(i\xi-A)^{-1}: \xi \neq 0} < \infty.
	\end{equation*}	
	For details we refer to the first chapters of~\cite{KunWei04}. This is the celebrated characterization of maximal $L^p$-regularity on UMD spaces due to Weis~\cite{Wei}. Using our weighted Mikhlin multiplier result (Theorem~\ref{thm:Mikhlin_1d}), we obtain the following corollary.
	
	\begin{corollary}\label{cor:mr}
		Let $-A$ be the generator of a bounded analytic semigroup on some UMD space $X$. Suppose further that $A$ has maximal $L^p$-regularity for some $p \in (1, \infty)$. Then for all $p \in (1, \infty)$ and $\omega \in \mathcal{A}_p(\IR;\mathcal{Q}_1)$ one has maximal $L^p$-regularity in the following sense: for all $f \in L^p_{\omega}([0,T];X)$ the abstract Cauchy problem
    	\begin{equation*}
    		\left\{
    		\begin{aligned}
    			\dot{u}(t) + Au(t) & = f(t) \\
    			u(0) & = 0
    		\end{aligned}
    		\right.
    	\end{equation*}
		has a unique solution $u$ in $W^{1,p}_{\omega}([0,T];X) \cap L^p_{\omega}([0,T];X)$.
	\end{corollary}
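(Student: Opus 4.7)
The plan is to reduce the weighted maximal regularity assertion to boundedness of the Fourier multiplier $m(\xi) = i\xi(i\xi-A)^{-1}$ on $L^p_\omega(\IR; X)$, and then apply Theorem~\ref{thm:Mikhlin_1d} in its one-weight form (with $\omega = \sigma$). Note that in one dimension $\mathcal{A}_p(\IR;\mathcal{Q}_1) = \mathcal{A}_p(\IR)$, so the hypothesis on $\omega$ is exactly the one required by Theorem~\ref{thm:Mikhlin_1d}.

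First I would invoke Weis's characterization, which is valid in the UMD setting and independent of the $L^p$-scale: the assumption that $A$ has maximal $L^p$-regularity for \emph{some} $p \in (1,\infty)$ yields
\[
\mathcal{R}_0 \coloneqq \mathcal{R}\{i\xi(i\xi-A)^{-1} : \xi \neq 0\} < \infty.
\]
Writing $i\xi(i\xi-A)^{-1} = I + A(i\xi-A)^{-1}$, the set $\{A(i\xi-A)^{-1} : \xi \neq 0\}$ is also $\mathcal{R}$-bounded, with $\mathcal{R}$-bound at most $\mathcal{R}_0 + 1$. Differentiating $m$ gives $m'(\xi) = -iA(i\xi-A)^{-2}$, and hence
\[
\xi m'(\xi) = -i \bigl[i\xi(i\xi-A)^{-1}\bigr]\cdot\bigl[A(i\xi-A)^{-1}\bigr],
\]
which is a product of two $\mathcal{R}$-bounded families. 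By the basic stability of $\mathcal{R}$-bounds under products (see \cite[Section~8.1]{HNVW17}), both $\{m(\xi)\}_{\xi\neq 0}$ and $\{\xi m'(\xi)\}_{\xi\neq 0}$ are $\mathcal{R}$-bounded, so $m$ satisfies the Mikhlin hypothesis of Theorem~\ref{thm:Mikhlin_1d}\ref{item:thm:mikhlin_weighted;UMD;1d}. Applied with $X = Y$, $\sigma = \omega$ and arbitrary $q \in (1,\infty)$, $\omega \in \mathcal{A}_q(\IR)$, it gives that $T_m$ extends to a bounded operator on $L^q_\omega(\IR; X)$.

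Next I would unwind the equivalence from the introductory discussion. Given $f \in L^p_\omega([0,T]; X)$, extend $f$ by zero to a function $\tilde f \in L^p_\omega(\IR; X)$ (using that $\omega \in \mathcal{A}_p(\IR)$ is locally integrable, so the extension is in $L^p_\omega$). The Fourier-side identification in \eqref{eq:mr_singular_operator} shows that, for $\tilde f \in C_c^\infty((0,\infty);X)$, the function $A u$ given by the convolution with the singular kernel $A e^{-tA}\mathds{1}_{\IR_{\ge 0}}(t)$ coincides with $T_m \tilde f$. Since such $\tilde f$ are dense in $L^p_\omega(\IR;X)$ (by Lemma~\ref{lem:density_cpct_Fourier_supp_away;1d} combined with standard truncation/mollification arguments), the boundedness of $T_m$ on $L^p_\omega(\IR; X)$ extends to all $\tilde f$; in particular, $Au \in L^p_\omega(\IR; X)$, whence $u \in L^p_\omega([0,T]; D(A))$ after restriction.

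Finally, setting $\dot u = \tilde f - A u$ on $\IR$ (which makes sense as an $L^p_\omega$ identity) and noting that the mild solution formula $u(t) = \int_0^t e^{-(t-s)A} f(s)\,ds$ gives $u(0) = 0$, one obtains $u \in W^{1,p}_\omega([0,T]; X) \cap L^p_\omega([0,T]; D(A))$. Uniqueness follows either from the mild solution representation or from the injectivity of $\dot u + A u$ on the maximal regularity space. The main obstacle I anticipate is the bookkeeping in verifying that the convolution with the causal kernel $A e^{-tA}\mathds{1}_{t\ge 0}$ agrees with $T_m$ on a dense class in $L^p_\omega(\IR; X)$; once this identification is in place, the weighted Mikhlin theorem does the rest of the work.
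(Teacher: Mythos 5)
Your proposal is correct and follows exactly the route the paper intends: invoke Weis's characterization to get $\mathcal{R}\{i\xi(i\xi-A)^{-1}:\xi\neq 0\}<\infty$, verify the one-dimensional Mikhlin condition for $m(\xi)=i\xi(i\xi-A)^{-1}$, and apply Theorem~\ref{thm:Mikhlin_1d} with $\sigma=\omega$; the paper itself does not even write this out but states the corollary as an immediate consequence of the preceding discussion and that theorem. (There is only a harmless unimodular-constant slip in your factorization of $\xi m'(\xi)$: the prefactor should be $-1$, not $-i$, which of course does not affect the $\mathcal{R}$-boundedness argument.)
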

	
	Here $W^{1,p}_{\omega}([0,T];X)$ is the space of all $X$-valued distributions for which both $u$ and $\dot{u}$ lie in $L^p_{\omega}([0,T];X)$. Note that Theorem~\ref{thm:mikhlin_weighted} actually gives a two-weight result for the operator~\eqref{eq:mr_singular_operator}. %
	
	Corollary~\ref{cor:mr} was first shown in~\cite{PruSim04} for power weights in $\mathcal{A}_p$ with positive exponents and was subsequently generalized to all $\mathcal{A}_p$ power weights in~\cite[Theorem~1.15]{HaaKun07}. For general $\mathcal{A}_p$-weights the result was first shown in~\cite[Corollary~5]{ChiFio14} as a consequence of the extrapolation result for Calderón--Zygmund operators (Theorem~\ref{thm:extrapolation}). However, one now sees that this extrapolation result for maximal $L^p$-regularity follows automatically from the extrapolation properties of Mikhlin multipliers and therefore is inherent to the standard approach via $\mathcal{R}$-boundedness estimates on the resolvent.
	
	We finally remark that the result of Corollary~\ref{cor:mr} actually holds for a broader class of weights than the $\mathcal{A}_p$-weights. In fact, since the kernel vanishes on the negative real line, maximal $L^p$-regularity even holds for $\omega \in \mathcal{A}_{p}^{-}$, a class of one-sided Muckenhoupt weights~\cite[Theorem~5.1]{ChiKro14}.

\section{Littlewood-Paley Theory and Fourier Multipliers for Rectangular $\mathcal{A}_{p}$-weights}\label{sec:rect_Ap-weights}\label{sec:LP}

\subsection{Product Pre-Decompositions and Blockings}

\begin{theorem}\label{thm:unconditional_product_decompositions}
Let $X$ be a Banach space with the property that both $X$ and $X^{*}$ have Pisier's property~$(\alpha)$ and let $\Delta^{j} = (\Delta^{j}_{i_{j}})_{i_{j} \in I_{j}}$, $j=1,\ldots,n$, be commuting unconditional decompositions of $X$.
Put $I:=\prod_{j=1}^{n}I_{j}$ and, for each $i=(i_{1},\ldots,i_{n}) \in I$, $\Delta_{i} := \prod_{j=1}^{n}\Delta^{j}_{i_{j}}$.
Then $\Delta = (\Delta_{i})_{i \in I}$ is an unconditional decomposition of $X$ with
\[
C^{+}_{\Delta} \leq \alpha_{X}^{n-1}C^{+}_{\Delta^{1}}\ldots C^{+}_{\Delta^{n}} \quad \text{and} \quad C^{-}_{\Delta} \leq C^{+}_{\Delta^{*}} \leq \alpha_{X^{*}}^{n-1}C^{+}_{(\Delta^{1})^{*}}\ldots C^{+}_{(\Delta^{n})^{*}}.
\]
\end{theorem}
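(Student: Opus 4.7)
The plan is to verify the hypotheses of Proposition~\ref{prop:uncond_Schaud_bbd_into_Rad} for $\Delta = (\Delta_i)_{i \in I}$. First, $\Delta$ is a pre-decomposition: each $\Delta_i = \prod_j \Delta^j_{i_j}$ is a projection (a product of commuting projections), and $\Delta_i \Delta_{i'} = 0$ whenever $i \neq i'$, since some coordinate then satisfies $\Delta^j_{i_j}\Delta^j_{i'_j}=0$. Density of $\mathrm{ran}(\Delta)$ in $X$ follows by iterating the approximation afforded by each unconditional decomposition $\Delta^j$: given $x \in X$ and $\varepsilon>0$, successively pick finite $F_j \subset I_j$ so that $\norm{\Delta^1_{F_1} x - x}$, $\norm{\Delta^2_{F_2}\Delta^1_{F_1}x - \Delta^1_{F_1}x}$, etc.\ are each small, and conclude by the triangle inequality that the box projection $\Delta_{F_1 \times \cdots \times F_n}x$ approximates $x$.

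The main work is bounding $C^+_\Delta$. I would first reduce to \emph{box} sets $F = F_1 \times \cdots \times F_n$. For an arbitrary finite $F \subset I$, setting $F' := \prod_j \pi_j(F) \supset F$ and $y := \Delta_F x = \sum_{i \in F}\Delta_i x$, the orthogonality $\Delta_i\Delta_{i'} = \delta_{i,i'}\Delta_i$ yields $\Delta_i y = \Delta_i x$ for $i \in F$ and $\Delta_i y = 0$ for $i \in F' \setminus F$; hence $\sum_{i \in F}\epsilon_i \Delta_i x = \sum_{i \in F'}\epsilon_i \Delta_i y$ and $\sum_{i \in F}\Delta_i x = \sum_{i \in F'}\Delta_i y$, so U$^+$ in the box case applied to $y$ and $F'$ yields U$^+$ for $x$ and $F$ with the same constant.

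For the box case, introduce further independent Rademacher families $(\epsilon^j_{i_j})_{i_j \in I_j}$, $j=1,\ldots,n$, on an auxiliary probability space. The heart of the argument is the inductive inequality
\[
\biggnorm{\sum_{i \in F} \epsilon_i x_i}_{L^2(\Omega;X)} \leq \alpha_X^{n-1} \biggnorm{\sum_{(i_1,\ldots,i_n) \in F} \epsilon^1_{i_1}\cdots\epsilon^n_{i_n} x_{(i_1,\ldots,i_n)}}_{L^2(\tilde\Omega;X)}.
\]
For the base $n=2$, observe that the joint law of $(\epsilon'_i \epsilon''_j \epsilon_{(i,j)})_{i,j}$ coincides with that of $(\epsilon_{(i,j)})_{i,j}$ (multiplying an iid Rademacher family by an independent $\pm 1$ family preserves the law); then condition on $\epsilon_{(i,j)} = s_{ij}$ and apply property~$(\alpha)$ to the scalars $(s_{ij})$ to absorb the signs into the product-Rademacher norm. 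The step $n \to n+1$ fuses the first $n$ coordinates into one super-index $J=(i_1,\ldots,i_n)$, applies the $n=2$ inequality to the pair $(J, i_{n+1})$, and uses the induction hypothesis on the $J$-part, producing $\alpha_X \cdot \alpha_X^{n-1} = \alpha_X^n$. Applying this inequality with $x_i := \Delta_i x$, the right-hand side is treated by Fubini and commutativity: write it as $\sum_{i_1}\epsilon^1_{i_1}\Delta^1_{i_1}\bigl(\sum_{i_2,\ldots,i_n}\epsilon^2_{i_2}\cdots\epsilon^n_{i_n}\Delta^2_{i_2}\cdots\Delta^n_{i_n}x\bigr)$, condition on $\epsilon^2,\ldots,\epsilon^n$ and apply U$^+$ for $\Delta^1$, then iterate in the remaining coordinates, yielding the bound $\prod_j C^+_{\Delta^j}\cdot\norm{\Delta^1_{F_1}\cdots\Delta^n_{F_n}x} = \prod_j C^+_{\Delta^j}\cdot\norm{\sum_{i \in F}\Delta_i x}$.

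The U$^+$ bound for the adjoint family $\Delta^* = (\Delta_i^*)_{i \in I}$ on $X^*$ is obtained by the identical argument, using property $(\alpha)$ for $X^*$ (with constant $\alpha_{X^*}$) and the U$^+$ constants $C^+_{(\Delta^j)^*}$ of the commuting adjoint families $((\Delta^j_{i_j})^*)_{i_j \in I_j}$. Proposition~\ref{prop:uncond_Schaud_bbd_into_Rad} then provides that $\Delta$ is an unconditional decomposition with $C^-_\Delta \leq C^+_{\Delta^*}$, yielding both advertised inequalities. The only genuinely nontrivial step is the $(\alpha)$-based comparison of single- and $n$-fold product Rademacher averages; the remainder is reduction to boxes, Fubini, commutativity, and iteration of U$^+$ along each coordinate.
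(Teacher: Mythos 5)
Your proposal is correct and takes the same route as the paper. The paper's proof is essentially a citation: it observes that $\mathrm{ran}(\Delta)$ is dense, appeals to the duality criterion (Proposition~\ref{prop:uncond_Schaud_bbd_into_Rad}) to reduce to showing that $\Delta$ and $\Delta^{*}$ are U$^{+}$, and then cites \cite[Proposition~7.5.4]{HNVW17} for the U$^{+}$ bounds with the stated constants. What you have done is unpack that citation: the reduction to box sets via orthogonality, the $\alpha_X^{n-1}$ comparison of a single Rademacher family with an $n$-fold product of independent Rademacher families (proved by the randomization/conditioning trick for $n=2$ and induction via fusing coordinates), and the coordinate-by-coordinate Fubini argument iterating the one-dimensional U$^{+}$ constants $C^{+}_{\Delta^{j}}$. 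This is precisely the content behind the cited proposition, and the dual estimate follows by running the same argument for $\Delta^{*}$ in $X^{*}$ using $\alpha_{X^{*}}$. One small remark: the paper writes that it applies Lemma~\ref{lem:basic_char_uncond_Schaud_decomp}, but the step ``both $\Delta$ and $\Delta^{*}$ U$^{+}$ implies $\Delta$ unconditional with $C^{-}_{\Delta}\le C^{+}_{\Delta^{*}}$'' is really Proposition~\ref{prop:uncond_Schaud_bbd_into_Rad}, which is what you correctly invoke.
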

\begin{proof}
Although this result seems to be well known, we do not know an explicit reference. However, the argumentation used in the concrete setting of Littlewood-Paley decompositions (see for instance \cite[Proposition~4.12]{KunWei04} and the corresponding note \cite[N.~4.12]{KunWei04}) also works in our setting. The argument goes as follows. Since one readily sees that $\mathrm{ran}(\Delta)$ is dense in $X$, by Lemma~\ref{lem:basic_char_uncond_Schaud_decomp} it suffices to show that both $\Delta$ and $\Delta^{*}$ are U$^{+}$ (with U$^{+}$-constants as asserted), something which follows directly from \cite[Proposition~7.5.4]{HNVW17}.
\end{proof}

\begin{remark}\label{rmk:thm:unconditional_product_decompositions;K-convex}
For a K-convex Banach space $X$ it holds that $X$ has Pisier's property~$(\alpha)$ if and only if $X^{*}$ does (see \cite[Proposition~7.5.15]{HNVW17}).
Moreover, $\alpha_{X} \leq K^{2}_{2,X}\alpha_{X^{*}}$ and $\alpha_{X^{*}} \leq K^{2}_{2,X^{*}}\alpha_{X}$.
In connection to the Littlewood--Paley theory in the next subsection, let us mention that every UMD space $X$ is K-convex with $K_{p,X} \leq \beta^{+}_{p,X} \leq \beta_{p,X}$ for all $p \in (1,\infty)$ (see \cite[Proposition~4.3.10]{HNVW16}).
\end{remark}

In the absence of property $(\alpha)$ the product pre-decomposition above is in general not unconditional.
In fact, in the context of Littlewood--Paley decompositions it even occurs that property $(\alpha)$ is not only sufficient but also necessary, see \cite{Lancien1998}.
However, as the next theorem shows, under some $\mathcal{R}$-boundedness conditions, one can find an appropriate blocking of the product pre-decomposition which forms an unconditional decomposition. The theorem is a modification of \cite[Theorem~2.5.1]{Wit00}, which was inspired by the work~\cite{Zim89} on multi-dimensional Littlewood-Paley decompositions.

Before we state the theorem, let us introduce some notation. Given an unconditional decomposition $\Delta=(\Delta_{i})_{i \in I}$ of $X$ and a subset $J \subset I$, we define in the strong operator topology
\[
\Delta_{J} := \mathrm{SOT}-\sum_{i \in J}\Delta_{i}.
\]

\begin{theorem}\label{thm:unconditional_blockings_product_decompositions}
Let $\Delta^{j} = (\Delta^{j}_{i})_{i \in \IZ}$, $j=1,\ldots,n$, be commuting unconditional decompositions of a Banach space $X$.
Suppose that the following $\mathcal{R}$-boundedness conditions hold true for all $j = 1, \ldots, n$:
\begin{equation}\label{eq:thm:unconditional_blockings_product_decompositions;R-bdd_assumption}
\kappa_{j} := \mathcal{R}\left\{ \sum_{i=M}^{N}\Delta^{j}_{i} \: : \: M,N \in \IZ \right\} < \infty, \quad
\kappa^{*}_{j} := \mathcal{R}\left\{ \sum_{i=M}^{N}(\Delta^{j}_{i})^{*} \: : \: M,N \in \IZ \right\} < \infty.
\end{equation}
Define the partition $(J_{k})_{k \in \IZ}$ of the index set $\IZ^{n}$ by
\[
J_{ln+r} := (\IZ \cap (-\infty, l+1])^{r} \times \{l+1\} \times (\IZ \cap (-\infty, l])^{n-r-1},
\]
where $l \in \IZ$ and $r \in \{0,\ldots,n-1\}$.
For each $k \in \IZ$ we define the bounded linear projection
\[
\Delta_{k} := \mathrm{SOT}-\sum_{i \in J_{k}}\Delta^{1}_{i_{1}} \cdots \Delta^{n}_{i_{n}}
\]
in $X$.
Then $\Delta = (\Delta_{k})_{k \in \IZ}$ is an unconditional decomposition of $X$ for which we have
\begin{equation}\label{fm:eq:thm;unconditional_blockings_product_decompositions;U-plus_constants}
C^{+}_{\Delta} \leq \sum_{j=1}^{n}C^{+}_{\Delta^{j}} \prod_{j \neq k}\kappa_{k}, \quad\quad
C^{-}_{\Delta} \leq C^{+}_{\Delta^{*}} \leq \sum_{j=1}^{n}C^{+}_{(\Delta^{j})^{*}} \prod_{j \neq k}\kappa_{k}^{*}
\end{equation}
and
\begin{equation}\label{fm:eq:thm;unconditional_blockings_product_decompositions;R-bdd_conclusion}
\mathcal{R}\left\{ \sum_{k=M}^{N}\Delta_{k} \: : \: N,M \in \IZ \right\} \leq 2\kappa_{1}\cdots\kappa_{n}, \quad\quad
\mathcal{R}\left\{ \sum_{k=M}^{N}\Delta_{k}^{*} \: : \: N,M \in \IZ \right\} \leq 2\kappa_{1}^{*}\cdots\kappa_{n}^{*}.
\end{equation}
\end{theorem}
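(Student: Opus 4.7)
The plan is to invoke Proposition~\ref{prop:uncond_Schaud_bbd_into_Rad}, which reduces the task to verifying that $\mathrm{ran}(\Delta)$ is dense in $X$ and that both $\Delta$ and $\Delta^{*}$ satisfy U$^{+}$ with the stated constants; the $\mathcal{R}$-boundedness claim~\eqref{fm:eq:thm;unconditional_blockings_product_decompositions;R-bdd_conclusion} will then fall out as a byproduct of a product formula for the one-sided partial sums $S_{N} := \sum_{k \leq N}\Delta_{k}$.

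The first step is to introduce, for each $j \in \{1,\ldots,n\}$, the one-sided projection
\[
P^{j}_{L} := \mathrm{SOT}\text{-}\!\!\lim_{M \to -\infty}\sum_{i=M}^{L}\Delta^{j}_{i}, \qquad L \in \IZ,
\]
which exists and defines an $\mathcal{R}$-bounded family with $\mathcal{R}\{P^{j}_{L} : L \in \IZ\} \leq \kappa_{j}$, thanks to \eqref{eq:thm:unconditional_blockings_product_decompositions;R-bdd_assumption} and the fact that SOT-closures preserve $\mathcal{R}$-bounds. Commutativity of the $\Delta^{j}$'s then yields
\[
\Delta_{ln+r} = Q^{r}_{l}\Delta^{r+1}_{l+1}, \qquad Q^{r}_{l} := \prod_{j \leq r}P^{j}_{l+1}\prod_{j > r+1}P^{j}_{l},
\]
and a telescoping in $r$ gives, for $N = Ln + R$ with $R \in \{0,\ldots,n-1\}$,
\[
S_{N} = \prod_{j=1}^{R+1}P^{j}_{L+1}\prod_{j=R+2}^{n}P^{j}_{L}.
\]
Hence $\{S_{N}\}_{N \in \IZ}$ is $\mathcal{R}$-bounded by $\kappa_{1}\cdots\kappa_{n}$, and since $\sum_{k=M}^{N}\Delta_{k} = S_{N} - S_{M-1}$, the first half of \eqref{fm:eq:thm;unconditional_blockings_product_decompositions;R-bdd_conclusion} follows with its factor $2$ from subadditivity; the adjoint case is verbatim.

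For the U$^{+}$ bound in \eqref{fm:eq:thm;unconditional_blockings_product_decompositions;U-plus_constants}, I fix $x$ of finite $\Delta$-support, choose signs $(\epsilon_{k})$, split by residue modulo $n$, and apply the triangle inequality in $L^{2}(\Omega;X)$:
\[
\biggnorm{\sum_{k}\epsilon_{k}\Delta_{k}x}_{L^{2}(\Omega;X)} \leq \sum_{r=0}^{n-1}\biggnorm{\sum_{l}\epsilon_{ln+r}Q^{r}_{l}\bigl(\Delta^{r+1}_{l+1}x\bigr)}_{L^{2}(\Omega;X)}.
\]
For each $r$, the family $\{Q^{r}_{l}\}_{l}$ is $\mathcal{R}$-bounded by $\prod_{j \neq r+1}\kappa_{j}$, which strips it off the randomised sum; U$^{+}$ for $\Delta^{r+1}$ applied to $x = \sum_{l}\Delta^{r+1}_{l+1}x$ then bounds the residual $\|\sum_{l}\epsilon_{ln+r}\Delta^{r+1}_{l+1}x\|_{L^{2}(\Omega;X)}$ by $C^{+}_{\Delta^{r+1}}\|x\|$. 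Summing over $r$ produces exactly $C^{+}_{\Delta} \leq \sum_{j}C^{+}_{\Delta^{j}}\prod_{k \neq j}\kappa_{k}$, and the same argument applied to $((\Delta^{j})^{*})_{j}$, combined with Proposition~\ref{prop:uncond_Schaud_bbd_into_Rad}, yields the analogous estimate for $C^{-}_{\Delta}$. Density of $\mathrm{ran}(\Delta)$ is immediate: each $\Delta^{1}_{i_{1}}\cdots\Delta^{n}_{i_{n}}x$ lies in $\Delta_{k_{0}}(X)$ for the unique $k_{0}$ with $(i_{1},\ldots,i_{n}) \in J_{k_{0}}$, and the commuting unconditional decompositions $\Delta^{j}$ ensure that $\prod_{j}\Delta^{j}_{F_{j}}x \to x$ as each $F_{j} \uparrow \IZ$.

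The main obstacle is the telescoping identity for $S_{N}$: one must recognise the $J_{k}$'s as the level sets in $\IZ^{n}$ of the pair (value of the maximum coordinate, first coordinate attaining it), so that the ``L-shaped shells'' $\bigcup_{r} J_{ln+r}$ at height $l+1$ fit together into consecutive quadrants. Once this geometric picture is in place, the remainder is routine bookkeeping with multiplicativity and subadditivity of $\mathcal{R}$-bounds and with the duality mechanism of Proposition~\ref{prop:uncond_Schaud_bbd_into_Rad}.
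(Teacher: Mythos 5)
Your proof follows essentially the same route as the paper's: you verify the hypotheses of Proposition~\ref{prop:uncond_Schaud_bbd_into_Rad} (density of $\mathrm{ran}(\Delta)$, U$^{+}$ for $\Delta$ and $\Delta^{*}$), using the same ingredients — the one-sided commuting projections $P^{j}_{L}$ with $\mathcal{R}$-bound $\kappa_{j}$, the factorisation $\Delta_{ln+r}=Q^{r}_{l}\Delta^{r+1}_{l+1}$, the split of the randomised sum by residue modulo $n$, stripping off the $\mathcal{R}$-bounded family $\{Q^{r}_{l}\}$, and deriving~\eqref{fm:eq:thm;unconditional_blockings_product_decompositions;R-bdd_conclusion} from the product formula for the one-sided partial sums. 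The paper carries this out explicitly only for $n=2$ (and its stated identity $\Delta_{2l+1}=\Delta^{1}_{l+1}P^{2}_{l+1}$ there is in fact mis-indexed; your factorisation $\Delta_{2l+1}=P^{1}_{l+1}\Delta^{2}_{l+1}$ is the correct one from the definition of $J_{k}$), while you handle general $n$ directly, so yours is if anything cleaner. One small point you share with the printed proof and should make explicit: in the step ``U$^{+}$ for $\Delta^{r+1}$ bounds the residual by $C^{+}_{\Delta^{r+1}}\norm{x}$'', what U$^{+}$ literally gives is a bound by $C^{+}_{\Delta^{r+1}}\normalnorm{\sum_{l}\Delta^{r+1}_{l+1}x}$, and one must still observe that the resulting partial-sum projection of $\Delta^{r+1}$ applied to $x=\Delta_{F}x$ is controlled by $\norm{x}$; this requires a short argument (or an appeal to the uniform boundedness of partial sums of an unconditional decomposition). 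This does not change the strategy — it is the same argument the paper intends — but it is worth flagging rather than sweeping into ``$x=\sum_{l}\Delta^{r+1}_{l+1}x$''.
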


\begin{remark}\label{rmk:thm:unconditional_blockings_product_decompositions}
Concerning the $\mathcal{R}$-boundedness assumptions in Theorem~\ref{thm:unconditional_blockings_product_decompositions}, let us remark the following. The $\mathcal{R}$-boundedness of the first collections in
\eqref{eq:thm:unconditional_blockings_product_decompositions;R-bdd_assumption}
is automatic when the space $X$ has the so-called triangular contraction property (or property weak-$(\alpha)$); see \cite[Definition~2.4.1]{Wit00} and \cite[Corollary~2.4.3]{Wit00}.
Having the $\mathcal{R}$-boundedness of the first collections,
the $\mathcal{R}$-boundedness of the second collections then is a consequence for $K$-convex spaces; see e.g.\ \cite[Proposition~8.20]{HNVW17}. In particular, the $\mathcal{R}$-boundedness assumption~\eqref{eq:thm:unconditional_blockings_product_decompositions;R-bdd_assumption}
is automatic when $X$ is a UMD space; see \cite{HNVW16}.
\end{remark}

In the next section we will apply Theorems \ref{thm:unconditional_product_decompositions} and \ref{thm:unconditional_blockings_product_decompositions} in the setting of Littlewood--Paley decompositions.
There the $\mathcal{R}$-bounds in \eqref{eq:thm:unconditional_blockings_product_decompositions;R-bdd_assumption} can be checked directly, with explicit bounds, so that we do not have to rely on the above remark.

\begin{proof}[Proof of Theorem~\ref{thm:unconditional_blockings_product_decompositions}.]
For simplicity of notation we only treat the case $n=2$. Throughout the proof it will also be convenient to write $P^{j}_{k} := \Delta^{j}_{\IZ \cap (-\infty,k]}$ for each $j \in \{1,2\}$ and $k \in \IZ$.
From \eqref{eq:thm:unconditional_blockings_product_decompositions;R-bdd_assumption} and the preservation of $\mathcal{R}$-bounds under taking closures in $\mathcal{B}(X)$ and $\mathcal{B}(X^{*})$ with respect to the $\mathrm{SOT}$-topology and the $\mathrm{W}^{*}\mathrm{OT}$-topology, respectively, it follows that
\begin{equation}\label{fm:eq:thm;unconditional_blockings_product_decompositions;R-bdd_conclusion;P}
\mathcal{R}\{ P^{j}_{k} \: : \: k \in \IZ \} \leq \kappa_{j} \quad\quad \mbox{and} \quad
\mathcal{R}\{ (P^{j}_{k})^{*} \: : \: k \in \IZ \} \leq \kappa_{j}^{*}, \quad\quad j=1,2.
\end{equation}

One readily sees that $\mathrm{ran}(\Delta)$ is dense in $X$.
In view of Proposition~\ref{prop:uncond_Schaud_bbd_into_Rad}, in order to show that $\Delta$ is an unconditional decomposition with \eqref{fm:eq:thm;unconditional_blockings_product_decompositions;U-plus_constants} it thus suffices that both $\Delta$ and $\Delta^{*}$ are \emph{U$^{+}$}, with $C^{+}_{\Delta} \leq C^{+}_{\Delta^{1}}\kappa_{2} + C^{+}_{\Delta^{2}}\kappa_{1}$,
$C^{+}_{\Delta^{*}} \leq C^{+}_{(\Delta^{1})^{*}}\kappa_{2}^{*} + C^{+}_{(\Delta^{2})^{*}}\kappa_{1}^{*}$.
We only consider $\Delta$, the case of $\Delta^{*}$ being completely similar.
To this end, let $x \in \mathrm{ran}(\Delta)$ and a finite subset $F$ of $\IZ$ be given.
Writing $F = F_{0} \cup F_{1}$ with $F_{r}:= F \cap [2\IZ + r]$ for $r \in \{0,1\}$,
it suffices to show that
\[
\biggnorm{\sum_{n \in F_{0}}\epsilon_{n}\Delta_{n}x}_{L^{2}(\Omega;X)} \leq C^{+}_{\Delta^{2}}\kappa_{1}, \quad\quad
\biggnorm{\sum_{n \in F_{1}}\epsilon_{n}\Delta_{n}x}_{L^{2}(\Omega;X)} \leq C^{+}_{\Delta^{1}}\kappa_{2}.
\]
We only treat the random sum over $F_{1}$, the sum over $F_{0}$ being similar.
Defining $\tilde{F}_{1} := \{ l \in \IZ : 2l+1 \in F_{1} \}$ and using $\Delta_{2l+1} = \Delta^{1}_{l+1}P^{2}_{l+1} = P^{2}_{l+1}\Delta^{1}_{l+1}$ and $x \in \mathrm{Ran}(\Delta) \subset \overline{\mathrm{Ran}(\Delta^{1})}$,
we find
\begin{align*}
\biggnorm{\sum_{n \in F_{1}}\epsilon_{n}\Delta_{n}x}_{L^{2}(\Omega;X)}
&= \biggnorm{\sum_{l \in \tilde{F}_{1}}\epsilon_{2l+1}P^{2}_{l+1}\Delta^{1}_{l+1}x}_{L^{2}(\Omega;X)}
 \stackrel{\eqref{fm:eq:thm;unconditional_blockings_product_decompositions;R-bdd_conclusion;P}}{\leq} \kappa_{2} \biggnorm{\sum_{l \in \tilde{F}_{1}}\epsilon_{2l+1}\Delta^{1}_{l+1}x}_{L^{2}(\Omega;X)} \\
&\leq \kappa_{2}C^{+}_{\Delta^{1}}\norm{x}_{X}. \qedhere
\end{align*}
\end{proof}

Let us finally derive the $\mathcal{R}$-bounds in \eqref{fm:eq:thm;unconditional_blockings_product_decompositions;R-bdd_conclusion}.
Define $(\Pi_{k})_{k \in \IZ}$ by $\Pi_{k} := \Delta_{\IZ \cap (-\infty,k]}$. Then, on the one hand we have $\sum_{k=M}^{N}\Delta_{k} = \Pi_{N}-\Pi_{M-1}$ for $N \geq M$ and $\sum_{n=M}^{N}\Delta_{n}=0$ otherwise.
On the other hand,
\[
\Pi_{k} = \left\{\begin{array}{ll}
P^{1}_{l+1}P^{2}_{l+1} & k=2l+1, l \in \IZ,\\
P^{1}_{l+1}P^{2}_{l} & k=2l, l \in \IZ,
\end{array}\right.
\]
so that $(\Pi_{k} )_{ k \in \IZ } \subset \{P^{1}_{k} \: : \: k \in \IZ \} \,\cdot\, \{ P^{2}_{k} \: : \: k \in \IZ \}$ and thus $(\Pi_{k}^{*} )_{ k \in \IZ } \subset \{(P^{2}_{k})^{*} \: : \: k \in \IZ \} \,\cdot\, \{ (P^{1}_{k})^{*} \: : \: k \in \IZ \}$.
The $\mathcal{R}$-bounds in \eqref{fm:eq:thm;unconditional_blockings_product_decompositions;R-bdd_conclusion} thus follow from \eqref{fm:eq:thm;unconditional_blockings_product_decompositions;R-bdd_conclusion;P}.

\subsection{Littlewood-Paley decompositions}

In this subsection we prove Littlewood--Paley decompositions in the vector-valued weighted setting.
More specifically, the aim is to obtain Theorem~\ref{thm:littlewood-paley}. As already mentioned in the introduction of this paper, $\mathcal{A}_p(\IR^n;\mathcal{R}_n)$ is the right class of weights for doing such Littlewood--Paley decompositions. As a matter of fact, the Littlewood--Paley decompositions require
\[
\mathds{1}_{\{x_{1} \geq 1\}},\ldots,\mathds{1}_{\{x_{n} \geq 1\}} \in \mathcal{M}^n_{p}(X,\omega)
\]
while it is known from \cite{Kur80} that
\[
\mathds{1}_{\{x_{1} \geq 1\}},\ldots,\mathds{1}_{\{x_{n} \geq 1\}} \in \mathcal{M}^n_{p}(\IC,\omega) \Longleftrightarrow  \omega \in \mathcal{A}_p(\IR^n;\mathcal{R}_n).
\]
The following lemma describes the one-dimensional behaviour of the class $\mathcal{A}_p(\IR^n;\mathcal{R}_n)$ in the two-weight setting.

	\begin{lemma}\label{lem:relation_ap}
		Let $p \in (1, \infty)$ and $\omega, \sigma\colon \IR^n \to \IR_{\ge 0}$ weights with $[\omega,\sigma]_{\mathcal{A}_p(\IR^n;\mathcal{R}_n)} < \infty$. Then for all $j \in \{1, \ldots, n \}$ and almost every $(x_1, \ldots, x_{j-1}, x_{j+1}, \ldots, x_n)$ one has
		\begin{align*}
			\MoveEqLeft \relax [\omega(x_1, \ldots, x_{j-1}, \cdot, x_{j+1}, \ldots, x_n), \sigma(x_1, \ldots, x_{j-1}, \cdot, x_{j+1}, \ldots, x_n)]_{\mathcal{A}_p(\IR; \mathcal{R}_1)} \\
			& \le [\omega,\sigma]_{\mathcal{A}_p(\IR^n;\mathcal{R}_n)}.
		\end{align*}
		In particular, if $\omega \in \mathcal{A}_p(\IR^n;\mathcal{R}_n)$, then
		\begin{equation*}
			[\omega(x_1, \ldots, x_{j-1}, \cdot, x_{j+1}, \ldots, x_n)]_{\mathcal{A}_p(\IR; \mathcal{R}_1)} \le [\omega]_{\mathcal{A}_p(\IR^n; \mathcal{R}_n)}
		\end{equation*}
		for all $j \in \{1, \ldots, n \}$ and almost every $(x_1, \ldots, x_{j-1}, x_{j+1}, \ldots, x_n)$.
	\end{lemma}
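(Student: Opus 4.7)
The plan is to deduce the one-dimensional slice estimate by testing the $n$-dimensional $\mathcal{A}_p$ condition on thin rectangles with one long direction in the $j$-th coordinate and then applying Lebesgue differentiation in the remaining $n-1$ coordinates. After fixing $j$, for any compact interval $I \subset \IR$ and any cube $C \subset \IR^{n-1}$ in the remaining variables, the product rectangle $R = C \times_j I$ belongs to $\mathcal{R}_n$, and the definition gives
\[
	\frac{1}{|C|}\int_C \left(\frac{1}{|I|}\int_I \omega(\vec{x})\,dx_j\right) d\vec{x}' \cdot \left(\frac{1}{|C|}\int_C \left(\frac{1}{|I|}\int_I \sigma'_p(\vec{x})\,dx_j\right)d\vec{x}'\right)^{p-1} \le [\omega,\sigma]_{\mathcal{A}_p(\IR^n;\mathcal{R}_n)}.
\]
Here $\vec{x}'$ denotes $(x_1,\ldots,\widehat{x_j},\ldots,x_n)$ and $\sigma'_p = \sigma^{-1/(p-1)}$; both $\omega$ and $\sigma'_p$ are locally integrable on $\IR^n$ because the two-weight characteristic is finite.

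Next I would shrink $C$ to a point $\vec{x}'$ along a regular sequence of cubes centered at $\vec{x}'$. By Fubini's theorem the slice functions $x_j \mapsto \omega(\vec{x}',x_j)$ and $x_j \mapsto \sigma'_p(\vec{x}',x_j)$ are locally integrable on $\IR$ for a.e.\ $\vec{x}'$, so for each fixed $I$ the maps $\vec{x}' \mapsto \frac{1}{|I|}\int_I \omega(\vec{x}',x_j)\,dx_j$ and $\vec{x}' \mapsto \frac{1}{|I|}\int_I \sigma'_p(\vec{x}',x_j)\,dx_j$ are locally integrable on $\IR^{n-1}$. The Lebesgue differentiation theorem applied with shrinking cubes (regular sequences, so no strong-maximal issues arise) then yields, for a.e.\ $\vec{x}'$,
\[
	\frac{1}{|R|}\int_R \omega \longrightarrow \frac{1}{|I|}\int_I \omega(\vec{x}',x_j)\,dx_j \quad \text{and} \quad \frac{1}{|R|}\int_R \sigma'_p \longrightarrow \frac{1}{|I|}\int_I \sigma'_p(\vec{x}',x_j)\,dx_j.
\]
Passing to the limit in the displayed inequality gives the desired one-dimensional bound for the fixed interval $I$ and a.e.\ $\vec{x}'$.

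The remaining obstacle is to upgrade the exceptional null set from depending on $I$ to being uniform in $I$. For this I would first run the argument only over a countable dense family $\mathcal{I}_0$ of intervals with rational endpoints, take the countable union of the exceptional sets to obtain a single null set $N \subset \IR^{n-1}$ outside of which the one-dimensional $\mathcal{A}_p$ inequality holds for every $I \in \mathcal{I}_0$. For any general bounded interval $I$, both averages $\frac{1}{|I|}\int_I \omega(\vec{x}',\cdot)$ and $\frac{1}{|I|}\int_I \sigma'_p(\vec{x}',\cdot)$ depend continuously on the endpoints of $I$ (for $\vec{x}'$ at which the slice is locally integrable, which is again a full-measure condition), so approximating $I$ by rational intervals in $\mathcal{I}_0$ transfers the bound. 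Taking the supremum over all intervals $I$ yields the slice $\mathcal{A}_p(\IR;\mathcal{R}_1)$ characteristic, bounded by $[\omega,\sigma]_{\mathcal{A}_p(\IR^n;\mathcal{R}_n)}$, for a.e.\ $\vec{x}'$. The one-weight statement is then the special case $\omega = \sigma$.
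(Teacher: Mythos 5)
Your argument is correct and is essentially the same as the paper's: test the $n$-dimensional $\mathcal{A}_p$ condition on rectangles $C\times I$, rewrite the iterated averages as a single average over that rectangle, shrink $C$ to a point via Lebesgue differentiation, and handle the $I$-dependence of the null set by first restricting to rational-endpoint intervals and then passing to general ones. The paper (following Kurtz) does exactly this, only written more tersely.
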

	\begin{proof}
The proof that we present is a direct adaption of the one-weighted argument in~\cite[p.~241]{Kur80}. Suppose that $[\omega,\sigma]_{\mathcal{A}_p(\IR^n;\mathcal{R}_n)} < \infty$. We may assume that $j = 1$. Let $I \subset \IR$ be an interval and $Q \subset \IR^{n-1}$ a cube, both of positive and finite measure. Then
		\begin{align*}
			\MoveEqLeft \biggl( \frac{1}{\abs{Q}} \int_Q \frac{1}{\abs{I}} \int_I \omega(y,x) \d y \d x \biggr) \biggl( \frac{1}{\abs{Q}} \int_Q \frac{1}{\abs{I}} \int_I \sigma(y,x)^{-\frac{1}{p-1}} \d y \d x \biggr)^{p-1} \\
			& = \biggl( \frac{1}{\abs{Q \times I}} \int_{Q \times I} \omega(y,x) \d y \d x \biggr) \biggl( \frac{1}{\abs{Q \times I}} \int_{Q \times I} \sigma(y,x)^{-\frac{1}{p-1}} \d y \d x \biggr)^{p-1} \\
			& \le [\omega, \sigma]_{\mathcal{A}_p(\IR^n; \mathcal{R}_n)}.
		\end{align*}
		Now, for fixed $x = (x_2, \ldots, x_n) \in \IR^{n-1}$ choose cubes centered at this point and shrinking to volume zero. For a fixed $I$ the desired estimate follows for almost every $(x_2, \ldots, x_n)$ from Lebesgue's differentiation theorem.
		A universal exceptional set independent of $I$ can be found by first considering only intervals with rational endpoints and then passing to general ones with a limiting argument.
	\end{proof}

For establishing the Littlewood--Paley decompositions of Theorem~\ref{thm:littlewood-paley}, together with Theorems \ref{thm:unconditional_product_decompositions} and \ref{thm:unconditional_blockings_product_decompositions}, the above lemma basically allows us to reduce the problem to the one-dimensional case (in the form of Lemma~\ref{lem:Littlewood-Paley_coordinate}), which was already treated in Section~\ref{subsec:LP_1d}.
This reduction requires some (notational) preparations in our setting.

Given $j \in \{1,\ldots,n\}$ and $T\colon L^{p}(\IR;X) \to L^{p}(\IR;Y)$, we let $T_{j}\colon L^{p}(\IR^{n};X) \to L^{p}(\IR^{n};Y)$ be the pointwise well-defined induced operator
\[
(T_{j}f)(x) = (Tf(x_{1},\ldots,x_{j-1},\,\cdot\,,x_{j+1},\ldots,x_{n}))(x_{j}).
\]
In this notation, the above lemma combined with Theorem~\ref{thm:extrapolation} immediately yields:

\begin{lemma}\label{lem:weighted_est_CZ-op_fix_var}
Let $T\colon L^{p}(\IR;X) \to L^{p}(\IR;Y)$ be a Calderón--Zygmund operator (as defined in Section~\ref{sec:extrapolation_CZ}) for some given $p \in (1,\infty)$.
For all $\omega, \sigma \in \mathcal{A}_p(\IR^n;\mathcal{R}_n)$ with $[\omega,\sigma]_{\mathcal{A}_p(\IR^n;\mathcal{R}_n)} < \infty$ and $f \in L^{p}(\IR^{n};X) \cap L^{p}_{\sigma}(\IR^{n};X)$ there holds the estimate
\begin{equation}\label{eq:weighted_estimate_one_coordinate}
\normalnorm{T_{j}f}_{L^{p}_{\omega}(\IR^{n};Y)} \lesssim [\omega, \sigma]_{\mathcal{A}_p(\IR^n; \mathcal{R}_n)}^{1/p} ([\omega]_{\mathcal{A}_p(\IR^n;\mathcal{R}_n)}^{1-\frac{1}{p}} + [\sigma]_{\mathcal{A}_p(\IR^n;\mathcal{R}_n)}^{\frac{1}{p}}) \norm{f}_{L^{p}_{\sigma}(\IR^{n};X)}.
\end{equation}
The implicit constant only depends on $\norm{T}_{\mathcal{B}(L^p(\IR;X),L^p(\IR;Y))}$, $p$ and the constant $C$ in the definition of a Calderón--Zygmund kernel.
\end{lemma}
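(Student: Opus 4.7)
The plan is to reduce to the one-dimensional result Theorem~\ref{thm:extrapolation} by a slicing/Fubini argument, using Lemma~\ref{lem:relation_ap} to control the slice weight characteristics by the full ones. Without loss of generality assume $j=1$ and write $x = (x_1, x') \in \IR \times \IR^{n-1}$, and denote $f_{x'} := f(\,\cdot\,, x') \colon \IR \to X$.

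First I would raise the target norm to the $p$-th power and swap the integrals by Fubini to obtain
\begin{equation*}
\normalnorm{T_{1}f}_{L^{p}_{\omega}(\IR^{n};Y)}^{p} = \int_{\IR^{n-1}} \int_{\IR} \norm{(Tf_{x'})(x_{1})}_{Y}^{p}\, \omega(x_{1},x')\, dx_{1}\, dx'.
\end{equation*}
The hypothesis $f \in L^{p}(\IR^{n};X) \cap L^{p}_{\sigma}(\IR^{n};X)$ together with Fubini guarantees $f_{x'} \in L^{p}(\IR;X) \cap L^{p}_{\sigma(\,\cdot\,, x')}(\IR;X)$ for almost every $x' \in \IR^{n-1}$, so that $Tf_{x'}$ is well-defined as an element of $L^{p}(\IR;Y)$.

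Next, for those $x'$ I would apply Theorem~\ref{thm:extrapolation} (with $n=1$) to $T$ with the pair of one-dimensional weights $\omega(\,\cdot\,, x')$ and $\sigma(\,\cdot\,, x')$. This yields, for a.e.\ $x'$,
\begin{equation*}
\int_{\IR} \norm{(Tf_{x'})(x_{1})}_{Y}^{p}\, \omega(x_{1},x')\, dx_{1} \lesssim A(x')^{p} \int_{\IR} \norm{f(x_{1}, x')}_{X}^{p}\, \sigma(x_{1},x')\, dx_{1},
\end{equation*}
where
\begin{equation*}
A(x') := [\omega(\,\cdot\,,x'), \sigma(\,\cdot\,,x')]_{\mathcal{A}_{p}(\IR)}^{1/p} \bigl( [\omega(\,\cdot\,,x')]_{\mathcal{A}_{\infty}(\IR)}^{1-1/p} + [\sigma(\,\cdot\,,x')_{p}']_{\mathcal{A}_{\infty}(\IR)}^{1/p} \bigr),
\end{equation*}
and the suppressed constant depends only on $\norm{T}_{\mathcal{B}(L^{p}(\IR;X),L^{p}(\IR;Y))}$, $p$, and the Calderón--Zygmund constant $C$, as desired.

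Now I would apply Lemma~\ref{lem:relation_ap} together with the general inclusion $[\,\cdot\,]_{\mathcal{A}_{\infty}} \leq [\,\cdot\,]_{\mathcal{A}_{p}}$ to bound $A(x')$ uniformly in $x'$ by the $n$-dimensional rectangular characteristics. Specifically, the two-weight part of the lemma gives $[\omega(\,\cdot\,,x'), \sigma(\,\cdot\,,x')]_{\mathcal{A}_{p}(\IR)} \leq [\omega,\sigma]_{\mathcal{A}_{p}(\IR^{n};\mathcal{R}_{n})}$, and the one-weight consequence applied to $\omega$ and to $\sigma_{p}'$ (noting that $\sigma \in \mathcal{A}_{p}(\IR^{n};\mathcal{R}_{n})$ iff $\sigma_{p}' \in \mathcal{A}_{p'}(\IR^{n};\mathcal{R}_{n})$, and that slicing commutes with duality) yields pointwise a.e.\ bounds for the remaining two factors in terms of $[\omega]_{\mathcal{A}_{p}(\IR^{n};\mathcal{R}_{n})}$ and $[\sigma]_{\mathcal{A}_{p}(\IR^{n};\mathcal{R}_{n})}$. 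Finally, integrating over $x' \in \IR^{n-1}$ and invoking Fubini once more on the right-hand side recovers $\norm{f}_{L^{p}_{\sigma}(\IR^{n};X)}^{p}$, producing~\eqref{eq:weighted_estimate_one_coordinate}.

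The only subtle point is the routine Fubini/measurability bookkeeping: one must verify that $x' \mapsto \norm{(Tf_{x'})(\,\cdot\,)}_{L^{p}_{\omega(\,\cdot\,,x')}(\IR;Y)}$ and $x' \mapsto A(x')$ are measurable, and that the pointwise-in-$x'$ inequality may legitimately be integrated. This is standard but slightly tedious; apart from this, the proof is just a direct composition of Theorem~\ref{thm:extrapolation} with Lemma~\ref{lem:relation_ap}.
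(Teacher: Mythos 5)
Your proof is correct and is precisely the argument the paper intends: the paper's own proof is the single remark that Lemma~\ref{lem:relation_ap} combined with Theorem~\ref{thm:extrapolation} "immediately yields" the estimate, and your slicing-plus-Fubini argument is exactly the unpacking of that remark.
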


It will be convenient to introduce the following notation. For each $j \in \{1,\ldots,n\}$ we define $\pi_{j}\colon \IR^{n} \to \IR$ by $\pi_{j}x:= x_{j}$ and consider the associated pull-back on functions: for a function $f\colon \IR \to \IC$ we write $\pi_{j}^{*}f \coloneqq f \circ \pi_{j}$.
Let $m \in L^{\infty}(\IR;\mathcal{B}(X,Y))$ be such that $m \in \mathcal{M}_{p}^1((X,\mathds{1}) \to (Y,\mathds{1}))$.
Then observe that $\pi_{j}^{*}m \in \mathcal{M}_{p}^n((X,\mathds{1}), (Y,\mathds{1}))$ with
\begin{equation}\label{eq:identity_fm_fixed_var}
[T_{m}]_{j} = T_{\pi_{j}^{*}m} \quad \text{in}\quad \mathcal{B}(L^{p}(\IR^{n};X),L^{p}(\IR^{n};Y)).
\end{equation}
For $j \in \{1,\ldots,n\}$ and a measurable set $A \subset \IR$ we define the frequency cut-off with respect to the $j$-coordinate $\Delta_{j}[A]$ by $\Delta_{j}[A] := T_{\pi_{j}^{*}\mathds{1}_{A}}$.

	\begin{lemma}\label{lem:cutoffs}
		Let $X$ be a UMD space, $n \in \IN$, $p \in (1, \infty)$ and $\omega, \sigma \in \mathcal{A}_p(\IR^n; \mathcal{R}_n)$ with $[\omega, \sigma]_{\mathcal{A}_p(\IR^n; \mathcal{R}_n)} < \infty$. For each $j \in \{1,\ldots,n\}$ the family of spectral projections $\{ \Delta_{j}[I] : I \in \mathcal{R}_{1} \}$ lies in $\mathcal{B}(L^p_{\sigma}(\IR^n; X), L^p_{\omega}(\IR^n;X))$ with $\mathcal{R}$-bound
\begin{equation}\label{eq:lem:cutoffs}
\mathcal{R}\{\Delta_{j}[I]: I \in \mathcal{R}_{1} \}
					\lesssim_{X,p} [\omega, \sigma]_{\mathcal{A}_p(\IR^n; \mathcal{R}_n)}^{1/p} ([\omega]_{\mathcal{A}_p(\IR^n;\mathcal{R}_n)}^{1-\frac{1}{p}} + [\sigma]_{\mathcal{A}_p(\IR^n;\mathcal{R}_n)}^{\frac{1}{p}}).
\end{equation}
As a consequence, $\{ \Delta[R]: R \in \mathcal{R}_{n} \} \subset \mathcal{B}(L^p_{\sigma}(\IR^n; X), L^p_{\omega}(\IR^n;X))$ with $\mathcal{R}$-bound
\[
\mathcal{R}\{\Delta[R]: R \in \mathcal{R}_{n} \} \lesssim_{X,p} \left( [\omega, \sigma]_{\mathcal{A}_p(\IR^n; \mathcal{R}_n)}^{1/p} ([\omega]_{\mathcal{A}_p(\IR^n;\mathcal{R}_n)}^{1-\frac{1}{p}} + [\sigma]_{\mathcal{A}_p(\IR^n;\mathcal{R}_n)}^{\frac{1}{p}}) \right)^{n}.
\]
	\end{lemma}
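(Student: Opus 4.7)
The plan is to establish the single-coordinate estimate~\eqref{eq:lem:cutoffs} first, then deduce the rectangular $\mathcal{R}$-bound by iteration.

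For the single-coordinate bound I would invoke Lemma~\ref{lem:weighted_est_CZ-op_fix_var} applied to the Hilbert transform $H$, which is a Calder\'on--Zygmund operator on $L^p(\IR; X)$ because $X$ is UMD. That lemma directly supplies the two-weight estimate $\norm{H_j f}_{L^p_\omega} \lesssim [\omega,\sigma]^{1/p}([\omega]^{1-1/p} + [\sigma]^{1/p})\norm{f}_{L^p_\sigma}$ for the lifted operator $H_j = T_{\pi_j^*(i\sgn)}$. Since $i\sgn = i(2\mathds{1}_{[0,\infty)} - \mathds{1})$, the half-line projection $\Delta_j[[0,\infty)]$ inherits the same two-weight bound, exactly as in the one-dimensional argument of Lemma~\ref{lem:cutoffs;1d}.

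To upgrade this to $\mathcal{R}$-boundedness of the whole family $\{\Delta_j[I] : I \in \mathcal{R}_1\}$, I would follow the conjugation scheme of \cite[Lemma~3.7(c)]{KunWei04}. Every $I \in \mathcal{R}_1$ is a signed sum of at most two half-lines, and each half-line projection is a modulation-conjugate of $\Delta_j[[0,\infty)]$, namely $\Delta_j[[a,\infty)] = M_{a e_j}\Delta_j[[0,\infty)]M_{-a e_j}$ where the modulation $M_{a e_j} : f \mapsto e^{iax_j}f$ is a pointwise isometry on both $L^p_\omega(\IR^n;X)$ and $L^p_\sigma(\IR^n;X)$. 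Replacing the Rademacher $L^p(\Omega)$-norm by the equivalent $L^2(\Omega)$-norm pointwise in $x$ via Kahane--Khintchine, and then pulling the unimodular factors $e^{\pm ia_k x_j}$ outside the Rademacher sum by Kahane's contraction principle, I bound the $\mathcal{R}$-norm of the family by a universal constant times the operator norm of the single projection $\Delta_j[[0,\infty)]$; this yields~\eqref{eq:lem:cutoffs}.

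For the rectangular bound I would use the factorisation $\Delta[R] = \Delta_1[I_1]\Delta_2[I_2]\cdots\Delta_n[I_n]$ for $R = I_1 \times \cdots \times I_n$, valid because the symbols multiply and the one-coordinate multipliers commute. Iterating the single-coordinate $\mathcal{R}$-bound through the geometric interpolation weights $\tau_k := \omega^{(n-k)/n}\sigma^{k/n}$ for $k = 0, \ldots, n$, whose $\mathcal{A}_p(\IR^n;\mathcal{R}_n)$-characteristics $[\tau_k]$ and $[\tau_{k-1},\tau_k]$ are controlled by $[\omega]$, $[\sigma]$ and $[\omega,\sigma]$ via H\"older's inequality, and chaining via the multiplicativity of $\mathcal{R}$-bounds under composition, would produce the $n$-fold product bound claimed in the statement. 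The main obstacle will be this final step: selecting the intermediate weights $\tau_k$ and carefully tracking their $\mathcal{A}_p$-characteristics to recover the clean $n$-th-power form (rather than a messier mixed expression) is the delicate bookkeeping, whereas the one-coordinate step is a direct transcription of the one-dimensional argument with Lemma~\ref{lem:weighted_est_CZ-op_fix_var} playing the role of Theorem~\ref{thm:extrapolation}.
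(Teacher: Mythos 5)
Your plan for the one\-/coordinate estimate~\eqref{eq:lem:cutoffs} is essentially the paper's own route: one passes to the Hilbert transform, invokes Lemma~\ref{lem:weighted_est_CZ-op_fix_var} together with the identity~\eqref{eq:identity_fm_fixed_var} (which is exactly how the paper replaces the direct appeal to Theorem~\ref{thm:extrapolation} in the one-dimensional proof), and then runs the modulation/conjugation scheme of \cite[Lemma~3.7(c)]{KunWei04} to pass from a single projection to the $\mathcal{R}$-bound of the whole family. That part is sound and is what the authors intend.

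The gap is in the second half, and you have in fact put your finger on it yourself with the remark that the ``bookkeeping'' is delicate: the geometric-interpolation weights $\tau_k=\omega^{(n-k)/n}\sigma^{k/n}$ do \emph{not} reproduce the displayed $n$-th power. If you run H\"older as you describe, you obtain
\begin{equation*}
[\tau_{k-1},\tau_k]_{\mathcal{A}_p(\IR^n;\mathcal{R}_n)}\le[\omega]_{\mathcal{A}_p}^{(n-k)/n}\,[\sigma]_{\mathcal{A}_p}^{(k-1)/n}\,[\omega,\sigma]_{\mathcal{A}_p}^{1/n},
\end{equation*}
so each of the $n$ composition steps contributes only a factor $[\omega,\sigma]^{1/(np)}$, and the total power of the mixed characteristic after chaining is $[\omega,\sigma]^{1/p}$, not $[\omega,\sigma]^{n/p}$. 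The display in the lemma, however, places the $n$-th power outside the \emph{entire} single-coordinate bound, $[\omega,\sigma]^{1/p}$ included. These are genuinely different quantities, and your computation cannot be massaged into the displayed form. A quick scaling check makes the discrepancy concrete and shows the displayed form is the one at fault: for constant weights $\omega\equiv a$, $\sigma\equiv b$ one has $[\omega]_{\mathcal{A}_p}=[\sigma]_{\mathcal{A}_p}=1$ and $[\omega,\sigma]_{\mathcal{A}_p}=a/b$, while the $\mathcal{R}$-bound of $\{\Delta[R]\}$ in $\mathcal{B}(L^p_{b},L^p_{a})$ equals $(a/b)^{1/p}$ times its unweighted value, so for $a/b\to0$ and $n\ge2$ the claimed $(a/b)^{n/p}$ decays strictly faster than the true $\mathcal{R}$-bound and the inequality fails. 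In other words, your intermediate-weight argument produces the \emph{correct} (first) power of $[\omega,\sigma]^{1/p}$, and the clean $n$-th-power form you are trying to match is a slip in the statement (only the $([\omega]^{1-1/p}+[\sigma]^{1/p})$ factor, and the contributions of the unmixed characteristics from $[\tau_{k-1}]$ and $[\tau_k]$, should carry exponent $n$). So: keep the composition-through-intermediate-weights approach --- it does fill the gap the paper leaves with the phrase ``as a consequence'' --- but do not try to force the displayed $n$-th power; compute the bound the chain actually gives and note the discrepancy.
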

	\begin{proof}
We only need to prove the first statement, including the $\mathcal{R}$-bound \eqref{eq:lem:cutoffs}. This can can be done in the same way as Lemma~\ref{lem:cutoffs;1d},
now using Lemma~\ref{lem:weighted_est_CZ-op_fix_var} in combination with the simple observation \eqref{eq:identity_fm_fixed_var} instead of directly using Theorem~\ref{thm:extrapolation}.
	\end{proof}

The following lemma can be obtained in the same way as Lemma~\ref{lem:Mikhlin_jth-coord;1d}, now using Lemma~\ref{lem:weighted_est_CZ-op_fix_var} in combination with the simple observation \eqref{eq:identity_fm_fixed_var} instead of directly using Theorem~\ref{thm:extrapolation}.
\begin{lemma}\label{lem:Mikhlin_jth-coord}
Let $X$ and $Y$ be UMD Banach spaces, $p \in (1,\infty)$ and $\omega, \sigma \in \mathcal{A}_p(\IR^n;\mathcal{R}_n)$ with $[\omega,\sigma]_{\mathcal{A}_p(\IR^n;\mathcal{R}_n)} < \infty$.
For every $j \in \{1,\ldots,n\}$ and $\phi \in \mathcal{S}(\IR)$ it holds that
$\pi_{j}^{*}\phi \in \mathcal{M}^n_p((X,\sigma) \to (Y,\omega))$ with
\[
\normalnorm{\pi_{j}^{*}\phi}_{\mathcal{M}^n_p((X,\sigma) \to (Y,\omega))} \lesssim [\omega, \sigma]_{\mathcal{A}_p(\IR^n; \mathcal{R}_n)}^{1/p} ([\omega]_{\mathcal{A}_p(\IR^n;\mathcal{R}_n)}^{1-\frac{1}{p}} + [\sigma]_{\mathcal{A}_p(\IR^n;\mathcal{R}_n)}^{\frac{1}{p}})\sup_{k=0,\ldots,3}\sup_{\xi \in \IR}|\xi^{k}\phi^{(k)}(\xi)|.
\]
\end{lemma}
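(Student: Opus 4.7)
The plan is to follow the hint given just before the statement, reducing to the one-dimensional Mikhlin theorem via the identity \eqref{eq:identity_fm_fixed_var} and applying the coordinate-wise weighted extrapolation of Lemma~\ref{lem:weighted_est_CZ-op_fix_var}. By homogeneity, I would assume without loss of generality that $\sup_{k=0,\ldots,3}\sup_{\xi \in \IR}|\xi^{k}\phi^{(k)}(\xi)| \leq 1$ and show the estimate with a constant only reflecting the weight characteristics and the UMD constants of $X$ and $Y$.

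First, the one-dimensional scalar Mikhlin theorem (the version already recorded in the proof of Lemma~\ref{lem:Mikhlin_jth-coord;1d}, i.e.\ Theorem~\ref{thm:mikhlin} applied to the $\mathcal{R}$-bounded set $\{c\,\Id : |c|\le 1\}$ in the UMD spaces $X$ and $Y$) yields $\phi \in \mathcal{M}^{1}_{p}((X,\mathds{1}) \to (Y,\mathds{1}))$ with operator norm $\lesssim 1$. Second, by \cite[Proposition~VI.4.4.2(a)]{Ste93}, the convolution kernel $K := \mathcal{F}^{-1}\phi$ satisfies the size and Hölder estimates in the definition of a Calderón--Zygmund kernel, with constants depending only on the Mikhlin bound of $\phi$; hence $T_{\phi}$ is a Calderón--Zygmund operator on $L^{p}(\IR;X) \to L^{p}(\IR;Y)$.

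The key step is now the passage from the one-dimensional CZ operator to its coordinate lift. By the identity \eqref{eq:identity_fm_fixed_var}, the Fourier multiplier operator $T_{\pi_{j}^{*}\phi}$ on $\IR^{n}$ coincides with $[T_{\phi}]_{j}$, the operator that applies $T_{\phi}$ in the $j$-th variable with the remaining variables frozen. Applying Lemma~\ref{lem:weighted_est_CZ-op_fix_var} to the CZ operator $T_{\phi}$ with weights $\omega,\sigma \in \mathcal{A}_{p}(\IR^{n};\mathcal{R}_{n})$ (which is precisely the point where the rectangular $\mathcal{A}_{p}$-condition is needed, via Lemma~\ref{lem:relation_ap}) gives, for $f$ in a suitable dense subset of $L^{p}_{\sigma}(\IR^{n};X)$,
\[
\normalnorm{T_{\pi_{j}^{*}\phi}f}_{L^{p}_{\omega}(\IR^{n};Y)} = \normalnorm{[T_{\phi}]_{j}f}_{L^{p}_{\omega}(\IR^{n};Y)} \lesssim [\omega,\sigma]_{\mathcal{A}_{p}}^{1/p}\bigl([\omega]_{\mathcal{A}_{p}}^{1-\frac{1}{p}}+[\sigma]_{\mathcal{A}_{p}}^{\frac{1}{p}}\bigr)\norm{f}_{L^{p}_{\sigma}(\IR^{n};X)},
\]
where the implicit constant absorbs $\norm{T_{\phi}}_{L^{p}(\IR;X) \to L^{p}(\IR;Y)} \lesssim 1$ and the CZ kernel constants. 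Rescaling by the Mikhlin seminorm recovers the factor $\sup_{k\leq 3}\sup_{\xi}|\xi^{k}\phi^{(k)}(\xi)|$.

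The only mild subtlety I anticipate is ensuring that $T_{\pi_{j}^{*}\phi}$ defined as a Fourier multiplier on the Schwartz class agrees with the pointwise-in-$x_{j}$ application of $T_{\phi}$ to $L^{p}_{\sigma}$-functions; this is exactly what \eqref{eq:identity_fm_fixed_var} and the density of Schwartz functions in $L^{p}_{\sigma}(\IR^{n};X)$ deliver, so there is no real obstacle beyond transcribing the argument of Lemma~\ref{lem:Mikhlin_jth-coord;1d} with \eqref{eq:identity_fm_fixed_var} inserted at the final step.
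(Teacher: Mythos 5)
Your argument is exactly the paper's intended proof: reduce to the one-dimensional scalar Mikhlin theorem to see that $T_\phi$ is a Calder\'on--Zygmund operator on $L^p(\IR;X)$, then lift to $\IR^n$ via the identity \eqref{eq:identity_fm_fixed_row} $[T_\phi]_j = T_{\pi_j^*\phi}$ and apply Lemma~\ref{lem:weighted_est_CZ-op_fix_var}, which is precisely what the paper's remark preceding the lemma indicates (mirroring the proof of Lemma~\ref{lem:Mikhlin_jth-coord;1d}). No gaps; the homogeneity normalization and the density remark at the end are both fine.
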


\begin{lemma}\label{lem:density_cpct_Fourier_supp_away}
Let $X$ be a Banach space, $p \in (1,\infty)$ and $\omega \in \mathcal{A}_p(\IR^n;\mathcal{R}_n)$.
Then $\mathcal{F}^{-1}C_c^{\infty}(\IR^{n}_{*};X)$
is dense in $L^{p}_{\omega}(\IR^{n};X)$, where $\IR^{n}_{*} = [\IR \setminus \{0\}]^{n}$.
\end{lemma}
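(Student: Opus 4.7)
The plan is to adapt the one-dimensional argument of Lemma~\ref{lem:density_cpct_Fourier_supp_away;1d} by decomposing into the $2^n$ frequency orthants. First I reduce to the scalar case $X = \IC$: since $L^p_\omega(\IR^n) \otimes X$ is dense in $L^p_\omega(\IR^n;X)$ (by density of simple functions), it suffices to approximate scalar functions by $\mathcal{F}^{-1}C_c^\infty(\IR^n_*)$; given scalar approximants $\tilde g_i$ of $g_i$, the tensor sum $\sum_i \tilde g_i \otimes x_i$ lies in $\mathcal{F}^{-1}C_c^\infty(\IR^n_*; X)$ because $\mathcal{F}(\tilde g_i \otimes x_i) = (\mathcal{F}\tilde g_i) \otimes x_i$ is supported in $\IR^n_*$.

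Next, $\mathcal{F}^{-1}C_c^\infty(\IR^n)$ is itself dense in $L^p_\omega(\IR^n)$ by a standard frequency-truncation argument: starting from the density of $\mathcal{S}(\IR^n)$, any Schwartz $f$ is approximated by $f * \psi_k$ with $\psi_k \in \mathcal{S}(\IR^n)$ an approximation of identity whose Fourier transform lies in $C_c^\infty(\IR^n)$, and $L^p_\omega$-convergence follows from the $A_p$ maximal function bound. For such $f \in \mathcal{F}^{-1}C_c^\infty(\IR^n)$ I then write $f = \sum_{\varepsilon \in \{-1,1\}^n} \Delta(I_\varepsilon) f$, where $I_\varepsilon := \prod_{j=1}^n \varepsilon_j[0,\infty)$ tile $\IR^n$ up to measure zero; each orthant projection $\Delta(I_\varepsilon)$ is bounded on $L^p_\omega(\IR^n)$ by Lemma~\ref{lem:cutoffs} applied to the UMD space $\IC$.

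The final step is to approximate each $\Delta(I_\varepsilon) f$ by an element of $\mathcal{F}^{-1}C_c^\infty(\IR^n_*)$ through a tensorized modulation-truncation, tracking Fourier supports coordinate by coordinate. I fix $\phi \in \mathcal{S}(\IR)$ with $\phi(0) = 1$ and $\hat\phi \in C_c^\infty((0,\infty))$ (e.g.\ $\hat\phi \in C_c^\infty((0,\infty))$ normalized so that $\int_\IR \hat\phi \ud\xi = 1$), set $\phi_{\varepsilon,k}(x) := \prod_{j=1}^n \phi(\varepsilon_j x_j/k)$ and $e_{\varepsilon,k}(x) := \exp(2\pi i (\varepsilon \cdot x)/k)$, and define $f^\varepsilon_k := \phi_{\varepsilon,k}\, e_{\varepsilon,k}\, \Delta(I_\varepsilon) f$. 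Modulation by $e_{\varepsilon,k}$ shifts the compact Fourier support of $\Delta(I_\varepsilon)f$, which lies in $I_\varepsilon$, by $\varepsilon/k$ into $\prod_j \varepsilon_j[1/k,\infty)$; subsequent multiplication by $\phi_{\varepsilon,k}$ convolves in the Fourier variable with $\widehat{\phi_{\varepsilon,k}}$, compactly supported in $\prod_j \varepsilon_j(0,\infty)$. The combined Fourier support of $f^\varepsilon_k$ is therefore compact and contained in $\prod_j \varepsilon_j(0,\infty) \subset \IR^n_*$. Pointwise $\phi_{\varepsilon,k} e_{\varepsilon,k} \to 1$ with uniform bound $\norm{\phi}_\infty^n$, so dominated convergence gives $f^\varepsilon_k \to \Delta(I_\varepsilon) f$ in $L^p_\omega(\IR^n)$.

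The main technical point is the Fourier-support bookkeeping in this last step: one must verify that the tensorized modulation, combined with multiplication by a Schwartz bump whose Fourier transform is compactly supported in $\prod_j \varepsilon_j(0,\infty)$, pushes the Fourier support compactly into the open orthant along every coordinate simultaneously. Everything else is a routine product-style tensorization of the one-dimensional lemma, using only the orthant boundedness already provided by Lemma~\ref{lem:cutoffs}.
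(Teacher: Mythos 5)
Your proposal is correct and follows essentially the same route as the paper's proof, which simply refers back to the one-dimensional Lemma~\ref{lem:density_cpct_Fourier_supp_away;1d} and indicates the replacement of $I_{\varepsilon}$ by the orthant $R_{\varepsilon} = \prod_{j=1}^{n}\varepsilon_{j}[0,\infty)$ and of $\phi_{\varepsilon,k}$ by its $n$-variable analogue $\phi(\tfrac{\varepsilon_{1}}{k}x_{1},\ldots,\tfrac{\varepsilon_{n}}{k}x_{n})$. You have merely made the Fourier-support bookkeeping explicit, choosing the tensorized $\phi_{\varepsilon,k}(x)=\prod_j\phi(\varepsilon_j x_j/k)$ as a particular instance of the paper's multivariate cutoff, and this agrees with what the reduction to the one-dimensional case requires.
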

\begin{proof}
This can be proved in the same way as Lemma~\ref{lem:density_cpct_Fourier_supp_away;1d}, now using $R_{\epsilon} \coloneqq \prod_{j=1}^{n}\varepsilon_{j}[0,\infty) \in \mathcal{R}_{n}$ with $\varepsilon \in \{-1,1\}^{n}$ instead of $I_{\varepsilon} \in \mathcal{R}_{1}$ with $\varepsilon \in \{-1,1\}$.
Furthermore, one has to take $\phi_{\varepsilon,k}(x):=\phi(\frac{\varepsilon_{1}}{k}x_{1},\ldots,\frac{\varepsilon_{n}}{k}x_{n})$.
\end{proof}

Recall from Section~\ref{sec:LP1d} the collection of dyadic interval $\mathcal{I} = \{ I_{k,\eta} : (k,\eta) \in \IZ \times \{-1,1\} \}$, where $I_{k,\eta} = \eta[2^{k},2^{k+1}]$.

\begin{lemma}\label{lem:Littlewood-Paley_coordinate}
Let $X$ be a UMD Banach space, $p \in (1,\infty)$ and $\omega \in \mathcal{A}_p(\IR^n;\mathcal{R}_n)$.
For each $j \in \{1,\ldots,n\}$, $\Delta_{j} \coloneqq ((\Delta_{I})_j)_{I \in \mathcal{I}}$ defines an unconditional decomposition of $L^{p}_{\omega}(\IR^{n};X)$ with $C_{\Delta_{j}}^{\pm} \lesssim_{X,p} [\omega]_{\mathcal{A}_p(\IR^n;\mathcal{R}_n)}^{2\max\{1,\frac{1}{p-1}\}}$.
\end{lemma}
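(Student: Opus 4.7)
The plan is to follow the argument of Theorem~\ref{thm:Littlewood-Paley_1d} essentially verbatim, substituting the $n$-dimensional fixed-coordinate analogues developed earlier in this subsection (Lemmas~\ref{lem:density_cpct_Fourier_supp_away}, \ref{lem:cutoffs} and~\ref{lem:Mikhlin_jth-coord}) for their one-dimensional counterparts. Fix $j \in \{1,\ldots,n\}$. I will verify the hypotheses of Proposition~\ref{prop:uncond_Schaud_bbd_into_Rad} for the pre-decomposition $\Delta_{j} = ((\Delta_{I})_j)_{I \in \mathcal{I}}$ on $L^{p}_{\omega}(\IR^{n};X)$, obtaining the U$^{+}$ bound with constant $\lesssim_{X,p} [\omega]_{\mathcal{A}_p(\IR^n;\mathcal{R}_n)}^{\max\{1,1/(p-1)\}}$; the claimed estimate on $C^{\pm}_{\Delta_{j}}$ then follows from Lemma~\ref{lem:basic_char_uncond_Schaud_decomp} together with Proposition~\ref{prop:uncond_Schaud_bbd_into_Rad}.

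For the density of $\mathrm{ran}(\Delta_{j})$ in $L^{p}_{\omega}(\IR^{n};X)$, note that any $f \in \mathcal{F}^{-1}C^{\infty}_{c}(\IR^{n}_{*};X)$ has Fourier support contained in a compact set of $\IR^n$ that is bounded away from $\{\xi_j = 0\}$, hence in $\pi_{j}^{-1}(\bigcup_{I \in F} I)$ for some finite $F \subset \mathcal{I}$, so $f = \sum_{I \in F}(\Delta_{I})_{j}f \in \mathrm{ran}(\Delta_{j})$. Density then follows from Lemma~\ref{lem:density_cpct_Fourier_supp_away}.

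For the U$^{+}$-estimate on $\Delta_{j}$, I would reuse the one-dimensional construction from the proof of Theorem~\ref{thm:Littlewood-Paley_1d}: take a family $(\rho_{I})_{I \in \mathcal{I}} \subset C^{\infty}_{c}(\IR)$ such that $\rho_{I} \equiv 1$ on $I$ and such that the functions $\rho_{\varepsilon,\mathcal{J}} = \sum_{I \in \mathcal{J}}\varepsilon_{I}\rho_{I}$ uniformly satisfy the one-dimensional Mikhlin condition of order $3$. Applying Lemma~\ref{lem:Mikhlin_jth-coord} to the pull-backs $\pi_{j}^{*}\rho_{\varepsilon,\mathcal{J}}$ and using the identity \eqref{eq:identity_fm_fixed_var} yields
\[
\biggnorm{\sum_{I \in \mathcal{J}}\varepsilon_{I} T_{\pi_{j}^{*}\rho_{I}}}_{\mathcal{B}(L^{p}_{\omega}(\IR^{n};X))} \lesssim_{p,X} [\omega]_{\mathcal{A}_p(\IR^n;\mathcal{R}_n)}^{\max\{1,\frac{1}{p-1}\}}
\]
uniformly in $\varepsilon \in \{-1,1\}^{\mathcal{I}}$ and finite $\mathcal{J} \subset \mathcal{I}$. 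Since $(\Delta_{I})_{j}T_{\pi_{j}^{*}\rho_{I}} = (\Delta_{I})_{j}$, combining this with the $\mathcal{R}$-boundedness of $\{(\Delta_{I})_{j}: I \in \mathcal{R}_{1}\}$ from Lemma~\ref{lem:cutoffs} (applied with $\sigma=\omega$) delivers the U$^{+}$-bound for $\Delta_{j}$.

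Finally, since $X$ is UMD and thus reflexive, $[L^{p}_{\omega}(\IR^{n};X)]^{*} = L^{p'}_{\omega'_{p}}(\IR^{n};X^{*})$ with $\omega'_{p} \in \mathcal{A}_{p'}(\IR^{n};\mathcal{R}_{n})$ and $[\omega'_{p}]_{\mathcal{A}_{p'}(\IR^n;\mathcal{R}_n)}^{\max\{1,1/(p'-1)\}} = [\omega]_{\mathcal{A}_{p}(\IR^n;\mathcal{R}_n)}^{\max\{1,1/(p-1)\}}$; moreover $(\Delta_{I})_{j}^{*} = (\Delta_{-I})_{j}$, so $\Delta_{j}^{*}$ is a pre-decomposition of the same form (indexed by the reflection-invariant set $\mathcal{I}$) on the dual space with $X^{*}$ (which is also UMD) in place of $X$. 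The same argument applied on the dual side yields the U$^{+}$-bound for $\Delta_{j}^{*}$ with the same dependence on the weight characteristic. No step poses a real obstacle; the main conceptual point is simply that, thanks to Lemmas~\ref{lem:weighted_est_CZ-op_fix_var}--\ref{lem:Mikhlin_jth-coord}, the weighted $n$-dimensional problem in the $j$-th variable reduces cleanly to the one-dimensional setting of Section~\ref{subsec:LP_1d}.
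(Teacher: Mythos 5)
Your proposal is correct and follows essentially the same route as the paper: verify the hypotheses of Proposition~\ref{prop:uncond_Schaud_bbd_into_Rad}, with density from Lemma~\ref{lem:density_cpct_Fourier_supp_away}, the $\mathrm{U}^{+}$-estimate from the combination of Lemma~\ref{lem:Mikhlin_jth-coord} (applied to the pull-backs $\pi_j^*\rho_{\varepsilon,\mathcal{J}}$) with Lemma~\ref{lem:cutoffs}, and duality for $\Delta_j^*$. Only a small bookkeeping slip: the $\mathrm{U}^{+}$-constant already carries the power $2\max\{1,\frac{1}{p-1}\}$ (one factor $[\omega]^{\max\{1,1/(p-1)\}}$ from the $\mathcal{R}$-bound in Lemma~\ref{lem:cutoffs} and another from the Mikhlin estimate in Lemma~\ref{lem:Mikhlin_jth-coord}), so the exponent $2$ does not arise from multiplying $C^{+}_{\Delta_j}$ by $C^{-}_{\Delta_j}$ via Lemma~\ref{lem:basic_char_uncond_Schaud_decomp}, but is already present in each of $C^{\pm}_{\Delta_j}$.
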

\begin{proof}
Let us check the conditions of Proposition~\ref{prop:uncond_Schaud_bbd_into_Rad}.
The density of $\ran(\Delta_{j}) \supset L^{p}_{\omega}(\IR^{n};X) \cap \mathcal{F}^{-1}C_c^{\infty}(\IR^{n}_{*};X)$ in $L^{p}_{\omega}(\IR^{n};X)$ follows from Lemma~\ref{lem:density_cpct_Fourier_supp_away}.
For the randomized estimates we only need to treat $\Delta_{j}$, $(\Delta_{j})^{*}$ being of the same form.
Indeed, as $X$ is reflexive (being a UMD space), $(\Delta_{I,j})^{*} = \Delta_{-I,j}$ on $[L^{p}_{\omega}(\IR^{n};X)]^{*} = L^{p'}_{\sigma}(\IR^{n};X^{*})$, where $\sigma= \omega^{-1/(p-1)}$.
Furthermore, $[\omega]_{\mathcal{A}_p(\IR^n;\mathcal{R}_n)}^{2\max\{1,\frac{1}{p-1}\}} = [\sigma]_{\mathcal{A}_{p'}(\IR^n;\mathcal{R}_n)}^{2\max\{1,\frac{1}{p'-1}\}}$.

Let $(\rho_I)_{I \in \mathcal{I}} \subset C^{\infty}_{c}(\IR)$ be as in the proof of Theorem~\ref{thm:Littlewood-Paley_1d}.
Using Lemma~\ref{lem:Mikhlin_jth-coord} we find that $(\pi_{j}^{*}\rho_{I})_{I \in \mathcal{I}} \subset \mathcal{M}^n_p(X,\omega)$ with
\[
\biggnorm{\sum_{I \in \mathcal{J}}\varepsilon_{I}T_{\pi_{j}^{*}\rho_{I}}}_{\mathcal{B}(L^{p}_{\omega}(\IR^{n};X))} \lesssim_{p,X} [\omega]_{\mathcal{A}_p(\IR^n;\mathcal{R}_n)}^{\max\{1,\frac{1}{p-1}\}}
\]
for all  $\varepsilon \in \{-1,1\}^{\mathcal{I}}$ and $\mathcal{J} \subset \mathcal{I}$ finite.
As $\Delta_{I,j}T_{\pi_{j}^{*}\rho_{I}} = \Delta_{I,j}$, combining this estimate with Lemma~\ref{lem:cutoffs} gives the desired estimate for $\Delta_{j}$ in Proposition~\ref{prop:uncond_Schaud_bbd_into_Rad}.
\end{proof}

We are now able to prove the Littlewood--Paley decompositions that we will use to obtain the Mikhlin multiplier theorems in Section~\ref{subsec:mf_Mikhlin} via an application of the abstract multiplier result Theorem~\ref{thm:mikhlin_weighted}.
For this we apply Theorems \ref{thm:unconditional_product_decompositions} and \ref{thm:unconditional_blockings_product_decompositions} to the above unconditional decompositions.
In the presence of Pisier's property $(\alpha)$ we can use Theorem~\ref{thm:unconditional_product_decompositions} and simply take the product decomposition, which consists of the spectral projections corresponding to rectangles from the family $\mathcal{I}_{n} := \{I_{1} \times \ldots \times I_{n}: I_{1},\ldots,I_{n} \in \mathcal{I}\}$.
In the general case only Theorem~\ref{thm:unconditional_blockings_product_decompositions} on blockings is applicable, which leads us to consider the family of rectangles $\mathcal{E}_{n} = \{ E_{k,\eta} : (k,\eta) \in \IZ \times \{-1,1\}^{n} \}$ defined for $l \in \IZ$ and $r \in \{0, \ldots, n-1\}$ by
\[
E_{ln + r,\eta} \coloneqq \prod_{j=1}^{r}\eta_{j}[0,2^{l+1}] \times \eta_{r}[2^{l},2^{l+1}] \times \prod_{j=r+2}^{n}\eta_{j}[0,2^{l}],
\]
Note that for $J_{k}$ is as in Theorem~\ref{thm:unconditional_blockings_product_decompositions}
\begin{equation}\label{eq:connection_blockings;abstract_concrete}
E_{k,\eta} = \bigcup_{i \in J_{k}} I_{i_{1},\eta_{1}} \times \ldots \times I_{i_{n},\eta_{n}}.
\end{equation}

\begin{theorem}[Littlewood--Paley for $\mathcal{A}_p$-weights]\label{thm:littlewood-paley}
		Let $p \in (1, \infty)$ and $\omega \in \mathcal{A}_p(\IR^n; \mathcal{R}_n)$.
For a UMD space $X$ one has the Littlewood--Paley decompositions:
\begin{thm_enum}%
\item\label{item:thm:littlewood-paley;blocking} $\Delta = (\Delta[E])_{E \in \mathcal{E}_{n}}$ forms an unconditional decomposition of $L^{p}_{\omega}(\IR^{n};X)$ with $\mathrm{U}^{\pm}$-constants $C^{\pm}_{\Delta} \lesssim_{X,p,n}[\omega]_{\mathcal{A}_p(\IR^n;\mathcal{R}_n)}^{(n+1)\max\{1,\frac{1}{p-1}\}}$.
\item\label{item:thm:littlewood-paley;product} If $X$ additionally has Pisier's property $(\alpha)$, then $\Delta = (\Delta[I])_{I \in \mathcal{I}_{n}}$ forms an unconditional decomposition of $L^{p}_{\omega}(\IR^{n};X)$ with $\mathrm{U}^{\pm}$-constants $C^{\pm}_{\Delta} \lesssim_{X,p,n}[\omega]_{\mathcal{A}_p(\IR^n;\mathcal{R}_n)}^{2n\max\{1,\frac{1}{p-1}\}}$.
\end{thm_enum}
\end{theorem}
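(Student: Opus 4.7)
The plan is to take the $n$ one-dimensional coordinate-wise decompositions $\Delta_{j} = ((\Delta_{I})_{j})_{I \in \mathcal{I}}$, $j = 1,\ldots,n$, supplied by Lemma~\ref{lem:Littlewood-Paley_coordinate}, note that they pairwise commute (they are Fourier multipliers in disjoint variables), and assemble them via the abstract machinery of Theorems~\ref{thm:unconditional_product_decompositions} and~\ref{thm:unconditional_blockings_product_decompositions}. Throughout, I will use that UMD spaces are reflexive and K-convex, so by Remark~\ref{rmk:thm:unconditional_product_decompositions;K-convex} Pisier's property~$(\alpha)$ on $X$ transfers to $X^{*}$, and by a Kahane/Fubini argument it is then inherited by $E := L^{p}_{\omega}(\IR^{n};X)$ and its dual $E^{*} = L^{p'}_{\omega^{-1/(p-1)}}(\IR^{n};X^{*})$.

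For part~\ref{item:thm:littlewood-paley;product}, I would invoke Theorem~\ref{thm:unconditional_product_decompositions} with ambient space $E$ applied to the $n$ commuting unconditional decompositions $\Delta_{1},\ldots,\Delta_{n}$. This directly yields the product $\Delta = (\Delta[I_{1} \times \cdots \times I_{n}])_{(I_{1},\ldots,I_{n}) \in \mathcal{I}_{n}}$ as an unconditional decomposition with $C^{\pm}_{\Delta} \lesssim_{n,X} \prod_{j=1}^{n} C^{\pm}_{\Delta_{j}}$. Plugging in the coordinate bound $C^{\pm}_{\Delta_{j}} \lesssim_{X,p} [\omega]_{\mathcal{A}_{p}(\IR^{n};\mathcal{R}_{n})}^{2\max\{1,1/(p-1)\}}$ from Lemma~\ref{lem:Littlewood-Paley_coordinate} then multiplies out to the advertised $[\omega]^{2n\max\{1,1/(p-1)\}}$.

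For part~\ref{item:thm:littlewood-paley;blocking} I would instead appeal to the blocking Theorem~\ref{thm:unconditional_blockings_product_decompositions}. Its hypothesis~\eqref{eq:thm:unconditional_blockings_product_decompositions;R-bdd_assumption} on the $\mathcal{R}$-boundedness of partial sums of $\Delta_{j}$ and of $(\Delta_{j})^{*}$ is supplied by Lemma~\ref{lem:cutoffs}: setting $\sigma = \omega$ gives $\kappa_{j} \lesssim [\omega]^{\max\{1,1/(p-1)\}}$, and reapplying the same lemma with $\omega^{-1/(p-1)} \in \mathcal{A}_{p'}(\IR^{n};\mathcal{R}_{n})$ on the dual side gives the matching $\kappa_{j}^{*}$ bound. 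The one subtlety is that our coordinate decompositions are indexed by $\mathcal{I} \cong \IZ \times \{-1,1\}$ rather than by $\IZ$; my plan is to handle this by first splitting $E$ unconditionally into the $2^{n}$ frequency orthants (which is again permitted by the $\mathcal{R}$-bound of Lemma~\ref{lem:cutoffs} applied to the indicators of coordinate half-spaces) and then running Theorem~\ref{thm:unconditional_blockings_product_decompositions} within each orthant, where the restricted coordinate decompositions are genuinely $\IZ$-indexed. Identity~\eqref{eq:connection_blockings;abstract_concrete} then identifies the $\IZ$-blocks produced by the theorem with precisely the rectangles $E_{k,\eta}$ for the corresponding $\eta \in \{-1,1\}^{n}$, and feeding the above bounds into the $C^{\pm}_{\Delta}$-formula of Theorem~\ref{thm:unconditional_blockings_product_decompositions} multiplies out to $[\omega]^{(n+1)\max\{1,1/(p-1)\}}$. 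The main obstacle will be the bookkeeping across the $2^{n}$ sign patterns (and the verification that the orthant projections really do combine into an unconditional decomposition of $E$); no new analytic input is needed beyond Lemmas~\ref{lem:Littlewood-Paley_coordinate} and~\ref{lem:cutoffs}.
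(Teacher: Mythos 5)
Your proposal follows essentially the same route as the paper's proof: part (b) is Theorem~\ref{thm:unconditional_product_decompositions} together with Remark~\ref{rmk:thm:unconditional_product_decompositions;K-convex} and Lemma~\ref{lem:Littlewood-Paley_coordinate}, and part (a) is Theorem~\ref{thm:unconditional_blockings_product_decompositions} together with Lemmas~\ref{lem:Littlewood-Paley_coordinate} and~\ref{lem:cutoffs} plus the duality observation. Your explicit orthant splitting is a correct way to reconcile the $\mathcal{I} \cong \IZ \times \{-1,1\}$ indexing with the $\IZ$-indexed blocking theorem—this is precisely what the paper leaves implicit via the identity~\eqref{eq:connection_blockings;abstract_concrete}—and your weight-exponent bookkeeping matches the stated constants.
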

\begin{proof}
Part \ref{item:thm:littlewood-paley;blocking} follows from a combination of Theorem~\ref{thm:unconditional_blockings_product_decompositions}, Lemma~\ref{lem:Littlewood-Paley_coordinate} and Lemma~\ref{lem:cutoffs}, where we use that $X^{*}$ is a UMD space and that the dual family $\Delta^{*}$ is of the same form on $[L^{p}_{\omega}(\IR^{n};X)]^{*} = L^{p'}_{\omega'}(\IR^{n};X^{*})$ with $\omega'=\omega^{-1/(p-1)} \in \mathcal{A}_{p'}(\IR^n; \mathcal{R}_n)$.

Part \ref{item:thm:littlewood-paley;product} directly follows from a combination of Theorem~\ref{thm:unconditional_product_decompositions}, Remark~\ref{rmk:thm:unconditional_product_decompositions;K-convex} and Lemma~\ref{lem:Littlewood-Paley_coordinate}.
\end{proof}

The argumentation used in \ref{item:thm:littlewood-paley;product} is the usual one in case of Pisier's property $(\alpha)$ and basically goes back to \cite{Zim89}, see the proof of Theorem~\ref{thm:unconditional_product_decompositions} and the references given there.
The use of the abstract blocking result Theorem~\ref{thm:unconditional_blockings_product_decompositions} in \ref{item:thm:littlewood-paley;blocking} is due to \cite[Section~3.5]{Wit00} in the periodic setting and can also be found in~\cite{Hyt07b} on anisotropic multipliers.
An alternative approach would be the original one by \cite{Zim89}, using a multi-dimensional Mikhlin theorem (in the spirit of Lemma~\ref{lem:Littlewood-Paley_coordinate}). For this one could use \cite[Proposition~3.2]{MeyVer15}, a Mikhlin theorem for $L^{p}_{\omega}(\IR^{n};X)$ with $\omega \in \mathcal{A}_p(\IR^n; \mathcal{Q}_n)$, obtained from extrapolation via Theorem~\ref{thm:extrapolation} from Theorem~\ref{thm:mikhlin}. However, in the anisotropic case (that we will also consider) a suitable version of Theorem~\ref{thm:extrapolation} is not available.

\subsection{Mikhlin multiplier theorems}\label{subsec:mf_Mikhlin}

The following theorem, which extends~\cite[Theorems 4.4 \& 4.5]{StrWei07} to the two-weighted setting, is a consequence of the generic Theorem~\ref{thm:generic_fm_thm} and the Littlewood--Paley decompositions from Theorem~\ref{thm:littlewood-paley}. Recall the collection of rectangles $\mathcal{I}_{n}$ and $\mathcal{E}_{n}$ introduced before Theorem~\ref{thm:littlewood-paley}.

	\begin{theorem}\label{thm:mikhlin_weighted}
		Let $X$ and $Y$ be UMD spaces, $p \in (1,\infty)$, and $\omega,\sigma \in \mathcal{A}_p(\IR^n; \mathcal{R}_n)$ with $[\omega, \sigma]_{\mathcal{A}_p(\IR^n; \mathcal{R}_n)} < \infty$.
		\begin{thm_enum}%
			\item\label{item:thm:mikhlin_weighted;UMD} Let $m \in L^{\infty}(\IR^{n};\mathcal{B}(X,Y))$ be such that $\partial^{\alpha}m_{|E}$ is continuous for each $E \in \mathcal{E}_{n}$ and $|\alpha|_{\infty} \leq 1$. If
    			\begin{equation*}
    				\norm{m}_{\mathcal{R}\mathfrak{M}(\mathcal{E}_{n})} := \sup_{|\alpha|_{\infty} \le 1} \mathcal{R}\{\abs{\xi}^{\abs{\alpha}} \partial^{\alpha} m_{|E^{\circ}}(\xi): E \in \mathcal{E}_{n}, \xi \in E\} < \infty,
    			\end{equation*}
    		    then $m \in \mathcal{M}_p^n((X,\sigma),(Y,\omega))$ with
    			\begin{equation*}
					\norm{T_m}_{\mathcal{B}(L^{p}_{\sigma}(\IR^{n};X),L^{p}_{\omega}(\IR^{n};Y))} \lesssim_{X,Y,p,n,\sigma,\omega} \norm{m}_{\mathcal{R}\mathfrak{M}(\mathcal{E}_{n})}.
    			\end{equation*}
			\item\label{item:thm:mikhlin_weighted;UMDalpha} Suppose further that $X$ and $Y$ have Pisier's property~$(\alpha)$. If $\mathscr{M} \subset L^{\infty}(\IR^{n};\mathcal{B}(X,Y))$ is such that $\partial^{\alpha}m_{|I^{\circ}}$ is continuous for each $m \in \mathscr{M}$, $I \in \mathcal{I}_{n}$ and $|\alpha|_{\infty} \leq 1$ and
    			\begin{equation*}
    				\norm{\mathscr{M}}_{\mathcal{R}\mathfrak{M}(\mathcal{I}_{n})} := \sup_{|\alpha|_{\infty} \le 1} \mathcal{R}\{\xi^{\alpha} \partial^{\alpha} m_{|I^{\circ}}(\xi): m \in \mathscr{M}, I \in \mathcal{I}_{n}, \xi \in E\} < \infty,
                \end{equation*}
                	then $\mathscr{M} \subset \mathcal{M}^n_p((X,\sigma),(Y,\omega))$ and one has in $\mathcal{B}(L^{p}_{\sigma}(\IR^{n};X),L^{p}_{\omega}(\IR^{n};Y))$
    			\begin{equation*}
					\mathcal{R}\{ T_{m} : m \in \mathscr{M} \} \lesssim_{X,Y,p,n,\sigma,\omega} \norm{\mathscr{M}}_{\mathcal{R}\mathfrak{M}(\mathcal{I}_{n})}.
    			\end{equation*}
		\end{thm_enum}
	\end{theorem}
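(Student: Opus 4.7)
My plan is to derive both parts from the generic multiplier Theorem~\ref{thm:generic_fm_thm} combined with the Littlewood--Paley decompositions of Theorem~\ref{thm:littlewood-paley}. Concretely, for part (a) I would apply Theorem~\ref{thm:generic_fm_thm}\ref{item:thm:generic_fm_thm;general} together with the $\mathcal{E}_n$-blocking decomposition of Theorem~\ref{thm:littlewood-paley}\ref{item:thm:littlewood-paley;blocking} (valid under UMD of $X$ and $Y$ alone), taking $E = L^p_{\sigma}(\IR^n;X)$, $G = L^p_{\omega}(\IR^n;Y)$, and $F = L^p_{\omega}(\IR^n;X)$; for part (b) I would pair Theorem~\ref{thm:generic_fm_thm}\ref{item:thm:generic_fm_thm;alpha} with the product $\mathcal{I}_n$-decomposition of Theorem~\ref{thm:littlewood-paley}\ref{item:thm:littlewood-paley;product} (which uses Pisier's property $(\alpha)$). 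The ambient constant $M$ in Theorem~\ref{thm:generic_fm_thm} --- the $\mathcal{R}$-bound on upper-quadrant spectral projections --- is supplied by Lemma~\ref{lem:cutoffs} with the announced dependence on $[\omega, \sigma]_{\mathcal{A}_p(\IR^n; \mathcal{R}_n)}$, while the $\mathrm{U}^{\pm}$-constants and Pisier constants enter through Theorem~\ref{thm:littlewood-paley} and Remark~\ref{rmk:thm:unconditional_product_decompositions;K-convex}.

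The substantive step is to verify that $\{m\mathds{1}_E : E \in \mathcal{E}_n\}$ (for (a)) and $\{m\mathds{1}_I : m \in \mathscr{M},\, I \in \mathcal{I}_n\}$ (for (b)) are of uniformly $\mathcal{R}$-bounded variation in the sense of Definition~\ref{def:uniform_R-bdd_var}. My plan is to iterate the one-dimensional identity from the proof of Theorem~\ref{thm:Mikhlin_1d} coordinatewise. For a product rectangle $R = \prod_j [a_j, b_j]$ (signs absorbed for notational simplicity), an $n$-fold iteration gives
\[
(m\mathds{1}_R)(\xi) = \sum_{\beta \in \{0,1\}^n} \int_{(-\infty, \xi]} (\eta^{\beta} \partial^{\beta} m)(\eta)\, \mathds{1}_{A_{R,\beta}}(\eta)\, d\mu_{R,\beta}(\eta),
\]
where $\mu_{R,\beta}$ is the tensor product, over $j$, of the one-dimensional measures $d(\delta_{a_j} - \delta_{b_j})$ (for $\beta_j = 0$) and $\eta_j^{-1} \mathds{1}_{(a_j, b_j)}\, d\lambda$ (for $\beta_j = 1$). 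Writing $\eta^{\beta}\partial^{\beta} m(\eta) = \prod_j (\eta_j/|\eta|)^{\beta_j} \cdot |\eta|^{|\beta|_1}\partial^{\beta} m(\eta)$ and invoking Kahane's contraction principle together with the $\mathcal{R}$-Mikhlin hypothesis places the integrand in a fixed $\mathcal{R}$-bounded set, uniform in $R$ (and in $m \in \mathscr{M}$ for (b)).

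For part (b), each factor $I_{k_j, \eta_j}$ is a dyadic interval with $b_j/a_j = 2$, so $\int_{a_j}^{b_j}\eta_j^{-1}\,d\eta_j = \log 2$ and $\norm{\mu_{I,\beta}} \le 2^n$ uniformly in $I$ and $\beta$, immediately closing the uniformly $\mathcal{R}$-bounded variation verification. For part (a), the rectangles $E \in \mathcal{E}_n$ carry only one dyadic coordinate, with $a_j = 0$ in the others, so $\int_0^{b_j}\eta_j^{-1}\,d\eta_j$ diverges and the naive iteration fails. To circumvent this, I would first decompose $\mathds{1}_E$ by inclusion--exclusion into an alternating sum of upper-quadrant indicators $\mathds{1}_{\prod_j[c_j, \infty)}$ with $c_j \in \{a_j, b_j\}$, and then apply the one-dimensional identity only in the dyadic coordinate while evaluating all other coordinates at the point masses $c_j$. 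The crucial structural fact is that every $\eta \in E$ satisfies $|\eta| \asymp 2^l$ (forced by the dyadic coordinate), so that after absorbing $|\eta|^{|\beta|_1}$ into the $\mathcal{R}$-bounded set via Kahane's contraction the resulting measure has total variation bounded purely in terms of $n$.

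The principal technical obstacle is the bookkeeping in (a): one must verify that the upper-quadrant reduction combined with the single-coordinate FTC reproduces $m\mathds{1}_E$ almost everywhere and genuinely fits the form of Definition~\ref{def:uniform_R-bdd_var} with constants independent of $E \in \mathcal{E}_n$. Once the uniformly $\mathcal{R}$-bounded variation has been secured in both regimes, the norm estimate of (a) and the $\mathcal{R}$-bound of (b) drop out immediately from Theorem~\ref{thm:generic_fm_thm}, with all weight-dependent prefactors tracked transparently through Lemma~\ref{lem:cutoffs} and Theorem~\ref{thm:littlewood-paley}.
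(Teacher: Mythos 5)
Your strategy matches the paper's: combine the generic Theorem~\ref{thm:generic_fm_thm} with Theorem~\ref{thm:littlewood-paley} (using Lemma~\ref{lem:cutoffs} to supply the constant $M$), and reduce everything to verifying that $\{m\mathds{1}_E\}$ and $\{m\mathds{1}_I\}$ are of uniformly $\mathcal{R}$-bounded variation. For part~(b) your coordinatewise FTC iteration with tensor-product measures $d(\delta_{a_j}-\delta_{b_j})$ and $\eta_j^{-1}\mathds{1}_{(a_j,b_j)}\,d\lambda$ is essentially verbatim what the paper does, with $\int_{a_j}^{b_j}\eta_j^{-1}\,d\eta_j = \log 2$ giving $\normalnorm{\mu_{m,I}} \lesssim_n 1$.

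For part~(a), however, you have talked yourself into a detour that is both unnecessary and, as written, not clearly sound. You claim that because the non-dyadic sides of $E \in \mathcal{E}_n$ start at $a_j = 0$, the density $\eta_j^{-1}$ diverges and the ``naive iteration fails.'' But that divergence arises only if you insist on the $\mathcal{I}_n$-style split $\tau \ni \eta^\beta\partial^\beta m$, $\mu \ni \eta^{-\alpha}$; the hypothesis in~(a) is not $\mathcal{R}\{\xi^\alpha\partial^\alpha m\}$ but $\mathcal{R}\{|\xi|^{|\alpha|}\partial^\alpha m\}$, and the natural split it dictates is $\tau_{m,E}(\eta) := \sum_{|\beta|_\infty\le 1}|\eta|^{|\beta|}\partial^\beta m_{|E}(\eta)\mathds{1}_{E_\beta}(\eta)$ with $\mathrm{d}\mu_{m,E}(\eta) := \sum_{|\alpha|_\infty\le 1}|\eta|^{-|\alpha|}\,\mathrm{d}\nu_{E,\alpha}(\eta)$. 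Under this split the exact same coordinatewise FTC representation for $m\mathds{1}_E$ that you use in~(b) still holds, and the measure is finite with a dimensional bound: on $E = E_{ln+r,\eta}$ one has $|\eta|\asymp 2^l$, so $|\eta|^{-|\alpha|}\asymp 2^{-l|\alpha|}$, which precisely offsets the up-to-$2^{l+1}$ length of each non-dyadic side, giving $\normalnorm{\mu_{m,E}} \lesssim \sum_{|\alpha|_\infty\le 1}2^{-l|\alpha|}(2^{l+1})^{|\alpha|}\lesssim_n 1$. This is exactly the structural fact $|\eta|\asymp 2^l$ you yourself noticed --- but you should use it to rescue the naive iteration rather than to motivate a workaround. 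Your inclusion--exclusion into upper-quadrant indicators $m\mathds{1}_{\prod_j[c_j,\infty)}$ is problematic as stated: these pieces have unbounded support, so they do not individually fit the representation~\eqref{eq:def:uniform_R-bdd_var} with finite-variation $\mu$, and you do not explain how they are to be recombined before the FTC is applied. The paper's route avoids all of this.
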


\begin{remark}
Following the steps of the proof, the power dependency on the weight characteristics can be determined explicitly: indeed, in \ref{item:thm:mikhlin_weighted;UMD} we have
\begin{align*}
C_{X,Y,p,n,\sigma,\omega} & \lesssim_{X,Y,p,n}
 [\omega]_{\mathcal{A}_p(\IR^n;\mathcal{R}_n)}^{(n+1)\max\{1,\frac{1}{p-1}\}} [\sigma]_{\mathcal{A}_p(\IR^n;\mathcal{R}_n)}^{(n+1)\max\{1,\frac{1}{p-1}\}} \\
 & \left([\omega, \sigma]_{\mathcal{A}_p(\IR^n; \mathcal{R}_n)}^{1/p} ([\omega]_{\mathcal{A}_p(\IR^n;\mathcal{R}_n)}^{1-\frac{1}{p}} + [\sigma]_{\mathcal{A}_p(\IR^n;\mathcal{R}_n)}^{\frac{1}{p}}) \right)^{n}
\end{align*}
and \ref{item:thm:mikhlin_weighted;UMDalpha} in we have
\[
C_{X,Y,p,n,\sigma,\omega} \lesssim_{X,Y,p,n}
\left( [\omega]_{\mathcal{A}_p(\IR^n;\mathcal{R}_n)}^{2\max\{1,\frac{1}{p-1}\}} [\sigma]_{\mathcal{A}_p(\IR^n;\mathcal{R}_n)}^{2\max\{1,\frac{1}{p-1}\}} [\omega, \sigma]_{\mathcal{A}_p(\IR^n; \mathcal{R}_n)}^{1/p} ([\omega]_{\mathcal{A}_p(\IR^n;\mathcal{R}_n)}^{1-\frac{1}{p}} + [\sigma]_{\mathcal{A}_p(\IR^n;\mathcal{R}_n)}^{\frac{1}{p}}) \right)^{n}.
\]
However, it is known that the obtained powers are far from optimal for the class of Calderón--Zygmund operators~\cite{Hyt12b}, e.g.\ the Hilbert transform. This loss of exactness stems from our approach based on the Littlewood--Paley decompositions.
\end{remark}

\begin{proof}
By Theorems \ref{thm:generic_fm_thm} and \ref{thm:littlewood-paley}, we only need to check that $\{ m\mathds{1}_{E}: E \in \mathcal{E}_{n} \}$ and $\{ m\mathds{1}_{I}: m \in \mathscr{M}, I \in \mathcal{I}_{n} \}$ are of uniformly $\mathcal{R}$-bounded variation in \ref{item:thm:mikhlin_weighted;UMD} and \ref{item:thm:mikhlin_weighted;UMDalpha}, respectively.

In connection with the representation \eqref{eq:def:uniform_R-bdd_var} in the definition of uniformly $\mathcal{R}$-bounded variation, let us note the following.
Let $I=[a_1,b_1] \times \cdots \times [a_n,b_n] \in \mathcal{R}_{n}$ and let $f\colon I \to Z$ be a continuous function to some Banach space $Z$ whose partial derivatives $\partial^{\alpha}f$, $|\alpha|_{\infty} \leq 1$, exist and are continuous on $I$.
For each $\alpha \in \{0,1\}^{n}$ and $j \in \{1,\ldots,n\}$, let $I_{\alpha,j} \coloneqq \{a_{j},b_{j}\}$ if $\alpha_{j}=0$ and $I_{\alpha,j} := (a_{j},b_{j})$ if $\alpha_{j}=1$, and let $\nu_{I_{\alpha,j}} \coloneqq \delta_{a_{j}}-\delta_{b_{j}}$ if $\alpha_{j}=0$ and $\nu_{I_{\alpha,j}} \coloneqq \mathds{1}_{(a_{j},b_{j})}\lambda^{1}$ if $\alpha_{j}=1$.
For each $\alpha \in \{0,1\}^{n}$, let $I_{\alpha} := \prod_{j=1}^{l}I_{\alpha,j}$ and $\nu_{I_{\alpha}} \coloneqq \otimes_{j=1}^{n}\nu_{I_{\alpha,j}}$.
Extending $f$ by zero to $\IR^{n}$, one has by the fundamental theorem of calculus
\begin{equation}\label{eq:thm:mikhlin_weighted;note_rep}
f(\xi) = \sum_{|\alpha|_{\infty} \leq 1} \int_{(-\infty,\xi]} (\partial^{\alpha}f)\mathds{1}_{I_{\alpha}} \ud \nu_{I,\alpha}, \qquad \xi \in \IR^{n} \setminus (I \setminus I_{\circ}),
\end{equation}
where $I_{\circ} = [a_1,b_1) \times \cdots \times [a_n,b_n)$.
For \ref{item:thm:mikhlin_weighted;UMD} we can use \eqref{eq:thm:mikhlin_weighted;note_rep} to obtain
\begin{align*}
(m\mathds{1}_{E})(\xi)
& = \sum_{|\alpha|_{\infty} \leq 1} \int_{(-\infty,\xi]}|\eta|^{|\alpha|}\partial^{\alpha}m_{|E}(\eta)\mathds{1}_{E_{\alpha}}(\eta) |\eta|^{-|\alpha|}\ud\nu_{E,\alpha}(\eta) \\
& =  \int_{(-\infty,\xi]} \sum_{|\beta|_{\infty}\leq1}|\eta|^{|\beta|}\partial^{\beta}m_{|E}(\eta)\mathds{1}_{E_{\beta}}(\eta) \:\:\sum_{|\alpha|_{\infty} \leq 1}|\eta|^{-|\alpha|}\ud\nu_{E,\alpha}(\eta)
\end{align*}
for a.e.\ $\xi \in \IR^{n}$,
where the second equality follows from disjointness of supports.
For the symbol $m\mathds{1}_{E}$ we can thus take $\tau_{m,E}(\eta) := \sum_{|\beta|_{\infty}\leq1}|\eta|^{|\beta|}\partial^{\beta}m_{|E}(\eta)\mathds{1}_{E_{\beta}}(\eta)$ and $\mathrm{d}\mu_{m,E}(\eta ):= \sum_{|\alpha|_{\infty} \leq 1}|\eta|^{-|\alpha|}\ud\nu_{E,\alpha}(\eta)$ in the representation \eqref{eq:def:uniform_R-bdd_var}.
Since
\[
\mathcal{R}(\{\tau_{m,E}(\eta):\eta \in \IR^{n}, E \in \mathcal{E}_{n}\}) \leq \norm{m}_{\mathcal{R}\mathfrak{M}(\mathcal{E}_{n})}
\]
and for $E = E_{ln + r,\eta}$ one has
\begin{align*}
\normalnorm{\mu_{m,E}}
& \leq \sum_{|\alpha|_{\infty} \leq 1}\norm{|\eta|^{-|\alpha|}\ud\nu_{E,\alpha}(\eta)} \eqsim \sum_{|\alpha|_{\infty} \leq 1}2^{-l|\alpha|}\prod_{j:\alpha_{j=1}}\normalnorm{\mathds{1}_{(a_{j,E},b_{j,E})}\lambda^{1}}  \\
& \leq \sum_{|\alpha|_{\infty} \leq 1}2^{-l|\alpha|}(2^{l+1})^{|\alpha|} \lesssim_{n} 1,
\end{align*}
it follows that
\[
\mathrm{var}_{\mathcal{R}}(\{ m\mathds{1}_{E}: E \in \mathcal{E}_{n} \}) \lesssim_{n} \norm{m}_{\mathcal{R}\mathfrak{M}(\mathcal{E}_{n})}.
\]

In case of \ref{item:thm:mikhlin_weighted;UMDalpha} one similarly gets that $m\mathds{1}_{I}$ satisfies \eqref{eq:def:uniform_R-bdd_var} with
\begin{equation*}
	\tau_{m,I}(\eta) \coloneqq \sum_{|\beta|_{\infty}\leq1}\eta^{\beta}\partial^{\beta}m_{|I}(\eta)\mathds{1}_{I_{\beta}}(\eta) \qquad \text{and} \qquad \mathrm{d}\mu_{m,I}(\eta ) \coloneqq \sum_{|\alpha|_{\infty} \leq 1}\eta^{-\alpha}\ud\nu_{I,\alpha}(\eta).
\end{equation*}
Then,
\[
\mathcal{R}(\{\tau_{m,I}(\eta):m \in \mathscr{M}, \eta \in \IR^{n}, I \in \mathcal{I}_{n}\}) \leq \norm{\mathscr{M}}_{\mathcal{R}\mathfrak{M}(\mathcal{I}_{n})}
\]
and
\begin{align*}
\normalnorm{\mu_{m,I}}
& \leq \sum_{|\alpha|_{\infty} \leq 1} \normalnorm{\eta^{-\alpha}\ud\nu_{I,\alpha}(\eta)}
\lesssim \sum_{|\alpha|_{\infty} \leq 1}\prod_{j:\alpha_{j=1}}\int_{(a_{j,I},b_{j,I})}\eta_{j}^{-1}\mathrm{d}\eta_{j}
= \sum_{|\alpha|_{\infty} \leq 1}\log^{|\alpha|}(2) \lesssim_{n} 1,
\end{align*}
so that
\[
\mathrm{var}_{\mathcal{R}}(\{ m\mathds{1}_{I}: m \in \mathscr{M}, I \in \mathcal{I}_{n} \}) \lesssim_{n} \norm{\mathscr{M}}_{\mathcal{R}\mathfrak{M}(\mathcal{I}_{n})}. \qedhere
\]
\end{proof}

We finally state an anisotropic version of Theorem~\ref{thm:mikhlin_weighted}.\ref{item:thm:mikhlin_weighted;UMD} in the weighted mixed-norm setting, extending \cite[Theorem~3.2]{Hyt07b} (see also~\cite[Section~7]{Hyt07c}) to the weighted setting. Such a result is an important tool in the weighted maximal $L^{q}$-$L^{p}$-regularity approach to parabolic problems (see~\cite{Lin17a}).
Let us first introduce the anisotropic setting.
Given $\vec{a} \in (0,\infty)^{n}$,
we define the $\vec{a}$-anisotropic distance function $|\,\cdot\,|_{\vec{a}}$ on $\IR^{n}$ by the formula
\begin{equation*}
|x|_{\vec{a}} := \left(\sum_{j=1}^{n}|x_{j}|^{2/a_{j}}\right)^{1/2}.
\end{equation*}
We furthermore define an $\vec{a}$-anisotropic version $\mathcal{E}^{\vec{a}}_{n} = \{ E^{\vec{a}}_{k,\eta} : (k,\eta) \in \IZ \times \{-1,1\}^{n} \}$ of the decomposition $\mathcal{E}_{n}$ (introduced before Theorem~\ref{thm:littlewood-paley}) for $l \in \IZ$ and $r \in \{0, \ldots, n-1 \}$ by
\[
E^{\vec{a}}_{ln+r,\eta} \coloneqq \prod_{j=1}^{r}\eta_{j}[0,2^{a_{j}(l+1)}] \times \eta_{r}[2^{a_{j}l},2^{a_{j}(l+1)}] \times \prod_{j=r+2}^{n}\eta_{i}[0,2^{a_{j}l}].
\]

Let us next introduce the weighted mixed-norm setting.
Suppose that $n =  n_{1} + \ldots + n_{l}$ with $n_{1},\ldots,n_{l} \in \IZ_{\geq 1}$, $l \in \IN$, and view $\IR^{n}$ as $\IR^{n} = \IR^{n_{1}} \times \ldots \times \IR^{n_{l}}$.
For $x \in \IR^{n}$ we accordingly write $x = (x_{1},\ldots,x_{l})$ with $x_{j}=(x_{j,1},\ldots,x_{j,n_{j}})$, where $x_{j} \in \IR^{n_{j}}$ and $x_{j,i} \in \IR$ $(j=1,\ldots,l; i=1,\ldots,n_j)$.
Given $\vec{p} \in (1,\infty)^{l}$ and $\vec{\omega} \in \prod_{j=1}^{l}\mathcal{A}_{p_{j}}(\IR^{n_{j}}; \mathcal{R}_{n_{j}})$,
we define associated the weighted mixed-norm Bochner space $L^{\vec{p}}_{\vec{\omega}}(\IR^{n};X)$ as the Banach space of all Bochner measurable $f\colon \IR^n \to X$ satisfying
\[
\norm{f}_{L^{\vec{p}}_{\vec{\omega}}(\IR^{n};X)} :=
 \left( \int_{\IR^{n_{l}}} \ldots \left(\int_{\IR^{n_{1}}}\norm{f(x)}_{X}^{p_{1}}\omega_{1}(x_{1})dx_{1} \right)^{p_{2}/p_{1}} \ldots \omega_{l}(x_{l})dx_{l} \right)^{1/p_{l}} < \infty.
\]
We denote by $\mathcal{M}^n_{\vec{p}}((X,\vec{\sigma}),(Y,\vec{\omega}))$ the set of all Fourier multipliers $L^{\vec{p}}_{\vec{\sigma}}(\IR^{n};X) \to L^{\vec{p}}_{\vec{\omega}}(\IR^{n};Y)$.

\begin{theorem}\label{thm:mikhlin_weighted_mixed-norm_anisotropic}
		Let $X$ and $Y$ be UMD spaces, $\vec{p} \in (1,\infty)^{l}$, and $\vec{\omega},\vec{\sigma} \in \prod_{j=1}^{l}\mathcal{A}_{p_{j}}(\IR^{n_{j}}; \mathcal{R}_{n_{j}})$ with $[\omega_{j}, \sigma_{j}]_{\mathcal{A}_{p_{j}}(\IR^{n_{j}}; \mathcal{R}_{n_{j}})} < \infty$ for $j=1,\ldots,l$.
Let $m \in L^{\infty}(\IR^{n};\mathcal{B}(X,Y))$ be such that $\partial^{\alpha}m_{|E^{\circ}}$ is continuous for all $E \in \mathcal{E}^{\vec{a}}_{n}$ and $|\alpha|_{\infty} \leq 1$. If
    			\begin{equation*}
    				\norm{m}_{\mathcal{R}\mathfrak{M}(\mathcal{E}^{\vec{a}}_{n})} := \sup_{|\alpha|_{\infty} \le 1} \mathcal{R}\{|\xi|_{\vec{a}}^{\abs{\alpha}} \partial^{\alpha} m_{|E^{\circ}}(\xi): E \in \mathcal{E}_{n}, \xi \in E\} < \infty,
    			\end{equation*}
    		    then $m \in \mathcal{M}^n_{\vec{p}}((X,\vec{\sigma}),(Y,\vec{\omega}))$ with
    			\begin{equation*}
					\norm{T_m}_{\mathcal{B}(L^{\vec{p}}_{\vec{\sigma}}(\IR^{n};X),L^{\vec{p}}_{\vec{\omega}}(\IR^{n};Y))} \lesssim_{X,Y,\vec{p},n,\vec{\sigma},\vec{\omega}} \norm{m}_{\mathcal{R}\mathfrak{M}(\mathcal{E}^{\vec{a}}_{n})}.
\end{equation*}
\end{theorem}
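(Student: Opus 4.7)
The plan is to follow the same scheme as the proof of Theorem~\ref{thm:mikhlin_weighted}.\ref{item:thm:mikhlin_weighted;UMD}: construct an unconditional anisotropic Littlewood--Paley decomposition $(\Delta[E])_{E \in \mathcal{E}^{\vec{a}}_n}$ of the weighted mixed-norm space $L^{\vec{p}}_{\vec{\omega}}(\IR^n; X)$ (and simultaneously of $L^{\vec{p}}_{\vec{\sigma}}(\IR^n; X)$), then invoke the generic Fourier multiplier result Theorem~\ref{thm:generic_fm_thm}.\ref{item:thm:generic_fm_thm;general} with $\mathscr{J} = \mathcal{E}^{\vec{a}}_n$. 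Two modifications are needed relative to Section~\ref{sec:rect_Ap-weights}: first, everything must be lifted from the single-exponent weighted space $L^p_\omega$ to the weighted mixed-norm space $L^{\vec{p}}_{\vec{\omega}}$; second, the dyadic decomposition along each coordinate and the variation computation for the generic theorem must be carried out in the anisotropic scaling prescribed by $\vec{a}$.

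The first key step is a mixed-norm version of Lemma~\ref{lem:weighted_est_CZ-op_fix_var}: if $T\colon L^{p_k}(\IR; X) \to L^{p_k}(\IR; Y)$ is a Calder\'on--Zygmund operator and $j$ is a coordinate belonging to the $k$-th block, then the induced operator $T_j$ is bounded $L^{\vec{p}}_{\vec{\sigma}}(\IR^n; X) \to L^{\vec{p}}_{\vec{\omega}}(\IR^n; Y)$ with norm governed solely by the operator norm and Calder\'on--Zygmund constants of $T$, by $p_k$, and by $[\omega_k, \sigma_k]_{\mathcal{A}_{p_k}(\IR^{n_k}; \mathcal{R}_{n_k})}$. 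I would obtain this by realizing $L^{\vec{p}}_{\vec{\omega}}(\IR^n;X)$ as an iterated Bochner space in which the $k$-th block sits innermost, applying Lemma~\ref{lem:weighted_est_CZ-op_fix_var} inside $\IR^{n_k}$ (after invoking Lemma~\ref{lem:relation_ap} there) with the other block variables frozen as parameters, and integrating outwards through the remaining blocks. Once this is available, the mixed-norm analogues of Lemma~\ref{lem:cutoffs} ($\mathcal{R}$-boundedness of coordinate frequency cut-offs) and Lemma~\ref{lem:Mikhlin_jth-coord} (one-dimensional Mikhlin symbols pulled back to a single coordinate are bounded multipliers on the mixed-norm space) follow by their original proofs verbatim.

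With these ingredients in place, for each coordinate $j$ I would introduce the anisotropic dyadic collection $\mathcal{I}^{a_j} := \{\eta[2^{a_j l}, 2^{a_j(l+1)}] : l \in \IZ,\ \eta \in \{\pm 1\}\}$ and repeat the argument of Lemma~\ref{lem:Littlewood-Paley_coordinate} to show that $\Delta_j = (\Delta_j[I])_{I \in \mathcal{I}^{a_j}}$ is an unconditional decomposition of $L^{\vec{p}}_{\vec{\omega}}(\IR^n; X)$; the smooth cut-offs $\rho_I$ can be obtained by anisotropic rescaling from the isotropic ones, so that they again satisfy a uniform one-dimensional Mikhlin bound. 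Applying the blocking Theorem~\ref{thm:unconditional_blockings_product_decompositions} to the $n$ commuting decompositions $\Delta_1, \ldots, \Delta_n$, whose $\mathcal{R}$-boundedness hypotheses are supplied by the mixed-norm Lemma~\ref{lem:cutoffs}, yields an unconditional decomposition of $L^{\vec{p}}_{\vec{\omega}}(\IR^n; X)$ whose components, via the formula~\eqref{eq:connection_blockings;abstract_concrete} adapted to the intervals $\mathcal{I}^{a_j}$, are exactly $\{\Delta[E] : E \in \mathcal{E}^{\vec{a}}_n\}$.

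Finally, applying Theorem~\ref{thm:generic_fm_thm}.\ref{item:thm:generic_fm_thm;general} reduces matters to verifying that $\{m \mathds{1}_E : E \in \mathcal{E}^{\vec{a}}_n\}$ is of uniformly $\mathcal{R}$-bounded variation. This is done exactly as in the proof of Theorem~\ref{thm:mikhlin_weighted}: the representation~\eqref{eq:thm:mikhlin_weighted;note_rep} combined with the factorization $\partial^\alpha m = |\eta|_{\vec{a}}^{-|\alpha|}\cdot(|\eta|_{\vec{a}}^{|\alpha|} \partial^\alpha m)$ places the $\mathcal{R}$-bounded piece into $\tau_{m,E}(\eta) = \sum_{|\alpha|_\infty \le 1} |\eta|_{\vec{a}}^{|\alpha|} \partial^\alpha m_{|E^\circ}(\eta) \mathds{1}_{E_\alpha}(\eta)$ and the scalar factor into $d\mu_{m,E}(\eta) = \sum_{|\alpha|_\infty \le 1} |\eta|_{\vec{a}}^{-|\alpha|} d\nu_{E,\alpha}(\eta)$; the total variation of $\mu_{m,E}$ is then bounded uniformly in $E = E^{\vec{a}}_{ln+r, \eta}$ using $|\eta|_{\vec{a}} \sim 2^l$ together with the coordinate lengths $\sim 2^{a_j l}$ on $E$. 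The main obstacle in the programme is the first step above --- lifting the weighted coordinate-wise Calder\'on--Zygmund estimate to the iterated weighted mixed-norm setting --- since everything else is essentially an anisotropic or mixed-norm reformulation of arguments from Section~\ref{sec:rect_Ap-weights}.
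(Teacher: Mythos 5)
Your proposal follows essentially the same route as the paper: build the anisotropic Littlewood--Paley decomposition from coordinate-wise decompositions via the blocking theorem, then invoke the generic multiplier theorem together with a uniform $\mathcal{R}$-bounded-variation estimate in which $|\eta|_{\vec{a}}$ replaces $|\eta|$. The paper's own proof is little more than the outline you give; it reduces to ``in the same way as Theorem~\ref{thm:littlewood-paley}'' and ``in the same way as Lemma~\ref{lem:cutoffs}'', so your reconstruction of the intermediate lemmas is the expected filling-in.

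One step, however, is described incorrectly, and it is precisely the step you single out as the ``main obstacle'': the mixed-norm lift of Lemma~\ref{lem:weighted_est_CZ-op_fix_var}. You cannot ``realize $L^{\vec{p}}_{\vec{\omega}}(\IR^n;X)$ as an iterated Bochner space in which the $k$-th block sits innermost''--- for distinct exponents $p_j$ the nesting order in the definition is fixed (block $1$ innermost, block $l$ outermost), and permuting it gives a genuinely different space. In particular, when the distinguished coordinate lies in an intermediate block, ``freeze the other block variables and integrate outwards'' does not produce the mixed-norm. The correct mechanism, implicit in the paper's terse ``in the same way as Lemma~\ref{lem:cutoffs}'', is to absorb the \emph{inner} blocks into the target Banach space: set $W=L^{p_{k-1}}_{\omega_{k-1}}\big(\IR^{n_{k-1}};\ldots L^{p_1}_{\omega_1}(\IR^{n_1};X)\ldots\big)$, note that $W$ is again UMD, observe that the relevant one-dimensional operators (Hilbert transform and scalar Mikhlin symbols) are \emph{scalar} Calder\'on--Zygmund operators and hence bounded on $L^{p_k}(\IR;W)$ with kernel estimates independent of $W$, apply Lemma~\ref{lem:weighted_est_CZ-op_fix_var} in block $k$ with $W$ in place of $X$, and only then integrate over the \emph{outer} blocks $k+1,\ldots,l$ with their variables frozen as parameters. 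With this correction the rest of your scheme (anisotropic $\mathcal{I}^{a_j}$, the blocking via Theorem~\ref{thm:unconditional_blockings_product_decompositions}, the anisotropic variation computation with $\tau_{m,E}$ and $\mu_{m,E}$) lines up with the intended argument.
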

\begin{proof}
In the same way as in Theorem~\ref{thm:littlewood-paley} it can be shown that $\Delta =\{ \Delta[E] : E \in \mathcal{E}^{\vec{a}}_{n}\}$ defines an unconditional decomposition of $L^{\vec{p}}_{\vec{\omega}}(\IR^{n};X)$ with $\mathrm{U}^{\pm}$-constants
\[
C^{\pm}_{\Delta} \lesssim_{X,\vec{p},n}\sum_{i=1}^{l}\prod_{j=1}^{l}
[\omega_{j}]_{\mathcal{A}_{p_{j}}(\IR^{n_{j}};\mathcal{R}_{n_{j}})}^{(n_{j}+\delta_{i,j})\max\{1,\frac{1}{p_{j}-1}\}},
\]
where $\delta_{i,j}$ denotes the Kronecker delta.
In the same way as in Lemma~\ref{lem:cutoffs} it can be shown that $\{ \Delta[R]: R \in \mathcal{R}_{n} \}$ is a bounded family in  $\mathcal{B}(L^{\vec{p}}_{\vec{\sigma}}(\IR^n; X), L^{\vec{p}}_{\vec{\omega}}(\IR^n;X))$ with $\mathcal{R}$-bound
\[
\mathcal{R}\{\Delta[R]: R \in \mathcal{R}_{n} \} \lesssim_{X,\vec{p}} \prod_{j=1}^{l}\left( [\omega_{j}, \sigma_{j}]_{\mathcal{A}_{p_{j}}(\IR^{n_{j}}; \mathcal{R}_{n_{j}})}^{1/p_{j}} ([\omega_{j}]_{\mathcal{A}_p(\IR^{n_{j}};\mathcal{R}_{n_{j}})}^{1-\frac{1}{p_{j}}} + [\sigma_{j}]_{\mathcal{A}_p(\IR^{n_{j}};\mathcal{R}_{n_{j}})}^{\frac{1}{p_{j}}}) \right)^{n_{j}}.
\]
We may thus apply the generic Theorem~\ref{thm:generic_fm_thm} and the proof of the theorem is now completed in the same way as in Theorem~\ref{thm:mikhlin_weighted}.\ref{item:thm:mikhlin_weighted;UMD}.
\end{proof}
	
\section{Fourier Multipliers for Cubular $\mathcal{A}_{p}$-weights}\label{sec:cubular}

	The approach for weighted estimates of multipliers based on Littlewood--Paley theory gives by its very own nature only results for the more restrictive and one-dimensional behaving class $\mathcal{A}_p(\IR^n; \mathcal{R}_n)$. Naturally, it is also very desirable to obtain results for the weaker class $\mathcal{A}_p(\IR^n; \mathcal{Q}_n)$. This is indeed possible if one works with Hörmander type conditions instead of the Littlewood--Paley decomposition. Hence, we pass from the multiplier perspective to the perspective of singular integrals. Nevertheless, as a consequence we will obtain weighted results for Fourier multipliers.

In this section we will use the following Banach space geometric property:
	\begin{definition}
		A complex Banach space $X$ has \emph{Fourier type $t \in [1,2]$} if the vector-valued Fourier transform $\mathcal{F}\colon \mathcal{S}(\IR^n;X) \to \mathcal{S}(\IR^n;X)$ extends for one (or equivalently all) $n \in \IN$ to a bounded operator $L^{t}(\IR^n;X) \to L^{t'}(\IR^n;X)$.
	\end{definition}
	
	Note that every Banach space has Fourier type $1$ by the Riemann--Lebesgue lemma and that Fourier type $t$ implies Fourier type $s$ for all $s \in [1,t]$. Further, Kwapień showed that a Banach space has Fourier type $2$ if and only if it is isomorphic to a Hilbert space~\cite[Proposition~4.1]{Kwa72}.

We will furthermore use the weight characteristic $[\omega,\sigma]_{\mathcal{A}^{r}_{p}(\IR^{n};\mathcal{Q}_{n})}$ defined by
\[
[\omega,\sigma]_{\mathcal{A}^{r}_{p}(\IR^{n};\mathcal{Q}_{n})} := \sup_{Q \in \mathcal{Q}_{n}}\left( \frac{\sigma^{r}(Q)}{|Q|}\right)^{\frac{1}{r}-\frac{1}{p}}\left(\frac{\omega(Q)}{|Q|} \right)^{1/p},
\]
where $r,p \in (1,\infty)$.

\subsection{Domination by sparse operators}

	We first show that certain multiplier operators are dominated by rather easy operators, namely sparse operators.
	
	\begin{definition}
		A collection $\mathcal{S}$ of cubes in $\IR^n$ is called \emph{sparse} if for some $\eta > 0$ there exists a pairwise disjoint collection $(E_Q)_{Q \in \mathcal{S}}$ such that for every $Q \in \mathcal{S}$ the set $E_Q$ is a measurable subset of $Q$ and satisfies $\normalabs{E_Q} \ge \eta \abs{Q}$. Given a sparse family $\mathcal{S}$ and $r \in [1, \infty)$, we define for non-negative measurable functions $f$ the associated \emph{sparse operator} as
		\begin{equation*}
			\mathcal{A}_{r,\mathcal{S}}f = \sum_{Q \in \mathcal{S}} \biggl( \frac{1}{\abs{Q}} \int_Q f^r \biggr)^{1/r} \mathds{1}_Q.
		\end{equation*}
	\end{definition}
The \emph{standard dyadic grid} on $\IR^n$ is the collection $\mathcal{D}$ of cubes $\{ 2^{-j} ([0,1)^n + m): j \in \IZ, m \in \IZ^n \}$. Further, for given $(\omega_k)_{k \in \IZ} \in (\{ 0, 1 \}^n)^{\IZ}$ we define a \emph{shifted dyadic grid} $\mathcal{D}^{\omega}$ as the collection
	\begin{equation*}
		\mathcal{D}^{\omega} \coloneqq \Bigl\{ Q + \sum_{j: 2^{-j} < \ell(Q)} \omega_j 2^{-j}: Q \in \mathcal{D} \Bigr\}.
	\end{equation*}

Having domination by sparse operators, the following theorem subsequently yields weighted estimates.
	\begin{theorem}[\cite{HytLi18}]\label{thm:sharp_estimate_sparse}
		Let $r \in (1, \infty)$ and $\mathcal{S}$ a sparse family of cubes out of a fixed shifted dyadic system. Then for $p \in (r,\infty)$ and all measurable $f\colon \IR^n \to \IR_{\ge 0}$
		\begin{equation*}
			\normalnorm{\mathcal{A}_{r,\mathcal{S}}(f \sigma)}_{L^p_{\omega}(\IR^n)} \lesssim_{p,r,\mathcal{S}} [\omega, \sigma]_{\mathcal{A}_p^r(\IR^n;\mathcal{Q}_n)} ([\omega]_{\mathcal{A}_{\infty}(\IR^n;\mathcal{Q}_n)}^{1/p'} + [\sigma^r]_{\mathcal{A}_{\infty}(\IR^n;\mathcal{Q}_n)}^{1/p}) \norm{f}_{L^p_{\sigma^r}(\IR^n)}.
		\end{equation*}
	\end{theorem}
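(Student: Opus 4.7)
The plan is to reduce by duality to a bilinear form, then run a two-sided stopping-time / principal-cubes decomposition together with the sharp weighted Carleson embedding theorem for $\mathcal{A}_{\infty}$-weights. Write $\tau := \sigma^{r}$ throughout. Since $(L^{p}_{\omega})^{*} = L^{p'}_{\omega}$ under the pairing $(f,g) \mapsto \int fg\,\omega\,dx$, the claimed norm inequality is equivalent to
\[
\sum_{Q \in \mathcal{S}} \bigl(\langle f^{r}\tau\rangle_{Q}\bigr)^{1/r}\,\omega(Q)\langle g\rangle^{\omega}_{Q} \lesssim K\,\norm{f}_{L^{p}_{\tau}}\,\norm{g}_{L^{p'}_{\omega}}
\]
for all non-negative $g$, where $\langle h\rangle_{Q} = |Q|^{-1}\int_{Q}h\,dx$, $\langle h\rangle^{\omega}_{Q} = \omega(Q)^{-1}\int_{Q}h\,\omega\,dx$ and $K$ denotes the desired right-hand constant. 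Factoring $\langle f^{r}\tau\rangle_{Q}^{1/r} = (\tau(Q)/|Q|)^{1/r}\,(\langle f^{r}\rangle^{\tau}_{Q})^{1/r}$ and pulling out the joint characteristic via
\[
\Bigl(\frac{\tau(Q)}{|Q|}\Bigr)^{1/r}\omega(Q) = \Bigl(\frac{\tau(Q)}{|Q|}\Bigr)^{1/r-1/p}\Bigl(\frac{\omega(Q)}{|Q|}\Bigr)^{1/p}\,\tau(Q)^{1/p}\omega(Q)^{1/p'},
\]
together with the very definition of $[\omega,\sigma]_{\mathcal{A}^{r}_{p}(\IR^{n};\mathcal{Q}_{n})}$, this reduces the problem to establishing
\[
\sum_{Q \in \mathcal{S}}\bigl(\langle f^{r}\rangle^{\tau}_{Q}\bigr)^{1/r}\langle g\rangle^{\omega}_{Q}\,\tau(Q)^{1/p}\,\omega(Q)^{1/p'} \lesssim \bigl([\omega]_{\mathcal{A}_{\infty}}^{1/p'}+[\tau]_{\mathcal{A}_{\infty}}^{1/p}\bigr)\,\norm{f}_{L^{p}_{\tau}}\,\norm{g}_{L^{p'}_{\omega}}.
\]

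Next I would perform a simultaneous stopping-time construction adapted to the two weighted averages. Starting from the maximal cubes of $\mathcal{S}$ as generation zero, the next generation inside $P$ consists of the maximal $Q' \subsetneq P$ in $\mathcal{S}$ for which either $\langle f^{r}\rangle^{\tau}_{Q'} > 2\langle f^{r}\rangle^{\tau}_{P}$ or $\langle g\rangle^{\omega}_{Q'} > 2\langle g\rangle^{\omega}_{P}$. Let $\mathcal{F}$ be the resulting family of principal cubes, and for $Q \in \mathcal{S}$ let $\pi(Q)$ be the smallest member of $\mathcal{F}$ containing $Q$; by the stopping rule both relevant averages at $Q$ are comparable to those at $\pi(Q)$. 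Regrouping by principal ancestor and estimating the two averages by their values at the stopping cube brings the sum to
\[
\sum_{P \in \mathcal{F}} \bigl(\langle f^{r}\rangle^{\tau}_{P}\bigr)^{1/r}\langle g\rangle^{\omega}_{P} \sum_{Q \in \mathcal{S},\,\pi(Q)=P} \tau(Q)^{1/p}\omega(Q)^{1/p'}.
\]

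The main obstacle, and what dictates the asymmetric $\mathcal{A}_{\infty}$-exponents, is to dominate the inner sum by $\tau(P)^{1/p}\omega(P)^{1/p'}$ times $[\omega]_{\mathcal{A}_{\infty}}^{1/p'}+[\tau]_{\mathcal{A}_{\infty}}^{1/p}$. I would split $\tau(Q)^{1/p}\omega(Q)^{1/p'}$ via Young's inequality and then invoke the sharp Hytönen--Pérez weighted Carleson embedding: for any $\mu \in \mathcal{A}_{\infty}(\IR^{n};\mathcal{Q}_{n})$ and sparse family $\mathcal{S}$, the sequence $\{\mu(Q)\}_{Q \in \mathcal{S}}$ is $\mu$-Carleson with constant $\lesssim [\mu]_{\mathcal{A}_{\infty}}$, so in particular $\sum_{Q \in \mathcal{S},\,Q \subset P}\mu(Q) \lesssim [\mu]_{\mathcal{A}_{\infty}}\mu(P)$. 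Applied once with $\mu=\tau$ and once with $\mu=\omega$, this yields the advertised mixed factor. The outer summation over $P \in \mathcal{F}$ is then closed by Hölder's inequality in $\ell^{p}$--$\ell^{p'}$ combined with another application of the weighted Carleson embedding to the principal family, giving $\sum_{P \in \mathcal{F}}(\langle f^{r}\rangle^{\tau}_{P})^{p/r}\tau(P) \lesssim \norm{f}_{L^{p}_{\tau}}^{p}$ and the symmetric bound for $g$. The hard part is tracking the sharp exponents of $[\omega]_{\mathcal{A}_{\infty}}$ and $[\tau]_{\mathcal{A}_{\infty}}$ cleanly through the two-sided stopping construction—the asymmetric $1/p'$ versus $1/p$ split is precisely the content of the Hytönen--Li refinement and is what distinguishes this mixed-characteristic bound from the earlier $A_{p}$-only sparse estimates.
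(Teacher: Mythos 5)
The paper does not prove Theorem~\ref{thm:sharp_estimate_sparse}; it is quoted from \cite{HytLi18} (see also \cite{HytFack18}). So there is no in-paper proof to compare against, only the blind attempt against the cited references, whose broad strategy --- dualize to a bilinear form, run a principal-cubes stopping time, and close with a weighted Carleson embedding --- your outline does follow.

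There is, however, a genuine gap at the decisive step, and you half-concede it yourself at the end. After the reduction to
\[
\sum_{Q\in\mathcal{S}} \bigl(\langle f^{r}\rangle^{\tau}_{Q}\bigr)^{1/r}\langle g\rangle^{\omega}_{Q}\,\tau(Q)^{1/p}\omega(Q)^{1/p'},\qquad \tau=\sigma^{r},
\]
you propose to apply Young's inequality to $\tau(Q)^{1/p}\omega(Q)^{1/p'}$ and then the $\mathcal{A}_{\infty}$--Carleson packing twice, once with $\mu=\tau$ and once with $\mu=\omega$. Whether one optimizes the Young parameter (which turns the output into $[\tau]_{\mathcal{A}_{\infty}}^{1/p}[\omega]_{\mathcal{A}_{\infty}}^{1/p'}\tau(P)^{1/p}\omega(P)^{1/p'}$) or leaves it unoptimized (in which case the two summands $\tau(P)$ and $\omega(P)$ do not recombine into $\tau(P)^{1/p}\omega(P)^{1/p'}$ and H\"older no longer closes), this route does not produce the additive factor $[\omega]_{\mathcal{A}_{\infty}}^{1/p'}+[\tau]_{\mathcal{A}_{\infty}}^{1/p}$; it produces the multiplicative one. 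Since both $\mathcal{A}_\infty$-characteristics are $\geq 1$, the product dominates the sum (by a factor that can be as large as $\min([\omega]_{\mathcal{A}_\infty}^{1/p'},[\tau]_{\mathcal{A}_\infty}^{1/p})$), so your argument only establishes a strictly weaker conclusion than the theorem claims. The additive, ``mixed'' $\mathcal{A}_{p}$--$\mathcal{A}_{\infty}$ form \emph{is} the theorem; getting it requires that the $\mathcal{A}_\infty$-boost be applied to only one weight on any given piece of the sparse sum, which in \cite{HytLi18,HytFack18} is accomplished by an explicit disjoint splitting of $\mathcal{S}$ before (rather than after) the Carleson step, so that the two $\mathcal{A}_{\infty}$-characteristics never multiply.

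A secondary issue is the final display $\sum_{P\in\mathcal{F}}(\langle f^{r}\rangle^{\tau}_{P})^{p/r}\tau(P)\lesssim\|f\|_{L^{p}_{\tau}}^{p}$, which requires the principal-cube family $\mathcal{F}$ to be $\tau$-Carleson with an absolute constant. The two-sided stopping rule ensures that only the $f$-stopped children of a cube have small total $\tau$-mass; the $g$-stopped children need not, so the stated packing does not follow directly. If one instead invokes the Lebesgue-sparsity of $\mathcal{F}\subset\mathcal{S}$ plus the $\mathcal{A}_{\infty}$--Carleson bound to get $\tau$-packing, one picks up yet another factor $[\tau]_{\mathcal{A}_{\infty}}$, further overshooting the target. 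This again points to the fact that the stopping construction and the Carleson estimate must be intertwined more carefully than in your sketch.
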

The above theorem is actually a reformulation of \cite[Theorem~1.1]{HytLi18} (also see \cite[Theorem~1.1]{HytFack18}).
	
	For an operator $T$ mapping $L^p(\IR^n;X)$ into vector-valued measurable functions and some $k \in \IN$ we define its \emph{grand maximal truncated operator} as
	\begin{equation*}
		\mathcal{M}_{T,k}f(x) = \sup_{Q \ni x} \esssup_{y \in Q} \norm{T(f \mathds{1}_{((2k+1)Q)^c})(y)}.
	\end{equation*}
	Here $(2k+1)Q$ denotes the cube with the same center and side length $2k+1$ times that of $Q$. Recall that a Bochner measurable function $f\colon \IR^n \to \IC$ lies in $L^{p,\infty}(\IR^n;X)$ for $p \in [1, \infty)$ if there exists $C \ge 0$ with
	\begin{equation*}
		\abs{\{ x \in \IR^n: \norm{f(x)} > \lambda \}} \le \frac{C^p}{\lambda^p} \qquad \text{for all } \lambda > 0.
	\end{equation*}
	Further, $\norm{f}_{L^{p,\infty}(\IR^n;X)}$ is the smallest $C$ for which the above estimate holds. One has the following general domination theorem by Lerner~\cite[Theorem~4.2]{Ler16} which only deals with the scalar case and the choice $k = 1$. However, a very similar argument does work in this more general setting as well.
	
	\begin{theorem}\label{thm:lerner}
		Let $X,Y$ be Banach spaces and $T\colon L^q(\IR^n;X) \to L^{q,\infty}(\IR^n;Y)$ be linear and bounded for some $q \in [1, \infty)$. Further suppose that $\mathcal{M}_T\colon L^r(\IR^n;X) \to L^{r,\infty}(\IR^n)$ for some $r \in [q,\infty)$. Then for every compactly supported $f \in L^r(\IR^n;X)$ and all $k \in \IN$ there exist sparse families $\mathcal{S}_1, \ldots, \mathcal{S}_{3^n}$ of cubes out of different shifted dyadic grids such that almost everywhere
			\[
				\norm{Tf(x)} \lesssim (\norm{T}_{L^q \to L^{q,\infty}} + \normalnorm{\mathcal{M}_{T,k}}_{L^r \to L^{r,\infty}}) \sum_{j=1}^{3^n} \mathcal{A}_{r,\mathcal{S}_j} \norm{f}(x).
			\]
		Here the implicit constant only depends on $k$, $n$, $q$ and $r$.
	\end{theorem}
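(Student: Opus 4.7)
The plan is to adapt Lerner's iterative Calderón--Zygmund decomposition from \cite{Ler16}, which establishes the scalar-valued version for $k=1$, to the present Banach-space-valued setting with arbitrary $k \in \IN$. The Banach-space norm $\norm{Tf(\cdot)}$ plays the role of the scalar modulus, while $(2k+1)$-dilations replace the $3$-dilations used in the $k = 1$ case. I would fix a large dyadic cube $Q_0$ with $\supp f \subset Q_0$ (so $f\mathds{1}_{(2k+1)Q_0} = f$) and carry out the construction inside $Q_0$; the bound for $x \notin Q_0$ follows by a standard dyadic-annular argument with enlarging cubes and is not the main point here.

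\textbf{Step 1: one-step recursive lemma.} The heart of the argument is the following claim. For every cube $Q$, there exist pairwise disjoint dyadic subcubes $\{P_i\}$ of $Q$ with $\sum_i \abs{P_i} \leq \tfrac{1}{2}\abs{Q}$ such that for a.e.\ $x \in Q$,
\[
\norm{T(f\mathds{1}_{(2k+1)Q})(x)}\mathds{1}_Q(x) \leq C_0 \langle\norm{f}\rangle_{r,(2k+1)Q}\mathds{1}_Q(x) + \sum_i \norm{T(f\mathds{1}_{(2k+1)P_i})(x)}\mathds{1}_{P_i}(x),
\]
where $\langle g\rangle_{r,R} := (\abs{R}^{-1}\int_R g^r)^{1/r}$ and $C_0 \lesssim_{k,n,q,r} \norm{T}_{L^q \to L^{q,\infty}} + \normalnorm{\mathcal{M}_{T,k}}_{L^r \to L^{r,\infty}}$. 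To prove it, define the exceptional set
\[
E := \left\{x \in Q : \norm{T(f\mathds{1}_{(2k+1)Q})(x)} > C_0 \langle\norm{f}\rangle_{r,(2k+1)Q} \right\} \cup \left\{x \in Q : \mathcal{M}_{T,k}(f\mathds{1}_{(2k+1)Q})(x) > C_0 \langle\norm{f}\rangle_{r,(2k+1)Q}\right\}.
\]
The weak-type $(q,q)$ bound on $T$ combined with Hölder's inequality $\norm{f\mathds{1}_{(2k+1)Q}}_{L^q}^q \leq \abs{(2k+1)Q}^{1-q/r}\norm{f\mathds{1}_{(2k+1)Q}}_{L^r}^q$ (this is where $q \leq r$ enters crucially) and the analogous weak-type $(r,r)$ estimate for $\mathcal{M}_{T,k}$ yield $\abs{E} \leq 2^{-n-2}\abs{Q}$ once $C_0$ is a sufficiently large multiple of the sum of the two operator norms. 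A Calderón--Zygmund stopping-time argument then selects the maximal dyadic cubes $\{P_i\}\subset Q$ with $\abs{P_i \cap E} > 2^{-n-1}\abs{P_i}$; by maximality applied to parents, $\abs{P_i \cap E} \leq \tfrac12\abs{P_i}$, giving both $\sum_i \abs{P_i} \leq \tfrac12\abs{Q}$ and $\abs{P_i \setminus E} \geq \tfrac12\abs{P_i} > 0$. On $Q \setminus \bigcup_i P_i$ the first clause of the definition of $E$ closes the estimate; on each $P_i$ one splits $f\mathds{1}_{(2k+1)Q} = f\mathds{1}_{(2k+1)P_i} + f\mathds{1}_{(2k+1)Q \setminus (2k+1)P_i}$ and bounds the outer piece by $\esssup_{z \in P_i}\norm{T(f\mathds{1}_{(2k+1)Q \setminus (2k+1)P_i})(z)} \leq \mathcal{M}_{T,k}(f\mathds{1}_{(2k+1)Q})(y) \leq C_0\langle\norm{f}\rangle_{r,(2k+1)Q}$ for some $y \in P_i \setminus E$.

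\textbf{Step 2: iteration and distribution among $3^n$ grids.} Iterating the one-step lemma, starting from $Q_0$ and reapplying it to each stopping cube $P_i$ (keeping $f$ fixed), yields a countable tree $\mathcal{F}$ of cubes. Setting $E_P := P \setminus \bigcup\{\text{children of }P\}$ gives pairwise disjoint sets with $\abs{E_P} \geq \tfrac12\abs{P}$, hence $\mathcal{F}$ is sparse with parameter $1/2$. Telescoping, and noting that the total measure of cubes at depth $N$ in the tree decays like $2^{-N}\abs{Q_0}$, one arrives at
\[
\norm{Tf(x)} \leq C_0 \sum_{P \in \mathcal{F}}\langle\norm{f}\rangle_{r,(2k+1)P}\mathds{1}_P(x) \qquad \text{for a.e.\ } x \in Q_0.
\]
To recast this as a sum over $3^n$ shifted dyadic grids, I would invoke the classical three-lattice theorem applied to the enlarged cube $(2k+1)Q$: there are $3^n$ shifted dyadic systems $\mathcal{D}^{\omega_1}, \ldots, \mathcal{D}^{\omega_{3^n}}$ and a constant $c_{k,n}$ such that every cube $Q$ is contained in some $R \in \mathcal{D}^{\omega_j}$ satisfying $(2k+1)Q \subset R$ and $\abs{R} \leq c_{k,n}\abs{Q}$. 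For each $P \in \mathcal{F}$ select such a dyadic container $R_P$ and let $\mathcal{S}_j := \{R_P : R_P \in \mathcal{D}^{\omega_j}\}$. Doubling gives $\langle\norm{f}\rangle_{r,(2k+1)P} \lesssim_{k,n} \langle\norm{f}\rangle_{r,R_P}$ and $\mathds{1}_P \leq \mathds{1}_{R_P}$, and sparsity transfers from $\mathcal{F}$ to each $\mathcal{S}_j$ because $E_P \subset R_P$ with $\abs{E_P} \geq (2c_{k,n})^{-1}\abs{R_P}$. Summing over $j$ produces the asserted bound.

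\textbf{Main obstacle.} The most delicate point is the use of the grand maximal truncated operator on each $P_i$: we must bound the single number $\esssup_{z \in P_i}\norm{T(f\mathds{1}_{(2k+1)Q \setminus (2k+1)P_i})(z)}$, yet the only pointwise control of $\mathcal{M}_{T,k}$ is on $Q \setminus E$. This is resolved only because the stopping level is chosen so that $P_i \setminus E$ retains positive measure, where a good point can be picked to estimate the essential supremum. A secondary annoyance is that the factor $(2k+1)$ permeates every step (the three-lattice application, the Chebyshev/Hölder bound on $\abs{E}$, and the doubling passage from $(2k+1)P$ to $R_P$), so all implicit constants inevitably depend on $k$, as the statement allows.
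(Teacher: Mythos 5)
Your proposal is correct and reconstructs exactly the adaptation of Lerner's Calder\'on--Zygmund iteration \cite[Theorem~4.2]{Ler16} that the paper asserts works but does not spell out (the paper cites Lerner's scalar, $k=1$ version and notes that ``a very similar argument does work in this more general setting as well''). The one-step stopping lemma with threshold chosen so the density of $E$ in each stopping cube is strictly below $1$, the iteration to a half-sparse tree, the three-lattice distribution over $3^n$ shifted grids, and the crucial extraction of a good point $y \in P_i\setminus E$ to control $\esssup_{z\in P_i}\normalnorm{T(f\mathds{1}_{(2k+1)Q\setminus(2k+1)P_i})(z)}$ are all handled along the intended route, with only the passage from $x\in Q_0$ to a.e.\ $x\in\IR^n$ acknowledged as deferred (as indeed in Lerner's own treatment via $2^n$ quadrants of dilating cubes containing $\supp f$).
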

	
	In the following we verify the assumptions of Lerner's domination theorem for certain Fourier multipliers. The geometric property Fourier type of a Banach space (see Section~\ref{sec:r-boundedness}) plays a role in our estimates.
	
In the next result, a vector-valued adaption of~\cite[Lemma~1]{KurWhe79}, we establish a connection between Hörmander type conditions on the multiplier and estimates on the kernel. Let us fix the setting.
	
		Let $m \in L^{\infty}(\IR^{n};\mathcal{B}(X,Y))$. We fix some $\phi \in C_c^{\infty}(\IR^n)$ with support contained in $\frac{1}{2} \le \abs{\xi} \le 2$ and inducing a partition of unity, i.e.\ $\sum_{j=-\infty}^{\infty} \phi(2^{-j} \xi) =  1$ for all $\xi \neq 0$ (see also Lemma~\ref{lem:Littlewood-Paley_coordinate}). For $m_j(\xi) = m(\xi) \phi(2^{-j}\xi)$ we have $m(\xi) = \sum_{j=-\infty}^{\infty} m_j(\xi)$ for $\xi \neq 0$. The multipliers $m_j$ are integrable and compactly supported. Therefore the corresponding kernel $k_j = \mathcal{F}^{-1} m_j$ is a bounded smooth function. For $N \in \IN$ we consider the approximative kernels $K_N = \sum_{j=-N}^N k_j$. For all $f \in \mathcal{S}(\IR^n;X)$ one then has
		\begin{equation*}
			K_N * f = \mathcal{F}^{-1} \biggl(\sum_{j=-N}^N m_j \cdot \mathcal{F}f \biggr).
		\end{equation*}

In the lemma below there is the H\"ormander type condition \eqref{eq:lem:kernel_estimate} on the the multipier $m \in L^{\infty}(\IR^{n};\mathcal{B}(X,Y))$. For convenience we use the convention that when we write down a condition like \eqref{eq:lem:kernel_estimate} it is implicitely assumed that the expression in \eqref{eq:lem:kernel_estimate} is well-defined in a natural way; we require the distribution $\partial^{\alpha}m$ to be regular on $\IR^{n} \setminus \{0\}$ in the sense that $\partial^{\alpha}m_{|\IR^{n} \setminus \{0\}} \in L^{1}_{\mathrm{loc}}(\IR^{n} \setminus \{0\};\mathcal{B}(X,Y))$ (or actually that $\partial^{\alpha}m \, u$ is a regular distribution on $\IR^{n} \setminus \{0\}$ for all $u \in X_{0}$ in the specific case of the lemma below).

		\begin{lemma}\label{lem:kernel_estimate}
		Let $X, Y$ be Banach spaces and $m \in L^{\infty}(\IR^{n};\mathcal{B}(X,Y))$. Assume that $Y$ has non-trivial Fourier type $t > 1$ and that there exist $s \in [t,2]$ and $l \in (\frac{n}{t},n] \cap \IN$ such that for some $C \ge 0$ one has for all $R > 0$ and all $u$ in a subset $X_0 \subset X$
		\begin{equation}\label{eq:lem:kernel_estimate}
			\biggl( R^{s\abs{\alpha} - n} \int_{R < \abs{\xi} < 2R} \norm{\partial^{\alpha} m(\xi)u}^s \d \xi \biggr)^{1/s} \le C \norm{u} \qquad \text{for all } \abs{\alpha} \le l.
		\end{equation}
		If there exists $d \in (\frac{n}{t},\frac{n}{t}+1) \cap \IN$, then for $p \in [1,t']$, $R >0$, $\abs{y} \le \frac{R}{2}$ and $u \in X_0$ we have uniformly in $N \in \IN$ the kernel estimate
		\begin{equation*}
			\biggl( \int_{R < \abs{x} < 2R} \norm{[K_N(x-y) - K_N(x)]u}^{p} \d x \biggr)^{1/p} \lesssim R^{\frac{n}{p}-\frac{n}{t'}-d} \abs{y}^{d-\frac{n}{t}} \norm{u},
		\end{equation*}
 where the implicit constant only depends on $n$, $t$, $s$, $l$, $C$ and $\norm{\mathcal{F}}_{L^{t}(\IR^{n};Y) \to L^{t'}(\IR^{n};Y)}$.
		In particular, for $p \in [1, t']$ the kernels satisfy uniformly the pointwise $p$-Hörmander condition, i.e.\ there exists $(a_k)_{k \in \IN} \in \ell^1$ such that for all $k \in \IN$, $u \in X_0$ and $y \in \IR^n$
			\begin{equation*}
				\biggl( \int_{2^k \abs{y} < \abs{x} < 2^{k+1} \abs{y}} \norm{[K_N(x-y) - K_N(x)]u}^{p} \d x \biggr)^{1/p} \le a_k (2^k \abs{y})^{-\frac{n}{p'}} \norm{u}.
		\end{equation*}
	\end{lemma}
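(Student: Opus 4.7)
The plan is to obtain sharp pointwise-in-$j$ estimates for each smooth kernel $k_j$, and then to sum over $j$ after splitting dyadically at the threshold $2^{j_0}\simeq|y|^{-1}$.

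The first step is a Fourier-side computation. Since $Y$ has Fourier type $t$, Hausdorff--Young yields $\|\mathcal{F}^{-1}g\|_{L^{t'}(\IR^n;Y)}\lesssim\|g\|_{L^t(\IR^n;Y)}$. Applied to $x^\alpha\partial^\gamma k_j(x)u$, whose Fourier transform is (up to constants) $\partial_\xi^\alpha[(i\xi)^\gamma m_j(\xi)u]$, the Leibniz expansion produces a sum over $\beta_1+\beta_2+\beta_3=\alpha$ of terms with four separate $2^j$-factors: $2^{j(|\gamma|-|\beta_1|)}$ from differentiating $\xi^\gamma$ on the annulus $|\xi|\sim 2^j$; $2^{-j|\beta_3|}$ from the chain rule on $\phi(2^{-j}\,\cdot\,)$; $2^{j(n/s-|\beta_2|)}$ from the Hörmander hypothesis \eqref{eq:lem:kernel_estimate} applied to $\partial^{\beta_2}m$ (admissible since $|\beta_2|\le|\alpha|\le l$); and $2^{jn(1/t-1/s)}$ from Hölder's inequality on the same annulus to pass from $L^s$ to $L^t$, which is permitted because $s\ge t$. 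Adding these four exponents, the dependence on $s$ cancels and one is left with
$$\||x|^{|\alpha|}\partial^\gamma k_j(\cdot)u\|_{L^{t'}(\IR^n;Y)}\lesssim 2^{j(n/t+|\gamma|-|\alpha|)}\|u\|,\qquad |\alpha|\le l.$$
A Hölder step $L^{t'}\hookrightarrow L^p$ on $A_R:=\{R<|x|<2R\}$ combined with $\|f\|_{L^{t'}(A_R)}\le R^{-d}\||x|^d f\|_{L^{t'}}$ then gives
$$\|\partial^\gamma k_j(\cdot)u\|_{L^p(A_R;Y)}\lesssim R^{n/p-n/t'-d}\,2^{j(n/t+|\gamma|-d)}\|u\|,$$
where the previous bound is applied with $|\alpha|=d$; this is legitimate because $d$ is the least integer exceeding $n/t$ and the integer $l$ satisfies $l>n/t$, forcing $d\le l$.

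For the difference, write $K_N(x-y)-K_N(x)=\sum_{j=-N}^N[k_j(x-y)-k_j(x)]$, fix $j_0\in\IZ$ with $2^{j_0}\simeq|y|^{-1}$, and split the sum at $j_0$. For $j\ge j_0$ I use the triangle inequality and the case $|\gamma|=0$ of the last display, noting that $|y|\le R/2$ absorbs the translate into a dilated annulus of comparable size; the resulting geometric series $\sum_{j\ge j_0}2^{j(n/t-d)}$ has ratio $<1$ (as $d>n/t$) and is controlled by $2^{j_0(n/t-d)}\simeq|y|^{d-n/t}$. For $j<j_0$ I use the mean-value identity $k_j(x-y)-k_j(x)=-\int_0^1 y\cdot\nabla k_j(x-\theta y)\,d\theta$ together with the case $|\gamma|=1$; here the series $\sum_{j<j_0}2^{j(n/t+1-d)}$ has ratio $>1$ (as $d<n/t+1$) and is dominated by its largest term $2^{j_0(n/t+1-d)}\simeq|y|^{d-n/t-1}$, which when multiplied by the Taylor factor $|y|$ again yields $|y|^{d-n/t}$. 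Combining both contributions delivers the claimed estimate, uniformly in $N$ because all partial sums are majorised by the full geometric series.

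The pointwise $p$-Hörmander statement then follows by taking $R=2^k|y|$: using $1/t+1/t'=1$, the right-hand side rearranges to $a_k(2^k|y|)^{-n/p'}\|u\|$ with $a_k:=C\,2^{k(n/t-d)}$, and the condition $d>n/t$ makes $(a_k)$ summable. The main technical obstacle will be the bookkeeping in the Fourier-side step: one must verify that the four $2^j$-factors from Leibniz, the chain rule on $\phi(2^{-j}\,\cdot\,)$, the $L^s$-Hörmander norm, and the Hölder conversion $L^s\hookrightarrow L^t$ telescope exactly into $2^{j(n/t+|\gamma|-|\alpha|)}$ without any residual $s$-dependence; once this is settled, the dyadic summation and the choice of threshold $2^{j_0}\simeq|y|^{-1}$ are entirely standard.
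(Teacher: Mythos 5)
Your proof is correct, and the backbone is the same as the paper's: a Fourier-type/Hausdorff--Young estimate on each dyadic piece $k_j$, followed by a dyadic summation split at $2^{j_0}\simeq|y|^{-1}$. The point where you genuinely diverge is the treatment of the difference $k_j(x-y)-k_j(x)$ for small $j$. The paper keeps this difference on the Fourier side, rewriting it as $\mathcal F^{-1}\bigl[(e^{iy\cdot}-1)m_j\bigr]$, applying Leibniz to $\partial^\alpha\bigl[(e^{iy\cdot}-1)m_j\bigr]$ and bounding $|\partial^\gamma(e^{iy\cdot}-1)|$ by $|y|^{|\gamma|}$ (or $|y||\xi|$ when $\gamma=0$), which produces a sum $\sum_{m=1}^{d}|y|^m(2^j)^{n/t-d+m}$ whose dominant term is then extracted using $2^j\le|y|^{-1}$. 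You instead use the physical-side mean-value identity $k_j(x-y)-k_j(x)=-\int_0^1 y\cdot\nabla k_j(x-\theta y)\,d\theta$, which forces you to prove the slightly richer intermediate bound $\||x|^{|\alpha|}\partial^\gamma k_j\,u\|_{L^{t'}}\lesssim 2^{j(n/t+|\gamma|-|\alpha|)}\|u\|$ including a first spatial derivative (the extra $\xi^\gamma$ factor in the Leibniz expansion on the Fourier side), but in exchange immediately delivers the single term $|y|(2^j)^{n/t+1-d}$ without the $m$-sum. Both routes are standard and equally rigorous; the paper's factor trick stays entirely on the symbol level and matches the shape of hypothesis \eqref{eq:lem:kernel_estimate} more directly, whereas your mean-value variant is arguably more elementary and eliminates the bookkeeping over $m$. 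Your verification that $d\le l$ and the identification $a_k=C\,2^{k(n/t-d)}\in\ell^1$ are both correct.
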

	\begin{proof}
		It follows from the assumptions that for $\abs{\alpha} \le l$, $j \in \IZ$ and $q \le s$
		\begin{equation}
			\label{eq:localized_kernel_estimate}
			\begin{split}
    			\MoveEqLeft \biggl( \int_{\IR^n} \normalnorm{\partial^{\alpha} m_j(\xi) u}^q \d\xi \biggr)^{1/q} \lesssim 2^{j(\frac{n}{q}-\frac{n}{s})} \biggl( \int_{\IR^n} \normalnorm{\partial^{\alpha} m_j(\xi) u}^s \d\xi \biggr)^{1/s} \\
				& \lesssim 2^{j(\frac{n}{q}-\frac{n}{s})} \sum_{\beta \le \alpha} 2^{-j \abs{\alpha - \beta}} \biggl( \int_{2^{j-1} \le \abs{\xi} \le 2^{j+1}} \normalnorm{\partial^{\beta} m(\xi) u}^s \d\xi \biggr)^{1/s} \\
    			& \lesssim 2^{j(\frac{n}{q}-\frac{n}{s})} \sum_{\beta \le \alpha} 2^{-j\abs{\alpha - \beta}} 2^{(j-1)(\frac{n}{s}-\abs{\beta})} \norm{u} \lesssim 2^{j(\frac{n}{q}-\abs{\alpha})} \norm{u}.
			\end{split}
		\end{equation}
		By Minkowski's inequality one has
		\begin{align*}
			\MoveEqLeft \biggl( \int_{R < \abs{x} < 2R} \norm{[K_N(x-y) - K_N(x)]u}^{p} \d x \biggr)^{1/p} \\
			& \le \sum_{j=-\infty}^{\infty} \biggl( \int_{R < \abs{x} < 2R} \normalnorm{[k_j(x-y) - k_j(x)]u}^{p} \d x \biggr)^{1/p}.
		\end{align*}
		Thus it suffices to prove suitable estimates for the kernels $k_j u$. We now estimate them separately. On the one hand one has for $R > 0$ and $\abs{y} \le \frac{R}{2}$
		\begin{align*}
			\MoveEqLeft \biggl( \int_{R < \abs{x} < 2R} \norm{[k_j(x-y) - k_j(x)]u}^{p} \d x \biggr)^{1/p} \\
			& \le \biggl( \int_{R < \abs{x} < 2R} \normalnorm{k_j(x-y)u}^{p} \d x \biggr)^{1/p} + \biggl( \int_{R < \abs{x} < 2R} \normalnorm{k_j(x)u}^{p} \d x \biggr)^{1/p} \\
			& \le 2 \biggl( \int_{R/2 < \abs{x} < 5R/2} \normalnorm{k_j(x)u}^{p} \d x \biggr)^{1/p}.
		\end{align*}
		Since $d \le l$, we obtain for the last above term
		\begin{align*}
			\MoveEqLeft \biggl( \int_{R/2 < \abs{x} < 5R/2} \normalnorm{k_j(x)u}^{p} \d x \biggr)^{1/p} \lesssim R^{-d} \biggl( \int_{R/2 < \abs{x} < 5R/2} \normalnorm{\abs{x}^d k_j(x)u}^{p} \d x \biggr)^{1/p} \\
			& \lesssim R^{-d} \sum_{\abs{\alpha} = d} \biggl( \int_{R/2 < \abs{x} < 5R/2} \normalnorm{x^{\alpha} k_j(x)u}^{p} \d x \biggr)^{1/p}.
		\end{align*}
		Recall that by assumption $Y$ has Fourier type $t$ and that $p \le t'$. Hence, for each of the above summands we have by~\eqref{eq:localized_kernel_estimate}
		\begin{equation}\label{eq:kernel_estimate_1}
    		\begin{split}
    			\MoveEqLeft R^{-d} \biggl( \int_{R/2 < \abs{x} < 5R/2} \normalnorm{x^{\alpha} k_j(x)u}^{p} \d x \biggr)^{1/p} \lesssim R^{\frac{n}{p} - \frac{n}{t'}-d} \biggl( \int_{R/2 < \abs{x} < 5R/2} \norm{\mathcal{F}^{-1}(\partial^{\alpha} m_j u)(x)}^{t'} \d x \biggr)^{1/t'} \\
    			& \lesssim R^{\frac{n}{p} - \frac{n}{t'}-d} \biggl( \int_{\IR^n} \normalnorm{\partial^{\alpha} m_j(\xi) u}^t \d\xi \biggr)^{1/t} \lesssim R^{\frac{n}{p} - \frac{n}{t'}-d} (2^j)^{\frac{n}{t}-d} \norm{u}.
    		\end{split}
		\end{equation}
		In the same spirit we can estimate the difference as
		\begin{align*}
			\MoveEqLeft \biggl( \int_{R < \abs{x} < 2R} \normalnorm{[k_j(x-y) - k_j(x)]u}^{p} \d x \biggr)^{1/p} \\
			& \lesssim R^{\frac{n}{p} - \frac{n}{t'}} \biggl( \int_{R < \abs{x} < 2R} \normalnorm{[k_j(x-y) - k_j(x)]u}^{t'} \d x \biggr)^{1/t'} \\
			& \lesssim R^{\frac{n}{p} - \frac{n}{t'} - d} \sum_{\abs{\alpha} = d} \biggl( \int_{R < \abs{x} < 2R} \norm{x^{\alpha} \mathcal{F}^{-1}((e^{iy \cdot} - 1) m_j u)(x)}^{t'} \d x \biggr)^{1/t'} \\
			& \lesssim R^{\frac{n}{p} - \frac{n}{t'} - d} \sum_{\abs{\alpha} = d} \biggl( \int_{\IR^n} \normalnorm{\partial^{\alpha} [(e^{i y \cdot} - 1) m_j](\xi)u]}^t \d\xi \biggr)^{1/t} \\
			& = R^{\frac{n}{p} - \frac{n}{t'} - d} \sum_{\abs{\beta} + \abs{\gamma} = d} \biggl( \int_{\IR^n} \normalnorm{\partial^{\gamma} (e^{i y \cdot} - 1)(\xi) \partial^{\beta} m_j(\xi)u}^{t} \d\xi \biggr)^{1/t}.
		\end{align*}	
		We now estimate the above summands. For $\abs{\gamma} = 0$ and $\abs{\beta} = d$ the inequality $\normalabs{e^{i y \xi}-1} \le \abs{\xi} \abs{y}$ and~\eqref{eq:localized_kernel_estimate} gives
		\begin{align*}
			\MoveEqLeft \biggl( \int_{\IR^n} \normalabs{e^{i \xi y}-1}^t \normalnorm{\partial^{\beta} m_j(\xi) u}^t \d \xi \biggr)^{1/t} \le \biggl( \int_{\IR^n} \abs{\xi}^t \abs{y}^t \normalnorm{\partial^{\beta} m_j(\xi) u}^t \d \xi \biggr)^{1/t} \lesssim \abs{y} 2^{j(\frac{n}{t}-d+1)} \norm{u}.
		\end{align*}
		For $\abs{\gamma} > 0$ we use $\normalabs{\partial^{\gamma} (e^{i y \cdot} - 1)} \le \abs{y}^{\abs{\gamma}}$ together with~\eqref{eq:localized_kernel_estimate} and obtain
		\begin{align*}
			\MoveEqLeft \biggl( \int_{\IR^n} \normalnorm{\partial^{\gamma} (e^{i y \cdot} - 1)(\xi) \partial^{\beta} m_j(\xi)u}^t \d\xi \biggr)^{1/t} \lesssim \abs{y}^{\abs{\gamma}} 2^{j(\frac{n}{t}-\abs{\beta})} \norm{u}.
		\end{align*}
		Adding the two just obtained estimates, we get
		\begin{equation}\label{eq:kernel_estimate_2}
			\begin{split}
				\MoveEqLeft \biggl( \int_{R < \abs{x} < 2R} \normalnorm{[k_j(x-y) - k_j(x)]u}^{p} \d x \biggr)^{1/p} \lesssim R^{\frac{n}{p}-\frac{n}{t'}-d} \sum_{m=1}^{d} \abs{y}^m (2^j)^{\frac{n}{t}-d+m} \norm{u}.
			\end{split}
		\end{equation}
		Finally, putting~\eqref{eq:kernel_estimate_1} and~\eqref{eq:kernel_estimate_2} together and using $\abs{y}^m (2^j)^{n/t-d+m} \le \abs{y} (2^j)^{n/t-d+1}$ for $2^j \le \abs{y}^{-1}$, we have because of $d \in (\frac{n}{t},\frac{n}{t}+1)$ the claimed estimate
		\begin{align*}
			\MoveEqLeft \biggl( \int_{R < \abs{x} < 2R} \norm{[K_N(x-y) - K_N(x)]u}^{p} \d x \biggr)^{1/p} \\
			& \lesssim \sum_{2^j \le \abs{y}^{-1}} R^{\frac{n}{p}-\frac{n}{t'}-d} \abs{y} (2^j)^{\frac{n}{t}+1-d} \norm{u} + \sum_{2^j \ge \abs{y}^{-1}} R^{\frac{n}{p}-\frac{n}{t'}-d} (2^j)^{\frac{n}{t} - d} \norm{u} \lesssim R^{\frac{n}{p}-\frac{n}{t'}-d} \abs{y}^{d-\frac{n}{t}} \norm{u}. \qedhere
		\end{align*}
	\end{proof}	
	
	\begin{remark}\label{rem:classical_hoermander}
		For $p = 1$ the pointwise $p$-Hörmander condition reduces to the pointwise variant of the well-known Hörmander condition, namely
		\begin{equation*}
			\int_{\abs{x} > 2\abs{y}} \norm{[K_N(x-y) - K_N(x)]u} \d x \lesssim \norm{u}.
		\end{equation*}
	\end{remark}
	
	We now use the obtained estimates on the kernel to verify the assumptions of Lerner's domination theorem.
	
	\begin{lemma}\label{lem:domination}
		Let $X,Y$ be Banach spaces and suppose that $X$ has non-trivial Fourier type $t > 1$. Further, let $m \in L^{\infty}(\IR^{n};\mathcal{B}(X,Y))$ be such that $T_m\colon L^q(\IR^n;X) \to L^{q,\infty}(\IR^n;Y)$ for some $q \in (1, \infty)$. Assume that there exist $s \in [t,2]$ and $l \in (\frac{n}{t},n] \cap \IN$ such that for some $C \ge 0$ one has for all $R > 0$ and all $v^* \in \mathcal{N}$ in a subset $\mathcal{N}$ of $Y^*$ norming for $Y$ the estimate
		\begin{equation*}
			\biggl( R^{s\abs{\alpha} - n} \int_{R < \abs{\xi} < 2R} \norm{\partial^{\alpha} m^*(\xi)v^*}^s \d\xi \biggr)^{1/s} \le C \norm{v^*} \qquad \text{for all } \abs{\alpha} \le l.
		\end{equation*}
		If $q \in [t, \infty)$, then for every compactly supported $f \in L^q(\IR^n;X)$ there exist sparse families $\mathcal{S}_1, \ldots, \mathcal{S}_{3^n}$ of cubes out of different shifted dyadic grids such that almost everywhere
		\begin{equation*}
			\norm{T_mf(x)} \lesssim \sum_{j=1}^{3^n} \mathcal{A}_{q,\mathcal{S}_j} \norm{f}(x).
		\end{equation*}
Here the implicit constant only depends on $n$, $s$, $l$, $t$, $\norm{\mathcal{F}}_{L^{t}(\IR^{n};X) \to L^{t'}(\IR^{n};X)}$, $C$, $q$ and $\norm{T}_{L^{q} \to L^{q,\infty}}$.
	\end{lemma}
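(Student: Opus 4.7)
My plan is to invoke Theorem~\ref{thm:lerner} with $r=q$. Since the hypothesis that $T_m\colon L^q(\IR^n;X) \to L^{q,\infty}(\IR^n;Y)$ is already given, the only non-trivial input to verify is that the grand maximal truncated operator $\mathcal{M}_{T_m,k}$ is bounded from $L^q(\IR^n;X)$ into $L^{q,\infty}(\IR^n)$. To this end I will prove the pointwise bound
\[
\mathcal{M}_{T_m,k} f(x) \lesssim M\!\left(\norm{T_m f}_Y\right)(x) + M_q\!\left(\norm{f}_X\right)(x),
\]
where $M$ is the Hardy--Littlewood maximal operator and $M_q g := [M(g^q)]^{1/q}$. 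Both operators on the right are bounded $L^q \to L^{q,\infty}$ for $q>1$ (the first uses the assumed weak-$L^q$ bound on $T_m$ together with $L^{q,\infty}$-boundedness of $M$, and the second follows from the weak-type $(1,1)$ bound for $M$), which will deliver the required estimate on $\mathcal{M}_{T_m,k}$.

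To obtain the pointwise bound, fix a cube $Q \ni x$ with center $c_Q$, put $g := f \mathds{1}_{((2k+1)Q)^c}$ and decompose, for $y \in Q$,
\[
\norm{T_m g(y)}_Y \le \norm{T_m g(y) - T_m g(c_Q)}_Y + \norm{T_m g(c_Q)}_Y .
\]
The \emph{first} term I dualize against a norming $v^* \in \mathcal{N}$. Writing $K$ for the kernel of $T_m$, so that $K^*$ is the kernel of $T_{m^*}$, and splitting $((2k+1)Q)^c$ into dyadic annuli $A_j \subset \{z : 2^j \ell(Q) \le |c_Q-z| < 2^{j+1}\ell(Q)\}$, H\"older's inequality in $z$ yields on each annulus
\[
\int_{A_j} \norm{f(z)}_X \, \norm{[K^*(y-z)-K^*(c_Q-z)]v^*}_{X^*}\,dz \le |A_j|^{1/q} M_q(\norm{f}_X)(x) \, \biggl(\int_{A_j} \norm{[K^*(y-z)-K^*(c_Q-z)]v^*}^{q'}\,dz\biggr)^{1/q'}.
\]
The crucial point is to apply Lemma~\ref{lem:kernel_estimate} to $m^*$: Fourier type is preserved under duality, so $X^*$ has Fourier type $t$, the assumed $L^s$ Hörmander condition on $\partial^{\alpha} m^* v^*$ is exactly the hypothesis of that lemma on $\mathcal{N}$, and the assumption $q \in [t,\infty)$ forces $q' \in [1,t']$, so the pointwise $q'$-Hörmander condition holds for the approximative kernels $K_N^*$ uniformly in $N$. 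The resulting estimate bounds the $A_j$-integral by $a_j (2^j\ell(Q))^{-n/q'} \norm{v^*}$ with $(a_j) \in \ell^1$; since $|A_j|^{1/q} \eqsim (2^j \ell(Q))^{n/q}$, summing over $j$ and taking the supremum over $v^*$ gives a bound by $M_q(\norm{f}_X)(x)$.

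For the \emph{second} term I write $g = f - f\mathds{1}_{(2k+1)Q}$. A Kolmogorov-type argument: by the weak-$L^q$ bound on $T_m$,
\[
|\{c \in Q : \norm{T_m(f\mathds{1}_{(2k+1)Q})(c)}_Y > \lambda\}| \lesssim \lambda^{-q} \norm{f\mathds{1}_{(2k+1)Q}}_{L^q(\IR^n;X)}^q \lesssim \lambda^{-q} |Q| M_q(\norm{f}_X)(x)^q,
\]
so picking $\lambda = C M_q(\norm{f}_X)(x)$ with $C$ large makes the exceptional set smaller than $|Q|/2$; on its complement (of positive measure in $Q$) we can choose $c_Q$ with $\norm{T_m(f\mathds{1}_{(2k+1)Q})(c_Q)} \lesssim M_q(\norm{f}_X)(x)$. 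Similarly, for a.e.\ $c_Q \in Q$ one has $\norm{T_m f(c_Q)}_Y \le M(\norm{T_m f}_Y)(x)$ (for instance via another Kolmogorov-type selection), and taking the essential supremum over $y \in Q$ of the original split yields the claimed pointwise bound on $\mathcal{M}_{T_m,k}f(x)$. The main obstacle in all of this is the operator-valued bookkeeping: the Hörmander-type condition of Lemma~\ref{lem:kernel_estimate} is for the \emph{truncated} kernels $K_N$, so I must justify passing the uniform-in-$N$ estimate to the actual action of $T_m$, which is done by the usual density and absolute-convergence arguments once the bounds are in place.
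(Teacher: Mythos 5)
Your proposal is correct and follows essentially the same route as the paper: invoke the Lerner domination theorem (Theorem~\ref{thm:lerner}) with $r=q$, verify boundedness of the grand maximal truncation by dualizing with $v^* \in \mathcal{N}$, splitting $((2k+1)Q)^c$ into dyadic annuli, and applying Lemma~\ref{lem:kernel_estimate} to $m^*$ (using that $X^*$ inherits Fourier type $t$ and that $q \ge t$ forces $q' \le t'$), then pass from the truncated kernels $K_N$ to $T_m$ by a limiting argument. The one cosmetic difference is how the "local" pieces $T_m(f\mathds{1}_{(2k+1)Q})(c_Q)$ and $T_mf(c_Q)$ are absorbed: the paper takes an infimum over $z \in Q$ and then dominates by averages, producing the bound $\mathcal{M}_{T_{N},k}f \lesssim M(\norm{T_N f}) + M(\normalnorm{T_N(f\mathds{1}_{(2k+1)Q})}) + M(M_q\norm{f})$, whereas you select a good point $c_Q$ by a Kolmogorov/Chebyshev argument; both are standard and equivalent, although your phrase "for a.e.\ $c_Q \in Q$ one has $\norm{T_m f(c_Q)}_Y \le M(\norm{T_m f}_Y)(x)$" is not literally true (it needs the same Chebyshev selection with a factor $2$, combined with the other exceptional set), and the point $c_Q$ should not be pinned down as the center of $Q$ at the outset if you intend to choose it later. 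You also correctly flag --- and should carry out, as the paper does via Fatou for weak $L^q$ --- the passage from $\mathcal{M}_{T_N,k}$ to $\mathcal{M}_{T_m,k}$, since the kernel estimates from Lemma~\ref{lem:kernel_estimate} are only for the truncations $K_N$.
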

	
	\begin{proof}		
		As already said, we verify the assumptions of Theorem~\ref{thm:lerner}. The required mapping property for $T = T_m$ is satisfied by our made assumptions. We now show that $\mathcal{M}_{T,k}\colon L^q(\IR^n;X) \to L^{q,\infty}(\IR^n)$ is bounded for some sufficiently large $k \in \IN$. For $f \in L^1(\IR^n;X)$ let $T_N f = K_N * f \in C(\IR^n;Y)$. Fix $x \in \IR^n$ and $Q$ with $x \in Q$. For $y,z \in Q$ we have
		\begin{align*}
			T_N(f \mathds{1}_{((2k+1)Q)^c})(y) & = T_N(f \mathds{1}_{((2k+1)Q)^c})(y) - T_N(f \mathds{1}_{((2k+1)Q)^c})(z) + T_N(f \mathds{1}_{((2k+1)Q)^c})(z).
		\end{align*}
		The values in $y$ and $z$ are comparable. For this note that $y, z \in Q$ implies $\abs{z-y} \le \sqrt{n} \ell(Q)$. Consequently, if $k$ is chosen large, we have $\abs{x-y} \le \frac{1}{4} (2k+1) \ell(Q)$. Then for all $v^* \in \mathcal{N}$
		\begin{equation}\label{eq:difference_estimate_1}
    		\begin{split}
    			\MoveEqLeft \langle v^*, T_N(f \mathds{1}_{((2k+1)Q)^c})(y) - T_N(f \mathds{1}_{((2k+1)Q)^c})(z) \rangle \\
    			& = \biggl\langle v^*, \int_{((2k+1)Q)^c} [K_N(y-w) - K_N(z-w)] f(w) \d w \biggr\rangle \\
    			& \le\int_{\abs{w} \ge (k+ \frac{1}{2}) \ell(Q)} \abs{\langle [K_N^*(w-(z-y)) - K_N^*(w)]v^*, f(z-w) \rangle} \d w \\
    			& \le \sum_{j=0}^{\infty} \biggl( \int_{\abs{w} \in 2^j (k + \frac{1}{2}) \ell(Q) [1,2]} \norm{[K_N^*(w-(z-y)) - K_N^*(w)]v^*}^{q'} \d w \biggr)^{1/q'} \\
    			& \qquad \cdot \biggl( \int_{\abs{w} \in 2^j (k + \frac{1}{2}) \ell(Q) [1,2]} \norm{f(z-w)}^q \d w \biggr)^{1/q}.	
    		\end{split}
		\end{equation}
		We now apply the estimates of Lemma~\ref{lem:kernel_estimate} to the multiplier $m^*$. For this notice that $X^*$ has Fourier type $t$ whenever $X$ has Fourier type $t$. By assumption we can choose some $d \in (\frac{n}{t},l] \cap \IN$. Further, after choosing a slightly smaller Fourier type if necessary, we may also assume that $d \in (\frac{n}{t},\frac{n}{t}+1)$. Then by Lemma~\ref{lem:kernel_estimate} and $q' \le t'$, the first factors in the inner sum satisfy the estimate
		\begin{equation}\label{eq:difference_estimate_2}
    		\begin{split}
    			\MoveEqLeft \biggl( \int_{2^{j+1} (k+ \frac{1}{2}) \ell(Q) \ge \abs{w} \ge 2^j (k+ \frac{1}{2}) \ell(Q)} \norm{[K^*_N(w-(z-y)) - K^*_N(w)]v^*}^{q'} \d w \biggr)^{1/q'} \\
    			& \lesssim (2^j \ell(Q))^{-d} \abs{z-y}^{d-\frac{n}{q}}.
    		\end{split}
		\end{equation}
		From~\eqref{eq:difference_estimate_1}, \eqref{eq:difference_estimate_2} and the dimensional estimate $\abs{z-y} \le \sqrt{n} \ell(Q)$ we obtain by taking in~\eqref{eq:difference_estimate_1} the supremum over all $v^*$ in the norming subset $\mathcal{N}$
		\begin{equation}
			\label{eq:comparability}
    		\begin{split}
    			\MoveEqLeft \norm{T_N(f \mathds{1}_{((2k+1)Q)^c})(y) - T_N(f \mathds{1}_{((2k+1)Q)^c})(z)} \\
    			& \lesssim \sum_{j=0}^{\infty} (2^j \ell(Q))^{-d} \abs{z-y}^{d-\frac{n}{q}} \biggl( \int_{\abs{w} \in 2^j (k + \frac{1}{2}) \ell(Q) [1,2]} \norm{f(z-w)}^q \d w \biggr)^{1/q} \\
    			& \lesssim \sum_{j=0}^{\infty} (2^j \ell(Q))^{-d} \ell(Q)^{d-\frac{n}{q}} (2^{j+1}\ell(Q))^{\frac{n}{q}} \\
    			& \qquad \cdot \biggl( \Bigl(2^{j+1} \Bigl(k+\frac{1}{2} \Bigr) \ell(Q) \Bigr)^{-n} \int_{\abs{z-w} \le 2^{j+1} (k+\frac{1}{2}) \ell(Q)} \norm{f(w)}^q \d w \biggr)^{1/q} \\
    			& \lesssim (M_q \norm{f})(z) \sum_{j=0}^{\infty} 2^{-j(d-\frac{n}{q})}.
    		\end{split}
		\end{equation}
		Here we use the maximal function $(M_q f)(x) \coloneqq (\sup_{Q \ni x} \abs{Q}^{-1} \int_Q \abs{f}^q)^{1/q}$. Notice that the series converges because of $d > \frac{n}{t} > \frac{n}{q}$. Now,~\eqref{eq:comparability} and $x \in Q$ yield
		\begin{align*}
			\MoveEqLeft \sup_{y \in Q} \normalnorm{T_N(f \mathds{1}_{((2k+1)Q)^c})(y)} \lesssim \inf_{z \in Q} \bigl(\normalnorm{T_N(f \mathds{1}_{((2k+1)Q)^c})(z)} + (M_q \norm{f})(z) \bigr) \\
			& \le \inf_{z \in Q} \bigl(\normalnorm{(T_N(f))(z)} + \normalnorm{T_N(f \mathds{1}_{(2k+1)Q})(z)} + (M_q \norm{f})(z) \bigr) \\
			& \le \frac{1}{\abs{Q}} \int_Q \norm{(T_Nf)(z)} \d z + \frac{1}{\abs{Q}} \int_Q \normalnorm{(T_N(f \mathds{1}_{(2k+1)Q}))(z)} \d z + \frac{1}{\abs{Q}} \int_Q (M_q \norm{f})(z) \d z \\
			& \le (M \norm{T_Nf})(x) + (M\normalnorm{T_N f\mathds{1}_{(2k+1)Q}})(x) + M(M_q \norm{f})(x).
		\end{align*}
		Since $Q$ is an arbitrary cube containing $x$, we have the pointwise domination
		\begin{equation*}
			\mathcal{M}_{T_{N,k}}f(x) \lesssim M(\norm{T_Nf})(x) + M(\normalnorm{T_N f \mathds{1}_{(2k+1)Q}})(x) + M(M_q \norm{f})(x).
		\end{equation*}
		Using the fact that $M$ maps $L^{q,\infty}(\IR^n)$ boundedly into itself for all $q > 1$, we obtain for $f \in L^1(\IR^n;X) \cap L^q(\IR^n;X)$
		\begin{align*}
			\MoveEqLeft \normalnorm{\mathcal{M}_{T_{N,k}} f}_{L^{q,\infty}} \lesssim \normalnorm{M \norm{T_N f}}_{L^{q,\infty}} + \normalnorm{M \normalnorm{T_N(f \mathds{1}_{(2k+1)Q})}}_{L^{q,\infty}} + \normalnorm{M(M_q \normalnorm{f})}_{L^{q,\infty}} \\
			& \lesssim \normalnorm{T_N f}_{L^{q,\infty}} + \normalnorm{T_N(f\mathds{1}_{(2k+1)Q})}_{L^{q,\infty}} + \normalnorm{M_q \normalnorm{f}}_{L^{q,\infty}} \lesssim \norm{f}_{L^q(\IR^n;X)}.
		\end{align*}
		For the last term we used the weak type boundedness $M_q\colon L^q \to L^{q,\infty}$, whereas the estimate for the first two is a consequence of our made assumptions.
		
		Now, let $f \in L^q(\IR^n;X)$. Since convergence in $L^{q,\infty}(\IR^n;Y)$ implies pointwise convergence almost everywhere after passing to some subsequence, we can find $f_n \in L^q(\IR^n;X) \cap L^1(\IR^n;X)$ and $(N_n)_{n \in \IN}$ with $f_n \to f$ in $L^q(\IR^n;X)$ and $(T_{N_n} f_n)(x) \to (Tf)(x)$ almost everywhere. The first part of the proof then gives
		\begin{align*}
			(\mathcal{M}_{T,k}f)(x) \le \liminf_{n \to \infty} (\mathcal{M}_{T_{N_n},k}f_n)(x).
		\end{align*}
		The result now follows from Fatou's lemma for weak type $L^q$-spaces.
	\end{proof}

\subsection{Weighted multiplier results}
	
	We now use the weighted estimate in Theorem~\ref{thm:sharp_estimate_sparse} together with the domination established in Lemma~\ref{lem:domination} to obtain some weighted estimates for Fourier multipliers.
	
	\begin{theorem}\label{thm:multiplier_two_weight}
		Let $X,Y$ be Banach spaces with $X$ of non-trivial Fourier type $t > 1$.  Let $m \in L^{\infty}(\IR^{n};\mathcal{B}(X,Y))$ and assume that there exist $l \in (\frac{n}{t},n] \cap \IN$ and $s \in [t,2]$  such that for some $C \ge 0$ one has for all $R > 0$ and $v^* \in \mathcal{N}$ in some norming subset $\mathcal{N} \subset Y^*$
		\begin{equation}\label{eq:assumption_multiplier}
			\biggl( R^{s\abs{\alpha} - n} \int_{R < \abs{\xi} < 2R} \norm{\partial^{\alpha} m^*(\xi)v^*}^s \d\xi \biggr)^{1/s} \le C \norm{v^*} \qquad \text{for all } \abs{\alpha} \le l.
		\end{equation}
		If $T_m\colon L^q(\IR^n;X) \to L^{q,\infty}(\IR^n;Y)$ is bounded for some $q \in [t, \infty)$, then, for all $p, r \in (1, \infty)$ with $p > r \ge q$ and weights $\omega, \sigma\colon \IR^n \to \IR_{\ge 0}$ with $\omega, \sigma^r \in \mathcal{A}_{\infty}(\IR^n; \mathcal{Q}_n)$ and $[\omega,\sigma]_{\mathcal{A}_p^r(\IR^n;\mathcal{Q}_n)} < \infty$, the multiplier operator $T_m(\,\cdot\, \sigma) \colon L^p_{\sigma^r}(\IR^n;X) \to L^p_{\omega}(\IR^n;Y)$ is bounded with
			\[
				\norm{T_m(\sigma f)}_{L^p_{\omega}(\IR^n;Y)} \lesssim [\omega,\sigma]_{\mathcal{A}_p^r(\IR^n;\mathcal{Q}_n)} ([\omega]_{\mathcal{A}_{\infty}(\IR^n;\mathcal{Q}_n)}^{1/p'} + [\sigma^r]_{\mathcal{A}_{\infty}(\IR^n;\mathcal{Q}_n)}^{1/p}) \norm{f}_{L^p_{\sigma^r}(\IR^n;X)}.
			\]
Here the implicit constant only depends on $n$, $p$, $r$, $s$, $l$, $t$, $\norm{\mathcal{F}}_{L^{t}(\IR^{n};X) \to L^{t'}(\IR^{n};X)}$, $C$, $q$ and $\norm{T}_{L^{q} \to L^{q,\infty}}$.
		\end{theorem}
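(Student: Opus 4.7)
The plan is to combine the sparse domination provided by Lemma~\ref{lem:domination} with the weighted estimate for sparse operators in Theorem~\ref{thm:sharp_estimate_sparse}. The required hypotheses for Lemma~\ref{lem:domination} on $m^*$ and on the $L^q\to L^{q,\infty}$ boundedness of $T_m$ are exactly those we assume (recalling that $X^*$ inherits Fourier type $t$ from $X$).

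First I would fix a compactly supported bounded function $f\colon \IR^n \to X$; such $f$ form a dense subspace of $L^p_{\sigma^r}(\IR^n;X)$. Since $\sigma^r \in \mathcal{A}_\infty(\IR^n;\mathcal{Q}_n)$ is locally integrable and $q \le r$, Hölder's inequality shows $\sigma^q \in L^1_{\mathrm{loc}}(\IR^n)$, so that $g \coloneqq \sigma f \in L^q(\IR^n;X)$ remains compactly supported. Lemma~\ref{lem:domination} then furnishes sparse families $\mathcal{S}_1, \dots, \mathcal{S}_{3^n}$ of cubes from different shifted dyadic grids (depending on $g$) such that
\begin{equation*}
\norm{T_m g(x)} \lesssim \sum_{j=1}^{3^n} \mathcal{A}_{q, \mathcal{S}_j} \norm{g}(x) \qquad \text{for a.e. } x \in \IR^n.
\end{equation*}

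Next I would upgrade the sparse exponent. Because $q \le r$, Hölder's inequality on each cube gives $\mathcal{A}_{q,\mathcal{S}} h \le \mathcal{A}_{r,\mathcal{S}} h$ for every nonnegative measurable $h$. Applying this with $h = \norm{g} = \sigma \norm{f}$ and invoking Theorem~\ref{thm:sharp_estimate_sparse} for each of the $3^n$ sparse families, I obtain
\begin{equation*}
\normalnorm{\mathcal{A}_{q,\mathcal{S}_j}(\sigma \norm{f})}_{L^p_\omega} \le \normalnorm{\mathcal{A}_{r,\mathcal{S}_j}(\sigma \norm{f})}_{L^p_\omega} \lesssim [\omega,\sigma]_{\mathcal{A}_p^r(\IR^n;\mathcal{Q}_n)}\bigl([\omega]_{\mathcal{A}_\infty(\IR^n;\mathcal{Q}_n)}^{1/p'} + [\sigma^r]_{\mathcal{A}_\infty(\IR^n;\mathcal{Q}_n)}^{1/p}\bigr) \norm{f}_{L^p_{\sigma^r}},
\end{equation*}
where the implicit constant is independent of $f$. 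Summing over $j = 1,\dots,3^n$ yields the claimed inequality for $f$ in the dense class.

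The final step is extension by continuity: the linear operator $f \mapsto T_m(\sigma f)$, initially defined on the dense subspace of compactly supported bounded functions in $L^p_{\sigma^r}(\IR^n;X)$, extends to a bounded operator $L^p_{\sigma^r}(\IR^n;X) \to L^p_\omega(\IR^n;Y)$ with the asserted norm. The main subtle point I expect to be the extension: strictly speaking $T_m$ was given to us only as an operator defined on $L^q(\IR^n;X)$, so one must be careful that the dense-class estimate does define a unique bounded extension, and that this extension is the ``correct'' Fourier multiplier object. This is essentially automatic once one notes that for the dense class both interpretations agree through the multiplier action on Schwartz functions. The bookkeeping for the weight-characteristic dependence is then simply the product of the $\mathcal{S}$-independent constants in Lemma~\ref{lem:domination} and in Theorem~\ref{thm:sharp_estimate_sparse}.
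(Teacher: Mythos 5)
Your proof follows the same route as the paper's (one-line) argument: combine the sparse domination from Lemma~\ref{lem:domination} with the weighted sparse bound of Theorem~\ref{thm:sharp_estimate_sparse} and conclude by density. The pointwise upgrade $\mathcal{A}_{q,\mathcal{S}}h \le \mathcal{A}_{r,\mathcal{S}}h$ for $q\le r$ and the uniformity of the constant over the $3^{n}$ sparse families (the dependence on $\mathcal{S}$ in Theorem~\ref{thm:sharp_estimate_sparse} is only through the sparseness constant and the dyadic grid, both fixed by Lerner's theorem) are exactly the details the paper leaves implicit, and you handle them correctly.
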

\begin{proof}
Using density of $C_c^{\infty}(\IR^n;X)$ in $L^p_{\sigma^r}(\IR^n;X)$, this follows directly from a combination of Theorem~\ref{thm:sharp_estimate_sparse} and Lemma~\ref{lem:domination}.
\end{proof}
	
\begin{remark}\label{rmk:thm:multiplier_two_weight}
Given $l \in (\frac{n}{t},n] \cap \IN$, the conditions $s \in [t,2]$ and $q \in [t,\infty)$ in Theorem~\ref{thm:multiplier_two_weight} can be relaxed to $s \in (\frac{n}{l},2]$ and $q \in (\frac{n}{l},\infty)$, respectively.
\end{remark}
\begin{proof}
Assume $s \in (\frac{n}{l},2]$ and $q \in (\frac{n}{l},\infty)$ instead of $s \in [t,2]$ and $q \in [t,\infty)$, respectively.
$X$ has Fourier type $\tilde{t}$ for all $\tilde{t} \in [1, t]$ and because of $t, q, s > \frac{n}{l}$ we can choose $\tilde{t} \in (\frac{n}{l},\min (q,s)]$.
Now, applying Theorem~\ref{thm:multiplier_two_weight} with $\tilde{t}$ instead of $t$ and with $\sigma$ and $\omega$ as the Lebesgue measure, we find that $T_m\colon L^{\tilde{q}}(\IR^n;X) \to L^{\tilde{q}}(\IR^n;Y)$ is bounded for all $\tilde{q} \in (q, \infty)$. In particular, we can take $\tilde{q} \in [t,\infty)$.
	\end{proof}

	We now restate the above result in terms of the mapping properties of $T_m$.
	
	\begin{corollary}\label{cor:estimates_mutliplier}
		Let $X,Y$ be Banach spaces with $X$ of non-trivial Fourier type $t > 1$.  Let $m \in L^{\infty}(\IR^{n};\mathcal{B}(X,Y))$ and assume that there exist $l \in (\frac{n}{t},n] \cap \IN$ and $s \in (\frac{n}{l},2]$  such that for some $C \ge 0$ one has for all $R > 0$ and $v^* \in \mathcal{N}$ in some norming subset $\mathcal{N} \subset Y^*$
		\begin{equation*}
			\biggl( R^{s\abs{\alpha} - n} \int_{R < \abs{\xi} < 2R} \norm{\partial^{\alpha} m^*(\xi)v^*}^s \d\xi \biggr)^{1/s} \le C \norm{v^*} \qquad \text{for all } \abs{\alpha} \le l.
		\end{equation*}
		If $T_m\colon L^q(\IR^n;X) \to L^{q,\infty}(\IR^n;Y)$ for some $q \in (\frac{n}{l}, \infty)$, then for all $p, r \in (1, \infty)$ with $p > r \ge q$
			\begin{align}\label{eq:cor:estimates_mutliplier}
				\MoveEqLeft \norm{T_m f}_{L^p_{\omega}(\IR^n;Y)} \lesssim [\omega, \sigma^{-\frac{1}{p-r}}]_{\mathcal{A}_p^r(\IR^n;\mathcal{Q}_n)} ([\omega]_{\mathcal{A}_{\infty}(\IR^n;\mathcal{Q}_n)}^{1/p'} + [\sigma^{-\frac{r}{p-r}}]_{\mathcal{A}_{\infty}(\IR^n;\mathcal{Q}_n)}^{1/p}) \norm{f}_{L^p_{\sigma}(\IR^n;X)},
			\end{align}
where the implicit constant only depends on $n$, $p$, $r$, $s$, $l$, $t$, $\norm{\mathcal{F}}_{L^{t}(\IR^{n};X) \to L^{t'}(\IR^{n};X)}$, $C$, $q$ and $\norm{T}_{L^{q} \to L^{q,\infty}}$.
			In particular, in the one-weight case one has
			\begin{equation*}
				\norm{T_m f}_{L^p_{\omega}(\IR^n;Y)} \lesssim [\omega]_{\mathcal{A}_{p/r}(\IR^n;\mathcal{Q}_n)}^{1/p} ([\omega]_{\mathcal{A}_{\infty}(\IR^n;\mathcal{Q}_n)}^{1/p'} + [\omega^{-\frac{r}{p-r}}]_{\mathcal{A}_{\infty}(\IR^n;\mathcal{Q}_n)}^{1/p}) \norm{f}_{L^p_{\omega}(\IR^n;X)}.
			\end{equation*}
	\end{corollary}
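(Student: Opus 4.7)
The plan is simply to translate Theorem~\ref{thm:multiplier_two_weight} from its ``weighted measure'' formulation $\norm{T_m(\sigma f)}_{L^p_\omega} \lesssim \ldots \norm{f}_{L^p_{\sigma^r}}$ into the more standard ``one-function'' formulation $\norm{T_m f}_{L^p_\omega} \lesssim \ldots \norm{f}_{L^p_\sigma}$ that appears in the corollary, together with an invocation of Remark~\ref{rmk:thm:multiplier_two_weight} to relax the range of $s$ and $q$.

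Let $\sigma$ denote the weight appearing in the statement of Corollary~\ref{cor:estimates_mutliplier}. The idea is to set $\tau \coloneqq \sigma^{-\frac{1}{p-r}}$ and apply Theorem~\ref{thm:multiplier_two_weight} with $\tau$ playing the role of its weight $\sigma$. Given $f \in L^p_\sigma(\IR^n;X)$, put $g \coloneqq f/\tau = \sigma^{\frac{1}{p-r}} f$. A direct computation shows $\norm{g}_{L^p_{\tau^r}}^p = \int \abs{f}^p \tau^{-p+r} \ud x = \int \abs{f}^p \sigma \ud x = \norm{f}_{L^p_\sigma}^p$, since $\tau^{-p+r} = \sigma^{-\frac{-p+r}{p-r}} = \sigma$. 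Hence Theorem~\ref{thm:multiplier_two_weight} (with $s\in[t,2]$, $q\in[t,\infty)$ initially) gives
\begin{equation*}
\norm{T_m f}_{L^p_\omega(\IR^n;Y)} = \norm{T_m(\tau g)}_{L^p_\omega(\IR^n;Y)} \lesssim [\omega,\tau]_{\mathcal{A}^r_p}\bigl([\omega]_{\mathcal{A}_\infty}^{1/p'} + [\tau^r]_{\mathcal{A}_\infty}^{1/p}\bigr)\norm{g}_{L^p_{\tau^r}},
\end{equation*}
which is exactly the asserted two-weight estimate~\eqref{eq:cor:estimates_mutliplier} once one substitutes back $\tau = \sigma^{-1/(p-r)}$ and $\tau^r = \sigma^{-r/(p-r)}$.

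To cover the full range $s\in(n/l,2]$ and $q\in(n/l,\infty)$ claimed in the corollary, I simply invoke Remark~\ref{rmk:thm:multiplier_two_weight}: choose an auxiliary $\tilde{t}\in(n/l,\min(t,q,s)]$ (possible since all three exceed $n/l$), note that $X$ also has Fourier type $\tilde{t}$, and rerun the argument with $\tilde{t}$ in place of $t$; this validates the application of Theorem~\ref{thm:multiplier_two_weight} under the weaker hypotheses.

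For the one-weight specialization, set $\sigma = \omega$ in~\eqref{eq:cor:estimates_mutliplier}. The only thing to check is the algebraic identity
\begin{equation*}
[\omega,\omega^{-1/(p-r)}]_{\mathcal{A}^r_p(\IR^n;\mathcal{Q}_n)} = [\omega]_{\mathcal{A}_{p/r}(\IR^n;\mathcal{Q}_n)}^{1/p}.
\end{equation*}
Unfolding the definition of $[\omega,\sigma]_{\mathcal{A}^r_p}$, the left-hand side equals $\sup_Q\bigl(\fint_Q \omega^{-r/(p-r)}\bigr)^{(p-r)/(pr)}\bigl(\fint_Q \omega\bigr)^{1/p}$, while the $\mathcal{A}_{p/r}$-characteristic (with $p/r - 1 = (p-r)/r$) is $\sup_Q\bigl(\fint_Q \omega\bigr)\bigl(\fint_Q \omega^{-r/(p-r)}\bigr)^{(p-r)/r}$; raising the latter to the power $1/p$ gives precisely the former. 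This is the only nontrivial verification, and it is pure bookkeeping—no genuine obstacle. The main work of the corollary is already carried by Theorem~\ref{thm:multiplier_two_weight} (and, upstream of that, by the sparse domination of Lemma~\ref{lem:domination} combined with the sharp sparse estimate of Theorem~\ref{thm:sharp_estimate_sparse}).
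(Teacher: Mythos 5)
Your proposal is correct and follows essentially the same route as the paper: both reduce the corollary to Theorem~\ref{thm:multiplier_two_weight} (with the range of $s$ and $q$ relaxed via Remark~\ref{rmk:thm:multiplier_two_weight}) by the substitution $g = f\sigma$ and a renaming of the weight, and both verify the same algebraic identity $[\omega,\omega^{-1/(p-r)}]_{\mathcal{A}^r_p} = [\omega]_{\mathcal{A}_{p/r}}^{1/p}$ for the one-weight case. The only cosmetic difference is that you name the renamed weight $\tau$ and carry the bookkeeping through from the start, whereas the paper does the substitution first and relabels afterwards; the content is identical.
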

	\begin{proof}
		Ignoring the known dependencies on the weights, Theorem~\ref{thm:multiplier_two_weight} and Remark~\ref{rmk:thm:multiplier_two_weight} give for $f \in L^p_{\sigma_r}(\IR^n;X)$ an estimate of the form
		\begin{equation*}
			\int_{\IR^n} \norm{T_m (f \sigma)}^p \omega \d x \lesssim \int_{\IR^n} \norm{f}^p \sigma^r \d x.
		\end{equation*}
		Now, using the substitution $g = f \sigma$ we get
		\begin{equation*}
			\int_{\IR^n} \norm{T_m (g)}^p \omega \d x \lesssim \int_{\IR^n} \norm{g}^p \sigma^{r-p} \d x.
		\end{equation*}
		Renaming the weights appropriately, we get the two-weight estimate. For the one-weight case we choose $\sigma = \omega^{-\frac{1}{p-r}}$. The two weight characteristic then reduces to
		\begin{equation*}
			[\omega,\sigma] = \sup_{Q} \biggl( \frac{1}{\abs{Q}} \int \omega^{-\frac{r}{p-r}} \biggr)^{\frac{1}{r}-\frac{1}{p}} \biggl( \frac{1}{\abs{Q}} \int_Q \omega \biggr)^{\frac{1}{p}} = [\omega]_{\mathcal{A}_{p/r}}^{1/p}. \qedhere
		\end{equation*}
	\end{proof}

A duality argument now gives the following result, where we replace the weak-$L^q$-bound by an $L^q$-bound. Note that in all results valid choices of the constants give $\frac{n}{l} \in [1,2)$.

\begin{corollary}\label{cor:estimates_mutliplier_dual}
		Let $X,Y$ be Banach spaces with $Y$ of non-trivial Fourier type $t > 1$.  Let $m \in L^{\infty}(\IR^{n};\mathcal{B}(X,Y))$ and assume that there exist $l \in (\frac{n}{t},n] \cap \IN$ and $s \in (\frac{n}{l},2]$ such that for some $C \ge 0$ one has, for all $R > 0$ and $x \in X$,
		\begin{equation*}
			\biggl( R^{s\abs{\alpha} - n} \int_{R < \abs{\xi} < 2R} \norm{\partial^{\alpha} m(\xi)x}^s \d\xi \biggr)^{1/s} \le C\norm{x} \qquad \text{for all } \abs{\alpha} \le l.
		\end{equation*}
		If $T_m\colon L^q(\IR^n;X) \to L^{q}(\IR^n;Y)$ for some $q \in (1, (\frac{n}{l})')$, then, for all $p, r \in (1, \infty)$ with $p < r' < q'$,
			\begin{align}
				\MoveEqLeft \norm{T_m f}_{L^p_{\omega}(\IR^n;Y)} \lesssim [\sigma^{-\frac{1}{p-1}}, \omega^{-\frac{1}{(p-1)(p'-r)}}]_{\mathcal{A}_{p'}^r(\IR^n;\mathcal{Q}_n)} \nonumber \\
				& \qquad ([\sigma^{-\frac{1}{p-1}}]_{\mathcal{A}_{\infty}(\IR^n;\mathcal{Q}_n)}^{1/p} + [\omega^{\frac{r}{(p-1)(p'-r)}}]_{\mathcal{A}_{\infty}(\IR^n;\mathcal{Q}_n)}^{1/p'}) \norm{f}_{L^p_{\sigma}(\IR^n;X)}, \label{eq:cor:estimates_mutliplier_dual}
			\end{align}
where the implicit constant only depends on $n$, $p$, $r$, $s$, $l$, $t$, $\norm{\mathcal{F}}_{L^{t}(\IR^{n};Y) \to L^{t'}(\IR^{n};Y)}$, $C$, $q$ and $\norm{T}_{L^{q} \to L^{q}}$.
			In particular, in the one weight case one has
			\begin{align*}
				\MoveEqLeft \norm{T_m f}_{L^p_{\omega}(\IR^n;Y)}\lesssim [\omega^{-\frac{1}{p-1}}]_{\mathcal{A}_{p'/r}(\IR^n;\mathcal{Q}_n)}^{1/p'} ([\omega^{-\frac{1}{p-1}}]_{\mathcal{A}_{\infty}(\IR^n;\mathcal{Q}_n)}^{1/p} + [\omega^{\frac{r}{(p-1)(p'-r)}}]_{\mathcal{A}_{\infty}(\IR^n;\mathcal{Q}_n)}^{1/p'}) \norm{f}_{L^p_{\omega}(\IR^n;X)}.
			\end{align*}
	\end{corollary}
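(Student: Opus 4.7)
The plan is to derive this statement from Corollary~\ref{cor:estimates_mutliplier} by invoking the Fourier multiplier duality recorded in Section~\ref{sec:r-boundedness}. Specifically, the map $m \mapsto \widetilde{m}^{*}$, with $\widetilde{m}^{*}(\xi) = [m(-\xi)]^{*}$, yields an isometric isomorphism
\[
\mathcal{M}_{p}^{n}((X,\sigma) \to (Y,\omega)) \longrightarrow \mathcal{M}_{p'}^{n}((Y^{*},\omega^{-1/(p-1)}) \to (X^{*},\sigma^{-1/(p-1)})),
\]
so proving \eqref{eq:cor:estimates_mutliplier_dual} amounts to bounding $T_{\widetilde{m}^{*}}$ as an operator from $L^{p'}_{\omega^{-1/(p-1)}}(\IR^{n};Y^{*})$ to $L^{p'}_{\sigma^{-1/(p-1)}}(\IR^{n};X^{*})$. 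I would do this by applying Corollary~\ref{cor:estimates_mutliplier} with the roles of $(X,Y,p,q)$ played by $(Y^{*},X^{*},p',q')$.

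To verify the hypotheses under this substitution, I would argue as follows. Fourier type is self-dual, so $Y^{*}$ has Fourier type $t>1$. Since $(\widetilde{m}^{*})^{*}(\xi) = \widetilde{m}(\xi) = m(-\xi)$ and the annular integrals in the Hörmander-type assumption are invariant under $\xi \mapsto -\xi$, the bound assumed for $\partial^{\alpha}m(\xi)x$ with $x \in X$ yields exactly the corresponding bound for $\partial^{\alpha}(\widetilde{m}^{*})^{*}(\xi)$ evaluated on the canonical norming subset $X \hookrightarrow X^{**}=(X^{*})^{*}$. Moreover, the $L^{q}$-boundedness of $T_{m}$ dualizes to $L^{q'}$-boundedness of $T_{\widetilde{m}^{*}}$, which in particular gives the weak-type bound needed in Corollary~\ref{cor:estimates_mutliplier}. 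Finally, $q\in(1,(n/l)')$ translates to $q'\in(n/l,\infty)$, and $p<r'<q'$ gives both $p'>r$ and $r>q$, so the parameter constraint $p'>r\ge q'$ required by Corollary~\ref{cor:estimates_mutliplier} is available within the admissible range.

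Applying Corollary~\ref{cor:estimates_mutliplier} to $T_{\widetilde{m}^{*}}$ produces a two-weight bound whose right-hand side involves the characteristics
\[
[\sigma^{-1/(p-1)},(\omega^{-1/(p-1)})^{-1/(p'-r)}]_{\mathcal{A}_{p'}^{r}}, \qquad [\sigma^{-1/(p-1)}]_{\mathcal{A}_{\infty}}, \qquad [(\omega^{-1/(p-1)})^{r}]_{\mathcal{A}_{\infty}}.
\]
Rewriting these in terms of the original weights $\sigma$ and $\omega$ and translating back through the isometry $m\mapsto\widetilde{m}^{*}$ produces \eqref{eq:cor:estimates_mutliplier_dual}. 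The one-weight case then follows by specializing $\sigma=\omega$ and simplifying, exactly as in the reduction performed at the end of the proof of Corollary~\ref{cor:estimates_mutliplier}.

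The main obstacle is not conceptual — the duality reduction is routine — but rather the careful bookkeeping of the repeated substitution $\tau\mapsto \tau^{-1/(p-1)}$ inside the $\mathcal{A}_{p'}^{r}$ and $\mathcal{A}_{\infty}$ characteristics, together with verifying that the admissible range $p<r'<q'$ really does lie inside the range $p'>r\ge q'$ demanded by Corollary~\ref{cor:estimates_mutliplier} across the full parameter window permitted by $q\in(1,(n/l)')$.
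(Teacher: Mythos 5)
Your approach is the same as the paper's: the paper also derives Corollary~\ref{cor:estimates_mutliplier_dual} from Corollary~\ref{cor:estimates_mutliplier} by the duality $m \mapsto \widetilde{m}^*$, applied at exponent $q'$, and then dualizes back. Your transfer of the hypotheses is correct: Fourier type is self-dual, $(\widetilde{m}^*)^*(\xi)x = m(-\xi)x$ on the norming subset $X \subset X^{**}$, the annular H\"ormander integrals are invariant under $\xi \mapsto -\xi$, and $L^{q}$-boundedness of $T_m$ dualizes to $L^{q'}$-boundedness of $T_{\widetilde{m}^*}$.

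The gap is in the parameter bookkeeping at the end. You write that $p < r' < q'$ gives $p'>r$ and $r>q$, ``so the parameter constraint $p'>r\ge q'$ required by Corollary~\ref{cor:estimates_mutliplier} is available.'' The first two implications are correct ($p<r'$ is equivalent to $r<p'$, and, since $x\mapsto x'$ is decreasing, $r'<q'$ is equivalent to $r>q$), but $r>q$ does \emph{not} imply $r\ge q'$: these agree only when $q=2$, and for $q<2$ (which is allowed, since $q\in(1,(n/l)')$ with $(n/l)'>2$) one has $q<2<q'$, so $r$ can lie strictly between $q$ and $q'$ and violate $r\ge q'$. Applying Corollary~\ref{cor:estimates_mutliplier} to $\widetilde{m}^*$ at exponent $q'$ requires $p'>r\ge q'$, and this dualizes cleanly to $p<r'\le q$, not to $p<r'<q'$. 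So the duality argument as you (and, implicitly, the paper) have written it proves the estimate on the range $p<r'\le q$ only, which is strictly smaller than the stated range $p<r'<q'$ whenever $q<2$. To match the range you actually obtain, you would either need $r'\le q$ in the hypothesis, or an additional argument (not contained in the proposal) that boosts the $L^{q,\infty}$-bound on $T_{\widetilde{m}^*}$ to exponents below $q'$; the latter does not follow from the ingredients used here.
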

	\begin{proof}
As $m^{**}(\xi)x = m(\xi)x$ for $x \in X$, $X \subset X^{**}$ is norming for $X^{*}$ and $Y^{*}$ has Fourier type $t$, duality for Fourier multipliers (see Section~\ref{sec:r-boundedness}) gives that $\tilde{m}^{*}$ satisfies the assumptions of Corollary~\ref{cor:estimates_mutliplier} with $q'$ instead of $q$. Dualizing the corresponding estimate yields the desired result.
	\end{proof}
	
	If we replace~\eqref{eq:assumption_multiplier} by its stronger operator norm variant and the weak-$L^q$ bound by an $L^q$-bound, we obtain the following result valid for a broader range of indices.
	
	\begin{corollary}\label{cor:estimates_mutliplier_combi}
		Let $X,Y$ be Banach spaces of non-trivial Fourier type $t > 1$. Let $m \in L^{\infty}(\IR^{n};\mathcal{B}(X,Y))$ and assume that there exist $l \in (\frac{n}{t},n] \cap \IN$ and $s \in (\frac{n}{l},2]$ such that for some $C \ge 0$ one has for all $R > 0$
		\begin{equation}\label{eq:cor:estimates_mutliplier_combi}
			\biggl( R^{s\abs{\alpha} - n} \int_{R < \abs{\xi} < 2R} \norm{\partial^{\alpha} m(\xi)}^s \d\xi \biggr)^{1/s} \le C \qquad \text{for all } \abs{\alpha} \le l.
		\end{equation}
 Suppose that $T_m\colon L^q(\IR^n;X) \to L^{q}(\IR^n;Y)$ for some $q \in (\frac{n}{l}, (\frac{n}{l})')$. Then \eqref{eq:cor:estimates_mutliplier} holds for all $p, r \in (1, \infty)$ with $p > r > \frac{n}{l}$ and \eqref{eq:cor:estimates_mutliplier_dual} holds for all $p, r \in (1, \infty)$ with $p < r' < (\frac{n}{l})'$.
	\end{corollary}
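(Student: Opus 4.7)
The plan is to leverage the fact that the operator-norm Hörmander condition \eqref{eq:cor:estimates_mutliplier_combi} is strictly stronger than the pointwise conditions appearing in Corollary~\ref{cor:estimates_mutliplier} (applied to $m^{*}$ against a norming subset $\mathcal{N} \subset Y^{*}$) and in Corollary~\ref{cor:estimates_mutliplier_dual} (applied to $m$ against $x \in X$), and then to combine both corollaries after first extending the hypothesised $L^{q}$-bound to all exponents strictly above $\frac{n}{l}$. First, I would take $\mathcal{N}$ to be the unit ball of $Y^{*}$ and exploit
\[
\normalnorm{\partial^{\alpha} m^{*}(\xi) v^{*}} \leq \normalnorm{\partial^{\alpha} m(\xi)}\normalnorm{v^{*}}, \qquad \normalnorm{\partial^{\alpha} m(\xi) x} \leq \normalnorm{\partial^{\alpha} m(\xi)}\normalnorm{x},
\]
so that \eqref{eq:cor:estimates_mutliplier_combi} verifies the scalar $s$-integral hypotheses of both corollaries in one stroke.

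Second, I would upgrade the given $L^{q}$-bound to boundedness on $L^{\tilde q}$ for every $\tilde q \in (\frac{n}{l}, \infty)$. Going above $q$ is free: Remark~\ref{rmk:thm:multiplier_two_weight} (which only uses Fourier type of $X$ and the already-verified pointwise condition) yields $T_{m} \colon L^{\tilde q} \to L^{\tilde q}$ for every $\tilde q > q$. Going below $q$ requires a duality step. Since $q < (\frac{n}{l})'$, one has $q' > \frac{n}{l}$, so by the duality isomorphism from Section~\ref{sec:r-boundedness} the reflected adjoint symbol $\widetilde{m}^{*}(\xi) = m(-\xi)^{*}$ defines a bounded multiplier on $L^{q'}(\IR^{n};Y^{*})$. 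Because $Y^{*}$ inherits Fourier type $t$ from $Y$ and because $\normalnorm{\partial^{\alpha}\widetilde{m}^{*}(\xi)} = \normalnorm{\partial^{\alpha}m(-\xi)}$, the symbol $\widetilde{m}^{*}$ again satisfies \eqref{eq:cor:estimates_mutliplier_combi}. Re-applying Remark~\ref{rmk:thm:multiplier_two_weight} to $\widetilde{m}^{*}$ produces $L^{\tilde s}$-boundedness for all $\tilde s > q'$, and dualising once more yields $T_{m}$ bounded on $L^{\tilde s'}$ for every $\tilde s' \in (\frac{n}{l}, q)$, thereby filling the remaining range.

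Finally, I would apply the two underlying corollaries. Given a pair $p > r > \frac{n}{l}$, Corollary~\ref{cor:estimates_mutliplier} with any $\tilde q \in (\frac{n}{l}, r]$ in place of $q$ delivers \eqref{eq:cor:estimates_mutliplier}, since the constraint $r \geq \tilde q$ is then met. Given $p < r' < (\frac{n}{l})'$, equivalently $r > \frac{n}{l}$, Corollary~\ref{cor:estimates_mutliplier_dual} with $\tilde q \in (\frac{n}{l}, \min\{r,(\frac{n}{l})'\})$ yields \eqref{eq:cor:estimates_mutliplier_dual}, since both constraints $\tilde q \in (1,(\frac{n}{l})')$ and $r' < \tilde q'$ are arranged. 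I expect the main obstacle to be precisely the duality bootstrap of the $L^{q}$-bound to exponents below $q$: one has to keep careful track of the invariance of both Fourier type and the operator-norm Hörmander condition under $m \mapsto \widetilde{m}^{*}$, and to be sure that the duality isomorphism from Section~\ref{sec:r-boundedness} transfers the $L^{\tilde s}$-bound on $\widetilde{m}^{*}$ back to an $L^{\tilde s'}$-bound on $m$ using only the norming embeddings $X \subset X^{**}$, $Y \subset Y^{**}$, without assuming additional reflexivity.
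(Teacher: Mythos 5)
Your proposal is correct and follows essentially the same route as the paper: check that the operator-norm Hörmander condition \eqref{eq:cor:estimates_mutliplier_combi} implies the pointwise hypotheses of Corollaries~\ref{cor:estimates_mutliplier} and~\ref{cor:estimates_mutliplier_dual}, bootstrap the single $L^{q}$-bound to $L^{\tilde q}$-boundedness for all $\tilde q$ in the full range, and finally re-apply the two corollaries with the widened choice of $q$. The only real divergence is in how you upgrade the $L^q$-bound to exponents below $q$. The paper simply applies Corollary~\ref{cor:estimates_mutliplier_dual} with $\omega=\sigma=\mathds{1}$ (which internally encodes exactly the duality step you carry out) and then bridges the two ranges $(q,\infty)$ and $(1,q')$ by interpolation when they do not overlap. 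You instead run the duality argument explicitly---pass to $\widetilde{m}^{*}$ on $L^{q'}(\IR^{n};Y^{*})$, apply Remark~\ref{rmk:thm:multiplier_two_weight} there, dualize back---which returns a window extending all the way up to $q$ from below and therefore makes the interpolation step superfluous. Both approaches are valid; yours makes the duality invariances ($\|\partial^\alpha\widetilde m^*\|=\|\partial^\alpha m(-\cdot)\|$, Fourier type of $Y^*$, the norming embedding $X\subset X^{**}$) transparent at the cost of re-deriving what Corollary~\ref{cor:estimates_mutliplier_dual} already packages, while the paper's version is shorter but pays with one interpolation step.
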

	\begin{proof}
Note that the conditions of Corollaries \ref{cor:estimates_mutliplier} and \ref{cor:estimates_mutliplier_dual} are both satisfied.
Applying them with $\omega=\sigma=\mathds{1}$,
we find that $T_{m}$ is $L^{\tilde{q}}$-bounded for all $\tilde{q} \in (q,\infty)$ and for all $\tilde{q} \in (1,q')$, respectively. Interpolation subsequently yields $L^{\tilde{q}}$-boundedness for all $\tilde{q} \in (1,\infty)$.
In particular, we can apply Corollary~\ref{cor:estimates_mutliplier} for all $q \in (\frac{n}{l}, \infty)$ and we can apply Corollary~\ref{cor:estimates_mutliplier_dual} for all $q \in (1,(\frac{n}{l})')$.
	\end{proof}

Combining the above result with
\cite[Corollary~4.4]{GirWei03} we obtain the following corollary (Corollary~\ref{cor:estimates_mutliplier;GirWei03}).

For the statement of this corollary it will be convenient to introduce the following notation. Let $N \in \IN$ and $q \in \{1,\infty\}$.
We denote by $\mathcal{R}\mathfrak{M}^{n}_{N,q}(X,Y)$ the space of all symbols $m \in L_{\infty}(\IR^{n};\mathcal{B}(X,Y))$ with $D^{\alpha}m_{|\IR^{n} \setminus \{0\}} \in L^{1}_{\mathrm{loc}}(\IR^{n} \setminus \{0\};\mathcal{B}(X,Y))$ for each $|\alpha|_{q} \leq N$ such that
\begin{equation}\label{eq:R-Miklhin-norm}
\norm{m}_{\mathfrak{M}^{n}_{N,q}} := \sup_{|\alpha|_{q} \leq N} \mathcal{R}_{\mathrm{ess}}\left\{|\xi|^{|\alpha|}D^{\alpha}m(\xi) : \xi \neq 0 \right\} < \infty,
\end{equation}
where $\mathcal{R}_{\mathrm{ess}}$ is the essential $\mathcal{R}$-bound; given $f \in L^{0}(\IR^{n} \setminus \{0\};\mathcal{B}(X,Y))$, $\mathcal{R}_{\mathrm{ess}}\{f(\xi): \xi \neq 0\}$ is the infinimum over all representatives $g$ of the equivalence class of $f$ (a.e.\ coincidence) of $\mathcal{R}\{g(\xi): \xi \neq 0\}$.

\begin{corollary}\label{cor:estimates_mutliplier;GirWei03}
Let $X,Y$ be Banach spaces of non-trivial Fourier type $t > 1$ and let $l \in (\frac{n}{t},n] \cap \IN$.
Then
			\begin{align*}
				\MoveEqLeft \norm{T_m}_{L^p_{\sigma} \to L^p_{\omega}} \lesssim_{X,Y,t,l,n,p,r} [\omega, \sigma^{-\frac{1}{p-r}}]_{\mathcal{A}_p^r(\IR^n;\mathcal{Q}_n)} \\ & \qquad\qquad ([\omega]_{\mathcal{A}_{\infty}(\IR^n;\mathcal{Q}_n)}^{1/p'} + [\sigma^{-\frac{r}{p-r}}]_{\mathcal{A}_{\infty}(\IR^n;\mathcal{Q}_n)}^{1/p}) \norm{m}_{\mathcal{R}\mathfrak{M}^{n}_{l,1}(X,Y)}
			\end{align*}
for $p, r \in (1, \infty)$ with $p > r > \frac{n}{l}$ and
			\begin{align*}
				\MoveEqLeft \norm{T_m}_{L^p_{\sigma} \to L^p_{\omega}} \lesssim_{X,Y,t,l,n,p,r} [\sigma^{-\frac{1}{p-1}}, \omega^{-\frac{1}{(p-1)(p'-r)}}]_{\mathcal{A}_{p'}^r(\IR^n;\mathcal{Q}_n)} \nonumber \\
				& \qquad\qquad ([\sigma^{-\frac{1}{p-1}}]_{\mathcal{A}_{\infty}(\IR^n;\mathcal{Q}_n)}^{1/p} + [\omega^{\frac{r}{(p-1)(p'-r)}}]_{\mathcal{A}_{\infty}(\IR^n;\mathcal{Q}_n)}^{1/p'}) \norm{m}_{\mathcal{R}\mathfrak{M}^{n}_{l,1}(X,Y)}.
\end{align*}
for $p, r \in (1, \infty)$ with $p < r' < (\frac{n}{l})'$.
\end{corollary}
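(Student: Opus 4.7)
The plan is to derive the corollary from Corollary~\ref{cor:estimates_mutliplier_combi}, with the unweighted $L^{q}$-boundedness supplied by \cite[Corollary~4.4]{GirWei03}. The strategy is to translate the $\mathcal{R}$-Mikhlin hypothesis $m \in \mathcal{R}\mathfrak{M}^{n}_{l,1}(X,Y)$ into both the Hörmander-type $L^{s}$-integrability condition \eqref{eq:cor:estimates_mutliplier_combi} and an unweighted $L^{q}$-bound, and then to invoke Corollary~\ref{cor:estimates_mutliplier_combi} directly.

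To verify \eqref{eq:cor:estimates_mutliplier_combi} I would use that an $\mathcal{R}$-bound dominates the operator norm pointwise (apply the defining inequality to a single vector, or invoke Kahane's contraction principle): one gets the essential pointwise estimate
$\|D^{\alpha} m(\xi)\|_{\mathcal{B}(X,Y)} \leq \|m\|_{\mathcal{R}\mathfrak{M}^{n}_{l,1}(X,Y)}\, |\xi|^{-|\alpha|}$
for every $|\alpha| \leq l$ and a.e.\ $\xi \neq 0$. Since $l > n/t$ and $t \leq 2$ give $n/l < t \leq 2$, the interval $(n/l, 2]$ is non-empty, so one may take, for instance, $s = 2$; integrating the pointwise bound over the dyadic annulus $\{R < |\xi| < 2R\}$ then yields \eqref{eq:cor:estimates_mutliplier_combi} with $C \lesssim_{n} \|m\|_{\mathcal{R}\mathfrak{M}^{n}_{l,1}(X,Y)}$. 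For the $L^{q}$-bound, \cite[Corollary~4.4]{GirWei03} asserts that on Banach spaces of Fourier type $t > 1$ the condition $m \in \mathcal{R}\mathfrak{M}^{n}_{l,1}(X,Y)$ with $l > n/t$ implies $T_m \in \mathcal{B}(L^{q}(\IR^{n};X), L^{q}(\IR^{n};Y))$ for all $q \in (1, \infty)$, with operator norm controlled by $\|m\|_{\mathcal{R}\mathfrak{M}^{n}_{l,1}(X,Y)}$; in particular, the value $q = 2$ (and more generally any $q \in (n/l, (n/l)')$) is admissible for Corollary~\ref{cor:estimates_mutliplier_combi}, the range being non-empty by the same inequality $n/l < 2$.

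With both hypotheses of Corollary~\ref{cor:estimates_mutliplier_combi} in place, the two asserted weighted estimates follow by a direct appeal to that corollary, the constants $C$ and $\|T_m\|_{L^{q} \to L^{q}}$ appearing there both being dominated by $\|m\|_{\mathcal{R}\mathfrak{M}^{n}_{l,1}(X,Y)}$ so that the $\mathcal{R}$-Mikhlin norm cleanly factors out of the final bound. The proof is essentially bookkeeping; the only potential pitfall is to confirm that the exponent ranges $(n/l, 2]$ for $s$ and $(n/l, (n/l)')$ for $q$ are non-empty under the sole assumption $l > n/t$ with $t \in (1, 2]$, which is immediate, and to check carefully that the implicit constants in Corollary~\ref{cor:estimates_mutliplier_combi} depend on $C$ and $\|T_m\|_{L^q \to L^q}$ only through a multiplicative factor, as is indeed the case throughout Theorem~\ref{thm:multiplier_two_weight} and its corollaries.
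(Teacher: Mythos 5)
Your proposal is correct and matches the paper's proof: both invoke \cite[Corollary~4.4]{GirWei03} for the unweighted $L^{q}$-bound (with $q$ in the admissible range $(\frac{n}{l},(\frac{n}{l})')$, which contains $2$), observe that the $\mathcal{R}$-Mikhlin condition implies the integral condition \eqref{eq:cor:estimates_mutliplier_combi} since $\mathcal{R}$-bounds dominate uniform bounds, and then apply Corollary~\ref{cor:estimates_mutliplier_combi}. The only difference is that you spell out the pointwise bound $\norm{D^{\alpha}m(\xi)} \lesssim \normalnorm{m}_{\mathcal{R}\mathfrak{M}^{n}_{l,1}}\abs{\xi}^{-\abs{\alpha}}$ and the dyadic-annulus integration explicitly, whereas the paper leaves this as an assertion.
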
	
\begin{proof}
By \cite[Corollary~4.4]{GirWei03}, $T_{m}$ is $L^{q}$-bounded for all $q \in (1,\infty)$, thus in particular for some $q \in (\frac{n}{l}, (\frac{n}{l})')$.
As the $\mathcal{R}$-boundedness condition in the definition of $\mathcal{R}\mathfrak{M}^{n}_{l,1}(X,Y)$ implies the integral condition \eqref{eq:cor:estimates_mutliplier_combi}, we may apply Corollary~\ref{cor:estimates_mutliplier_combi} to obtain the desired result.
\end{proof}
	
	\begin{remark}\label{rmk:one-weight}
		Replacing the $\mathcal{A}_{\infty}$-constants by the larger $\mathcal{A}_{p/r}$-constants in the one weight setting, i.e.\ using the estimates $[\omega]_{\mathcal{A}_{\infty}} \lesssim [\omega]_{\mathcal{A}_{p/r}}$ and $[\omega^{-r/(p-r)}]_{\mathcal{A}_{\infty}} \lesssim [\omega]_{\mathcal{A}_{p/r}}^{r/(p-r)}$ respectively, we have, a fortiori, under the assumptions of Corollary~\ref{cor:estimates_mutliplier} and~\ref{cor:estimates_mutliplier_dual} (and the subsequent results), for $T_m\colon L^p_{\omega}(\IR^n;X) \to L^p_{\omega}(\IR^n;Y)$ the estimates
		\begin{equation*}
			\norm{T_m} \lesssim [\omega]_{\mathcal{A}_{p/r}}^{\max(1, \frac{1}{p-r})} \quad \text{and} \quad \norm{T_m} \lesssim [\omega^{-\frac{1}{p-1}}]_{\mathcal{A}_{p'/r}}^{\max(1, \frac{1}{p'-r})},
		\end{equation*}
		respectively. In particular, $T_m$ is bounded for $\omega \in \mathcal{A}_{p/r}$ and $\omega^{-\frac{1}{p-1}} \in \mathcal{A}_{p'/r}$ respectively.
	\end{remark}
	
	\begin{remark}
		The boundedness result without the dependencies on the weights stated in the previous remark follows
from earlier known results: since the kernel $K$ satisfies the pointwise $p$-Hörmander condition, one can essentially apply a variant of~\cite[Part~I, Theorem~1.6]{FraRuiTor86} to obtain the boundedness of $T_m$ for the same class of $\mathcal{A}_p$-weights (for this see also the remarks at the end of~\cite{FraRuiTor86}).
	\end{remark}
	
\begin{remark}\label{rmk:cor:estimates_mutliplier;GirWei03}
In connection with Remark~\ref{rmk:one-weight}, note that the fact $\mathcal{A}_{p} = \bigcup_{q \in (1,p)}\mathcal{A}_{q}$ yields that for each $\omega \in \mathcal{A}_{p}$ there exists $r \in (1,p)$ such that $\omega \in \mathcal{A}_{p/r}$.

In particular, Corollory~\ref{cor:estimates_mutliplier;GirWei03} gives a Mikhlin theorem for $\mathcal{A}_{p}$-weights, but with implied constants that have a complicated dependence on the weight.
A nice dependence on the weight can be obtained at the cost of increasing the Mikhlin condition to order $n+2$ (where the higher order estimates only require uniform boundedness instead of $R$-boundedness), see \cite[Proposition~3.1]{MeyVer15}.
This smoothness $n+2$ could actually be improved to $n+1$ by using (something in the spirit of) \cite{Hyt04} instead of \cite[Proposition~VI.4.4.2(a)]{Ste93} for passing from the Fourier multiplier perspective to the perspective of singular integrals.
\end{remark}

	\begin{remark}
		In the case of scalar multipliers $m\colon \IR^n \to \IC$ the assumptions made on the multiplier always imply the classical Hörmander condition. Further, the boundedness of $T_m\colon L^2(\IR^n) \to L^2(\IR^n)$ is equivalent to $m \in L^{\infty}(\IR^n)$ by Plancherel's theorem. Hence, for scalar multipliers we recover the one-weight results in~\cite[Theorem~1]{KurWhe79} whose proof uses properties of the sharp maximal function instead of domination by sparse operators.
	\end{remark}

\bibliographystyle{plain}

\end{document}